\date{}
\title[Global rigidity results]{Global rigidity
for ultra-differentiable quasiperiodic  cocycles and its spectral applications}
\author{Hongyu Cheng}
\address{
Chern Institute of Mathematics and LPMC, Nankai University, Tianjin 300071, China} \email{hychengmath@126.com}
\author{Lingrui Ge}
\address{Department of Mathematics, University of California Irvine, CA, 92697-3875, USA}
\email{lingruig@uci.edu}
\author {Jiangong You}
\address{
Chern Institute of Mathematics and LPMC, Nankai University, Tianjin 300071, China} \email{jyou@nankai.edu.cn}
\author{Qi Zhou}
\address{
Chern Institute of Mathematics and LPMC, Nankai University, Tianjin 300071, China
}
 \email{qizhou@nankai.edu.cn}
\newtheorem{theorem}{Theorem}[section]
\newtheorem{corollary}{Corollary}
\newtheorem{proposition}{Proposition}
\newtheorem{lemma}{Lemma}[section]
\theoremstyle{definition}
\newtheorem{remark}{Remark}[section]
\newtheorem{definition}{Definition}
\newtheorem{claim}{Claim}
\newcommand{\me}{\mathrm{e}}   % natural number e
\newcommand{\mi}{\mathrm{i}}     % i=sqrt{-1}
\newcommand{\dif}{\mathrm{d}}   % differential d
\newcommand{\ZZ}{\mathbb{Z}}
\newcommand{\RR}{\mathbb{R}}
\newcommand{\TT}{\mathbb{T}}
\newcommand{\mc}{\mathcal}
\numberwithin{equation}{section}
\newcommand{\N}{{\mathbb N}}
\newcommand{\Q}{{\mathbb Q}}
\newcommand{\R}{{\mathbb R}}
\newcommand{\T}{{\mathbb T}}
\newcommand{\Z}{{\mathbb Z}}
\begin{document}

\maketitle

\begin{abstract}
  For quasiperiodic  Schr\"odinger operators with one-frequency analytic potentials, from dynamical systems side, it has been proved that the corresponding quasiperiodic Schr\"odinger cocycle is either rotations reducible or has positive Lyapunov exponent for all irrational frequency  and almost every energy  \cite{AvilaF11}. From spectral theory side, the ``Schr\"odinger conjecture"  \cite{AvilaF11} and the ``Last's intersection spectrum conjecture" have been verified \cite{JitomirskayaM12}.  The proofs of above results crucially depend on the analyticity of the potentials. People are curious about if the analyticity is essential for those problems, see open problems by Fayad-Krikorian \cite{Krikorian09,Krikorian11} and Jitomirskaya-Marx \cite{JitomirskayaM12,Jitomirskayam17}. In this paper, we prove the above mentioned  results for
 ultra-differentiable potentials.
\end{abstract}

\section{Introduction and main results}
We consider smooth quasiperiodic $SL(2,\R)$ cocycles
\begin{equation*}
(\alpha,A):\mathbb{T}\times\mathbb{R}^{2}\rightarrow\mathbb{T}\times\mathbb{R}^{2},\ \ (\theta,w)\mapsto(\theta+\alpha,A(\theta)w),
\end{equation*}
where $\alpha\in \R\setminus\Q$, $A\in C^{\infty}(\T, SL(2,\R))$ and $\mathbb{T}:=\mathbb{R}/\mathbb{Z}$. Typical
examples are  Schr\"odinger cocycles where
\begin{equation*}
\begin{split}
A(\theta)=S_{E}^V(\theta)=\left(
\begin{array}{ll}
E-V(\theta) & -1\\
  1 & 0
\end{array}
\right),
\end{split}
\end{equation*}
which is equivalent to the eigenvalue equations of  the one-dimensional quasiperiodic Schr\"odinger operator
$H_{V,\alpha,\theta}$ defined by
\begin{equation}\label{20200824operator}
\begin{split}
(H_{V,\alpha,\theta}u)_{n}=u_{n-1}+u_{n+1}+V(\theta+n\alpha)u_{n}.
\end{split}
\end{equation}
Quasiperiodic Schr\"odinger operators describe the conductivity of electrons in a two-dimensional crystal layer subject to an external magnetic field of flux acting perpendicular to the lattice plane. Due to the rich backgrounds in quantum physics, quasiperiodic Schr\"odinger operators  have been extensively studied \cite{Last2005}.

It has been proved that the (almost) reducibility of the above Schr\"odinger cocycles is a powerful tool in the study of the spectral theory of quasiperiodic Schr\"odinger operators \cite{YouICM}.  Recall that $(\alpha,A)$
is $C^r$($r$ could be $\infty$ or $\omega$) reducible, if there  exist $B\in C^r(\T,PSL(2,\R))$ and $C\in SL(2,\R)$ such that
$$B(\cdot+\alpha)A(\cdot)B(\cdot)^{-1}=C.$$ We remark that the reducibility  is too restrictive since even an $\R$-valued cocycle is in general not reducible if the frequency is very Liouvillean. The appropriate notion is  $C^r$ rotations reducibility,
 which means, there  exist $B\in C^r(\T,PSL(2,\R))$ and $C\in C^r(\T,SO(2,\R))$ such that
$$B(\cdot+\alpha)A(\cdot)B(\cdot)^{-1}=C(\cdot).$$

By Kotani's theory, for Lebesgue almost every $E \in \mathbb{R}$,  the Schr\"odinger cocycles
$(\alpha, S_{E}^{V})$ is  either $L^2$ rotations reducible or  has positive Lyapunov exponent \footnote{ We refer  to Section \ref{qpcocycle} for the definitions and basic results.}.     In many circumstances,  especially for its dynamical and spectral applications, what's important is the rigidity, i.e.,  whether $L^2$ conjugacy  for analytic (resp. smooth) cocycles implies analytic (resp. smooth) conjugacy under some additional assumptions.
In this paper, we are interested in the  global rigidity results for smooth quasiperiodic Schr\"odinger cocycles and its spectral applications.

\subsection{Global rigidity results for smooth quasiperiodic cocycles}
 Based on the powerful method of renormalization, Avila-Krikorian
\cite{Krikorian06} proved that if $\alpha$ is recurrent Diophantine, $V\in C^{\omega}(\T,\R)$, then for
Lebesgue almost every $E,$ the Schr\"odinger cocycle
$(\alpha, S_{E}^{V})$ is either nonuniformly hyperbolic or $C^{\omega}$ reducible. Later, Fayad-Krikorian \cite{Krikorian09} proved that for all Diophantine $\alpha$ \footnote{Here $\alpha$ is {\it Diophantine} (denote $\alpha \in {\rm DC}(v,\tau)$),  if there exist $v>0$ and $\tau>1$ such that
\begin{equation*}
 \|n \alpha\|_{\Z}:=\inf_{j \in \Z}\left|  n \alpha  - j \right|
> \frac{v}{|n|^{\tau}},\quad \forall \  n\in\Z\backslash\{0\}.
\end{equation*}
We also denote ${\rm DC}:=\bigcup_{v>0,
\, \tau>1} {\rm DC}(v,\tau)$ the union.}, and $V\in C^{\infty}(\T,\R)$, then for
Lebesgue almost every $E,$ the Schr\"odinger cocycle
$(\alpha, S_{E}^{V})$ is either nonuniformly hyperbolic or $C^{\infty}$-reducible. Indeed,  it was pointed out by Fayad-Krikorian \cite{Krikorian09},
 to extend the results of \cite{Krikorian09} to any irrational number is an interesting and important problem.
The problem was later settled by Avila-Fayad-Krikorian \cite{AvilaF11} in the analytic case. More precisely, for all irrational $\alpha$ and any $V\in C^{\omega}(\T,\R)$,  Avila-Fayad-Krikorian \cite{AvilaF11} proved that, the Schr\"odinger cocycle  $(\alpha, S_{E}^{V})$   is  either $C^{\omega}$ rotations reducible or has positive Lyapunov exponent for
Lebesgue almost every $E$.
 Around 2011's, R. Krikorian \cite{Krikorian11} asked the fourth author whether the global dichotomy results of \cite{AvilaF11} hold in the $C^{\infty}$ topology.

In the present paper, we address  R. Krikorian's question for  a large class of $C^\infty$ cocycles.   We first introduce the definition of $M$-ultra-differentiable functions. It is known that the derivatives of a $C^\infty$ function $f$ may grow as fast as you like and its regularity  is characterized by the growth of $D^{s}f$.  For a given sequence of positive real numbers $M=(M_{s})_{s\in\mathbb{N}},$  we say $f\in C^{\infty}(\T,\R)$ is
 $M$-ultra-differentiable if  there exists $r>0$ such that
   $$\|D^{s}f\|_{C^0} \leq  r^{-s}M_{s},$$  here $r$ is also called the ``width".
The real-analytic and $\nu$-Gevrey functions are two special cases corresponding to $M_{s}=s!$
and $M_{s}=(s!)^{\nu^{-1}}, 0<\nu<1$ respectively. Then our main result is the following:

\begin{theorem}\label{mainresult}
Let $\alpha\in\R\setminus\mathbb{Q}$, $ V:\mathbb{T} \rightarrow  \R$ be an M-ultra-differentiable function with  $M=(M_{s})_{s\in\mathbb{N}}$ satisfying\\

$ \mathbf{(H1)}$:  \text{Log-convex:}   \quad \quad \quad  $  M_{\ell}^{s-k}< M_{k}^{s-\ell}M_{s}^{\ell-k}, \quad  s>\ell>k,$ \\

$\mathbf{(H2)}$: \text{Sub-exponential growth: } \quad    $\lim_{s\rightarrow\infty}s^{-1}\ln(M_{s+1}/M_s)=0.$\\

 Then for Lebesgue almost every $E \in \mathbb{R},$ either the Schr\"odinger cocycle
$(\alpha, S_{E}^{V})$ is  $C^{\infty}$ rotations reducible or it has positive Lyapunov exponent.
\end{theorem}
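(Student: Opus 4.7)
My plan is to combine Kotani's theorem with a transfer principle from the analytic category to the $M$-ultra-differentiable category. By Kotani theory, for Lebesgue almost every $E$ the Schr\"odinger cocycle $(\alpha, S_E^V)$ is either $L^2$ rotations reducible or has positive Lyapunov exponent, so the problem reduces to showing that, under the hypotheses $(\mathbf{H1})$ and $(\mathbf{H2})$, any $L^2$ rotations reducibility for an $M$-ultra-differentiable cocycle can be upgraded to $C^\infty$ rotations reducibility. This upgrading is the heart of the proof.

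To implement this, I would approximate $V$ by analytic potentials $V_n$ on complex strips of shrinking width $r_n \to 0$. The sub-exponential growth condition $(\mathbf{H2})$ yields a Jackson-type estimate $\|V-V_n\|_{C^0(\T)} \le \varepsilon_n$, where $\varepsilon_n$ decays faster than any polynomial in $r_n$, while the log-convexity $(\mathbf{H1})$ guarantees that the natural operations (products, compositions, conjugations) preserve the $M$-scale. Fix an energy $E$ for which $(\alpha, S_E^V)$ has zero Lyapunov exponent. For $n$ large, the Lyapunov exponent of $(\alpha, S_E^{V_n})$ is close to zero, and by Avila-Fayad-Krikorian \cite{AvilaF11} we obtain analytic conjugacies $B_n \in C^\omega_{r'_n}(\T, PSL(2,\R))$ with $r'_n \lesssim r_n$ that rotations reduce $(\alpha, S_E^{V_n})$ on that strip.

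The crucial step is then to show that, after appropriate normalization of the $B_n$ (e.g.\ fixing the fibered rotation number), the sequence converges in $C^\infty$ to a limit $B$ that rotations reduces the original cocycle $(\alpha, S_E^V)$. This amounts to proving a quantitative almost reducibility theorem in the ultra-differentiable setting: one re-runs an Avila-Fayad-Krikorian type KAM scheme while tracking the growth of $\|D^s B_n\|_{C^0}$ against the scale $M_s$, and uses $(\mathbf{H2})$ to close the estimates. The main obstacle lies in the Liouville regime. For Diophantine $\alpha$ the Fayad-Krikorian scheme \cite{Krikorian09} transports with only minor modifications, but for general irrational $\alpha$ the resonant KAM steps may produce analytic conjugacies with exponentially large norm on very thin strips. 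The delicate point will be to control the cumulative product of all such resonant conjugacies in the $M$-ultra-differentiable topology; this is precisely where the log-convexity and sub-exponential growth must work in concert to guarantee that the limit is genuinely $C^\infty$ rather than merely finitely differentiable, and to convert the analytic estimates on shrinking strips into $C^s$ bounds that are uniform in $n$ for every fixed $s$.
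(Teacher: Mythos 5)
Your proposal shares the correct starting point (Kotani theory) but the mechanism you propose for upgrading $L^2$-conjugacy to $C^\infty$-conjugacy has a genuine gap, and it also misses the key structural tool the paper actually uses.

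The central difficulty is in the step where you fix an energy $E$ with zero Lyapunov exponent, pass to analytic truncations $V_n$ on shrinking strips, and then invoke Avila--Fayad--Krikorian as a black box to produce analytic conjugacies $B_n$. This does not work for two reasons. First, the AFK dichotomy holds for Lebesgue a.e.\ $E$ \emph{for a fixed potential}: for a fixed $E$ there is no guarantee that $E$ lies in the good set for every approximant $V_n$, so you cannot conclude that $(\alpha, S_E^{V_n})$ is rotations reducible (in particular, zero Lyapunov exponent alone does not imply rotations reducibility, and the fibered rotation number of $(\alpha, S_E^{V_n})$ may fail the Diophantine-type condition that AFK needs for some $n$). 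Second, even granting rotations reducibility of each approximant, the analytic conjugacies $B_n$ are produced on strips of width $r'_n \to 0$, and a black-box application of AFK gives you no control on how $\|B_n\|_{r'_n}$ blows up as $r'_n \to 0$. To show that some normalization of $B_n$ converges in $C^\infty$ you would need uniform estimates on $\|D^s B_n\|_{C^0}$ for every fixed $s$, which simply do not come out of applying the analytic theorem to each $V_n$ separately; proving them would require re-doing the KAM construction with quantitative control in the $M$-scale, which is precisely the content you cannot defer.

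The paper avoids these issues by never approximating $V$ by analytic potentials. It first uses \emph{renormalization} (the $\Z^2$-action formalism of Krikorian, Proposition~\ref{llambdarenormalizations}) to bring the cocycle $(\alpha, S_E^V)$, known only to be $L^2$-conjugated to rotations, into a genuinely perturbative regime: after finitely many renormalization steps one obtains a cocycle $(\alpha_{n_j}, R_{\rho_{n_j}}\me^{F_{n_j}})$ with $\|F_{n_j}\|_{M,r}$ as small as desired, and with renormalized rotation number in $DC_{\alpha_{n_j}}(\gamma,\tau)$ for a full-measure set of $\rho_f$. Only then does a KAM theorem enter, and it is a KAM theorem proved \emph{directly in the ultra-differentiable category} (Theorem~\ref{mainresults}), solving modified homological equations via diagonal dominance and controlling the loss of ``width'' $r$ via the auxiliary function $\Lambda$ derived from $\mathbf{(H1)}$ and $\mathbf{(H2)}$ (Lemma~\ref{finallemma}). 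The analytic-continuation idea you gesture at does appear, but only locally and in a very controlled way: the solutions of the eliminable parts of the homological equation are trigonometric polynomials which are analytically continued to manage the Liouville-resonant growth (Lemma~\ref{abstractderivatives}), not the potential $V$ itself. Your proposal skips renormalization entirely, and without it the problem never becomes perturbative, so there is no place where any KAM scheme — analytic or ultra-differentiable — could be legitimately applied.
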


\begin{remark}
Theorem \ref{mainresult} proved the  $C^{\infty}$ rigidity  for a large class of $C^{\infty}$ quasiperiodic cocycles.  The {\it almost rigidity}  in $C^{\infty}$ topology was already proved by Fayad-Krikorian \cite{Krikorian09}. More precisely, they proved the cocycle  either  has positive Lyapunov exponent, or the cocycle is  $C^{\infty}$ almost rotations reducible,  which means the cocycle  can be approximated by rotations reducible  cocycles in the $C^{\infty}$ topology.
\end{remark}

We remark that  the
assumptions $\mathbf{(H1)}$ and $\mathbf{(H2)}$ are not  restrictive.
It is obvious that both analytic and Gevrey class functions satisfy  $\mathbf{(H1)}$ and $\mathbf{(H2)}$. Indeed,
the log-convexity condition $\mathbf{(H1)}$  is a very classical assumption in the literature, which guarantees  the space of M-ultra-differentiable functions form a Banach algebra.
The sub-exponential condition $\mathbf{(H2)}$ was first introduced by Bounemoura-Fejoz \cite{Bounemoura20} to guarantee the ultra-differentiable functions have an analogue of the Cauchy estimates for analytic functions, which is one of the main ingredients in KAM theory. We remark that the commonly used condition in the   literature is called  \textit{moderate growth condition:}
$$ \sup_{s,\ell\in\N} \left( \frac{M_{s+\ell}}{M_s M_{\ell}}\right)^{\frac{1}{s+\ell}}<\infty,$$
which is  stronger than $\mathbf{(H2)},$ see \cite{Bounemoura20} for details.

Attached to the sequence $(M_s)_{s\in\N}$, one can define $\Lambda: [0,\ \infty)\rightarrow[0,\ \infty)$ by
\begin{equation*}
\Lambda(y):=\ln \big(\sup_{s\in \mathbb{N}}y^{s}M_{s}^{-1}\big)=\sup_{s\in \mathbb{N}}
(s \ln y-\ln M_{s}),
\end{equation*}
which in fact describes the decay rate of the Fourier coefficients for periodic functions. For  $C^\infty$ smooth periodic functions, the growth of   $\Lambda(y)$ is faster than $\ln (y^{s})$  for any $s\in\N$ as $y$ goes to infinity.  Consequently,  $C^\infty$ means
\begin{equation*}
\lim_{y\rightarrow\infty}\Lambda(y)/\ln (y)=\infty.
\end{equation*}
On the other hand, one can easily check that $M_{s}=\exp\{s^{\delta(\delta-1)^{-1}}\}$ satisfies  $\mathbf{(H1)}$ and $\mathbf{(H2)}$  if and only if $\delta>2.$
 Attached to this $M_s$, $\Lambda(y)=(\ln y)^{\delta}. $ Notice that $\Lambda(y)=y^\nu, 0<\nu<1$ for Gevrey functions. Thus the space of M-ultra-differentiable functions with $\mathbf{(H1)}$ and $\mathbf{(H2)}$ is much bigger than the space of Gevrey functions, and  quite close to the whole space of  $C^\infty$ functions.
However,  those $C^\infty$ functions with $\Lambda(y)\le (\ln y)^2$ are not included. We don't know it is essential or  due to the  shortage of our method.

The proof of Theorem~\ref{mainresult} is based on  renormalization technique and  local KAM result.
For  M-ultra-differentiable functions, to describe the smallness of perturbation,  we define the $\|\cdot\|_{M,r}$-norm by
\begin{equation*}
\begin{aligned}
\|f\|_{M,r}=c\sup_{s\in\mathbb{N}}\big((1+s)^{2}r^{s}\|D_{\theta}^{s}f(\theta)\|_{C^{0}}M_{s}^{-1}\big)<\infty,\
c=4\pi^{2}/3,
\end{aligned}
\end{equation*}
and denote by $U^{M}_{r}(\mathbb{T}, *)$  the
set of all these $*$-valued functions ($*$ will usually denote $\R$, $sl(2,\R)$
$SL(2,\R)$).

Then our precise KAM-type result is the following:

\begin{theorem}\label{mainresults}
Let $r>0$, $\alpha\in \R \setminus\mathbb{Q}$ and $A\in U^{M}_{r}(\mathbb{T}, SL(2,\mathbb{R}))$
with $M$ satisfying $\mathbf{(H1)}$ and $\mathbf{(H2)}.$ Then for every $\tau>1$ and $\gamma>0,$
there exists $\varepsilon_{*}=\varepsilon_{*}(\gamma,\tau,r,M)>0,$ such that if
$\|A-R\|_{M,r}\leq \varepsilon_{*}$ for some $R\in SO(2,\R),$
and  $\rho(\alpha,A)=:\rho_{f}\in DC_{\alpha}(\gamma,\tau),\ i.e.,$
 \begin{equation*}
\|k\alpha\pm2\rho_{f}\|_{\mathbb{Z}}\geq\gamma\langle k\rangle^{-\tau},\ \forall k\in\mathbb{Z},
\langle k\rangle=\max\{1, |k|\},
\end{equation*}
then  $(\alpha,A)$ is $C^{\infty}$ rotations reducible.
\end{theorem}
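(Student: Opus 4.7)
My plan is to prove Theorem~2 by a quadratic KAM iteration carried out inside the ultra-differentiable scale $\{U^M_{r}\}_{r>0}$. Starting from $A(\theta) = R\, e^{f(\theta)}$ with $f:\T \to sl(2,\R)$ small in $\|\cdot\|_{M,r}$, I decompose $f$ under the adjoint action $\operatorname{Ad}(R)$ on $sl(2,\C)$ into a ``diagonal'' part $f^{\operatorname{diag}}$ (eigenvalue $1$, i.e.\ the $so(2,\R)$-component commuting with $R$) and an ``off-diagonal'' part $f^{\pm}$ (eigenvalues $e^{\pm 2i\rho}$, with $\rho$ the angle of $R$). Small divisors controlled by the Diophantine condition on the fibered rotation number live only in the off-diagonal direction, so the strategy is to kill $f^{\pm}$ iteratively and let the $so(2,\R)$-part accumulate into a rotation-valued limit $C(\theta)$, giving exactly $C^\infty$ rotations reducibility.

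At the $n$-th step, assume $A_n(\theta) = R_n\, e^{f_n(\theta)}$ with $\|f_n\|_{M,r_n}\leq \varepsilon_n$ and with $\rho_n$ the angle of $R_n$ satisfying $|\rho_n-\rho_f|\lesssim \varepsilon_n$ by invariance of the fibered rotation number. I truncate $f_n$ to Fourier modes $|k|\leq N_n$, the tail being super-polynomially small in $N_n$ via the decay of Fourier coefficients for $M$-ultra-differentiable functions (quantified by $\Lambda$). In the diagonalising basis of $\operatorname{Ad}(R_n)$ the cohomological equation for the off-diagonal part,
\[
 h^{\pm}(\theta+\alpha) - \operatorname{Ad}(R_n)\, h^{\pm}(\theta) = (f_n^{\pm})^{\leq N_n}(\theta),
\]
decouples into scalar equations with small divisors $e^{2\pi i k\alpha}e^{\pm 2 i\rho_n}-1$, which the hypothesis $\rho_f \in DC_\alpha(\gamma,\tau)$ combined with $|\rho_n-\rho_f|\ll \gamma N_n^{-\tau-1}$ bounds below by $\tfrac12\gamma\langle k\rangle^{-\tau}$ for $|k|\leq N_n$. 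Setting $B_{n+1}=e^{h^{\pm}}$, computing $A_{n+1} = B_{n+1}(\cdot+\alpha)\, A_n\, B_{n+1}^{-1} = R_{n+1}\, e^{f_{n+1}}$ and absorbing the average of the diagonal piece into $R_{n+1}$ yields a new $f_{n+1}$ which is quadratic in $f_n$ modulo the truncation tail.

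Quantitative control is where $\mathbf{(H1)}$ and $\mathbf{(H2)}$ enter. Log-convexity endows $U^M_r$ with a Banach-algebra structure and the product/composition/exponential estimates needed to handle the Baker–Campbell–Hausdorff expansion; the sub-exponential hypothesis $\mathbf{(H2)}$ gives the Bounemoura–Fejoz ultra-differentiable Cauchy estimate, controlling the cost of shrinking the width from $r_n$ to $r_{n+1}$ in terms of $\Lambda$. These assemble into an iteration inequality of the shape
\[
 \|f_{n+1}\|_{M,r_{n+1}} \;\leq\; C\gamma^{-2} N_n^{2\tau}\|f_n\|_{M,r_n}^2 \;+\;(\text{truncation tail}),
\]
and one picks $(\varepsilon_n, r_n, N_n)$ so that $\varepsilon_n\to 0$ super-exponentially while $r_n$ decreases to some $r_\infty > 0$.

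Taking $n\to\infty$, the products $B^{(n)}:=B_n\cdots B_1$ converge in $U^M_{r_\infty}$, hence in $C^\infty$, to $B\in C^\infty(\T, PSL(2,\R))$; the accumulated diagonal pieces produce an $SO(2,\R)$-valued limit $C(\theta)$, yielding $B(\cdot+\alpha)\, A(\cdot)\, B(\cdot)^{-1} = C(\cdot)$. I expect the main obstacle to be the ultra-differentiable book-keeping in the Cauchy step: unlike the analytic case, where a fixed fraction of lost width tames the whole inversion, here the trade-off between the truncation error, the small-divisor loss $\gamma^{-1}N_n^\tau$, and the loss of $\|\cdot\|_{M,r}$-norm when exponentiating is governed by the growth of $(M_s)$ and of $\Lambda$, and only $\mathbf{(H2)}$ makes these three factors fit together with quadratic convergence in a class of $(M_s)$ essentially as broad as all $C^\infty$.
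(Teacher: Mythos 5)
Your proposal lays out a classical KAM scheme — split $f_n$ into diagonal and off‑diagonal parts with respect to $\operatorname{Ad}(R_n)$, kill the off‑diagonal part with Diophantine divisors $e^{2\pi i k\alpha}e^{\pm 2i\rho_n}-1$, lose only a polynomial factor $\gamma^{-1}N_n^{\tau}$ per step, and keep a uniformly positive width $r_\infty>0$ so that $B^{(n)}$ converges in $U^M_{r_\infty}$. This works for Diophantine $\alpha$, but Theorem~\ref{mainresults} is a \emph{semi‑local} statement for \emph{arbitrary} irrational $\alpha$, and that is precisely where your scheme breaks. You never address the non‑constant part of $f_n^{\mathrm{diag}}$: if you try to conjugate it away you must solve $v(\theta+\alpha)-v(\theta)=f_n^{\mathrm{diag}}-[f_n^{\mathrm{diag}}]$ whose divisors $e^{2\pi i k\alpha}-1$ are \emph{not} controlled by $\rho_f\in DC_\alpha(\gamma,\tau)$ — for Liouvillean $\alpha$ they can be as small as $1/\overline{Q}_{n+1}$, hence not polynomial in any truncation level $N_n$. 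If instead you leave it alone, it accumulates into a $\theta$‑dependent term $\widetilde g(\theta)$ of size $O(\varepsilon_0)$ (not $\varepsilon_n$), and then the off‑diagonal homological equation you wrote, with a constant $R_n$ in the adjoint, is no longer the correct one: it becomes the full non‑diagonal equation $e^{2i(2\pi\rho_f+\widetilde g(\cdot))}f(\cdot+\alpha)-f+h=0$ (equation~\eqref{h} in the paper), which cannot be solved by comparing Fourier modes and needs the diagonally‑dominant/homotopy argument of Lemma~\ref{caiyouzhou19}. As a byproduct your claimed quadratic iteration inequality cannot hold — the normal form carries an $O(\varepsilon_0)$‑size tail throughout.

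Your claim that $r_n\downarrow r_\infty>0$ is the second essential gap, and it is incompatible with arbitrary $\alpha$. The paper's scheme solves the truncated diagonal cohomological equation but then controls $\|e^{iv}\|_{M,r}$ only by shrinking the width drastically (Lemma~\ref{abstractderivatives}, with $\overline{r}_n\sim \overline{Q}_n^{-2}r_0\to 0$), a trick inherited from the Liouvillean analytic case; the trade‑off is that the widths $r_n\to 0$ and convergence holds only in $C^\infty$, never in a fixed ultra‑differentiable class $U^M_{r_\infty}$. The paper states this explicitly in Section~\ref{Sketchoftheproof}. Finally, the paper's iteration is not quadratic: it proceeds with the rate $\varepsilon_{n+1}=\varepsilon_n\overline{Q}_{n+1}^{-\Gamma^{1/2}(\overline{Q}_{n+1}^{1/3})}$, and the one‑step normal form is $R_{\rho_f+(2\pi)^{-1}g_n}e^{F_n}$ with a large resonant $g_n$ carried along and (crucially) inverted at the end of each step (Lemma~\ref{newlemma}) so that the composite conjugation stays near the identity. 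These three ingredients — carrying the resonant rotation in the normal form, solving the perturbed homological equation by the diagonally‑dominant method, and trading width for control of $\|e^{iv}\|$ — are exactly what make the theorem hold for all irrational $\alpha$ with $\varepsilon_*$ independent of $\alpha$, and none of them appear in your proposal.
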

We point out that, Theorem \ref{mainresults} is a
semi-local result in the terminology of \cite{FayadK18}, i.e., the smallness of the perturbation $\varepsilon_{*}$  does not depend on the frequency $\alpha$. One should not expect that $\varepsilon_{*}$ is independent of $\rho_{f}$ (in terms of $\gamma,\tau$) as this is not true in the $C^{\infty}$ topology (or even Gevrey class) \cite{AvilaK16}. To this end,  we mention another open problem of Fayad-Krikorian \cite{FayadK18}: Is the semi-local version of the almost reducibility conjecture true for cocycles in quasi-analytic classes? In the analytic topology, it has been established in \cite{Hou12,YouZ13}.

The technical reason why we introduce  $\mathbf{(H1)}$ and $\mathbf{(H2)}$ is the following: The proof of  Theorem \ref{mainresults} is based on a non-standard KAM scheme developed in \cite{Hou12,KWYZ18}. The key idea is to prove  that
the homological equations
\begin{equation}\label{h}
\me^{2\mi(2\pi \rho_f+\widetilde{g}(\cdot))}f(\cdot+\alpha)-f+h=0,
\end{equation}
has a smooth approximating solution,  consult section~\ref{Sketchoftheproof} for more discussions. Here $\widetilde{g}(\cdot)$ comes from the perturbation, in order to ensure that \eqref{h} has a smooth approximating solution, we do need some kind of control for all derivatives $\|D^{s} \widetilde{g}(\cdot)\|_{C^{0}}, s\in\N$ which is guaranteed by $\mathbf{(H2)}$.

Next we give a short review of local reducibility results. The pioneering result of local reducibility was due to Dinaburg-Sinai
\cite{Dinaburg75}, who proved that if $\alpha\in DC$, and $V$ is  analytically small, then  $(\alpha, S_{E}^{V})$ is reducible for majority of $E$. Eliasson \cite{Eliasson92} further proved  for Lebesgue almost surely $E$, $(\alpha, S_{E}^{V})$ is reducible. Note these two results are perturbative, i.e. the smallness of $V$ depends on Diophantine constants of $\alpha$. For reducibility results in other topology, one can consult \cite{caiyouzhou19,Claire13,Bounemoura21} and the references therein.

If $\alpha$ is Liouvillean, based on ``algebraic conjugacy trick"  developed in \cite{Krikorian09}, Avila-Fayad-Krikorian \cite{AvilaF11} proved that in the local regime,   $(\alpha, S_{E}^{V})$ is reducible for majority of $E$,  thus gives a generalization of Dinaburg-Sinai's Theorem \cite{Dinaburg75}  to arbitrary one-frequency.  The result was also proved for analytic quasiperiodic linear systems by Hou-You in \cite{Hou12}.  Later, Zhou-Wang  \cite{ZhouW12} generalized $SL(2, \R)$ cocycles result \cite{AvilaF11}
to $GL(d, \R)$ cocycles  by different method. Theorem~\ref{mainresult} and Theorem~\ref{mainresults} can be seen as a  generalization of \cite{AvilaF11} from analytic functions to  ultra-differentiable functions.

\subsection{The spectral applications}

We point out that  global rigidity results in the analytic topology \cite{Krikorian06,AvilaF11} have many important applications in the spectral theory of quasiperiodic Schr\"odigner operators. To name a few, it was used to verify
the Schr\"odinger Conjecture \cite{MMG13} in the Liouvillean context \cite{AvilaF11}, it also  plays an essential role  in solving
``Last's intersection spectrum conjecture" \cite{JitomirskayaM12},
 Aubry-Andre-Jitomirskaya's conjecture \cite{AvilaYZ17}. With Theorem \ref{mainresults}, one can prove the first two conjectures also hold for quasiperiodic operators with M-ultra-differentiable potentials satisfying $\mathbf{(H1)}$ and $\mathbf{(H2)}$.

\subsubsection{Schr\"odinger conjecture} The  Schr\"odinger conjecture \cite{MMG13} says, for general discrete Schr\"odinger operators over uniquely ergodic base dynamics,  all eigenfunctions are bounded for almost every energy in the support of  the absolutely continuous  part of the spectral measure. This conjecture has recently been disproved by Avila \cite{AvilaA15}. However,  it is still interesting to know, to what extend the conjecture is true. For example,
the KAM scheme of \cite{AvilaF11} implies that the Schr\"odinger conjecture is true in the quasiperiodic case with analytic potentials, and this was the first time it was verified in a Liouvillean context. Indeed, as pointed by Jitomirskaya and Marx in \cite{Jitomirskayam17} (page 2363 of \cite{Jitomirskayam17}): addressing the Schr\"odinger conjecture for quasiperiodic operators with lower regularities of the potentials still remains an open problem.

With Theorem \ref{mainresult}, we can prove the Schr\"odinger conjecture  with M-ultra-differentiable quasiperiodic potentials.
\begin{corollary}
Let $\alpha\in\R\setminus\mathbb{Q}$, $ V:\mathbb{T} \rightarrow  \R$ be a M-ultra-differentiable function satisfying $\mathbf{(H1)}$ and $\mathbf{(H2)}$.   Then the Schr\"odinger conjecture is true.
\end{corollary}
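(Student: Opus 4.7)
The plan is to combine Theorem \ref{mainresult} with Kotani theory and the standard translation between rotations reducibility and boundedness of generalized eigenfunctions, following the analogous route taken in \cite{AvilaF11} for the analytic case.

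First, I would fix the Borel decomposition of the spectrum into its absolutely continuous, singular continuous and pure point parts, and reduce the statement of the Schr\"odinger conjecture to the following assertion: for Lebesgue almost every $E$ in the essential support $\Sigma_{ac}$ of the absolutely continuous spectrum, every generalized eigenfunction $u$ of $H_{V,\alpha,\theta}$ at energy $E$ is bounded. By Kotani's theorem, the Lyapunov exponent $L(\alpha, S_{E}^{V})$ vanishes for Lebesgue almost every $E \in \Sigma_{ac}$, so on a full-measure subset $\Sigma_{ac}^{\ast} \subset \Sigma_{ac}$ the cocycle $(\alpha, S_{E}^{V})$ has zero Lyapunov exponent.

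Next, I would invoke Theorem \ref{mainresult}: since $V$ is $M$-ultra-differentiable with $M$ satisfying $\mathbf{(H1)}$ and $\mathbf{(H2)}$, for Lebesgue almost every $E$ the cocycle $(\alpha, S_{E}^{V})$ is either $C^{\infty}$ rotations reducible or has positive Lyapunov exponent. Intersecting with $\Sigma_{ac}^{\ast}$, the positive Lyapunov exponent alternative is excluded, so for Lebesgue almost every $E \in \Sigma_{ac}$ there exist $B \in C^{\infty}(\T, PSL(2,\R))$ and $C \in C^{\infty}(\T, SO(2,\R))$ with
\begin{equation*}
B(\theta+\alpha)\, S_{E}^{V}(\theta)\, B(\theta)^{-1} = C(\theta).
\end{equation*}
Since $C$ is $SO(2,\R)$-valued and $B$, $B^{-1}$ are continuous on the compact torus, the iterates $S_{E}^{V}(\theta + (n-1)\alpha)\cdots S_{E}^{V}(\theta)$ are uniformly bounded in $n \in \Z$ and $\theta \in \T$.

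Finally, I would translate this boundedness of the cocycle transfer matrices into boundedness of eigenfunctions. For any formal solution $u$ of $H_{V,\alpha,\theta}u = Eu$, the vector $\bigl(u_{n+1},\, u_{n}\bigr)^{T}$ is obtained from $\bigl(u_{1},\, u_{0}\bigr)^{T}$ by applying the iterated cocycle, so the uniform bound on transfer matrices immediately yields $\sup_{n\in\Z} (|u_{n}|^{2}+|u_{n-1}|^{2}) \le C(E,\theta)(|u_{0}|^{2}+|u_{-1}|^{2})$. In particular every generalized eigenfunction at such $E$ is bounded, which is precisely the Schr\"odinger conjecture for $H_{V,\alpha,\theta}$. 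The only nontrivial ingredient is the passage from $L^{2}$-type information (Kotani) to the rigid $C^{\infty}$ rotations reducibility, but this is exactly what Theorem \ref{mainresult} provides, so no further obstacle arises.
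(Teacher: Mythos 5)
Your proof is correct and follows the same route the paper intends: Kotani theory to exclude positive Lyapunov exponent on a Lebesgue-full subset of $\Sigma_{ac}$, Theorem~\ref{mainresult} to upgrade to $C^{\infty}$ rotations reducibility there, and the elementary observation that rotations reducibility gives uniformly bounded transfer matrices and hence bounded generalized eigenfunctions. The paper states the corollary as an immediate consequence of Theorem~\ref{mainresult} (citing the analytic argument of \cite{AvilaF11}) without spelling out the steps, so your write-up simply makes explicit the argument the authors leave implicit.
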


\subsubsection{Last's intersection spectrum conjecture}
Denote
\begin{equation*}
\begin{aligned}
S_{-}(\beta)=\cap_{\theta\in\T}\Sigma_{ac}(\beta,\theta),
\end{aligned}
\end{equation*}
where $\Sigma_{ac}(\beta,\theta)$ is the absolutely continuous spectrum of (quasi)periodic Schr\"odinger operator $H_{V,\beta,\theta}$ defined by \eqref{20200824operator}.
For any $\alpha\in \R\setminus\Q,$ it can be approximated by a sequence of rational numbers $(p_{n}/q_{n}).$
It is well known that the rational frequency approximation is indispensable for numeric analysis, thus the existence of the
limits $S_{-}(p_{n}/q_{n})$ as $p_{n}/q_{n}\rightarrow \alpha$ is crucial.
A conjecture of Y. Last says,
up to a set of zero Lebesgue measure, the absolutely continuous spectrum can be
obtained asymptotically from $S_{-}(p_{n}/q_{n})$, the spectrum of  periodic operators associated
with the continued fraction expansion of $\alpha.$

 Jitomirskaya-Marx \cite{JitomirskayaM12} settled the ``Last's intersection spectrum conjecture" for analytic quasiperiodic Schr\"odinger operators. They also pointed out, in \cite{JitomirskayaM12}, that
the analyticity of the potential $V$ is essential for the proof of their result,
and, whether or not the
analyticity can be relaxed without reducing the range of frequencies
for which the statement holds is an interesting open problem (page 5 of \cite{JitomirskayaM12}). In this work,
we will give a positive answer to this problem
for $\nu$-Gevrey potentials with $1/2<\nu\leq 1$. In the following, we say two sets $A\doteq B$ if $ \chi_{A}=\chi_{B}\
$ Lebesgue almost everywhere. Moreover, we say $\lim_{n\rightarrow\infty}B_{n}\doteq B$
 if
$
 \lim_{n\rightarrow\infty} \chi_{B_{n}}=\chi_{B}\
$ Lebesgue almost everywhere.

\begin{theorem}\label{lastintersection}
Let $\alpha\in\R\setminus\mathbb{Q}$ and $V:\mathbb{T} \rightarrow  \R$ be a $\nu$-Gevrey function with $1/2< \nu\leq 1$, there is a sequence $p_{n}/q_{n}\rightarrow \alpha$ such that
\begin{equation*}
\begin{aligned}
\lim_{n\rightarrow\infty}S_{-}(p_{n}/q_{n})\doteq
S_{-}(\alpha)=\Sigma_{ac}(\alpha).
\end{aligned}
\end{equation*}
\end{theorem}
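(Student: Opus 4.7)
The strategy is to follow the blueprint of Jitomirskaya-Marx \cite{JitomirskayaM12}, using Theorem~\ref{mainresult} in place of the analytic global rigidity result of Avila-Fayad-Krikorian \cite{AvilaF11}. My first step is to establish the identity $S_-(\alpha)\doteq\Sigma_{ac}(\alpha)$. By Kotani theory applied to the smooth quasiperiodic cocycle $(\alpha, S_E^V)$, the absolutely continuous spectrum $\Sigma_{ac}(\alpha, \theta)$ coincides (up to a Lebesgue null set and independently of $\theta$) with the essential closure of $Z=\{E\in\R:L(\alpha,E)=0\}$. By Theorem~\ref{mainresult}, for Lebesgue a.e.\ $E\in Z$ the cocycle $(\alpha, S_E^V)$ is $C^{\infty}$ rotations reducible, i.e.\ conjugated via some $B\in C^{\infty}(\T, PSL(2,\R))$ to a cocycle valued in $SO(2,\R)$. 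Such cocycles are uniformly bounded over all base points $\theta\in\T$, so by Jitomirskaya-Last subordinacy theory every such $E$ lies in $\Sigma_{ac}(\alpha, \theta)$ for \emph{every} $\theta$, and hence in $S_-(\alpha)$. Combined with the trivial inclusion $S_-(\alpha)\subset\Sigma_{ac}(\alpha)$, this yields $S_-(\alpha)\doteq\Sigma_{ac}(\alpha)$.

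For the approximation claim $\lim_n S_-(p_n/q_n)\doteq S_-(\alpha)$ for some subsequence of continued-fraction approximants, I would argue the two $\doteq$-inclusions separately. For the outer inclusion $\limsup_n S_-(p_n/q_n)\subset\Sigma_{ac}(\alpha)$, one exploits that if $L(\alpha, E)>0$, then the $q_n$-fold transfer matrices of $(p_n/q_n, S_E^V)$ must grow exponentially in $q_n$, forcing $E\notin S_-(p_n/q_n)$; this step requires joint continuity of the Lyapunov exponent in $(\alpha, E)$ at irrational $\alpha$ for $\nu$-Gevrey potentials, which for $1/2<\nu\le 1$ is available through large-deviation techniques extending Bourgain-Jitomirskaya. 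For the inner inclusion, I would use the $C^{\infty}$ conjugation $B$ supplied by Theorem~\ref{mainresult}: applying $B$ to $(p_n/q_n, S_E^V)$ produces an $O(|\alpha-p_n/q_n|\cdot\|DB\|_{C^0})$-perturbation of the rotation cocycle $C(\cdot)\in SO(2,\R)$, and iterating $q_n$ times the product remains elliptic provided the rotation number $\rho_f$ is non-resonant with $p_n/q_n$.

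The main obstacle is controlling the interplay between the norm of the conjugation $B$ (which degenerates as the Diophantine constant $\gamma$ of $2\rho_f$ degenerates) and the depth $q_n$ of the periodic iteration. Unwinding the non-standard KAM scheme of Theorem~\ref{mainresults} yields quantitative Gevrey-type bounds on $B$ of the form $\|D^s B\|_{C^0}\le C^s (s!)^{1/\nu}\gamma^{-\tau s}$, and the periodic approximation argument closes precisely when the subexponential loss inherited from the Gevrey tail beats the polynomial-in-$q_n$ growth picked up along the iteration. This is exactly where the threshold $\nu>1/2$ enters, matching the range in which continuity of the Lyapunov exponent is currently known. A Borel-Cantelli-type selection of a suitable full-measure set of energies in $S_-(\alpha)$ enjoying good non-resonance with respect to the chosen approximants $p_n/q_n$ then packages the two inclusions into the claimed equality.
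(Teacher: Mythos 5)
Your high-level plan follows the Jitomirskaya--Marx blueprint, which is indeed the approach the paper takes: establish the two $\doteq$-inclusions separately, with the inner inclusion flowing through global rigidity and the outer inclusion through Lyapunov-exponent continuity. However, there are several genuine gaps.

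The most serious is the outer inclusion $\limsup_n S_-(p_n/q_n)\subset\Sigma_{ac}(\alpha)$. You invoke ``joint continuity of the Lyapunov exponent in $(\alpha,E)$ at irrational $\alpha$ for $\nu$-Gevrey potentials.'' But this continuity is \emph{not} available for general irrational $\alpha$ in the Gevrey class; in fact the paper proves it only for Diophantine $\alpha$ (Theorem~\ref{continuity}) and explicitly records that the Liouvillean case is open. The paper therefore splits the argument into Case~I ($\alpha$ Diophantine), where LE continuity applies, and Case~II ($\alpha\notin DC(v,10)$), which is handled by an entirely different mechanism: Last--Simon's theorem $\limsup_m\sigma_m(\theta)\subset\Sigma_{ac}(\alpha)$ for the ``cut-and-repeat'' periodic approximants, combined with a Hausdorff-distance comparison (via a matrix perturbation lemma applied to $A^\theta_{q_n}(\xi)$ and $\widetilde{A}^\theta_{q_n}(\xi)$) between $\sigma_m(\theta)$ and $\sigma(p_n/q_n,\theta)$, exploiting the extreme Liouville approximation to make $q_n^3|\alpha-p_n/q_n|$ small along a subsequence. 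Without this case distinction your proof of the outer inclusion does not close for Liouvillean $\alpha$.

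Second, you attribute the threshold $\nu>1/2$ to both halves of the argument, writing that ``the periodic approximation argument closes precisely when the subexponential loss inherited from the Gevrey tail beats the polynomial-in-$q_n$ growth,'' and tie this to the inner inclusion. This is misattributed: the inner inclusion $\Sigma_{ac}(\alpha)\subset\liminf_n S_-(p_n/q_n)$ (Theorem~\ref{addproposition}) holds for every $M$-ultra-differentiable potential satisfying $\mathbf{(H1)},\mathbf{(H2)}$, with no Gevrey restriction at all. The constraint $1/2<\nu$ enters only in the outer inclusion, through the large-deviation estimates behind Theorem~\ref{continuity}.

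Third, for the inner inclusion, it is not enough to use the $C^\infty$ conjugation from Theorem~\ref{mainresult}: a bare $C^\infty$ bound on $B$ only yields $\|t_{p_n/q_n}(E,\cdot)-a_{q_n,0}(E)\|_{C^0}\lesssim q_n^{-\infty}$, which is too weak to conclude $|t_{p_n/q_n}(E,\theta)|\le 2$ via the IDS comparison. What the paper actually uses is quantitative almost reducibility in the ultra-differentiable topology (Lemma~\ref{almostreducibility}), obtained by combining renormalization, the local KAM scheme (Corollary~\ref{corollarylocalalmost}), and inverse renormalization. This produces conjugations $B_{j,\ell}$ in shrinking spaces $U_{\widetilde r_\ell}$ with explicit norm bounds, and these feed into the generalized Chambers' formula (Proposition~\ref{differenceestimate}) $\|t_{p_n/q_n}(E,\cdot)-a_{q_n,0}(E)\|_{C^0}\le 4\exp\{-\Lambda(cq_n)\}$. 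Your heuristic bound $\|D^s B\|_{C^0}\le C^s(s!)^{1/\nu}\gamma^{-\tau s}$ does not correspond to what the KAM scheme actually delivers (the widths $r_n$ collapse much faster than any fixed Gevrey radius), and without the $\Lambda$-type decay the argument for the inner inclusion does not close. Finally, the combination with the Lipschitz continuity of the IDS (used to localize $q_nN(p_n/q_n,E)$ away from the integers) is an ingredient you do not mention but which is essential to conclude $E\in S_-(p_n/q_n)$ from the Chambers estimate.
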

%
%{\color{red}\begin{remark}
%The case $\nu=1$ was proved by Jitomirskaya-Marx \cite{JitomirskayaM12}, our result works for $1/2<\nu\leq 1$, thus can be viewed as a generalization of their result to more general potentials.
%\end{remark}}
\begin{remark}
As we will see, in fact we will prove
\begin{equation}\label{l1es}
\Sigma_{ac}(\alpha)\subset\liminf_{n\rightarrow\infty}S_{-}(p_{n}/q_{n})
\end{equation}
for all  M-ultra-differentiable potential satisfying  $\mathbf{(H1)}$ and $\mathbf{(H2)}$ (Theorem \ref{addproposition}). Gevrey property only plays a role in proving
\begin{equation}\label{l2es}\limsup_{n\rightarrow\infty}S_{-}(p_{n}/q_{n})\subset\Sigma_{ac}(\alpha)\end{equation}
for Diophantine frequency  (Theorem \ref{addpropositions}).
\end{remark}

We briefly explain why  analyticity is crucial for the proof of  \cite{JitomirskayaM12}.
On the one hand, the key  of \eqref{l1es}
is  to prove  that $E\in\Sigma_{ac}(\alpha)$ implies exponentially
small variation (in $q_{n}$) of the approximating discriminants (``generalized Chambers' formula"). For the analytic potential, Jitomirskaya-Marx \cite{JitomirskayaM12} got this estimate as a corollary of  Avila's quantization of acceleration \cite{Avila15}, which can be defined only for analytic cocycles.
On the other hand, the proof of \eqref{l2es}
 was first obtained by Shamis \cite{Shamis11} as a corollary of the  continuity of Lyapunov exponent: i.e. the Lyapunov exponent  $L(\beta+\cdot,\cdot)$: $\T\times C^{\omega}(\T, SL(2,\mathbb{C}))$ is jointly continuous for any irrational $\beta$ \cite{Bourgainj02,JitomirskayaKS09,JitomirskayaCAMAR12}.  However,  the Lyapunov  exponent  $L(\beta+\cdot,\cdot)$: $\T\times C^{\infty}(\T, SL(2,\mathbb{C}))$ is not continuous \cite{Wangy13}.

 In fact, it was also pointed out by Jitomirskaya and Marx in \cite{JitomirskayaM12} that analyticity should not be essential for their results, while one needs new methods in the non-analytic case. To generalize the result in \cite{JitomirskayaM12} to ultra-differentiable potential, we have to
overcome the difficulty caused by the non-analyticity of potential. One key issue is to prove the ``generalized Chambers' formula" in
ultra-differentiable case. Instead of using Avila's quantization of acceleration \cite{Avila15},  we will use perturbative argument
 which avoids the analyticity, showing that if the cocycle is smoothly rotations reducible, then $q$-step transfer matrices grows sub-exponentially in $q$ \footnote{As pointed out in footnote 5 of   \cite{JitomirskayaM12}, this ideas was first pointed out by the fourth author after first preprint of  \cite{JitomirskayaM12}.}.
%However,  Theorem  \ref{mainresult} only ensures the cocycle is $C^{\infty}$-rotation reducible,  $q$-step transfer matrices may grow very fast, since Birkhoff sum $$\sum_{i=1}^q f(\theta+n\alpha)- q \int f(\theta)d\theta$$
%might doesn't decay if we just assume $f$ is $C^{\infty}$ and $\alpha$ is very Liouvillean.
 To do this, we will use inverse renormalization and quantitative KAM result, to show if the cocycle is almost  reducible in ultra-differentiable topology, then we have a good control of the  growth of $q$-step transfer matrices. As for the proof of second inclusion, the key is to prove that Lyapunov exponent can be still continuous with respect to the rational approximation of the frequency for $\nu$-Gevrey potential $V$ with $1/2< \nu<1,$ if $\alpha$ is Diophantine, which is a generalization of the results in \cite{Bourgainj02}. See Theorem~\ref{continuity}  for details. Recently, Ge-Wang-You-Zhao \cite{GWYZ} further constructed counter-examples for $\nu$-Gevrey potential $V$ with $0< \nu<1/2$, which shows  Theorem~\ref{continuity} is optimal.

Finally, we  review some related results. For general ergodic discrete Schr\"odinger operators, the relation between the absolutely continuous spectrum and the spectrum of certain periodic approximates has been studied by Last in \cite{Last92,LastY93}, more precisely, \cite{LastY93} essentially proved that for $V\in C^1(\T)$ and a.e. $\alpha$, $\limsup_{n\rightarrow\infty}S_-(p_n/q_n)\subset \Sigma_{ac}(\alpha)$ up to sets of zero Lebesgue measure. The conjecture is known for the almost Mathieu operator where $V(\theta)=2\lambda\cos\theta$ (\cite{AvronS90,Last94} for a.e. $\alpha$, $\lambda$ and \cite{Krikorian06,Jitomirskayak02,LastY93,JitomirskayaM12} extending to all $\alpha$). More recently, the conjecture was settled for a.e. $\alpha$ and sufficiently smooth potential by Zhao \cite{Zhaoxin19}.

\subsection{The structure of this paper}
The paper is arranged as follows. In Section~\ref{Definitionsandpreliminaries} we give some definitions and preliminaries.
Before  giving the proof of Theorem \ref{mainresults}, we first derive condition $\mathbf{(A)}$ on Fourier coefficients from assumptions
$\mathbf{(H1)}$ and $\mathbf{(H2)}$ on Taylor coefficients (Lemma \ref{finallemma}) in Section~\ref{Ultradifferentiable}. Then we prove Theorem  \ref{mainresults}
in Section~\ref{Inthissection} and prove Theorem \ref{mainresult} in Section~\ref{Theoremmainresults}. The proof of  Theorem \ref{lastintersection}
 is given in Section~\ref{Lastsintersection}, which was based on Theorem \ref{addproposition} and Theorem \ref{addpropositions}.
 In Section~\ref{chambers} we give the proof of Generalized Chambers' formula (Proposition \ref{differenceestimate}),  and in  Section~\ref{gerc} we give the
 proof  of the joint continuity of Lyapunov exponent (Theorem \ref{continuity}), these two results are bases of the proof of Theorem \ref{addproposition} and Theorem \ref{addpropositions} respectively.

\section{Definitions and preliminaries}\label{Definitionsandpreliminaries}

\subsection{Quasiperiodic cocycles}\label{qpcocycle}

Given $A \in C^0(\T,SL(2,\R))$ and $\alpha\in \R\setminus\Q$, the iterates of $(\alpha,A)$ are of the form $(\alpha,A)^n=(n\alpha,  A_n)$, where
$$
A_n(\cdot):=
\left\{\begin{array}{l l}
A(\cdot+(n-1)\alpha) \cdots A(\cdot+\alpha) A(\cdot),  & n\geq 0\\[1mm]
A^{-1}(\cdot+n\alpha) A^{-1}(\cdot+(n+1)\alpha) \cdots A^{-1}(\cdot-\alpha), & n <0
\end{array}\right.    .
$$
Define the finite Lyapunov exponent as
$$ L_n(\alpha,A)= \frac{1}{n}\int_{\mathbb{T}}\ln\|A_{n}(\theta)\|d\theta,$$
then by Kingman's subadditive ergodic theorem,
the {\it Lyapunov exponent}  of $(\alpha, A)$ is defined as
\begin{equation*}
\begin{split}
L(\alpha,A)=\lim_{n\rightarrow\infty}L_n(\alpha,A)= \inf_{n>0} L_n(\alpha,A)\geq0.
\end{split}
\end{equation*}

The cocycle $(\alpha, A)$ is called uniformly hyperbolic if there exists a continuous splitting
$E_{s}(\theta)\oplus E_{u}(\theta)=\mathbb{R}^{2},$ and $C>0, 0<\lambda<1,$ such that for every $n\geq1$ we have
\begin{equation*}
\begin{split}
\|A_{n}(\theta)w\|\leq C\lambda^{n}\|w\|,\ \forall w\in E_{s}(\theta),\\
\|A_{-n}(\theta)w\|\leq C\lambda^{n}\|w\|,\ \forall w\in E_{u}(\theta).
\end{split}
\end{equation*}

Assume now  $A\in C^0(\T, SL(2,\R))$ is homotopic to the
identity, then there exist $\psi:\T \times \T \to \R$ and $u:\T
\times\T \to \R^+$ such that
\begin{equation*}
\begin{split}
A(x) \cdot \left (\begin{matrix} \cos 2 \pi y \\
\sin 2 \pi y \end{matrix} \right )=u(x,y) \left (\begin{matrix} \cos 2 \pi (y+\psi(x,y))
\\ \sin 2 \pi (y+\psi(x,y)) \end{matrix} \right ).
\end{split}
\end{equation*}
The function $\psi$ is
called a {\it lift} of $A$.  Let $\mu$ be any probability measure on
$\T \times \T$ which is invariant by the continuous map $T:(x,y)
\mapsto (x+\alpha,y+\psi(x,y))$, projecting over Lebesgue measure on
the first coordinate (for instance, take $\mu$ as any accumulation
point of $\frac {1} {n} \sum_{k=0}^{n-1} T_*^k \nu$ where $\nu$ is
Lebesgue measure on $\T \times \T$). Then the number
\begin{equation*}
\begin{split}
\rho(\alpha,A)=\int \psi d\mu \mod \Z
\end{split}
\end{equation*}
does not depend on the choices of $\psi$ and $\mu$ and is called the {\it fibered rotation number} of $(\alpha,A)$, see \cite {Johnsonm82} and \cite {Herman83}.
It is immediate from the definition that
\begin{equation}\label{rotationnumberper}
|\rho(\alpha, A)-\rho|\leq\|A-R_{\rho}\|_{C^{0}}.
\end{equation}

%
%Usually, the rotation number varies after a conjugation via some
%$B\in C^{0}(\mathbb{T},\ PSL(2,\ \mathbb{R})),$ but the change depends only on the topological degree of $B.$
%We say that the topological degree of $B,$ which denoted by $deg(B),$ is $d\in\mathbb{Z}^{d},$ as long as $B$ is homotopic to
%%
%\begin{equation*}
%\begin{split}
%\left(
%\begin{array}{l}
%\cos\pi\langle d,\theta\rangle\ \ \ -\sin\pi\langle d,\theta\rangle
%\\
%\sin\pi\langle d,\theta\rangle\ \ \ \cos\pi\langle d,\theta\rangle
%\end{array}
%\right).
%\end{split}
%\end{equation*}
%%
%
%
%\begin{lemma}\label{rotationnumberpers}
%Let $A\in C^{0}(\mathbb{T},\ SL(2,\ \mathbb{R}))$ and it is homotopic to identity,
%$B\in C^{0}(\mathbb{T},\ PSL(2,\ \mathbb{R})).$ If $\widetilde{A}\in C^{0}(\mathbb{T},\ PSL(2,\ \mathbb{R}))$
%and $(\alpha, A)$ is conjugate to $(\alpha, \widetilde{A})$ via $B,$ then $\widetilde{A}$ is also homotopic to identity
%and
%%
%\begin{equation*}
%\begin{split}
%\rho(\alpha, \widetilde{A})=\rho(\alpha, A)+2^{-1}\langle deg(B), \alpha\rangle\ \ (mod \ \ \mathbb{Z}).
%\end{split}
%\end{equation*}
%%
%In particular, $\rho(\alpha, A)=\rho(\alpha, \widetilde{A})$ as $B$ is homotopic to identity.
%\end{lemma}

\subsection{Continued fraction expansion}

Let $\alpha \in (0,1)$ be irrational. Define $ a_0=0,
\alpha_{0}=\alpha,$ and inductively for $k\geq 1$,
$$a_k=[\alpha_{k-1}^{-1}],\qquad \alpha_k=\alpha_{k-1}^{-1}-a_k=G(\alpha_{k-1})=\{\alpha_{k-1}^{-1}\},$$
where $G(\cdot)$ is the Gauss map.
Let $p_0=0,  p_1=1,  q_0=1,  q_1=a_1,$ then we define inductively
$p_k=a_kp_{k-1}+p_{k-2}$, $q_k=a_kq_{k-1}+q_{k-2}.$
The sequence $(q_n)$  is the  denominators of best rational
approximations of $\alpha$ since we have
\begin{equation*}
\begin{split}
\|k\alpha\|_{\mathbb{Z}}\geq\|q_{n-1}\alpha\|_{\mathbb{Z}},\quad \forall\,\, 1\leq k<q_{n},
\end{split}
\end{equation*}
and
\begin{equation*}
\begin{split}
(q_{n}+q_{n+1})^{-1}<\|q_{n}\alpha\|_{\mathbb{Z}}\leq q_{n+1}^{-1}.
\end{split}
\end{equation*}

For sequence $(q_{n})$, we will fix a particular subsequence $(q_{n_{k}})$ of the denominators of the best rational
approximations for $\alpha,$ which for simplicity will be denoted by $(Q_{k})$. Denote the sequences
 $(q_{n_{k}+1})$ and $(p_{n_{k}})$ by $(\overline{Q}_{k})$ and $(P_{k}),$ respectively. Next, we introduce the
 concept of CD bridge which was introduced in \cite{AvilaF11}.
\begin{definition}[CD bridge,\cite{AvilaF11}]\label{CDbridge}
Let $0<\mathbb{A}\leq\mathbb{B}\leq\mathbb{C}$. We say that the pair of denominators
$(q_{m},q_{n})$ forms a $CD(\mathbb{A},\mathbb{B},\mathbb{C})$ bridge if
\begin{equation*}
\begin{split}
&\bullet\,\,  q_{i+1}\leq q_{i}^{\mathbb{A}},\quad i=m,\cdots,n-1,\\
&\bullet \,\, q_{m}^{\mathbb{C}}\geq q_{n}\geq q_{m}^{\mathbb{B}}.
\end{split}
\end{equation*}
\end{definition}
\begin{lemma}\label{bridgeestimate}\emph{\cite{AvilaF11}}
For any $\mathbb{A}\geq1$, there exists a subsequence $(Q_{k})$ of $(q_{n})$ such that $Q_{0}=1$
and for each $k\geq0,$ $Q_{k+1}\leq\overline{Q}_{k}^{\mathbb{A}^{4}}$, either
$\overline{Q}_{k}\geq Q_{k}^{\mathbb{A}}$, or the pairs $(\overline{Q}_{k-1},Q_{k})$ and
$(Q_{k},Q_{k+1})$ are both $CD(\mathbb{A},\mathbb{A},\mathbb{A}^{3})$ bridges.
\end{lemma}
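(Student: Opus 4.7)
The plan is to construct the subsequence $(Q_k)=(q_{n_k})$ recursively by a greedy algorithm that classifies each index $n\geq 0$ as a \emph{jump index} when $q_{n+1}\geq q_n^{\mathbb A}$ and as a \emph{slow index} otherwise. The first alternative of the dichotomy, $\overline Q_k\geq Q_k^{\mathbb A}$, will correspond to $n_k$ being a jump, while the second alternative arises when $n_k$ is slow and is therefore sandwiched between controlled bridges on both sides.

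Start with $n_0=0$. Given $n_k$, if $q_{n_k+1}\geq q_{n_k}^{\mathbb A}$ (jump case) set $n_{k+1}=n_k+1$; the first alternative is immediate and $Q_{k+1}\leq \overline Q_k^{\mathbb A^4}$ is trivial. If $n_k$ is slow, I take $n_{k+1}$ to be the least integer $n>n_k$ such that either $q_n\geq q_{n_k+1}^{\mathbb A}$ or $n-1$ is a jump index. When no internal jump interferes, the minimality of $n_{k+1}$ forces every $i\in[n_k,n_{k+1}-1]$ to be slow, so iterating $q_{i+1}\leq q_i^{\mathbb A}$ one step at a time yields
\[
q_{n_k}^{\mathbb A}\leq q_{n_k+1}^{\mathbb A}\leq q_{n_{k+1}}\leq q_{n_{k+1}-1}^{\mathbb A}<q_{n_k+1}^{\mathbb A^2}\leq q_{n_k}^{\mathbb A^3}.
\]
This simultaneously verifies that $(Q_k,Q_{k+1})$ and $(\overline Q_k,Q_{k+1})$ are $CD(\mathbb A,\mathbb A,\mathbb A^3)$ bridges, the latter being precisely the ``previous'' bridge needed at step $k+1$. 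The condition $Q_{k+1}\leq\overline Q_k^{\mathbb A^4}$ follows from $Q_{k+1}\leq Q_k^{\mathbb A^3}\leq\overline Q_k^{\mathbb A^3}$; and for $k=0$, $Q_0=1$ makes $\overline Q_0\geq Q_0^{\mathbb A}$ automatic, so the first alternative is free.

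The main obstacle I anticipate is handling the interface between slow stretches and jumps. If an internal jump $j$ appeared strictly inside $(n_k,n_{k+1})$, the one-step inequality $q_{j+1}\leq q_j^{\mathbb A}$ that drives the bridge would fail at $i=j$; by a minimality argument the only position consistent with the greedy search is $j=n_{k+1}-1$, in which case I redefine $n_{k+1}$ to be that jump index itself, so that at step $k+1$ the first alternative $\overline Q_{k+1}\geq Q_{k+1}^{\mathbb A}$ is automatic and no further bridge is required there. Checking that in all four combinations of (jump/slow) at $n_{k-1}$ and $n_k$ the ``previous'' bridge condition $(\overline Q_{k-1},Q_k)$ coincides with the bridge already delivered by the greedy step at index $k-1$ is a tedious but routine case analysis, after which the subsequence $(Q_k)$ so constructed satisfies all conclusions of the lemma.
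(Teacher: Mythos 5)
The paper cites this lemma from [AvilaF11] without reproducing the proof, so I can only evaluate your construction on its own terms. It has a genuine gap at the interface between jumps and slow stretches, and the ``tedious but routine'' case analysis you defer in your final paragraph actually contains a case that does not go through.

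\textbf{Degenerate previous bridge after a jump.} When $n_k$ is a jump you set $n_{k+1}=n_k+1$, so $Q_{k+1}=q_{n_k+1}=\overline{Q}_k$. If $n_{k+1}$ then turns out to be slow, the second alternative at index $k+1$ requires $(\overline{Q}_k,Q_{k+1})=(Q_{k+1},Q_{k+1})$ to be a $CD(\mathbb{A},\mathbb{A},\mathbb{A}^3)$ bridge, whose size condition $Q_{k+1}\geq Q_{k+1}^{\mathbb{A}}$ fails for $\mathbb{A}>1$ and $Q_{k+1}>1$ (the relevant range here is $\mathbb{A}>\tau+23$). So the rule ``jump $\Rightarrow n_{k+1}=n_k+1$'' is unsound unless $n_k+1$ is itself a jump; otherwise you must search further.

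\textbf{The redefinition does not fix the bridge at $k$.} In the slow case, when you discover a jump at $j=n_{k+1}-1$ and redefine $n_{k+1}:=j$, you restore the first alternative at $k+1$ but leave the second alternative at $k$ unverified: you still need $(Q_k,Q_{k+1})=(q_{n_k},q_j)$ to be a $CD(\mathbb{A},\mathbb{A},\mathbb{A}^3)$ bridge. By minimality of the original $n_{k+1}$ you have $q_j<q_{n_k+1}^{\mathbb{A}}$, and since $n_k$ is slow, $q_{n_k+1}<q_{n_k}^{\mathbb{A}}$; this gives $q_j<q_{n_k}^{\mathbb{A}^2}$ but nothing guarantees the lower size bound $q_j\geq q_{n_k}^{\mathbb{A}}$. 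For instance, if $n_k+1$ is itself a jump then $j=n_k+1$ and $q_j=q_{n_k+1}<q_{n_k}^{\mathbb{A}}$, so the bridge fails.

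Both failures have the same root: the greedy rule does not look far enough ahead. A slow $n_k$ can only be committed to the subsequence when the forward bridge $(Q_k,Q_{k+1})$ is guaranteed to be completable before the next jump, and a jump at $n_k$ should be followed by the least index $>n_k$ that is either itself a jump or has reached the growth threshold $\overline{Q}_k^{\mathbb{A}}$, not automatically by $n_k+1$. Carrying this out requires an auxiliary inductive invariant relating the position of the next jump to $\overline{Q}_k$, and it is precisely this extra bookkeeping that the slack exponent $\mathbb{A}^4$ in $Q_{k+1}\leq\overline{Q}_k^{\mathbb{A}^4}$ is there to absorb --- slack that your one-step bounds never use.
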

Set $\tau>1$ and $\mathbb{A}>\tau+23>24$, then for $\{\overline{Q}_{n}\}_{n\geq0},$ the selected subsequence in
 Lemma~\ref{bridgeestimate}, we have the following lemma.
\begin{lemma}\label{knandknrelation}
For  $\{\overline{Q}_{n}\}_{n\geq0},$ we have
\begin{equation*}
\begin{split}
\overline{Q}_{n+1}\geq\overline{Q}_{n}^{\mathbb{A}},\ \forall n\geq0.
\end{split}
\end{equation*}
\end{lemma}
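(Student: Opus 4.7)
The plan is to apply Lemma~\ref{bridgeestimate} at index $n+1$ and do a short case analysis, feeding the bound back through the obvious chain of inequalities coming from the fact that $\overline{Q}_k$ and $Q_k$ are themselves consecutive (or nearly consecutive) terms of $(q_n)$.

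First I would record two elementary index comparisons. Since $(n_k)$ is a strictly increasing sequence of natural numbers, $n_{n+1}\geq n_{n}+1$, and since $(q_i)$ is strictly increasing this gives
\[
Q_{n+1} \;=\; q_{n_{n+1}} \;\geq\; q_{n_{n}+1} \;=\; \overline{Q}_{n},
\qquad
\overline{Q}_{n+1} \;=\; q_{n_{n+1}+1} \;>\; q_{n_{n+1}} \;=\; Q_{n+1}.
\]
These will be the only ``free'' estimates I need; everything else comes from Lemma~\ref{bridgeestimate}.

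Now I would apply Lemma~\ref{bridgeestimate} at the index $k=n+1\geq 1$, which produces a dichotomy. In the first alternative one has $\overline{Q}_{n+1}\geq Q_{n+1}^{\mathbb{A}}$; combining with the first inequality above and the monotonicity of $x\mapsto x^{\mathbb{A}}$ (valid since $\mathbb{A}>1$), we obtain
\[
\overline{Q}_{n+1} \;\geq\; Q_{n+1}^{\mathbb{A}} \;\geq\; \overline{Q}_{n}^{\mathbb{A}}.
\]
In the second alternative, the pair $(\overline{Q}_{n},Q_{n+1})$ forms a $CD(\mathbb{A},\mathbb{A},\mathbb{A}^{3})$ bridge, so by the very definition of a CD bridge one has $Q_{n+1}\geq \overline{Q}_{n}^{\mathbb{A}}$; combining with $\overline{Q}_{n+1}\geq Q_{n+1}$ gives again $\overline{Q}_{n+1}\geq \overline{Q}_{n}^{\mathbb{A}}$.

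There is no genuine obstacle here; the argument is purely bookkeeping of indices within the continued fraction expansion and an unpacking of the CD bridge definition. The only point that requires a moment's care is verifying that the dichotomy of Lemma~\ref{bridgeestimate} is applicable at $k=n+1$ for every $n\geq 0$ (so that $k\geq 1$ and $\overline{Q}_{k-1}$ is defined in the second alternative), which is automatic.
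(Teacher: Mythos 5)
Your proof is correct and follows essentially the same two-case argument as the paper: both split on the dichotomy of Lemma~\ref{bridgeestimate} at index $n+1$ and use the CD-bridge lower bound in the second case. You merely make explicit the auxiliary monotonicity inequalities $Q_{n+1}\geq\overline{Q}_n$ and $\overline{Q}_{n+1}\geq Q_{n+1}$ that the paper treats as obvious.
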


\begin{proof}
$\mathbf{Case\ one:}$ $\overline{Q}_{n+1}\geq Q_{n+1}^{\mathbb{A}}.$ Obviously
$\overline{Q}_{n+1}\geq Q_{n+1}^{\mathbb{A}}\geq\overline{Q}_{n}^{\mathbb{A}}.$

   $\mathbf{Case \ two:}$ $\overline{Q}_{n+1}< Q_{n+1}^{\mathbb{A}}.$ In this case
we know that $(\overline{Q}_{n},Q_{n+1})$ forms a  CD $(\mathbb{A},\mathbb{A},\mathbb{A}^{3})$
bridge. Thus $Q_{n+1}\geq\overline{Q}_{n}^{\mathbb{A}},$ which implies
$\overline{Q}_{n+1}\geq\overline{Q}_{n}^{\mathbb{A}}.$

\end{proof}

\subsection{Renormalization}\label{addrenormalization}
In this subsection we give the notations and definitions about the renormalization
which are given in \cite{Krikorian06,Krikorian09,Krikorian15}.

\subsubsection{$\Z^{2}-$actions}
Consider the cocycle $(\alpha, A)\in (0,1) \setminus\mathbb{Q}\times U_r^{M}(\mathbb{T}, SL(2,\mathbb{R}))$ and
set $\beta_{n}=\Pi_{l=0}^{n}\alpha_{l}=(-1)^{n}(q_{n}\alpha-p_{n})
=(q_{n+1}+\alpha_{n+1}q_{n})^{-1},$ where $\alpha_n=G^n(\alpha)$.
 Let $\Omega^{r}=\R\times U_{r}^{M}(\R,SL(2,\R))$ be the subgroup of
Diff$(\R\times U_{r}^{M}(\R,SL(2,\R)))$ made of skew-product diffeomorphisms
$(\alpha,A)\in\R\times U_{r}^{M}(\R,SL(2,\R)).$

A $U_{r}^{M}$ fibered $\Z^{2}-$action is a homomorphism $\Phi:\Z^{2}\rightarrow\Omega^{r}.$
We denote by $\Lambda^{r}$ the
space of such actions, and denote $\Phi=(\Phi(1,0),\Phi(0,1))$ for short.
Let $\Pi_{1}:\R\times U_{r}^{M}(\R,SL(2,\R))\rightarrow \R,$
$\Pi_{2}:\R\times U_{r}^{M}(\R,SL(2,\R))\rightarrow
U_{r}^{M}(\R,SL(2,\R))$
be the coordinate projections.
Let also $\gamma_{n,m}^{\Phi}=\Pi_{1}\circ\Phi(n,m)$
 and $A_{n,m}^{\Phi}=\Pi_{2}\circ\Phi(n,m).$

Two fibered $\Z^2$ actions $\Phi$, $\Phi'$ are said to be conjugate if there exists a smooth map
 $B: \R\rightarrow SL(2,\mathbb{R})$ such that
 $$\Phi'(n,m)=(0,B) \circ \Phi(n,m) \circ (0,B)^{-1}, \qquad \forall (n,m)\in\Z^2 .$$
 That is
 $$  A_{n,m}^{\Phi'}(\cdot) =B(\cdot+\gamma_{n,m}^{\Phi})A_{n,m}^{\Phi}(\cdot) B(\cdot)^{-1} , \qquad  \gamma_{n,m}^{\Phi'}= \gamma_{n,m}^{\Phi}. $$
 We denote $\Phi'= \mathrm{Conj_{B}}(\Phi)$ for short.     We  say that an action is normalized if $\Phi(1,0)=(1,Id),$ and in that case, if
 $\Phi(0,1)=(\alpha,A),$ the map $A\in U_{r}^{M}(\R,SL(2,\R))$
is clearly $\Z-$periodic.

For  any  $M$-ultra-differentiable function $f: \R \rightarrow \R$ (not necessary periodic), one can also define
\begin{equation*}
\begin{aligned}
\|f\|_{r,T}=c\sup_{s\in\mathbb{N}}\big((1+s)^{2}r^{s}\|D_{\theta}^{s}f(\theta)\|_{C^{0}([0,T])}M_{s}^{-1}\big)\
, c=4\pi^{2}/3.
\end{aligned}
\end{equation*}
If $f: \T \rightarrow \R$ is periodic, we also denote $\|f\|_{r,1}=\|f\|_{M,r}$.

\begin{lemma}\emph{(Lemma~2 of \cite{Krikorian09})}\label{normalizinglemma}
 If $\Phi\in\Lambda^{r}$ with $\gamma_{1,0}^{\Phi}=1,$ then there exists  $B\in U_{r}^{M}(\mathbb{R},SL(2,\mathbb{R}))$ and a normalized action $\widetilde{\Phi}$
such that $\widetilde{\Phi}=\mathrm{Conj_{B}}(\Phi)$. Moreover,  for any $T\in\R^{+}$, we have  estimate
\begin{eqnarray*}
\|B-Id\|_{rK_{*}^{-1},1} &\leq& \|\Phi(1,0)-Id\|_{r,1} ,\\
\|B\|_{r(K_{*}T)^{-1},T} &\leq& \|\Phi(1,0)\|_{r,T}^{T+1},  \quad  \forall T\in\R^{+},
\end{eqnarray*}
where $K_{*}$ is an absolute constant.
 \end{lemma}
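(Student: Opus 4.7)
The plan is to solve the cocycle-trivializing equation
\begin{equation*}
B(\theta+1) = B(\theta)\,A(\theta)^{-1}, \qquad \theta \in \R,
\end{equation*}
where $A := A_{1,0}^{\Phi}$; this is precisely the normalization condition $\widetilde{\Phi}(1,0) = (1, Id)$, given that $\gamma_{1,0}^{\Phi} = 1$. Since we work on $\R$ rather than $\T$ and $A$ is not assumed periodic, $B$ on a fundamental interval $[0,1]$ is free up to the Whitney-type jet-matching conditions at the endpoints that the cocycle extension then imposes.

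First I would construct $B$ on $[0,1]$ explicitly. Using an ultra-differentiable cutoff $\chi \in U_{rK_*^{-1}}^M(\R,[0,1])$ (its existence under $\mathbf{(H2)}$ being classical, via a Denjoy--Carleman non-quasi-analytic partition-of-unity construction), I would define $B$ so that $B \equiv Id$ in a neighborhood of $\theta = 0$ and the infinite jet of $B$ at $\theta = 1^-$ equals $\{(A^{-1})^{(k)}(0)\}_{k \geq 0}$, the jet that the cocycle relation forces on $B$ at $\theta = 1^+$. Consistency at $\theta = 0$ (namely, that the cocycle-imposed jet at $\theta = 0^-$ is just $Id$) is automatic from $\frac{d^k}{d\theta^k}\!\left[A^{-1}(\theta) A(\theta)\right]_{\theta=0} = 0$ for $k \geq 1$. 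The interpolating $B$ can then be built with
\begin{equation*}
\|B - Id\|_{rK_*^{-1}, 1} \leq \|\Phi(1,0) - Id\|_{r, 1}
\end{equation*}
using the Banach algebra structure of $U_r^M$ granted by $\mathbf{(H1)}$ together with the controlled norm of $\chi$. I would then extend $B$ to $\R$ via the cocycle relation; the jet matching guarantees that the extension lies in $U_r^M(\R, SL(2,\R))$.

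For the estimate on $[0,T]$, iterating the cocycle relation gives
\begin{equation*}
B(\theta + n) = B(\theta)\, A(\theta)^{-1} A(\theta + 1)^{-1} \cdots A(\theta + n - 1)^{-1}, \qquad \theta \in [0,1],
\end{equation*}
so on $[0,T]$ the function $B$ is a product of at most $T+1$ factors of $A^{\pm 1}$ at shifted arguments times the seed $B|_{[0,1]}$. The Banach algebra multiplication inequality in $U^M$, combined with the width loss $r \mapsto r(K_* T)^{-1}$ needed to accommodate the $T$ shifted arguments, yields the stated bound $\|B\|_{r(K_*T)^{-1}, T} \leq \|\Phi(1,0)\|_{r, T}^{T+1}$. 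The main obstacle is to keep $K_*$ absolute, independent of $T$, $r$, and the Carleman sequence $M$; this reduces to a uniform loss-of-width Cauchy-type estimate for $M$-ultra-differentiable functions, which is precisely the content of $\mathbf{(H2)}$ as exploited by Bounemoura--Fejoz \cite{Bounemoura20}, together with a careful bookkeeping of the constants in the ultra-differentiable Whitney extension.
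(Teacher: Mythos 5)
The paper does not prove this lemma itself; it quotes it as Lemma~2 of Fayad--Krikorian \cite{Krikorian09}. Your reconstruction---trivialize the cocycle $B(\theta+1)=B(\theta)A(\theta)^{-1}$ by building $B$ on a fundamental interval via a non-quasi-analytic cutoff, match the forced jet at the right endpoint, then propagate by the cocycle relation and estimate products over $[0,T]$ by the Banach algebra property---is indeed the right skeleton and matches the spirit of the reference. But there are two points you should be more careful about.

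First, the assertion that $\mathbf{(H2)}$ guarantees the existence of an $M$-ultra-differentiable cutoff is not correct. The Denjoy--Carleman criterion for non-quasi-analyticity is $\sum_{s} M_s^{-1/s}<\infty$, and $\mathbf{(H2)}$ does not imply this: the analytic sequence $M_s=s!$ satisfies both $\mathbf{(H1)}$ and $\mathbf{(H2)}$ but is quasi-analytic, so no cutoff $\chi\in U_r^{M}$ exists in that class. What is true is that the \emph{non-analytic} examples highlighted in the paper (Gevrey $\nu<1$, and $M_s=\exp(s^{\delta/(\delta-1)})$ with $\delta>2$) are all non-quasi-analytic, so the cutoff construction works there; but as a blanket statement "under $\mathbf{(H2)}$" it is false, and a proof in full generality would need to either add a non-quasi-analyticity hypothesis or handle the quasi-analytic case by a separate (cutoff-free) argument as in the analytic renormalization of \cite{AvilaF11}. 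Relatedly, you do not need the full strength of an ultra-differentiable Whitney/Borel extension: the jet you want at $\theta=1$ is the jet of an actual $U_r^{M}$ function, namely $A(\cdot-1)^{-1}$, so a partition-of-unity gluing of $Id$ near $0$ with $A(\cdot-1)^{-1}$ near $1$ suffices and avoids the stronger Carleson-type hypotheses that Whitney extension in these classes would require.

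Second, the quantitative claims are asserted rather than derived. The first inequality has no multiplicative constant, which is only plausible because the width is shrunk by the absolute factor $K_*$: as $r'\to 0$ the $U_{r'}^{M}$-norm of the cutoff tends to its $C^0$ size, and $K_*$ must be chosen large enough to absorb that norm. Likewise the width loss $r\mapsto r(K_*T)^{-1}$ in the second estimate is not explained by the Banach algebra inequality alone (which is lossless in width); you should say explicitly where the extra factor $T$ is spent---e.g.\ in the derivative-of-cutoff bookkeeping over a fundamental domain shifted $T$ times, or in a Cauchy-type loss needed to dominate the accumulating Leibniz terms in a product of $T+1$ factors. As written, "careful bookkeeping of the constants" is the whole content of the lemma's estimates, and that is precisely the part the reader would need to see.
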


\subsubsection{Renormalization of actions}
Following \cite{Krikorian06,Krikorian09,Krikorian15}, we introduce  the scheme of renormalization of $\Z^{2}$ actions.

Fixing  $\lambda\neq 0.$ Define $M_{\lambda}: \Lambda^{r}\rightarrow \Lambda^{r}$ by
\begin{equation*}
M_{\lambda}(\Phi)(n,m):=(\lambda^{-1}\gamma_{n,m}^{\Phi}, A_{n,m}^{\Phi}(\lambda\cdot)).
\end{equation*}
Let $\theta_{*}\in\R.$ Define $T_{\theta_{*}}: \Lambda^{r}\rightarrow \Lambda^{r}$ by
\begin{equation*}
T_{\theta_{*}}(\Phi)(n,m):=(\gamma_{n,m}^{\Phi}, A_{n,m}^{\Phi}(\cdot+\theta_{*})).
\end{equation*}
Let $U\in GL(2,\R).$ Define $N_{U}: \Lambda^{r}\rightarrow \Lambda^{r}$ by
\begin{equation*}
N_{U}(\Phi)(n,m):=\Phi(n',m'),\ \text{where} \ \Big(\begin{matrix}
n'\\
m'
\end{matrix}\Big)=U^{-1}
\Big(\begin{matrix}
n\\
m
\end{matrix}\Big).
\end{equation*}

Let $\widetilde{Q}_{n}=\Big(\begin{matrix}
q_{n},  & p_{n}\\
q_{n-1}, & p_{n-1}
\end{matrix}\Big),$
and define for $n\in\N$ and $\theta_{*}\in\R$ the renormalized actions
\begin{equation*}
\mathcal{R}^{n}(\Phi):=M_{\beta_{n-1}}\circ N_{\widetilde{Q}_{n}}(\Phi),\
\mathcal{R}_{\theta_{*}}^{n}(\Phi):=
T_{\theta_{*}}^{-1}\big[\mathcal{R}^{n}(T_{\theta_{*}}(\Phi))\big].
\end{equation*}

For any given cocycle $(\alpha, A)$ with $\alpha\in\mathbb{R}\setminus\mathbb{Q}$, we set $\Phi=((1,Id),(\alpha, A)).$  Then by the definitions of the operators above, we get
\begin{equation*}
\begin{split}
\mathcal{R}_{\theta_{*}}^{n}(\Phi)=((1,A^{(n,0)}),(\alpha_{n},A^{(n,1)})),
\end{split}
\end{equation*}
where
\begin{equation*}
\begin{split}
A^{(n,0)}(\theta)&=A_{(-1)^{n-1}q_{n-1}}(\theta_{*}+\beta_{n-1}(\theta-\theta_{*})),\\
A^{(n,1)}(\theta)&=A_{(-1)^{n}q_{n}}(\theta_{*}+\beta_{n-1}(\theta-\theta_{*})).
\end{split}
\end{equation*}
Thus $A^{(n,0)}$ and $A^{(n,1)}$ are
$\beta_{n-1}^{-1}-$periodic and can be regarded as cocycles over the dynamics on
$\mathbb{R}$ given by $\theta\mapsto \theta+1$ and $\theta\mapsto \theta+\alpha_{n}$. It is easy to see that
$A^{(n,1)}(\theta+1)A^{(n,0)}(\theta)=A^{(n,0)}(\theta+\alpha_{n})A^{(n,1)}(\theta),$ which expresses the commutation of
the cocycles. Based on this fact,  there exists $D_n$  which is a normalizing map such that
\begin{eqnarray*}
D_n(\theta+1)A^{(n,0)}(\theta)D_n(\theta)^{-1} & =& Id, \\
D_n(\theta+\alpha_{n})A^{(n,1)}(\theta)D_n(\theta)^{-1} &=& A^{(n)}(\theta),
\end{eqnarray*}
which  satisfies $A^{(n)}(\theta+1)=A^{(n)}(\theta).$ Thus $A^{(n)}$ can be seen as an element of
$C^{0}(\mathbb{T}, SL(2, \mathbb{R})),$
 and $(\alpha_{n}, A^{(n)})$ is called a representative of the $n$-th renormalization of $(\alpha, A).$

 \subsubsection{Convergence of the renormalized actions}

 The following result on convergence of  renormalized actions was essentially contained in \cite{Krikorian06,Krikorian09,Krikorian15}, which
deal with cocycles in $C^{\ell}$ setting with $\ell\in\mathbb{N}$ and $\ell=\infty, \omega.$ We will sketch the proof in the ultra-differentiable setting, just for completeness.

\begin{proposition} [\cite{Krikorian06,Krikorian09,Krikorian15}]\label{llambdarenormalizations}
Suppose that $(\alpha, A)\in (0,1) \setminus\mathbb{Q}\times U_r(\mathbb{T}, SL(2,\mathbb{R}))$. If $(\alpha, A)$ is $L^{2}$-conjugated to rotations and  homotopic to the identity,
then  for almost every $\theta_{*}\in\mathbb{R},$ there exists  $D_{n}\in U_{r/K_*^{2}}(\mathbb{R}, SL(2,\mathbb{R}))$ with
\begin{equation}\label{modelnormalizings}
\|D_{n}\|_{r/(K_{*}^2 T),T} \leq  C^{q_{n-1}(T+1)},
\end{equation}
such that
\begin{equation*}
\begin{aligned}
\mathrm{Conj_{D_{n}}}(\mathcal{R}_{\theta_{*}}^{n}(\Phi))= ((1,Id),(\alpha_{n},\ R_{\rho_{n}}\me^{F_{n}})),
\end{aligned}
\end{equation*}
with $\|F_{n}\|_{r/K_{*}^{2},1}\rightarrow 0,$  where $K_{*}$ is an absolute constant defined in  Lemma \ref
{normalizinglemma}.
\end{proposition}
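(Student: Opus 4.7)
The plan is to follow the renormalization strategy of \cite{Krikorian06,Krikorian09,Krikorian15}, modifying the regularity bookkeeping so that all estimates are carried in the ultra-differentiable norms $\|\cdot\|_{M,r}$ and $\|\cdot\|_{r,T}$. The essential input is that $L^{2}$ rotations reducibility of $(\alpha,A)$ forces, for almost every $\theta_{*}\in\mathbb{R}$, the iterate $A^{(n,0)}(\theta) = A_{(-1)^{n-1}q_{n-1}}(\theta_{*}+\beta_{n-1}(\theta-\theta_{*}))$ to be $C^{0}$-close to the identity on any fixed bounded interval $[-T,T]$, while $A^{(n,1)}$ is close to a constant rotation. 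Indeed, picking $\theta_{*}$ to be a Lebesgue density point at which the measurable conjugation realizing the $L^{2}$ rotations reducibility is Luzin-continuous yields a $C^{0}$ error tending to zero with $n$; the Banach algebra property and Cauchy-type bounds granted by $\mathbf{(H1)}$ and $\mathbf{(H2)}$ then upgrade this to closeness in $\|\cdot\|_{r',T}$ for any $r'<r$, at a controlled width loss.

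Next I would construct $D_{n}$ by applying Lemma~\ref{normalizinglemma} to the renormalized action $\mathcal{R}^{n}_{\theta_{*}}(\Phi)$, whose first generator has base shift $1$. The lemma produces a normalizing $B_{n}\in U^{M}_{r/K_{*}}(\mathbb{R}, SL(2,\mathbb{R}))$ with
\begin{equation*}
\|B_{n}\|_{r(K_{*}T)^{-1}, T} \leq \|A^{(n,0)}\|_{r,T}^{T+1}.
\end{equation*}
Since $A^{(n,0)}$ is a product of $q_{n-1}$ rescaled copies of $A$, the sub-multiplicativity of $\|\cdot\|_{M,r}$ guaranteed by $\mathbf{(H1)}$ gives $\|A^{(n,0)}\|_{r,T}\leq C^{q_{n-1}}$, whence the estimate \eqref{modelnormalizings}; absorbing a further width loss to pass from $r/K_{*}$ to $r/K_{*}^{2}$ produces the $D_{n}$ in the statement. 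The commutation relation
\begin{equation*}
A^{(n,1)}(\theta+1) A^{(n,0)}(\theta) = A^{(n,0)}(\theta+\alpha_{n}) A^{(n,1)}(\theta)
\end{equation*}
then guarantees that $A^{(n)} = D_{n}(\cdot+\alpha_{n}) A^{(n,1)} D_{n}^{-1}$ is $\mathbb{Z}$-periodic, and therefore descends to an element of $U^{M}_{r/K_{*}^{2}}(\mathbb{T}, SL(2,\mathbb{R}))$.

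Finally, writing $A^{(n)} = R_{\rho_{n}} \me^{F_{n}}$, I would deduce $\|F_{n}\|_{r/K_{*}^{2}, 1}\to 0$ by combining the $C^{0}$ smallness of $A^{(n)}-R_{\rho_{n}}$ (inherited from the $L^{2}$ rotations reducibility of the original cocycle, which is preserved by all the renormalization and normalization operations) with the a priori uniform bound on $\|A^{(n)}\|_{M,r/K_{*}^{2}}$ coming from the estimates on $D_{n}$, via the Fourier-side characterization of the class $U^{M}_{r}$ that will be developed in Lemma~\ref{finallemma} of Section~\ref{Ultradifferentiable}. The main obstacle will be exactly this final interpolation: the exponential-in-$q_{n-1}$ factor present in the bound on $D_{n}$ must be dominated by the smallness coming from the $L^{2}$ rotations reducibility, and this dominance is ultimately provided by the sub-exponential growth condition $\mathbf{(H2)}$ on $(M_{s})_{s\in\mathbb{N}}$, which is precisely the hypothesis that allows a small width loss to absorb polynomial growth in the ultra-differentiable norm while preserving geometric decay of the Fourier-type tails of the perturbation.
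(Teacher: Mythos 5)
Your outline tracks the paper's strategy—Luzin point for $\theta_*$, normalization via Lemma~\ref{normalizinglemma}, sub-multiplicativity for the bound \eqref{modelnormalizings}—but two of the central mechanisms are misidentified, and each omission leaves a real gap.

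First, the claim that $A^{(n,0)}$ is ``$C^{0}$-close to the identity'' at a Luzin/density point is not correct. What the Luzin continuity of the measurable conjugation $B$ at $\theta_{*}$ gives is that $B(\theta_{*})\,A^{(n,0)}(\cdot)\,B(\theta_{*})^{-1}$ is close to a constant rotation $R_{\rho_n}$; $A^{(n,0)}$ itself is only close to the conjugate $B(\theta_{*})^{-1}R_{\rho_n}B(\theta_{*})$, which is in general far from $Id$. The paper handles this by first conjugating the whole $\Z^2$-action by the constant $B(\theta_{*})$ before normalizing, so that the normalizing map $\widetilde{D}_{n}$ produced by Lemma~\ref{normalizinglemma} is applied to an action whose first generator really is close to a rotation; the final conjugation is $D_{n}=\widetilde{D}_{n}B$. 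Applying Lemma~\ref{normalizinglemma} directly to $\mathcal{R}_{\theta_{*}}^{n}(\Phi)$, as you propose, still produces a normalizing map with the bound \eqref{modelnormalizings} (the second inequality of the lemma requires no smallness), but the resulting second generator $D_{n}(\cdot+\alpha_{n})A^{(n,1)}D_{n}^{-1}$ has no reason to be of the form $R_{\rho_{n}}\me^{F_{n}}$ with $F_{n}\to 0$.

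Second, the argument for $\|F_{n}\|_{r/K_{*}^{2},1}\to 0$ cannot rest on ``the a priori uniform bound on $\|A^{(n)}\|_{M,r/K_{*}^{2}}$ coming from the estimates on $D_{n}$.'' The bound you have on $D_{n}$ is $C^{q_{n-1}(T+1)}$, which diverges with $n$; it does not give uniform boundedness of $A^{(n)}$ in the ultra-differentiable class. What actually supplies the uniform bound is a separate precompactness statement: for $(\alpha,A)$ that is $L^{2}$-conjugated to rotations and for a.e.\ $\theta_{*}$, all derivatives of $A^{(n,i)}$ on the relevant windows are controlled, $\|\partial^{\ell}A^{(n,i)}\|\leq K_{*}^{\ell+1}\|A\|_{C^{s}}$, uniformly in $n$. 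This is the ultra-differentiable translation of Theorem~5.1 of \cite{Krikorian06}; it is a renormalization estimate, not a consequence of $\mathbf{(H1)}$--$\mathbf{(H2)}$, which concern only the function class. Once one has uniform boundedness of $\{A^{(n,i)}\}$ in $U^{M}_{r/K_{*}}$ and the $C^{0}$-convergence to rotations coming from the renormalization convergence theorem (Theorems 4.3/4.4 of \cite{Krikorian15}), one can interpolate to convergence in $U^{M}_{r/K_{*}^{2}}$, and only then does $\|F_{n}\|_{r/K_{*}^{2},1}\to 0$ follow. Without this precompactness input, the passage from $C^{0}$-smallness to smallness in the ultra-differentiable norm does not close.
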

\begin{proof}
We first prove $\{A^{(n,i)}(\theta)\}_{n\geq0}, i=1,2,$ are precompact
in $U_{r/K_{*}}^{M}.$ Indeed, Theorem 5.1 of  \cite{Krikorian06} shows that  for any $(\alpha, A)\in (0,1) \setminus\mathbb{Q}\times C^s(\mathbb{T}, SL(2,\mathbb{R}))$, if it is $L^{2}$-conjugated to rotation, then  for almost every $\theta_{*}\in\mathbb{R},$ there exists $K_{*}>0$ such
that for every $d>0$ and for every $n>n_{0}(d),$
\begin{equation*}
\begin{aligned}
\|\partial^{\ell} A^{(n,i)}(\theta)\|\leq K_{*}^{\ell+1}\|A(\theta)\|_{C^{s}},\ i=1,2,\ 0\leq \ell\leq s,\ |\theta-\theta_{*}|<d/n,
\end{aligned}
\end{equation*}
which implies that
\begin{equation*}
\begin{aligned}
\|A^{(n,i)}(\theta)\|_{C^{s}}\leq 2K_{*}^{s+1}\| A(\theta)\|_{C^{s}},\ i=1,2,
\end{aligned}
\end{equation*}
therefore, by the definition of norms of ultra-differentiable functions, we have
\begin{equation*}
\begin{aligned}
\|A^{(n,i)}(\theta)\|_{r/K_{*},1}\leq 2K_*\| A(\theta)\|_{r,1}<\infty,\ i=1,2.
\end{aligned}
\end{equation*}
That is the sequences $\{A^{(n,i)}(\theta)\}_{n\geq0}, i=1,2,$ are  uniformly bounded in $U_{r/K_{*}},$
which implies $\{A^{(n,i)}(\theta)\}_{n\geq0}, i=1,2,$ are precompact
in $U_{r/K_{*}}^{M}.$

Assume $B\in L^{2}(\T, SL(2,\R))$ is the conjugation such that
$$B(\theta+\alpha)A(\theta)B(\theta)^{-1}\in SO(2,\R),$$
consequently by Theorems~4.3, Theorem 4.4 in \cite{Krikorian15} we have
$$\mathcal{R}_{\theta_{*}}^{n}(\mathrm{Conj_{B(\theta_{*})}}(\Phi))
=((1,\widetilde{C}_{n}^{(1)}(\theta)),(\alpha_{n},\widetilde{C}^{(2)}_{n}(\theta))$$
with $\widetilde{C}_{n}^{(i)}=R_{\rho_{n}}\me^{U_{n}^{(i)}(\theta)}, i=1,2,$ and
\begin{equation*}
\begin{aligned}
\|U_{n}^{(i)}(\theta)\|_{r/K_{*},1}\rightarrow0,\ i=1,2,\text{if}\ n\rightarrow\infty, |\theta-\theta_{*}|\leq d/n, n\geq n_{0}(d).
\end{aligned}
\end{equation*}
Using Lemma~\ref{normalizinglemma}, there is a normalizing conjugation $\widetilde{D}_{n},$
which is closed to identity in $\|\cdot\|_{rK_{*}^{-2},1}-$topology such that $$\widetilde{D}_{n}(\theta+1)\widetilde{C}_{n}^{(1)}
(\theta)\widetilde{D}_{n}(\theta)^{-1}=Id.$$
Denote $D_{n}=\widetilde{D}_{n}B,$ the action $\mathrm{Conj_{D_{n}}}(\mathcal{R}_{\theta_{*}}^{n}(\Phi))$
is of form $((1,Id),(\alpha_{n},\ R_{\rho_{n}}\me^{F_{n}}))$ with
$\|F_{n}\|_{rK_{*}^{-2},1}\rightarrow 0.$
Moreover,  for any $T\in \R^+$, by Lemma~\ref{normalizinglemma} we get
\begin{equation*}
\begin{aligned}
\|\widetilde{D}_{n}\|_{r/(K_{*}^2 T),T}&\leq
\|\widetilde{C}_{n}^{(1)}\|_{ r/K_{*} ,T}^{T+1}
\leq  \|B(\theta_*)\|^{2(T+1)}\|A^{(n,0)}\|_{r,T}^{T+1}\\
&\leq \|B(\theta_*)\|^{2(T+1)} \|A\|_{r,1}^{q_{n-1}(T+1)}\leq C^{q_{n-1}(T+1)},
\end{aligned}
\end{equation*}
then \eqref{modelnormalizings} follows directly.
\end{proof}

\section{Ultra-differentiable functions}\label{Ultradifferentiable}

As we introduced, one way to define the modulus of ultra-differentiable functions is by the growth of $D^{s}f$.
For periodic function $f\in C^{\infty}(\T,\R),$ an alternative way is to define the modulus of ultra-differentiability
by the decay rate of its Fourier coefficient.
Attached to the sequence $(M_s)_{s\in\N}$, we can define $\Lambda: [0,\ \infty)\rightarrow[0,\ \infty)$ by
\begin{equation}\label{definelambda}
\Lambda(y):=\ln \big(\sup_{s\in \mathbb{N}}y^{s}M_{s}^{-1}\big)=\sup_{s\in \mathbb{N}}
(s \ln y-\ln M_{s}).
\end{equation}
This defines a function $\Lambda:[0,\infty)\rightarrow[0,\infty),$
which is continuous, constant equal to zero for $y\leq1$ and strictly increasing for $y\geq1$
(see \cite{Bounemoura20,Chaumat94} or \cite{Thilliez03}).

For any $f\in U_{r}^{M}(\mathbb{T}, \mathbb{R}),$ write it as $f(\theta)=\sum_{k\in\mathbb{Z}}\widehat{f}(k)\me^{2\pi\mi k\theta}$,
one easily checks that $\Lambda$ controls the decay of the Fourier coefficients in the sense that
\begin{equation}\label{fouriercoefficient}
|\widehat{f}(k)|\leq \|f\|_{M,r}\exp\{-\Lambda(|2\pi k|r)\},\ \forall k\in\mathbb{Z}.
\end{equation}

For periodic function, using $\Lambda$ is more natural and convenient. Now we derive some properties of $\Lambda$ for $f\in U_{r}^{M}(\mathbb{T}, \mathbb{R})$ from $\mathbf{(H1)}$ and $\mathbf{(H2)},$
which will be the bases of our whole proof of the KAM scheme.

\begin{lemma}[Proposition~10 of \cite{Bounemoura20}]\label{banachalgebra}
Let $f,g\in U_{r}^{M} (\mathbb{T}, \mathbb{R})$ with $M$ satisfying $\mathbf{(H1)}.$ Then $f\cdot g\in U_{r}^{M} (\mathbb{T}, \mathbb{R}),$ and we have
\begin{equation*}
\begin{aligned}
\|f\cdot g\|_{M,r}\leq \|f\|_{M,r}\|g\|_{M,r}.
\end{aligned}
\end{equation*}
\end{lemma}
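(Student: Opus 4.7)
My strategy is the classical Leibniz-rule approach to multiplicative estimates in weighted $C^\infty$ norms, for which the constant $c=4\pi^2/3$ in the definition of $\|\cdot\|_{M,r}$ appears to have been deliberately calibrated. The plan consists of three ingredients: an application of the Leibniz rule to $D^s(fg)$, a combinatorial bound on $\binom{s}{k}M_kM_{s-k}/M_s$ extracted from the log-convexity condition $\mathbf{(H1)}$, and an elementary Cauchy-product estimate for the weights $(1+k)^{-2}$.

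First, from Leibniz' rule and the defining estimate $\|D^k h\|_{C^0}\le M_k\|h\|_{M,r}/(c(1+k)^2 r^k)$ applied to both $f$ and $g$, I would derive
$$
c(1+s)^2 r^s\,\|D^s(fg)\|_{C^0}\,M_s^{-1} \le \frac{\|f\|_{M,r}\|g\|_{M,r}}{c}\sum_{k=0}^s \binom{s}{k}\,\frac{M_kM_{s-k}}{M_s}\,\frac{(1+s)^2}{(1+k)^2(1+s-k)^2}.
$$
It then suffices to bound the right-hand sum by $c^2$ uniformly in $s$. The log-convexity of $s\mapsto\log M_s$ enshrined in $\mathbf{(H1)}$, together with the standard normalization $M_0=1$, should yield the combinatorial inequality $\binom{s}{k}M_kM_{s-k}\le M_s$; the residual weight-convolution factor is then controlled by
$$(1+s)^2\sum_{k=0}^s\frac{1}{(1+k)^2(1+s-k)^2}\le \frac{4\pi^2}{3}=c,$$
which I would prove by splitting the sum at $k=\lfloor s/2\rfloor$: on the first half $1+s-k\ge(1+s)/2$, so $\sum_{k\le s/2}\le 4(1+s)^{-2}\sum_{k\ge 0}(1+k)^{-2}=2\pi^2/(3(1+s)^2)$, and by symmetry the second half is bounded in the same way, giving $4\pi^2/3$ in total. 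Taking the supremum in $s$ then yields $\|fg\|_{M,r}\le\|f\|_{M,r}\|g\|_{M,r}$.

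The main technical obstacle I anticipate is extracting the clean combinatorial inequality $\binom{s}{k}M_kM_{s-k}\le M_s$ from the bare-bones convexity statement $\mathbf{(H1)}$, since the binomial factor is not directly visible in the hypothesis and one must use both the strictness and the implicit normalization $M_0=1$ (a Jensen-type argument applied to $\log M_s$, together with the fact that $\mathbf{(H1)}$ forces $M_s$ to dominate factorial growth in the precise sense needed). Once this combinatorial bound is in hand, the remaining pieces -- Leibniz, substitution into the norm, and the convolution estimate on $(1+k)^{-2}$ -- are routine bookkeeping that the constant $c=4\pi^2/3$ has been tuned to absorb; this is exactly the content of Proposition~10 of \cite{Bounemoura20} cited above.
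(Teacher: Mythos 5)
Your plan --- Leibniz, the combinatorial bound $\binom{s}{k}M_kM_{s-k}\le M_s$, and the $(1+k)^{-2}$ convolution estimate --- is the natural route, the Leibniz bookkeeping is correct, and your convolution estimate (split at $\lfloor s/2\rfloor$, each half bounded by $4(1+s)^{-2}\cdot\pi^2/6$, total $4\pi^2/3=c$) is right. But the step you yourself flag as the main obstacle is a genuine gap, and the heuristic you offer for closing it does not hold up. Dividing your combinatorial inequality by $s!$ rewrites it as $N_kN_{s-k}\le N_s$ with $N_s:=M_s/s!$, which indeed follows by Jensen from log-convexity of $(N_s)$ together with $N_0=1$; but that is log-convexity of $M_s/s!$, \emph{strictly stronger} than $\mathbf{(H1)}$. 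Indeed $N_\ell^2\le N_{\ell-1}N_{\ell+1}$ amounts to $M_\ell^2/(M_{\ell-1}M_{\ell+1})\le\ell/(\ell+1)$, whereas $\mathbf{(H1)}$ gives only $M_\ell^2/(M_{\ell-1}M_{\ell+1})<1$. Jensen on $\log M_s$ alone yields only $M_kM_{s-k}\le M_s$ with no binomial factor, and that leaves the divergent quantity $\sum_k\binom{s}{k}(1+s)^2/\bigl((1+k)^2(1+s-k)^2\bigr)\sim 2^s$ in your Leibniz sum uncontrolled.

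Concretely, $M_s=\mathrm{e}^{s^{1.1}}$ (the paper's own admissible example $\exp\{s^{\delta/(\delta-1)}\}$ with $\delta=11$) satisfies $\mathbf{(H1)}$ and $M_0=1$, yet $\binom{2}{1}M_1^2=2\mathrm{e}^2\approx 14.8>M_2=\mathrm{e}^{2^{1.1}}\approx 8.5$, so your key inequality is simply false for it. There is no ``Jensen plus $\mathbf{(H1)}$ forces $M_s$ to dominate factorial growth'' argument available: $\mathbf{(H1)}$ does not force $M_s\ge s!$, and even where $M_s\ge s!$ the binomial inequality is not implied by $\mathbf{(H1)}$. To make your plan rigorous you need the classical Denjoy--Carleman form of log-convexity, namely that $M_s/s!$ is log-convex with $M_0=1$ (this does hold for $M_s=s!$ and $M_s=(s!)^{1/\nu}$, $0<\nu\le 1$); under that hypothesis $\binom{s}{k}M_kM_{s-k}\le M_s$ is immediate, and your Leibniz-plus-convolution argument closes exactly as you describe, which is the content of Proposition~10 of \cite{Bounemoura20}.
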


\begin{remark}
As explained in  \cite{Bounemoura20},  the role of the normalizing constant $c>0$   in the definition of $ \|f\|_{M,r}$ is to ensure that $U_{r}^{M} (\mathbb{T}, \mathbb{R})$ forms a standard Banach algebra with respect to multiplication.
\end{remark}

\begin{lemma}[Proposition~8 of \cite{Bounemoura20}]\label{cauchyestimate}
Let $f\in U_{r}^{M}(\mathbb{T},\mathbb{R})$ with $M$ satisfying $\mathbf{(H2)}.$
Then $\partial f\in U_{r/2}^{M}(\mathbb{T},\mathbb{R})$ with
\begin{equation*}
\begin{aligned}
\|\partial f\|_{r/2}\leq C_{M}r^{-1}\|f\|_{r},
\end{aligned}
\end{equation*}
where
\begin{equation*}
\begin{aligned}
C_{M}:=\sup_{s\in\mathbb{N}}\{2^{-s}M_{s+1}M_{s}^{-1}\}<\infty.
\end{aligned}
\end{equation*}
\end{lemma}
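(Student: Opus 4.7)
The plan is to prove the estimate by a direct computation from the definition of the norm $\|\cdot\|_{M,r}$, with hypothesis $\mathbf{(H2)}$ entering only to ensure $C_{M}$ is finite. The structure mirrors that of the Cauchy estimate for analytic functions, with $\mathbf{(H2)}$ playing the role of the pole-free annulus in the analytic case.

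First I would verify $C_{M}<\infty$. By $\mathbf{(H2)}$ one has $\ln(M_{s+1}/M_{s})=o(s)$, so for any $\epsilon<\ln 2$ and all sufficiently large $s$, $M_{s+1}/M_{s}\leq e^{\epsilon s}$, whence $2^{-s}M_{s+1}/M_{s}\leq e^{(\epsilon-\ln 2)s}\to 0$. Since each term is finite and the sequence tends to $0$, the supremum is attained on a finite initial segment, hence finite.

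Next I would unwrap the definitions. The defining supremum at radius $r/2$ applied to $\partial f$ reads
\[
\|\partial f\|_{M,r/2} \;=\; c\sup_{s\in\mathbb{N}}\bigl((1+s)^{2}(r/2)^{s}\|D^{s+1}f\|_{C^{0}}M_{s}^{-1}\bigr),
\]
while applying the definition of $\|f\|_{M,r}$ at the shifted index $s+1$ yields the pointwise bound
\[
\|D^{s+1}f\|_{C^{0}}\;\leq\; c^{-1}(2+s)^{-2}r^{-(s+1)}M_{s+1}\|f\|_{M,r}.
\]
Substituting this in and regrouping, the general term in the first supremum becomes
\[
\frac{(1+s)^{2}}{(2+s)^{2}}\cdot 2^{-s}\cdot\frac{M_{s+1}}{M_{s}}\cdot r^{-1}\|f\|_{M,r}.
\]
Since $(1+s)^{2}/(2+s)^{2}\leq 1$ for every $s$, taking the supremum over $s$ delivers $\|\partial f\|_{M,r/2}\leq C_{M}r^{-1}\|f\|_{M,r}$, which is the claim. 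Membership $\partial f\in U_{r/2}^{M}(\mathbb{T},\mathbb{R})$ follows at once from the finiteness of the right-hand side.

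The main issue is conceptual rather than computational: one must recognize that $\mathbf{(H2)}$ is precisely the minimal condition on $(M_{s})$ ensuring $C_{M}<\infty$, so that derivatives remain within the same ultra-differentiable class at the cost of only a halved width and a multiplicative constant depending on $M$. The weight-ratio $(1+s)^{2}/(2+s)^{2}\leq 1$ and the shift $s\mapsto s+1$ produce the factor $2^{-s}M_{s+1}/M_{s}$ automatically; no finer arithmetic is required. Had we used a width reduction $r\mapsto\delta r$ with $\delta\in(0,1)$ in place of $r/2$, the same argument would go through with constant $\sup_{s}\delta^{-s}M_{s+1}/M_{s}$, still finite under $\mathbf{(H2)}$ --- so the factor $1/2$ is merely a convenient normalization.
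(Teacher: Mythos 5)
Your proof is correct and follows exactly the computation used for this Cauchy-type estimate: unwind the norm at radius $r/2$, shift the index from $s$ to $s+1$ in the definition of $\|f\|_{M,r}$, observe $(1+s)^2/(2+s)^2\le 1$, and absorb the remaining factor $2^{-s}M_{s+1}/M_s$ into $C_M$, whose finiteness is exactly what $\mathbf{(H2)}$ guarantees. This is the same argument the paper delegates to Proposition~8 of \cite{Bounemoura20}, so there is nothing further to add.
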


\begin{comment}
\begin{proof}
For $G\in U_{r}(\mathbb{T},SL(2,\mathbb{R})),$ we have
$D_{\theta}^{s}(D_{\theta}G(\theta))=D_{\theta}^{s+1}G(\theta),$
which, together with the definition of $\|\cdot\|_{r}-$norm, implies
%
\begin{eqnarray}\label{20200640}
\|D_{\theta}G(\theta)\|_{r/2}&=&
\sup_{s\in\mathbb{N}}(1+s)^{2}(2^{-1}r)^{s}\|D_{\theta}^{s}(D_{\theta}G(\theta))\|_{C^{0}}M_{s}^{-1}\nonumber\\
&\leq& r^{-1}\sup_{s\in\mathbb{N}}(2^{-s}M_{s+1}M_{s}^{-1})
((2+s)^{2}r^{s+1}\|D_{\theta}^{s+1}G\|_{C^{0}}M_{s+1}^{-1})\nonumber\\
&\leq& C_{M}r^{-1}\|G\|_{r},
\end{eqnarray}
%
where the last inequality is by $\lim_{s\rightarrow\infty}s^{-1}\ln \{M_{s+1}M_{s}^{-1}\}=0\ (\mathbf{(H2)}),$ thus
%
\begin{equation*}
\begin{aligned}
C_{M}:=\sup_{s\in\mathbb{N}}\{2^{-s}M_{s+1}M_{s}^{-1}\}<\infty.
\end{aligned}
\end{equation*}
%
By \eqref{20200640} we get
%
\begin{equation*}
\begin{aligned}
\|G(\theta+p_{n}/q_{n})&-G(\theta+\alpha)
\|_{r/2}=\|D_{\theta}G\|_{r/2}|p_{n}/q_{n}-\alpha|\\
&\leq q_{n}^{-2}\|D_{\theta}G\|_{r/2}\leq
C_{M} q_{n}^{-2}r^{-1}\|G\|_{r}.
\end{aligned}
\end{equation*}
%
\end{proof}
\end{comment}

\begin{lemma}\label{finallemma}
 $\mathbf{(H1)}$ and $\mathbf{(H2)}$  imply that there exists $\Gamma: [1,\ \infty) \rightarrow \mathbb{R}^{+}$
such that the following hold:
\begin{equation*}
\begin{aligned}
  \mathbf{(A):}
  \left\{
\begin{array}{l}
\mathrm{(I)}: \lim_{x\rightarrow\infty}\Gamma(x) = \infty,
  \\
\mathrm{(II)}: \Gamma(x)\ln x\ \mathrm{is} \ \mathrm{non-decreasing},
  \\
\mathrm{(III)}: \Lambda(y)-\Lambda(x) \geq (\ln y-\ln x)\Gamma(x)\ln x,\ \forall y>x\geq 1.
 \end{array}
 \right.
\end{aligned}
\end{equation*}
\end{lemma}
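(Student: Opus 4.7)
The plan is to read off $\Gamma$ directly from the right derivative of $\Lambda$ viewed in the variable $\ln y$. The key observation is that $\mathbf{(H1)}$ makes $\Lambda$ convex as a function of $\ln y$ (being a supremum of affine functions), while $\mathbf{(H2)}$ controls how fast this right derivative grows relative to $\ln y$.

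First, set $\psi(v):=\Lambda(e^v)=\sup_{s\in\N}(sv-\ln M_s)$. As a pointwise supremum of affine functions, $\psi$ is convex on $\R$. Condition $\mathbf{(H1)}$ is exactly the statement that the differences $a_s:=\ln M_{s+1}-\ln M_s$ are non-decreasing in $s$, and a direct computation from $\psi(v)=\sup_s(sv-\ln M_s)$ shows that $\psi$ is the piecewise-linear convex function whose slope equals $s$ on the interval $[a_{s-1},a_s]$ (with $a_{-1}:=-\infty$); in particular, the right derivative $\psi'_+$ is a non-decreasing $\N$-valued step function. The strict monotonicity of $\Lambda$ on $[1,\infty)$ stated in the paper forces $a_0\le 0$, so $\psi'_+(v)\ge 1$ for all $v>0$, and well-posedness of $\Lambda$ as a $[0,\infty)$-valued function (the $C^\infty$ property) forces $\ln M_s/s\to\infty$, hence $a_s\to\infty$.

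I then set
\[
\Gamma(x):=\frac{\psi'_+(\ln x)}{\ln x}\quad\text{for }x>1,\qquad \Gamma(1):=1.
\]
Positivity of $\Gamma$ on $(1,\infty)$ follows from $\psi'_+(\ln x)\ge 1>0$ established above. Property $\mathrm{(II)}$ is immediate: $\Gamma(x)\ln x=\psi'_+(\ln x)$ is non-decreasing in $x$ because $\psi'_+$ is non-decreasing, while $\Gamma(1)\ln 1=0$ is compatible with this. Property $\mathrm{(III)}$ is the standard subgradient inequality for convex $\psi$: for $\ln y>\ln x\ge 0$,
\[
\Lambda(y)-\Lambda(x)=\psi(\ln y)-\psi(\ln x)\ge\psi'_+(\ln x)(\ln y-\ln x)=\Gamma(x)\ln x\cdot(\ln y-\ln x).
\]

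The one substantive input is $\mathrm{(I)}$, where $\mathbf{(H2)}$ enters. For $v\in[a_{s-1},a_s)$ with $s\ge 1$ we have $\psi'_+(v)=s$, hence
\[
\Gamma(e^v)=\frac{s}{v}\ge\frac{s}{a_s}.
\]
Condition $\mathbf{(H2)}$ asserts $a_s/s\to 0$, equivalently $s/a_s\to\infty$; combined with $a_s\to\infty$ this forces $\Gamma(x)\to\infty$ as $x\to\infty$. I do not expect a real obstacle beyond the bookkeeping of matching the piecewise-linear breakpoints of $\psi$ to the sequence $(a_s)$ in order to extract the correct growth rate; modulo this, the argument is a restatement of classical facts about Legendre transforms of log-convex sequences.
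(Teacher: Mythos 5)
Your proof is correct and takes essentially the same route as the paper: your $\Gamma(x)=\psi'_+(\ln x)/\ln x$ coincides with the paper's $\Gamma(x)=s(x)(\ln x)^{-1}$, where $s(x)$ is the maximizing index in the definition of $\Lambda$, i.e.\ the right slope of the convex function $\psi(v)=\Lambda(e^v)$. The paper verifies $\mathrm{(III)}$ by a two-case hand computation (distinguishing $s(y)=s(x)$ from $s(y)\ge s(x)+1$, using that $\mathbf{(H1)}$ makes $M_{s+1}/M_s$ increasing) and $\mathrm{(II)}$ from the monotonicity of $s(x)$; these are precisely the subgradient inequality and the monotonicity of $\psi'_+$ that you invoke directly, and both proofs use $\mathbf{(H2)}$ in the identical way (via $s(x)/\ln x\ge s/a_s\to\infty$) to get $\mathrm{(I)}$.
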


\begin{proof}
For any $x\geq 1,$ we select  $s(x)\in\mathbb{N}$ as the one such that
\begin{equation}\label{appedix20}
\begin{split}
\Lambda(x)=\sup_{s\in\mathbb{N}}\ln (x^{s}M_{s}^{-1})=\ln(x^{s(x)}M_{s(x)}^{-1}).
\end{split}
\end{equation}

\begin{claim}
The function $s(x)\in\mathbb{N}$ is well-defined, non-decreasing with
\begin{equation}\label{sx}
\lim_{x\rightarrow \infty} s(x)(\ln x)^{-1} =\infty.
\end{equation}
\end{claim}

\begin{proof}
It is quite standard $s(x)\in\mathbb{N}$ is well-defined, we first prove that $s(x)$ is non-decreasing.
By the definition of $s(x)$, we have
\begin{equation*}
\begin{split}
x^{s(x)}M_{s(x)}^{-1}\geq x^{s(x)+1}M_{s(x)+1}^{-1},\ x^{s(x)}M_{s(x)}^{-1}\geq x^{s(x)-1}M_{s(x)-1}^{-1},
\end{split}
\end{equation*}
which implies
\begin{equation}\label{appedix21}
\begin{split}
M_{s(x)}/M_{s(x)-1}\leq x\leq M_{s(x)+1}/M_{s(x)}.
\end{split}
\end{equation}

Assume that there exist  $y>x\geq 1$ such that $s(y)<s(x).$
The fact that $s(\cdot) \in \N$ implies  $s(y)+1\leq s(x).$ First, by \eqref{appedix21} we get
\begin{equation*}
\begin{split}
y\leq M_{s(y)+1}/M_{s(y)}, \quad x\geq M_{s(x)}/M_{s(x)-1}.
\end{split}
\end{equation*}
However, by $\mathbf{(H1)}$ we know that $\{M_{\ell+1}/M_{\ell}\}_{\ell\in\mathbb{N}}$
is increasing, which together with $s(y)+1\leq s(x),$ implies that
\begin{equation*}
\begin{split}
y\leq M_{s(y)+1}/M_{s(y)}\leq M_{s(x)}/M_{s(x)-1}\leq x,
\end{split}
\end{equation*}
this contradicts with the assumption $y>x.$ Thus $s(x)$ is non-decreasing.

By \eqref{appedix21}, we have
\begin{equation*}
\begin{split}
s^{-1}(x)\ln (M_{s(x)}/M_{s(x)-1})\leq s^{-1}(x)\ln x\leq s^{-1}(x)\ln (M_{s(x)+1}/M_{s(x)}),
\end{split}
\end{equation*}
then \eqref{sx} follows from the assumption $\mathbf{(H2)}$.
\end{proof}

Let $\Gamma(x)=s(x)(\ln x)^{-1}$. Then \eqref{sx} implies  $\mathrm{(I)}.$ Moreover,
note $\Gamma(x)\ln x=s(x),$ which together with the fact $s(x)\in\mathbb{N}$ is non-decreasing, implies $\mathrm{(II)}.$

Now we  prove $\mathrm{(III)}$.
For any  $y>x$, by the fact $s(x)\in\mathbb{N}$ is non-decreasing, we can distinguish the proof into two cases:

\noindent
$\textbf{Case 1}: s(y)=s(x).$ By the definitions of $\Lambda(x)$ and $s(x)$,  we get
\begin{equation*}
\begin{split}
\Lambda(y)-\Lambda(x)=(\ln y-\ln x)s(x)=(\ln y-\ln x)\Gamma(x)\ln x.
\end{split}
\end{equation*}

\noindent
$\mathbf{Case\ 2}:\ s(y)\geq s(x)+1.$ The inequality
on the left hand of \eqref{appedix21} and the fact that $\{M_{s+1}/M_{s}\}_{s\in\mathbb{N}}$ is increasing imply
\begin{equation*}
\begin{split}
y\geq M_{s(y)}/M_{s(y)-1}\geq  M_{s(x)+1}/M_{s(x)},
\end{split}
\end{equation*}
that is $\ln y \geq \ln M_{s(x)+1}-\ln M_{s(x)}.$
Together with the definitions of $\Lambda(x)$ and $s(x)$, it yields
\begin{equation*}
\begin{split}
\Lambda(y)-\Lambda(x)
&\geq \ln(y^{s(x)+1}M_{s(x)+1}^{-1})-\ln(x^{s(x)}M_{s(x)}^{-1})\\
&=(\ln y-\ln x)s(x)+\ln y-(\ln M_{s(x)+1}-\ln M_{s(x)})\\
&\geq (\ln y-\ln x)s(x)=(\ln y-\ln x)\Gamma(x)\ln x.
\end{split}
\end{equation*}
We thus finish the whole proof.
\end{proof}

\begin{remark}
To give a  heuristic   understanding of the function $\Gamma$, we can assume that
$\Lambda$ is differentiable, by \eqref{appedix20}, we can rewrite it as  $$\Gamma(x)=x\Lambda'(x)(\ln x)^{-1}.$$
Now if we fix $\Lambda(x)=(\ln x)^\delta,$
then
$\Gamma(x)=x\Lambda'(x)(\ln x)^{-1}=\delta(\ln x)^{\delta-2}$ and $\mathrm{(I)}$ and $\mathrm{(II)}$
are equivalent to
$$\delta(\ln x)^{\delta-2}\to +\infty$$
and $\delta(\ln x)^{\delta-1}$ is non-decreasing, which means $\delta>2.$ \end{remark}

For the function $f\in C^{\infty}(\mathbb{T}, \mathbb{R})$ and any $ K \geq 1 $, we define the truncation operator $\mathcal{T}_{K}$ and projection operator $\mathcal{R}_{K}$ as
\begin{equation*}
\mathcal{T}_{K}f(\theta)=\sum_{k\in\ZZ,|k|<  K}\widehat{f}(k)\me^{2\pi\mathrm{i}k\theta},
\quad
\mathcal{R}_{K}f(\theta)=\sum_{k\in\ZZ,|k|\geq K} \widehat{f}(k)\me^{2\pi\mathrm{i}k\theta}.
\end{equation*}
We denote the average of $f(\theta)$ on $ \TT $ by
$[f(\theta)]_{\theta}=\int_{\mathbb{T}}f(\theta)\dif \theta=\widehat{f}(0).$
The norm of $\mathcal{R}_{K}f(\theta)$ in a shrunken regime has the following estimate for $C^\infty$ functions satisfying $\mathbf{(H1)}$ and $\mathbf{(H2)}.$

\begin{lemma}\label{projectionestimate}
Under the assumptions  $\mathbf{(A)},$ there exists $T_1=T_1(M)$, such that  for any $f\in U_{r}^{M}(\mathbb{T},\mathbb{R})$, if
$Kr\geq T_1$, then
\begin{equation}\label{inequivalentnorm}
\begin{aligned}
\|\mathcal{R}_{K}f\|_{M,r/2}\leq C(Kr^2)^{-1}\|f\|_{M,r}
\exp\{-9^{-1}\Gamma(4 Kr)\ln (4Kr)\}.
\end{aligned}
\end{equation}
Particularly,
\begin{equation}\label{inequivalentnormss}
\begin{aligned}
\|\mathcal{R}_{K}f\|_{C^{0}}\leq (Kr^2)^{-1} \|f\|_{M,r}\exp\{-\Lambda(\pi K r)\}.
\end{aligned}
\end{equation}
\end{lemma}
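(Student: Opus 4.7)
The plan is to exploit the Fourier decay \eqref{fouriercoefficient} together with the properties of $\Gamma$ from Lemma~\ref{finallemma}. First I would handle the $C^{0}$ estimate \eqref{inequivalentnormss}: bounding $\|\mathcal{R}_K f\|_{C^{0}}\leq \sum_{|k|\geq K}|\widehat{f}(k)|$ and invoking property (III) of $\mathbf{(A)}$ with $x=\pi Kr$, $y=2\pi kr$ gives $\Lambda(2\pi kr)\geq \Lambda(\pi Kr)+\ln(2k/K)\,\Gamma(\pi Kr)\ln(\pi Kr)$ for $k\geq K$. The resulting sum is a geometric-type series with ratio $\sim 2^{-\Gamma(\pi Kr)\ln(\pi Kr)}$; by (I), once $Kr\geq T_1$, this ratio is so small that the $K$-factor produced by the summation is dominated by $(Kr^{2})^{-1}$.

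The substantive task is \eqref{inequivalentnorm}. Writing $\|D^{s}\mathcal{R}_K f\|_{C^{0}}\leq \sum_{|k|\geq K}(2\pi k)^{s}|\widehat{f}(k)|$ and inserting \eqref{fouriercoefficient}, I must bound $\sup_{s\in\N}\sum_{|k|\geq K}(1+s)^{2}(\pi kr)^{s}M_{s}^{-1}\exp\{-\Lambda(2\pi kr)\}$. Set $G:=\Gamma(4Kr)\ln(4Kr)$. My strategy is to split the supremum at $s=G/2$. For $s\leq G/2$, I would apply (III) with $x=4Kr$, $y=2\pi kr$ to get $\exp\{-\Lambda(2\pi kr)\}\leq \exp\{-\Lambda(4Kr)\}(2K/(\pi k))^{G}$, and then use the definition of $\Lambda$ at $s'=s+2$ to obtain $\exp\{-\Lambda(4Kr)\}\leq M_{s+2}/(4Kr)^{s+2}$. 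Combining,
\[
(\pi kr)^{s}\exp\{-\Lambda(2\pi kr)\}\leq \frac{M_{s+2}}{16K^{2}r^{2}}\left(\frac{\pi}{4}\right)^{s}\left(\frac{2}{\pi}\right)^{G}\left(\frac{k}{K}\right)^{s-G}.
\]
Since $s-G<-1$ the $k$-sum converges to $O(K/(G-s-1))$, leaving the factor $(2/\pi)^{G}=\exp\{-\ln(\pi/2)G\}$. Because $\ln(\pi/2)>1/9$, the surplus $\exp\{-(\ln(\pi/2)-1/9)G\}$ absorbs the polynomial-in-$G$ factors $(1+s)^{2}(\pi/4)^{s}(M_{s+2}/M_{s})\cdot K/(G-s-1)$ provided $Kr\geq T_{1}$ is large enough, and what remains is a bound of the form $C(Kr^{2})^{-1}\|f\|_{M,r}\exp\{-G/9\}$.

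For $s>G/2$ I would instead apply the direct Cauchy-type bound $(2\pi kr)^{s+2}/M_{s+2}\leq \exp\Lambda(2\pi kr)$ to obtain $(\pi kr)^{s}M_{s}^{-1}\exp\{-\Lambda(2\pi kr)\}\leq 2^{-s}(M_{s+2}/M_{s})(2\pi kr)^{-2}$; summing $\sum 1/k^{2}=O(1/K)$ produces the $(Kr^{2})^{-1}$ factor, while $2^{-s}\leq 2^{-G/2}=\exp\{-(\ln 2/2)G\}$ supplies the exponential decay. Since $\ln 2/2>1/9$ and $(1+s)^{2}M_{s+2}/M_{s}=\exp\{o(s)\}$ by $\mathbf{(H2)}$, the surplus exponent $(\ln 2/2 - 1/9)s$ easily absorbs these growth factors whenever $s\geq G/2$ and $G$ is large. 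Taking the maximum of the two regimes yields the stated bound. \emph{The main technical obstacle} is the simultaneous balancing of polynomial-in-$G$ losses against the exponential gains: one must choose $T_{1}=T_{1}(M)$ so that, via (I), $G=\Gamma(4Kr)\ln(4Kr)$ is large enough to make both surpluses $\exp\{-(\ln(\pi/2)-1/9)G\}$ and $\exp\{-(\ln 2/2 - 1/9)s\}$ dominate all accompanying polynomial factors uniformly in $s$.
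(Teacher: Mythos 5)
Your proposal is correct in substance but takes a genuinely different route from the paper's own proof. The paper avoids any case-splitting in $s$ by the one-line algebraic trick of writing $(r/2)^{s}(2\pi k)^{s}=(2/3)^{s}\{2\pi k(3r/4)\}^{s}$, then bounding $\{2\pi k(3r/4)\}^{s}M_{s}^{-1}\leq\exp\Lambda(2\pi k(3r/4))$ \emph{uniformly} in $s$ straight from the definition of $\Lambda$; the leftover $(1+s)^{2}(2/3)^{s}$ is bounded by an absolute constant, so the supremum over $s$ disappears immediately, and what remains is just $\sum_{|k|\geq K}\exp\{-\Lambda(2\pi kr)+\Lambda(2\pi k(3r/4))\}$. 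This sum is then estimated as a product of a uniformly small factor $\exp\{-\Lambda(2\pi kr)+\Lambda(2\pi k(7/8)r)\}\leq\exp\{-G/9\}$ (via $\ln(8/7)>1/9$ together with (II),(III)) and a summable factor $\exp\{-\Lambda(2\pi k(7/8)r)+\Lambda(2\pi k(3/4)r)\}\leq(4kr)^{-2}$ (via $\Gamma(4kr)>18$, guaranteed by (I) through the choice of $T_{1}$). Your approach instead splits the $s$-supremum at $s=G/2$, transporting the $\Lambda$-decay down to the fixed argument $4Kr$ via (III) and then converting $\exp\{-\Lambda(4Kr)\}$ to a power at index $s+2$ when $s$ is small, and applying the Cauchy bound directly at $2\pi kr$ and using $2^{-s}\leq 2^{-G/2}$ when $s$ is large. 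Both are valid and draw on identical ingredients — (I)-(III) plus the identity $\sup_{s}y^{s}M_{s}^{-1}=\exp\Lambda(y)$ — but the paper's route is cleaner in that the $(1+s)^{2}$ factor becomes entirely inert, while yours makes explicit which mechanism supplies the exponential gain at each $s$-scale, at the cost of verifying $(1+s)^{2}M_{s+2}/M_{s}=\exp\{o(s)\}$ (from $\mathbf{(H2)}$) is dominated by the surplus exponential in each regime and choosing $T_{1}(M)$ large enough that $G/2$ exceeds the threshold where this kicks in. Two small precision points: in the $s\leq G/2$ regime the surplus exponential need not absorb the whole factor $K/(G-s-1)$ — one power of $K$ simply cancels against the $K^{-2}$ already produced, leaving $(Kr^{2})^{-1}$ as required; and your $C^{0}$ estimate as sketched needs a bit more care to extract $(Kr^{2})^{-1}$ from $\sum_{k\geq K}(K/2k)^{\Gamma(\pi Kr)\ln(\pi Kr)}$ (one needs $T_{1}$ large enough that the geometric decay beats the polynomial-in-$Kr$ growth), whereas the paper's $(4kr)^{-2}$ splitting hands this over for free.
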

\begin{proof} First by $\mathrm{(II)}$ and $\mathrm{(III)}$ in $\mathbf{(A)},$ for any  $|k|\geq K,$
we have
\begin{eqnarray*}
&&\Lambda(|2\pi k|r)-\Lambda(|2\pi k|(7/8)r)\nonumber \\
&\geq&
\{\ln (|2\pi k|r)-\ln (|2\pi k|(7/8)r)\}\Gamma(|2\pi k|(7/8)r)\ln (|2\pi k|(7/8)r) \nonumber\\
&\geq&\ln (8/7) \Gamma(|4k|r)\ln (|4k|r)>9^{-1}\Gamma(4Kr)\ln (4Kr).
\end{eqnarray*}
Moreover, by $\mathrm{(I)}$ in $\mathbf{(A)}$, we know there exists  $T_1=T_1(M)$,   such that  if $Kr \geq T_1$
then $\Gamma(4|k|r)>18, \forall |k|\geq K,$
which implies
\begin{equation*}
\begin{aligned}
\Lambda(|2\pi k|(7/8)r)-\Lambda(|2\pi k|(3/4)r)> 2 \ln (|4k|r).
\end{aligned}
\end{equation*}
Consequently, direct calculations show  that
\begin{eqnarray}\label{remaindersss}
 && \sum_{|k|\geq K}\exp\{-\Lambda(|2\pi k|r)\}|2\pi k(3r/4)|^{s}M_{s}^{-1}\nonumber\\
&\leq&\sum_{|k|\geq K}\exp\{-\Lambda(|2\pi k|r)\}
\exp\{\Lambda(|2\pi k|(3/4)r)\}\nonumber\\
&\leq&\sup_{|k|\geq K}\exp\{-\Lambda(|2\pi k|r)+\Lambda(|2\pi k|(7/8)r)\}\nonumber\\
&&\sum_{|k|\geq K}\exp\{-\Lambda(|2\pi k|(7/8)r)+\Lambda(|2\pi k|(3/4)r)\}\nonumber\\
&\leq&\exp\{-9^{-1}\Gamma(4 Kr)\ln (4Kr)\}\sum_{|k|\geq K}|4kr|^{-2}\nonumber\\
&\leq&(4Kr^2)^{-1}\exp\{-9^{-1}\Gamma(4 Kr)\ln (4Kr)\}.
\end{eqnarray}
Finally, by \eqref{fouriercoefficient}, we have
\begin{equation*}
\begin{aligned}
\|D_{\theta}^{s}\mathcal{R}_{K}f\|_{C^{0}}
\leq\|f\|_{M,r}\sum_{|k|\geq K}\exp\{-\Lambda(|2\pi k|r)\}|2\pi k|^{s},
\end{aligned}
\end{equation*}
then
\begin{equation*}
\begin{aligned}
\|\mathcal{R}_{K}f\|_{M,r/2}
&=3^{-1}4\pi^{2}\sup_{s\in\mathbb{N}}\big((r/2)^{s}(1+s)^{2}
\|D_{\theta}^{s}\mathcal{R}_{K}f\|_{C^{0}}M_{s}^{-1}\big)\\
&\leq\|f\|_{M,r}\sup_{s\in\mathbb{N}}3^{-1}4\pi^{2}
(1+s)^{2}(2/3)^{s}\\
&\sum_{|k|\geq K}\exp\{-\Lambda(|2\pi k|r)\}\{|2\pi k|(3r/4)\}^{s}M_{s}^{-1}\\
&\leq C(Kr^2)^{-1}\|f\|_{M,r}
\exp\{-9^{-1}\Gamma(4 Kr)\ln (4Kr)\},
\end{aligned}
\end{equation*}
where the  last inequality follows from \eqref{remaindersss}.

The conclusion  \eqref{inequivalentnormss} follows from similar computations, we thus omit the details.
\end{proof}

For the given function $f\in U_{r}^{M}(\mathbb{T},\mathbb{R}),$ we define the $\|\cdot\|_{\Lambda,r}$-
norm by
\begin{equation}\label{20201022}
\begin{aligned}
\|f\|_{\Lambda,r}
=\sum_{k\in\mathbb{Z}}|\widehat{f}(k)|\me^{\Lambda(|2\pi k|r)},
\end{aligned}
\end{equation}
with $\Lambda$ being the one defined by \eqref{definelambda}.
With the help of Lemma \ref{finallemma}, we can discuss the relationship between the
 spaces $\|\cdot\|_{M,r}$ and
$\|\cdot\|_{\Lambda,r}.$
\begin{lemma}\label{twonormrelationn}
Under the assumptions $\mathbf{(H1)}$ and $\mathbf{(H2)}$, we have
\begin{eqnarray}
\label{tttwonormrelations}  \|f\|_{M,r} &\leq& C\|f\|_{\Lambda,2r},\\
\label{tttwonormrelations2} \|f\|_{\Lambda,r/2} &\leq& (2\pi r)^{-1}(4+c_{M})\|f\|_{M,r},
\end{eqnarray}
where $c_{M}$ is a constant that only depends on the sequence $M$.
\end{lemma}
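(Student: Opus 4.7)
The plan is to translate between the Taylor-type norm $\|\cdot\|_{M,r}$ and the Fourier-type norm $\|\cdot\|_{\Lambda,r}$, using on one side the Legendre-type identity $y^s M_s^{-1} \leq \exp\{\Lambda(y)\}$ and on the other the Fourier-coefficient decay bound \eqref{fouriercoefficient}.

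For \eqref{tttwonormrelations} I would start from $\|D^s f\|_{C^0} \leq \sum_k |2\pi k|^s |\widehat{f}(k)|$, multiply by $(1+s)^2 r^s M_s^{-1}$, and absorb the polynomial weight via the elementary inequality $(1+s)^2 \leq (9/4)\,2^s$ at the cost of replacing $r$ by $2r$. Each summand is then bounded by $(9/4)|\widehat{f}(k)|\,|4\pi k r|^s M_s^{-1} \leq (9/4)|\widehat{f}(k)|\exp\{\Lambda(|4\pi k|r)\}$; summing over $k$ and taking the supremum over $s$ recognises $\|f\|_{\Lambda,2r}$ on the right and gives \eqref{tttwonormrelations} with an explicit $C=9c/4$.

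For \eqref{tttwonormrelations2} I would first apply \eqref{fouriercoefficient} to reduce the claim to bounding
\[
S := \sum_{k \in \Z} \exp\{\Lambda(|\pi k|r) - \Lambda(|2\pi k|r)\} \leq (2\pi r)^{-1}(4+c_M).
\]
Writing $s(y)$ for the maximizer from the proof of Lemma \ref{finallemma}, the identity $\Lambda(y)=s(y)\ln y - \ln M_{s(y)}$ together with the trivial lower bound $\Lambda(2y) \geq s(y)\ln(2y) - \ln M_{s(y)}$ yields the pointwise estimate $\exp\{\Lambda(y)-\Lambda(2y)\}\leq 2^{-s(y)}$. I would then group the terms by the integer value $n=s(|\pi k|r)$: the level $n=0$ corresponds to $|k|\leq 1/(\pi r)$ and contributes at most $2/(\pi r)+1$ terms of size $\leq 1$, producing the leading $(2\pi r)^{-1}\cdot 4$ piece; each level $n\geq 1$ corresponds to $|k|$ lying in an interval of length $(M_{n+1}/M_n - M_n/M_{n-1})/(\pi r)$, weighted by $2^{-n}$. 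A short Abel summation reduces the tail to the single series $\sum_{n\geq 1} 2^{-n} M_{n+1}/M_n$, which defines $c_M$.

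The only nontrivial point is the convergence of this tail series, and this is precisely where $\mathbf{(H2)}$ enters: since $\ln(M_{n+1}/M_n)=o(n)$, for any $\varepsilon<\ln 2$ the ratio $M_{n+1}/M_n$ is eventually dominated by $e^{(\ln 2 - \varepsilon)n}$, making $2^{-n}M_{n+1}/M_n$ geometrically summable. Without $\mathbf{(H2)}$ the ratios could grow at rate $2^n$ or faster and the series would diverge; thus $\mathbf{(H2)}$ is exactly what allows $c_M$ to be a constant depending only on the sequence $M$.
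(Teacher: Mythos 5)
Your proof of \eqref{tttwonormrelations} is essentially the paper's: both convert the $\sup_s$ into a termwise Legendre-duality bound $y^s M_s^{-1}\leq\exp\{\Lambda(y)\}$ after absorbing the polynomial weight $(1+s)^2$ into a factor of $2^s$, which costs one doubling of the width; the only cosmetic difference is that you apply the duality inside the $k$-sum while the paper factors out a $\sup_k$ first.

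For \eqref{tttwonormrelations2} you take a genuinely different route. The paper chooses the test index $s(y)+2$ (rather than your $s(y)$) in the infimum defining $\exp\{-\Lambda(2y)\}$, which yields the sharper pointwise bound $\exp\{\Lambda(y)-\Lambda(2y)\}\leq c_M(2y)^{-2}$ with $c_M=\sup_s 2^{-s}M_{s+2}M_s^{-1}$; this converts the whole $k$-sum into the elementary $\sum_k k^{-2}$ and the proof is two lines. You instead keep the weaker bound $2^{-s(y)}$, then resum combinatorially by stratifying $k$ according to the value $n=s(|\pi k|r)$ and applying Abel summation, arriving at $\sum_{n\geq1}2^{-n}M_{n+1}/M_n$. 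Both approaches use $\mathbf{(H2)}$ in exactly the same guise (finiteness of $\sup$ or $\sum$ of $2^{-n}$ times successive ratios of $M_n$); your version makes the role of $\mathbf{(H2)}$ more transparent as a geometric-convergence threshold, while the paper's is shorter and produces the clean $1/r$ scaling immediately, without needing to argue that the $O(1)$ counting errors from ``number of integers in an interval of length $L$" and the constant $\sum 2^{-n}$ tail are absorbed into $(2\pi r)^{-1}(4+c_M)$ for $r\leq 1$. You should make that absorption explicit if you write this up, and note that the $n=0$ stratum actually extends to $|k|\leq M_1/(M_0\pi r)$ rather than $1/(\pi r)$, though this does not affect the $1/r$ scaling. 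Neither point is a gap; the argument as outlined does go through.
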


\begin{proof}
First, \eqref{definelambda} implies that for any $y>0$, any $s\in \N$,
\begin{equation*}
\exp\{-\Lambda(y)\}y^{s} \leq M_{s},
\end{equation*}
which yields
\begin{equation*}
\begin{aligned}
\exp\{-\Lambda(yr)\}y^{s}=\exp\{-\Lambda(yr)\}(yr)^{s}r^{-s} \leq M_{s}r^{-s}, \forall yr>0.
\end{aligned}
\end{equation*}
Then
\begin{equation*}
\begin{aligned}
\sup_{k\in\mathbb{Z}}\exp\{-\Lambda(|2\pi k|2r)\} |2\pi k|^{s}\leq M_{s}(2r)^{-s}.
\end{aligned}
\end{equation*}
Thus for any $f\in U_{r}^{M}(\mathbb{T},\mathbb{R})$, for any $s\in\mathbb{N},$
we get
\begin{equation*}
\begin{aligned}
\|D_{\theta}^{s}f\|_{C^{0}}&
\leq\sum_{k\in\mathbb{Z}}|\widehat{f}(k)||2\pi k|^{s}\\
&\leq\sup_{k\in\mathbb{Z}}\exp\{-\Lambda(|2\pi k|2r)\}|2\pi k|^{s}
\sum_{k\in\mathbb{Z}}|\widehat{f}(k)| \exp\{\Lambda(|2\pi k|2r)\}\\
&\leq\|f\|_{\Lambda,2r}M_{s}(2r)^{-s},
\end{aligned}
\end{equation*}
which implies that
\begin{equation*}
\begin{aligned}
\|f\|_{M,r}&=3^{-1}4\pi^{2}\sup_{s\in\mathbb{N}}\big(r^{s}(1+s)^{2}
\|D_{\theta}^{s}f\|_{C^{0}}M_{s}^{-1}\big)\\
&\leq\|f\|_{\Lambda,2r}\sup_{s\in\mathbb{N}}3^{-1}4\pi^{2}2^{-s}(1+s)^{2}<C\|f\|_{\Lambda,2r}.
\end{aligned}
\end{equation*}

Now we turn to the inequality in \eqref{tttwonormrelations2}.
Easily, for $y\geq1,$ we have
\begin{equation*}
\begin{split}
\exp\{-\Lambda(2y)+\Lambda{(y)}\}&=\inf_{s\in\mathbb{N}}\{(2y)^{-s}M_{s}\} y^{s(y)}M_{s(y)}^{-1}\\
&\leq (2y)^{-(s(y)+2)} M_{s(y)+2}y^{s(y)} M_{s(y)}^{-1}\leq c_{M}(2y)^{-2},
\end{split}
\end{equation*}
where (by $\mathbf{(H2)}$)
\begin{equation*}
\begin{aligned}
c_{M}:=\sup_{s\in\mathbb{N}}\{2^{-s}M_{s+2}M_{s}^{-1}\}<\infty.
\end{aligned}
\end{equation*}
Note $\Lambda(y)=0,$ if $y\leq1$. Consequently, we have
\begin{equation*}
\begin{split}
\|f\|_{\Lambda,r/2}&=\sum_{k\in\Z}|\widehat{f}(k)|\exp\{\Lambda{(|\pi k|r)}\}\\
&\leq\|f\|_{M,r}\sum_{k\in\Z}\exp\{-\Lambda(|2\pi k|r)+\Lambda{(|\pi k|r)}\}\\
&=\|f\|_{M,r}\{\sum_{|k|<(\pi r)^{-1}}+\sum_{|k|\geq (\pi r)^{-1}}\}\exp\{-\Lambda(|2\pi k|r)+\Lambda{(|\pi k|r)}\}\\
&\leq 2(\pi r)^{-1}\|f\|_{M,r}+c_{M}\|f\|_{M,r}\sum_{|k|\geq (\pi r)^{-1}}(|2\pi k|r)^{-2}\\
&\leq2(\pi r)^{-1}\|f\|_{M,r}+(2\pi r)^{-1}c_{M}\|f\|_{M,r}.
\end{split}
\end{equation*}
%
\begin{comment}
Moreover,
%
\begin{equation}\label{againnormrelation}
\begin{aligned}
\|D_{\theta}f(\theta)\|_{\Lambda,r/2}\leq (2\pi r^{2})^{-1}(8+\widetilde{c}_{M})\|f\|_{M,r},
\end{aligned}
\end{equation}
%
Note
%
\begin{equation*}
\begin{split}
\exp\{-\Lambda(2y)+\Lambda{(y)}\}\leq y^{s(y)} M_{s(y)}^{-1}(2y)^{-(s(y)+3)} M_{s(y)+3}\leq \widetilde{c}_{M}(2y)^{-3},
\end{split}
\end{equation*}
%
where
%
\begin{equation*}
\begin{aligned}
\widetilde{c}_{M}:=\sup_{s\in\mathbb{N}}\{2^{-s}M_{s+3}M_{s}^{-1}\}<\infty.
\end{aligned}
\end{equation*}
%
Thus with the calculations above, we get the conclusion in \eqref{againnormrelation}.

%
\begin{equation*}
\begin{split}
\|D_{\theta}f(\theta)\|_{\Lambda,r}&=\sum_{|k|>0}|\widehat{f}(k)||2\pi k|
\leq\|f\|_{M,r}\sum_{|k|>0}\exp\{-\Lambda(|2\pi k|r)\}|2\pi k|\\
&=\|f\|_{M,r}\big\{\sum_{0<|k|<K}+
\sum_{|k|\geq K}\big\}\exp\{-\Lambda(|2\pi k|r)\}|2\pi k|\\
&\leq 2\pi K\|f\|_{M,r}+2\|f\|_{M,r}
\sup_{k}\exp\{-\Lambda(|\pi k|r)\}|\pi k|\\
&\ \ \sum_{|k|\geq K}
\exp\{-\Lambda(|2\pi k|r)+\Lambda(|\pi k|r)\}\\
&\leq 2\pi K^{2}\|f\|_{M,r}+2\|f\|_{M,r}M_{1}r^{-1}(Kr^{2})^{-1}<3\pi K^{2}\|f\|_{M,r},
\end{split}
\end{equation*}
%
where
the third inequality is by \eqref{remaindersss} and \eqref{20201001}.
\end{comment}

\end{proof}

We have stated the above lemma only for $\|f\|_{\Lambda,r/2}$ in \eqref{tttwonormrelations2} as this is the
only case we shall need; but clearly one could obtain an estimate for any $\|\partial ^{s}f\|_{\Lambda,r/2},\ s\in\N,$ by the
similar discussions above.

\section{The inductive step}\label{Inthissection}

\subsection{Sketch of the proof}\label{Sketchoftheproof}

The proof of Theorem~\ref{mainresults} is based on a non-standard KAM scheme which was first developed in \cite{KWYZ18}.
Now let us  briefly  introduce  the main idea of the proof.
We start from the cocycle  $(\alpha, R_{\rho_f}\me^{F_n})$ with $\|F_n\|$ of size  $\varepsilon_{n}$,  to conjugate it into $(\alpha, R_{\rho_{f}} \me^{F_{n+1}})$ with a smaller perturbation, a
crucial ingredient is to solve the  homological equations
\begin{eqnarray}
\label{ho-di}  f_{1}(\cdot+\alpha)-f_{1}=- (g_{1}-[g_1]_{\theta}),
\end{eqnarray}
$$ \me^{4\pi \mi\rho_f}f_{2}(\cdot+\alpha)-f_{2}+g_{2}=0.$$
However, if $\alpha$ is Liouvillean,  \eqref{ho-di} cann't be solved at all, even if in the analytic category.
This is  essentially
different from the classical KAM scheme.  Therefore, we have to leave $g_{1}(\theta)$ (at least the resonant terms of $g_{1}(\theta)$) into the normal form.  As a result, from the second step of iteration we need to consider the modified cocycle $(\alpha, R_{\rho_f+(2\pi)^{-1}g(\theta)}\me^{F_{n}(\theta)})$,  thus the second  equation  in (\ref{ho-di}) is of the form
\begin{equation*}
\me^{2\mi(2\pi \rho_f+g(\theta))}f(\cdot+\alpha)-f+g_2=0.
\end{equation*}

In order to get desired result, we distinguish the discussions into three steps. In the first step we eliminate the
lower order terms of $g(\theta)\in U_r^{M}(\T,\R)$ by solving the equation
$$
v(\theta+\alpha)-v(\theta)=-(\mathcal{T}_{\overline{Q}_{n}}g-[g]_{\theta}).
$$
Although  $\|g(\theta)\|$ is of size  $\varepsilon_{0}$, $\|\me^{\mi v}\|_{r}$ could  be  very large in Liouvillean frequency case. To control $\|\me^{\mi v}\|_{r}$,
the trick is to control $\|\mathrm{Im}v(\theta)\|$ at the cost of
reducing the analytic radius greatly, which was first developed in analytic case in \cite{YouZ14}.
The key point here is that $v(\theta)$ is in fact  a trigonometric polynomial,  one can analytic  continue $v(\theta)$ to become a real analytic function, and  the ``width" $r$ just plays the role of
analytic radius. Therefore, one can shrink $r$ greatly in order to control $\|\me^{\mi v}\|_{r}$ (Lemma~\ref{rotationlemma}). Consequently, the  ``width" will go to zero rapidly, and the convergence of the KAM iteration only works in the $C^{\infty}$ category.

The second step is to make the perturbation much smaller by solving the homological equation
$$
\me^{2\mi(2\pi \rho_f+\widetilde{g}(\cdot))}f(\cdot+\alpha)-f+h=0,
$$
where $\|\widetilde{g}\|=O(\|F_{n}\|).$ By the method of
diagonally dominant \cite{KWYZ18}, we can solve its approximation equation and then to make the perturbation  as small as we desire (Lemma~\ref{iterationlemma}).

By these two steps, we can already get $C^{\infty}$ almost reducible result (Corollary \ref{corollarylocalalmost}).
However,  to get $C^{\infty}$ rotations reducible result,  at the end of one
KAM step  we need to inverse the first step, such that the conjugation is
close to the identity (Lemma~\ref{newlemma}).

For simplicity, in the following parts we will shorten $U_{r}^{M}(\T,*)$ and $\|\cdot\|_{M,r}$ as
$U_{r}(\T,*)$ and $\|\cdot\|_{r}$, also the letter  $C$ denotes  suitable (possibly different) large constant that do not depend on the iteration step.

\subsection{Main iteration lemma}
For the functions $\Lambda(x)$ and $\Gamma(x)$ in Lemma~\ref{finallemma}, by $\mathrm{(I)}$ in $\mathbf{(A)}$ we know that there exists
$\widetilde{T}\geq T_1$, where $T_1$ is defined in Lemma \ref{projectionestimate},  such that for any $x\geq \widetilde{T}$
\begin{eqnarray}
\label{newaboutchi} \Gamma(x) &\geq& 64\mathbb{A}^{8}\tau^{4}, \\
\label{newaboutchi2} \Lambda(x) &\geq& \ln x.
\end{eqnarray}

 Denote
\begin{equation}\label{qnestimate}
\begin{split}
T=\max\{c_{M}^3, \widetilde{T}^{3},\ (2^{-1}r)^{-12}, (4\gamma^{-1})^{2\tau}\},
\end{split}
\end{equation}
where $c_{M}$ is the one in \eqref{tttwonormrelations2}.  Then for the $T$ defined above,
we claim that there exists $n_{0}$ such that $Q_{n_{0}+1}\leq T^{\mathbb{A}^{4}}$ and $\overline{Q}_{n_{0}+1}\geq T.$
Indeed, let $m_{0}$ be such that $Q_{m_{0}}\leq T\leq Q_{m_{0}+1}.$
If $\overline{Q}_{m_{0}}\geq T$, then we set $n_{0}=m_{0}-1.$ Otherwise, if
$\overline{Q}_{m_{0}}\leq T,$ by the definition
of $(Q_{k})$, it then holds
$Q_{m_{0}+1}\leq T^{\mathbb{A}^{4}}.$ By the selection,
$\overline{Q}_{m_{0}+1}\geq T,$ then $n_{0}=m_{0}$
satisfy our needs. In the following we will shorten $n_{0}$ as $0,$ that is $\overline{Q}_{n}$ stands for $\overline{Q}_{n+n_{0}}.$

Without loss of generality we assume $0<r_{0}:=2^{-1}r\leq1$. Set
\begin{equation}\label{varepsilonnestimate}
\begin{split}
  \widetilde{\varepsilon}_{0}=0, \qquad   \varepsilon_{0}= T^{-8\mathbb{A}^{4}\tau^{2}},
\end{split}
\end{equation}
then  $\varepsilon_{0}$ just depends on $\gamma,\tau,r,M,$
but not on $\alpha.$ Once we have this, we can define the
iterative parameters as following, for $n\geq1$
\begin{equation}\label{parameter}
\begin{split}
\overline{r}_{n}=2\overline{Q}_{n}^{-2}r_{0},  & \qquad
r_{n}=\overline{Q}_{n-1}^{-2}r_{0}.\\
\varepsilon_{n}=\varepsilon_{n-1}\overline{Q}_{n}^{-\Gamma^{\frac{1}{2}}(\overline{Q}_{n}^{\frac{1}{3}})},
&\qquad \widetilde{\varepsilon}_{n}=C\sum_{l=0}^{n-1}\varepsilon_{l}.
\end{split}
\end{equation}
To simplify the notations,  for any $g\in C^{0}(\mathbb{T},\mathbb{R}),$ we denote
\begin{equation*}
\begin{split}
R_{g}:=\left(
\begin{matrix}
\cos2\pi g\ \ -\sin2\pi g
\\
\sin2\pi g\  \ \cos2\pi g
\end{matrix}
\right)=\me^{-2\pi g J},\ J=\left(
\begin{matrix}
0\ \ \ \  1
\\
-1\ \ \ 0
\end{matrix}
\right),
\end{split}
\end{equation*}
and set
\begin{equation*}
\begin{split}
\mathcal{F}_{r}(\rho_{f},\eta, \widetilde{\eta}):=
\Big\{(\alpha,\ R_{\rho_f+(2\pi)^{-1}g(\theta)}\me^{F(\theta)}):  &\ \|g\|_{r}\leq \eta,\|F\|_{r}\leq\widetilde{\eta},\\  & \rho_{f}=\rho(\alpha,\ R_{\rho_f+(2\pi)^{-1}g(\theta)}
\me^{F(\theta)})
\Big\}.
\end{split}
\end{equation*}
Then the main inductive lemma is the following:

\begin{proposition}\label{rotationproposition}
 Assume that
$ \rho_f \in DC_{\alpha}(\gamma,\tau)$, then for $n\geq1,$ the cocycle
\begin{equation}\label{totalbeforerotation}
\begin{split}
(\alpha,\ R_{\rho_{f}+(2\pi)^{-1}g_{n}}\me^{F_{n}(\theta)})
\in\mathcal{F}_{r_{n}}(\rho_{f},\widetilde{\varepsilon}_{n}, \varepsilon_{n}),
\end{split}
\end{equation}
with $\mathcal{R}_{\overline{Q}_{n}}g_{n}=0$ can be conjugated to
\begin{equation}\label{totalafterrotation}
\begin{split}
(\alpha,\ R_{\rho_{f}+(2\pi)^{-1}g_{n+1}}\me^{F_{n+1}(\theta)})
\in\mathcal{F}_{r_{n+1}}(\rho_{f},\widetilde{\varepsilon}_{n+1}, \varepsilon_{n+1}),
\end{split}
\end{equation}
with $\mathcal{R}_{\overline{Q}_{n+1}}g_{n+1}=0$ by the conjugation $\Phi_{n}$ with the estimate
\begin{equation}\label{totalrotationestimate}
\begin{split}
\|\Phi_{n}-I\|_{r_{n+1}}\leq C\varepsilon_{n}^{\frac{1}{2}}.
\end{split}
\end{equation}
\end{proposition}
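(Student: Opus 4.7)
The plan is to realize the three-step KAM procedure outlined in Section~\ref{Sketchoftheproof} as a composition of three conjugations $\Phi_n = (\Phi_n^{(3)}) \circ (\Phi_n^{(2)}) \circ (\Phi_n^{(1)})$, after which the final radius, perturbation, and $g$-norm match the parameters $(r_{n+1},\widetilde\varepsilon_{n+1},\varepsilon_{n+1})$ and the conjugation is close to the identity in the $\|\cdot\|_{r_{n+1}}$ topology.

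First, I would treat the nonresonant part of the ``phase'' $g_n$. Since by hypothesis $\mathcal{R}_{\overline Q_n}g_n=0$, the function $v$ determined by the cohomological equation
\begin{equation*}
v(\theta+\alpha)-v(\theta)=-(2\pi)^{-1}\bigl(g_n(\theta)-[g_n]_\theta\bigr)
\end{equation*}
is a trigonometric polynomial of degree less than $\overline Q_n$, with Fourier coefficients $\widehat v(k)=(2\pi)^{-1}\widehat{g}_n(k)(e^{2\pi i k\alpha}-1)^{-1}$. Conjugating by $\Phi_n^{(1)}(\theta)=R_{v(\theta)}$ rotates the normal form from $R_{\rho_f+(2\pi)^{-1}g_n}$ to $R_{\rho_f+(2\pi)^{-1}[g_n]_\theta}$ and transforms $e^{F_n}$ into $e^{R_vF_nR_{-v}}$. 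The difficulty is that when $\alpha$ is Liouvillean the denominators $\|k\alpha\|_\Z$ are tiny for $k$ comparable to $\overline Q_n$, so $\|v\|$ may be huge on a strip of width $r_n$. The remedy, to be packaged as Lemma~\ref{rotationlemma}, is the observation that $v$ extends as an entire trigonometric polynomial, so by shrinking the width from $r_n$ down to $\overline r_n=2\overline Q_n^{-2}r_0$ (a drastic cut enforced by the $\overline Q_n^{-2}$ factor) one makes $\|\mathrm{Im}\,v\|_{\overline r_n}$ small and hence $\|e^{iv}\|_{\overline r_n}$ controllable. The output is a cocycle in $\mathcal F_{\overline r_n}(\rho_f,\widetilde\varepsilon_n+C\varepsilon_n,\varepsilon_n')$ with $\varepsilon_n'$ of the same order as $\varepsilon_n$.

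Second, with the $\theta$-dependent part of $g$ removed, I would reduce the perturbation by conjugating with $\Phi_n^{(2)}=e^H$ for some small $H\in sl(2,\R)$-valued function, i.e.\ approximately solving the twisted homological equation
\begin{equation*}
e^{2 i(2\pi\rho_f+\widetilde g(\theta))}f(\theta+\alpha)-f(\theta)+h(\theta)=0,
\end{equation*}
where $\widetilde g=O(\|\tilde F_n\|_{\overline r_n})$ collects the remaining average of $g_n$ and the small correction from Step~1. The diagonally dominant method of \cite{KWYZ18}, to be formalized in Lemma~\ref{iterationlemma}, first truncates the Fourier expansion at level $\overline Q_{n+1}$, solves the finite linear system using only the Diophantine condition $\rho_f\in DC_\alpha(\gamma,\tau)$ and the smallness of $\widetilde g$ (no condition on $\alpha$ itself), and absorbs the small resonant low-frequency modes (those with $|k|\le \overline Q_{n+1}$ that are Diophantine-bad relative to $\rho_f$) into a new polynomial phase $g_{n+1}$ with $\mathcal{R}_{\overline Q_{n+1}}g_{n+1}=0$. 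The high-frequency tail is estimated via Lemma~\ref{projectionestimate}, and this is precisely where the factor $\overline Q_{n+1}^{-\Gamma^{1/2}(\overline Q_{n+1}^{1/3})}$ in the definition of $\varepsilon_{n+1}$ comes from: the function $\Gamma$ provided by Lemma~\ref{finallemma} furnishes the super-polynomial gain needed to beat the polynomial losses incurred by shrinking the width.

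Third, to ensure that the overall conjugation is close to the identity (not merely bounded), I would compose with $\Phi_n^{(3)}=R_{-v}$ and absorb this inverse rotation into the analysis, as in Lemma~\ref{newlemma}. Because $\Phi_n^{(2)}$ is $O(\varepsilon_n^{1/2})$-close to the identity in a suitable norm, the product $R_{-v}\Phi_n^{(2)}R_v$ remains $O(\varepsilon_n^{1/2})$-close to the identity once one restricts to the final width $r_{n+1}=\overline Q_n^{-2}r_0$, yielding \eqref{totalrotationestimate}. The main obstacle is precisely the bookkeeping across the shrinking widths $r_n\to\overline r_n\to r_{n+1}$: because $\overline r_n\to 0$, convergence cannot be achieved in any fixed analytic strip, and it is the interplay between $\mathbf{(H1)}$--$\mathbf{(H2)}$, the function $\Gamma$, and the CD-bridge structure from Lemma~\ref{knandknrelation} that allows the super-polynomial smallness $\varepsilon_{n+1}/\varepsilon_n=\overline Q_{n+1}^{-\Gamma^{1/2}(\overline Q_{n+1}^{1/3})}$ to dominate the polynomial losses $\overline Q_n^{O(1)}$ and close the induction; verifying this interplay rigorously will be the technical heart of the argument.
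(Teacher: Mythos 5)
Your three-step decomposition --- $\Phi_n=\me^{v_nJ}\Psi_n\me^{-v_nJ}$ built from Lemma~\ref{rotationlemma}, Lemma~\ref{iterationlemma} and Lemma~\ref{newlemma} --- is exactly the paper's proof, including the width-shrinking trick and the observation that the super-polynomial factor $\overline Q_{n+1}^{-\Gamma^{1/2}(\overline Q_{n+1}^{1/3})}$ must dominate the polynomial losses $\overline Q_n^{O(1)}$ coming from the CD-bridge structure. One small correction to your internal description of Step~2: the modes that accumulate into the new phase $g_{n+1}$ are not ``Diophantine-bad'' off-diagonal terms (Lemma~\ref{bettersmalldivisor} shows that for $|k|<\overline Q_{n+1}^{1/2}$ the divisors $\|k\alpha\pm 2\rho_f\|_\Z$ are in fact \emph{good}, so those modes are eliminated); rather, $g_{n+1}$ is the truncation $-\mathcal T_{\overline Q_{n+1}}\widetilde f$ of the \emph{diagonal} part of the resonant piece in the $su(1,1)$ decomposition, i.e.\ the directions whose divisor is $\|k\alpha\|_\Z$ with no $\rho_f$, which produce rotations and therefore can legitimately live in the rotation part of the normal form. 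Likewise, after Step~1 the phase is reduced to the constant $|\widehat g_n(0)|\le C\varepsilon_n$ and then absorbed into $\widetilde F_n$, so the middle parameter should be $0$ rather than $\widetilde\varepsilon_n+C\varepsilon_n$; neither misstatement changes the architecture, but both would need fixing in a written-out proof.
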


The construction of the conjugation in  Proposition \ref{rotationproposition} is divided into three steps given in Lemma~\ref{rotationlemma}, Lemma~\ref{iterationlemma} and  Lemma~\ref{newlemma} of the following.

\begin{lemma}\label{rotationlemma}
For $n\geq1,$ the cocycle
\begin{equation}\label{beforerotation}
\begin{split}
(\alpha,\ R_{\rho_{f}+(2\pi)^{-1}g_{n}}\me^{F_{n}(\theta)})
\in\mathcal{F}_{r_{n}}(\rho_{f},\widetilde{\varepsilon}_{n}, \varepsilon_{n}),
\end{split}
\end{equation}
with $\mathcal{R}_{\overline{Q}_{n}}g_{n}=0$ can be conjugated to the cocycle
\begin{equation}\label{afterrotation}
\begin{split}
(\alpha,\ R_{\rho_{f}}\me^{\widetilde{F}_{n}(\theta)})
\in\mathcal{F}_{\overline{r}_{n}}(\rho_{f},0,
C\varepsilon_{n}),
\end{split}
\end{equation}
via the conjugation $(0,\me^{-v_{n}J})$ with $\|\me^{-v_{n}J}\|_{\overline{r}_{n}}\leq C.$
\end{lemma}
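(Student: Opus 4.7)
The plan is to conjugate the given cocycle by the $\T$-periodic rotation $B(\theta)=\me^{-v_n(\theta)J}$, where $v_n$ is the trigonometric polynomial solving the finite homological equation
\begin{equation*}
v_n(\theta+\alpha)-v_n(\theta)=-\bigl(g_n(\theta)-[g_n]_\theta\bigr).
\end{equation*}
Since $\mathcal{R}_{\overline Q_n}g_n=0$, one defines $v_n$ by the explicit Fourier formula $\widehat v_n(k)=-\widehat g_n(k)/(\me^{2\pi\mi k\alpha}-1)$ for $0<|k|<\overline Q_n$ and $\widehat v_n(0)=0$. Because $\me^{-vJ}$ and any $R_g$ are both rotations and hence commute, a direct computation gives
\begin{equation*}
B(\theta+\alpha)R_{\rho_f+(2\pi)^{-1}g_n(\theta)}\me^{F_n(\theta)}B(\theta)^{-1}=R_{\rho_f+(2\pi)^{-1}[g_n]_\theta}\,\me^{\mathrm{Ad}_{B(\theta)}F_n(\theta)},
\end{equation*}
the equality on the right of $R$ being obtained from the homological equation, which collapses $g_n+v_n(\cdot+\alpha)-v_n$ to the constant $[g_n]_\theta$. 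Thus it will suffice to control $\|v_n\|_{\overline r_n}$ and to absorb the remaining constant phase.

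The core step is the estimate of $\|v_n\|_{\overline r_n}$. For every $0<|k|<\overline Q_n$, the best-approximation property of continued fractions gives $\|k\alpha\|_\Z\geq(2\overline Q_n)^{-1}$, so $|\widehat v_n(k)|\leq C\overline Q_n\,|\widehat g_n(k)|$. The crucial feature is that the much smaller width $\overline r_n=2\overline Q_n^{-2}r_0$ satisfies $|2\pi k|\cdot 2\overline r_n\leq 4\pi r_0\overline Q_n^{-1}\leq 1$ for every Fourier mode appearing in $v_n$, so $\Lambda(|2\pi k|\cdot 2\overline r_n)=0$ on the whole support of $\widehat v_n$. Summing Fourier coefficients then gives
\begin{equation*}
\|v_n\|_{\Lambda,2\overline r_n}=\sum_{0<|k|<\overline Q_n}|\widehat v_n(k)|\leq C\overline Q_n^{2}\,\widetilde\varepsilon_n,
\end{equation*}
and the first inequality in Lemma~\ref{twonormrelationn} transfers this to $\|v_n\|_{\overline r_n}\leq C\overline Q_n^{2}\,\widetilde\varepsilon_n$, which is tiny by the parameter choice \eqref{parameter}. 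The Banach-algebra bound (Lemma~\ref{banachalgebra}) applied term by term to the exponential series then yields $\|B\|_{\overline r_n}\leq C$ and $\|\mathrm{Ad}_B F_n\|_{\overline r_n}\leq C\varepsilon_n$.

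Finally I absorb the constant phase $[g_n]_\theta$. Since conjugation preserves the fibered rotation number, the output cocycle $R_{\rho_f+(2\pi)^{-1}[g_n]_\theta}\me^{\mathrm{Ad}_B F_n}$ still has rotation number $\rho_f$; applying \eqref{rotationnumberper} with $\rho=\rho_f+(2\pi)^{-1}[g_n]_\theta$ then forces $|[g_n]_\theta|\leq C\|\mathrm{Ad}_B F_n\|_{C^0}\leq C\varepsilon_n$. Writing $R_{\rho_f+(2\pi)^{-1}[g_n]_\theta}=R_{\rho_f}\me^{-[g_n]_\theta J}$ and merging the two small $sl(2,\R)$-exponentials via the Baker--Campbell--Hausdorff formula produces $R_{\rho_f}\me^{\widetilde F_n(\theta)}$ with $\|\widetilde F_n\|_{\overline r_n}\leq C\varepsilon_n$, placing the conjugated cocycle in $\mathcal{F}_{\overline r_n}(\rho_f,0,C\varepsilon_n)$ as claimed. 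The main obstacle lies in the preceding paragraph: in the Liouvillean regime the divisors $\|k\alpha\|_\Z$ for $|k|<\overline Q_n$ may be as small as $\overline Q_n^{-1}$, so $v_n$ is large on any fixed width; the compensating mechanism is that $v_n$ is genuinely a finite-degree trigonometric polynomial, so shrinking the width down to $\overline r_n\sim \overline Q_n^{-2}r_0$ completely annihilates the ultra-differentiable weight $\Lambda$ on the relevant Fourier modes and reduces the estimate to a purely polynomial one---at the price of forcing the overall scheme to converge only in the $C^\infty$ topology.
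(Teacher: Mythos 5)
Your algebraic setup (solving the finite homological equation, using that $\me^{-vJ}$ and $R_g$ commute, absorbing the constant $[g_n]_\theta$ via rotation-number invariance and BCH) is correct and matches the paper. The problem is the estimate of $\|\me^{-v_nJ}\|_{\overline r_n}$, which is the heart of the lemma, and here the argument fails.

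You claim $\|v_n\|_{\Lambda,2\overline r_n}=\sum_{0<|k|<\overline Q_n}|\widehat v_n(k)|\le C\overline Q_n^2\widetilde\varepsilon_n$, and that this is ``tiny by the parameter choice \eqref{parameter}.'' But it is not: by \eqref{parameter}, $\widetilde\varepsilon_n=C\sum_{l=0}^{n-1}\varepsilon_l$ is a \emph{partial sum}, bounded below by $\varepsilon_0>0$; it converges to a fixed small constant but does not shrink. Meanwhile $\overline Q_n\to\infty$, so $\overline Q_n^2\widetilde\varepsilon_n\to\infty$ and your bound on $\|v_n\|_{\overline r_n}$ diverges. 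Exponentiating via the Banach-algebra inequality then gives $\|\me^{-v_nJ}\|_{\overline r_n}\le\exp\{\|v_n\|_{\overline r_n}\}$, which is unbounded, so you cannot conclude $\|\me^{-v_nJ}\|_{\overline r_n}\le C$. Shrinking the width does kill the $\Lambda$-weight on the relevant modes, but it does nothing to the Fourier coefficients $|\widehat v_n(k)|\sim\overline Q_n|\widehat g_n(k)|$, which carry the Liouvillean small divisors.

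The missing idea is the one emphasised in Lemma~\ref{abstractderivatives}: since $v_n$ is real on $\T$, the modulus of $\me^{\mi\widetilde v_n(\vartheta)}$ is $\exp\{-\mathrm{Im}\,\widetilde v_n(\vartheta)\}$, so one only needs to bound the \emph{imaginary part of the analytic continuation} on the thin strip, not $v_n$ itself. Writing $\widetilde v_n(\vartheta)-v_n(\theta)=\sum\widehat v_n(k)\me^{2\pi\mi k\theta}(\me^{-2\pi k\widetilde\theta}-1)$ and using $|\me^{-2\pi k\widetilde\theta}-1|\le 16\pi|k|\overline Q_n^{-2}$ for $|\widetilde\theta|\le 4\overline Q_n^{-2}$ produces an extra factor $\overline Q_n^{-2}$ that exactly cancels the $\overline Q_n$ loss from the small divisors (and then some), yielding $\|\mathrm{Im}\,\widetilde v_n\|^*_{4\overline Q_n^{-2}}\le\|g_n\|_{r_n}$, which \emph{is} bounded, and hence $\|\me^{\mi v_n}\|_{\overline r_n}\le C$. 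Your argument estimates $\|v_n\|$ (or equivalently $\|\widetilde v_n\|^*$, with no cancellation on the real axis) rather than $\|\mathrm{Im}\,\widetilde v_n\|^*$, and that is precisely the difference between a divergent and a bounded exponent.
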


Before giving the proof of Lemma~\ref{rotationlemma} we give an auxiliary lemma.
To this end, for $f(\theta)= \sum_{k\in\mathbb{Z}}
\widehat{f}(k)\me^{2\pi\mi k \theta }\in U_{r}(\mathbb{T},\mathbb{R})$, we
set $\vartheta=\theta+\mi\widetilde{\theta}, (\theta\in\mathbb{T},|\widetilde{\theta}|\leq r)$
\begin{equation*}
\begin{aligned}
\widetilde{f}(\vartheta )
=\sum_{k\in\mathbb{Z}}\widehat{f}(k)\me^{2\pi \mi k(\theta+\mi\widetilde{\theta})}.
\end{aligned}
\end{equation*}
Then we, formally,
define the analytic norm
\begin{equation*}
\begin{aligned}
\|\widetilde{f}\|_{r}^{*}=\sum_{k\in\mathbb{Z}}
|\widehat{f}(k)|\sup_{|\widetilde{\theta}|\leq r,\theta\in\mathbb{T}}\big|\me^{2\pi \mi k(\theta+\mi\widetilde{\theta})}\big|
=\sum_{k\in\mathbb{Z}}|\widehat{f}(k)|\me^{|2\pi k|r}.
\end{aligned}
\end{equation*}
If $\mathrm{Im} \vartheta=\widetilde{\theta}=0,$
then $\widetilde{f}(\vartheta)=f(\theta),$ and if $0<|\mathrm{Im} \vartheta|\leq r,$ one has
\begin{equation*}
\begin{aligned}
\|f\|_{\Lambda,r}=\sum_{k\in\mathbb{Z}}
|\widehat{f}(k)|\me^{\Lambda(|2\pi k|r)}\leq \|\widetilde{f}\|_{r}^{*}.
\end{aligned}
\end{equation*}
In general, $\|\widetilde{f}\|_{r}^{*}=\infty$, however,  if $f$ is a trigonometric polynomial, then
$\widetilde{f}$ really defines a real analytic function in the strip $|\mathrm{Im} \vartheta|\leq r,$ motivated by this we have the following:

\begin{lemma}\label{abstractderivatives}
Assume that $v$ is the solution of
\begin{equation}\label{B_homeq}
v(\theta+\alpha)-v(\theta)=-(\mathcal{T}_{\overline{Q}_{n}}g-[g]_{\theta}),
\end{equation}
where $g\in U_{r_{n}}(\mathbb{T},\mathbb{R})$ with $\|g(\theta)\|_{r_{n}} \leq C \varepsilon_{0}. $
Then
\begin{equation*}
\|\me^{\mathrm{i}v(\theta)}\|_{\overline{r}_{n}}\leq C.
\end{equation*}
\end{lemma}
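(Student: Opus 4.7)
The key idea is that although $v$ itself may have a huge $M$-ultra-differentiable norm at width $\overline{r}_n$ --- its Fourier coefficients involve inverses of small divisors that can be as large as $\overline{Q}_n$ --- the exponential $\me^{\mi v}$ remains controlled because it suffices to bound only the imaginary part of $v$ on a sufficiently narrow complex strip. First I would Fourier-invert the homological equation \eqref{B_homeq} to get $\widehat v(k)=\widehat g(k)/(\me^{2\pi \mi k\alpha}-1)$ for $0<|k|<\overline{Q}_n$, so that $v$ is a trigonometric polynomial of degree $<\overline{Q}_n$. The best-approximation inequality $\|k\alpha\|_{\mathbb{Z}}\geq\|Q_n\alpha\|_{\mathbb{Z}}\geq(Q_n+\overline{Q}_n)^{-1}\geq(2\overline{Q}_n)^{-1}$ for such $k$ yields the small-divisor bound $|\widehat v(k)|\leq\overline{Q}_n|\widehat g(k)|/2$.

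Extending $v$ to a complex argument $\vartheta=\theta+\mi\widetilde\theta$, reality of $v$ on $\mathbb{R}$ gives
\begin{equation*}
|\mathrm{Im}\,v(\theta+\mi\widetilde\theta)|\leq 2\sum_{0<k<\overline{Q}_n}|\widehat v(k)|\sinh(2\pi k|\widetilde\theta|).
\end{equation*}
For $|\widetilde\theta|\leq 2\overline{r}_n=4\overline{Q}_n^{-2}r_0$ and $k<\overline{Q}_n$, the argument of $\sinh$ is of order $r_0/\overline{Q}_n$, which is small by the choice of $T$ in \eqref{qnestimate}; linearising $\sinh(u)\leq 2u$ and using the small-divisor bound gives $|\mathrm{Im}\,v|\leq 4\pi|\widetilde\theta|\,\overline{Q}_n\sum_k k|\widehat g(k)|$. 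Lemma~\ref{cauchyestimate} combined with the conversion \eqref{tttwonormrelations2} yields $\sum_k k|\widehat g(k)|\leq Cr_n^{-2}\|g\|_{r_n}$. Substituting $\overline{r}_n=2\overline{Q}_n^{-2}r_0$, $r_n=\overline{Q}_{n-1}^{-2}r_0$, invoking Lemma~\ref{knandknrelation} ($\overline{Q}_n\geq\overline{Q}_{n-1}^{\mathbb{A}}$ with $\mathbb{A}\geq 24$) to absorb the factor $\overline{Q}_{n-1}^{4-\mathbb{A}}$, and using $\|g\|_{r_n}\leq C\varepsilon_0$, one obtains $|\mathrm{Im}\,v|\leq Cr_0^{-1}\varepsilon_0\leq 1$ on $|\widetilde\theta|\leq 2\overline{r}_n$ by the smallness of $\varepsilon_0$ prescribed in \eqref{varepsilonnestimate}--\eqref{qnestimate}.

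Consequently $|\me^{\mi v(\vartheta)}|=\me^{-\mathrm{Im}\,v(\vartheta)}\leq\me$ on $|\widetilde\theta|\leq 2\overline{r}_n$. Since $v$ is a trigonometric polynomial, $\me^{\mi v}$ is entire, so Cauchy's integral formula on discs of radius $2\overline{r}_n$ centred on $\mathbb{R}$ gives $\|D^s\me^{\mi v}\|_{C^0(\mathbb{T})}\leq s!\,\me/(2\overline{r}_n)^s$. Plugging this into the definition of $\|\cdot\|_{M,\overline{r}_n}$ bounds the norm by $C\sup_s(1+s)^2 s!/(2^s M_s)$, which is finite under $\mathbf{(H1)}$, $\mathbf{(H2)}$ (these force $M_{s+1}/M_s\to\infty$, so $s!/M_s$ decays fast enough), giving the desired $\|\me^{\mi v}\|_{\overline{r}_n}\leq C$.

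The main obstacle is that $v$ itself is wildly singular in the $M$-ultra-differentiable norm due to the small divisors, so the Banach-algebra bound $\|\me^{\mi v}\|\leq\me^{\|v\|}$ is useless. The crucial trick is to replace pointwise control of $v$ by a bound on $\mathrm{Im}\,v$ on a narrow complex strip, where the $\sinh$ cancellation beats the small-divisor blow-up --- made possible only by the aggressive shrinking $\overline{r}_n\ll r_n$ dictated by the CD-bridge growth $\overline{Q}_n\geq\overline{Q}_{n-1}^{\mathbb{A}}$.
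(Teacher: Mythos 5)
Your proof follows essentially the same strategy as the paper's: Fourier-invert the homological equation, exploit the reality of $v$ to reduce everything to a bound on $\mathrm{Im}\,v$ on a narrow strip $|\mathrm{Im}\,\vartheta|\lesssim\overline{Q}_n^{-2}$, and then propagate the resulting uniform bound on $\me^{\mi v}$ back to the $\|\cdot\|_{M,\overline{r}_n}$-norm. The two internal estimates you do differently are both fine. For $\sum_k k|\widehat g(k)|\leq Cr_n^{-2}\|g\|_{r_n}$ you go through $\|\partial g\|_{\Lambda,r_n/4}$ via Lemma~\ref{cauchyestimate} and \eqref{tttwonormrelations2}, whereas the paper splits the sum at $\widetilde T(\pi r_n)^{-1}$ and invokes \eqref{newaboutchi2}; the two routes give the same order of magnitude. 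For the final step you apply Cauchy's integral formula on discs of radius $2\overline{r}_n$ to get $\|D^s\me^{\mi v}\|_{C^0}\leq s!\,\me/(2\overline{r}_n)^s$, rather than chaining $\|\cdot\|_{M,\overline{r}_n}\leq C\|\cdot\|_{\Lambda,2\overline{r}_n}\leq\|\cdot\|^*_{2\overline{r}_n}$ as the paper does.

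One point to tighten: your justification that $\sup_s (1+s)^2 s!/(2^s M_s)<\infty$ is not quite right. You say $\mathbf{(H1)},\mathbf{(H2)}$ force $M_{s+1}/M_s\to\infty$ and hence $s!/M_s$ decays, but $M_{s+1}/M_s\to\infty$ alone does not dominate $s!$ (take $M_{s+1}/M_s=\ln(s+2)$). What is actually needed is $M_s\gtrsim (s/\me)^s$, equivalently $\Lambda(y)\leq y$ — which is exactly the hidden inequality behind the paper's own step $\|\cdot\|_{\Lambda,r}\leq\|\widetilde{\cdot}\|^*_r$. So your argument requires no more than theirs, but the phrase "$M_{s+1}/M_s\to\infty$ implies $s!/M_s$ decays" should be replaced by an appeal to $\Lambda(y)\leq y$ (a standing normalization of Denjoy--Carleman classes) or directly to $M_s\gtrsim s!$.
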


\begin{proof}
By comparing the Fourier coefficients of  \eqref{B_homeq} we have
\begin{equation*}
v(\theta)=\sum_{0<|k|<\overline{Q}_{n}}\widehat{v}(k)
\me^{2\pi\mi k\theta}=  -\sum_{0<|k|<\overline{Q}_{n}} \widehat{g}(k)(\me^{2\pi\mi k\alpha}-1)^{-1}\me^{2\pi\mi k\theta}
\end{equation*}
with estimate
\begin{equation}\label{coefficient}
|\widehat{v}(k)|\leq \overline{Q}_{n}|\widehat{g}(k)|,\ 0<|k|<\overline{Q}_{n}.
\end{equation}

For $\theta\in\TT,$ by the fact  $g(\theta) \in\R$, one has  $v(\theta)\in\mathbb{R}.$ Thus for the function

\begin{equation*}
\begin{split}
\widetilde{v}(\vartheta)-v(\theta)=\sum_{0<|k|<\overline{Q}_{n}}
\widehat{v}(k)\me^{2\pi\mi k\theta} \big( \me^{-2\pi k\widetilde{\theta}}-1\big),
\end{split}
\end{equation*}
we have $
\mathrm{Im}\widetilde{v}(\vartheta)=\mathrm{Im}(\widetilde{v}(\vartheta)-v(\theta)).
$ Consequently, by \eqref{coefficient}, we have:
\begin{equation*}
\begin{split}
\|\mathrm{Im}\widetilde{v}(\vartheta)\|_{4\overline{Q}_{n}^{-2}}^{*}
&
\leq\|\widetilde{v}(\vartheta)-v(\theta)\|_{4\overline{Q}_{n}^{-2}}^{*}\\
&=\sum_{0<|k|<\overline{Q}_{n}}
|\widehat{v}(k)|\sup_{|\widetilde{\theta}|\leq 4\overline{Q}_{n}^{-2},\theta\in\mathbb{T}}
\big|\me^{2\pi\mi k\theta}\big(\me^{-2\pi k\widetilde{\theta}}-1\big)\big|\\
&\leq\sum_{0<|k|<\overline{Q}_{n}}
\overline{Q}_{n}^{-1}|\widehat{g}(k)|16\pi|k|\\
&\leq16\pi\overline{Q}_{n}^{-1}\widetilde{T}(\pi r_{n})^{-1}\sum_{|k|<\widetilde{T}(\pi r_{n})^{-1}}|\widehat{g}(k)|\\
&\ +
16(\overline{Q}_{n} r_{n})^{-1}\sum_{\widetilde{T}(\pi r_{n})^{-1}\leq |k|<\overline{Q}_{n}}
|\widehat{g}(k)|\exp\{\Lambda(|\pi k|r_{n})\}\\
&\leq32\widetilde{T}(\overline{Q}_{n}r_{n})^{-1}\|g\|_{\Lambda,r_{n}/2},
\end{split}
\end{equation*}
where the third inequality follows by \eqref{newaboutchi2}.

By \eqref{tttwonormrelations2} of  Lemma \ref{twonormrelationn}, one can further compute
\begin{equation*}
\|\mathrm{Im}\widetilde{v}(\vartheta)\|_{4\overline{Q}_{n}^{-2}}^{*} \leq
32\widetilde{T}(2\pi\overline{Q}_{n})^{-1}(4+c_M) \overline{Q}_{n-1}^{4}r_0^{2}\|g\|_{r_{n}}\leq \|g\|_{r_{n}},
\end{equation*}
the last inequality follows by $\overline{Q}_{n}\geq\max\{T,\overline{Q}_{n-1}^{\mathbb{A}}\}, n\geq1$
(by Lemma \ref{knandknrelation} and choice of $\overline{Q}_{n_{0}}$).
Therefore, by  \eqref{tttwonormrelations} of  Lemma  \ref{twonormrelationn}, we have
\begin{equation*}
\begin{split}
\|\me^{\mi v(\theta)}\|_{\overline{r}_{n}}&\leq C\|\me^{\mi v(\theta)}\|_{\Lambda,2\overline{r}_{n}}
\leq C\|\me^{\mi \widetilde{v}(\vartheta)}\|_{2\overline{r}_{n}}^{*}
\leq C\|\me^{\mi \widetilde{v}(\vartheta)}\|_{4\overline{Q}_{n}^{-2}}^{*}
\\
&\leq C\exp\{\|\mathrm{Im}\widetilde{v}(\vartheta)\|_{4\overline{Q}_{n}^{-2}}^{*}\}<C\exp\{\|g\|_{r_{n}}\}<C.
\end{split}
\end{equation*}
\end{proof}

\textbf{Proof of Lemma~\ref{rotationlemma}:}
Assume that $v_{n}$ is the solution of $$v_n(\theta+\alpha)-v_n(\theta)=-(g_n(\theta)- \widehat{g}_{n}(0)).$$ Note $\mathcal{R}_{\overline{Q}_{n}}g_{n}=0$, then by Lemma \ref{abstractderivatives} we have
\begin{equation*}
\begin{split}
\|\me^{v_{n}J}\|_{\overline{r}_{n}}\leq C.
\end{split}
\end{equation*}
Direct computation shows that $(0, \me^{-v_{n}J})$
conjugates the cocycle \eqref{beforerotation} into
$(\alpha, R_{\rho_{f}+(2\pi)^{-1}\widehat{g}(0)}\me^{ \overline{F}_{n} }),$ with
$\overline{F}_{n}=\me^{-v_{n}J}F_{n}(\theta)\me^{v_{n}J}.$
Thus by Lemma \ref{banachalgebra}, we have
\begin{equation}\label{evjfestimate}
\|\overline{F}_{n}\|_{\overline{r}_{n}}
\leq\|\me^{-v_{n}J}\|_{\overline{r}_{n}}\|F_{n}\|_{\overline{r}_{n}}
\|\me^{v_{n}J}\|_{\overline{r}_{n}}
\leq C\varepsilon_{n}.
\end{equation}

On the other hand, since $\me^{-v_{n}J}$ is homotopic to the identity,
the fibered rotation number remains unchanged,
then by \eqref{rotationnumberper}, we have
\begin{equation*}
|(2\pi)^{-1}\widehat{g}_{n}(0)|\leq\|\overline{F}_{n}\|_{\overline{r}_{n}}
\leq  C \varepsilon_{n},
\end{equation*}
which means
\begin{equation}\label{rhofgoestimate}
 |\widehat{g}_{n}(0)|\leq  C \varepsilon_{n}.
\end{equation}
Also note  if $B, D$ are small $sl(2, \mathbb{R})$ matrices,
then there exists $E\in sl(2, \mathbb{R})$ such that
\begin{equation*}
\me^{B}\me^{D}=\me^{B+D+E},
\end{equation*}
where $E$ is a sum of terms at least 2 orders in $B,D.$ Consequently, by  \eqref{evjfestimate}, \eqref{rhofgoestimate} and Lemma \ref{banachalgebra},  there exists  $\widetilde{F}_{n}\in  U_{\overline{r}_{n}}(\mathbb{T}, sl(2, \mathbb{R}))$
such that $R_{\rho_{f}+(2\pi)^{-1}\widehat{g}(0)}\me^{\overline{F}_{n}}
=R_{\rho_{f}}\me^{\widetilde{F}_{n}}$ with estimate $\|\widetilde{F}_{n}\|_{\overline{r}_{n}}\leq C \varepsilon_{n}.$  \qed\\

Once we get \eqref{afterrotation}, we will further conjugate it to
another cocycle with much smaller perturbation. We will give a lemma which can be
applied to more general cocycles rather than just \eqref{afterrotation}.

\begin{lemma}\label{iterationlemma}
Consider the cocycle $
(\alpha,\ R_{\rho_{f}}\me^{\widetilde{F}(\theta)})$
with
 $\rho_{f}=\rho(\alpha,\ R_{\rho_{f}}\me^{\widetilde{F}(\theta)})\in DC_{\alpha}(\gamma,\tau)$ and
\begin{equation}\label{itisreallylast}
\begin{split}
\|\widetilde{F}\|_{\overline{r}_{n}}\leq 8^{-2}\gamma^2 Q_{n+1}^{-2\tau^{2}},\ n\geq0.
\end{split}
\end{equation}
 Then there is a conjugation map
$\Psi_{n}\in U_{r_{n+1}}(\mathbb{T},SL(2,\mathbb{R}))$ with
\begin{equation*}
\begin{split}
\|\Psi_{n}-I\|_{r_{n+1}}\leq\|\widetilde{F}\|_{\overline{r}_{n}}^{\frac{1}{2}},\ n\geq0,
\end{split}
\end{equation*}
such that $\Psi_{n}$ conjugates the cocycle $
(\alpha,\ R_{\rho_{f}}\me^{\widetilde{F}(\theta)})$
into
\begin{equation}\label{n+thfunctionals}
\begin{split}
(\alpha,\ R_{\rho_{f}+(2\pi)^{-1}\widetilde{g}_{n}}\me^{G(\theta)})
\in\mathcal{F}_{r_{n+1}}(\rho_{f},2\|\widetilde{F}\|_{\overline{r}_{n}},
\epsilon),
\end{split}
\end{equation}
with $\mathcal{R}_{\overline{Q}_{n+1}}\widetilde{g}_{n}=0$ and $n\geq0,$ where $\epsilon=C^{-2}\|\widetilde{F}\|_{\overline{r}_{n}}
\overline{Q}_{n+1}^{-\Gamma^{\frac{1}{2}}(\overline{Q}_{n+1}^{\frac{1}{3}})}.$
\end{lemma}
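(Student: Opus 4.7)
My approach follows the KAM step of \cite{KWYZ18}. The plan is to conjugate so that the ``resonant'' diagonal component of $\widetilde F$ (in the basis adapted to $R_{\rho_f}$) is absorbed into a new rotation factor $R_{\rho_f+(2\pi)^{-1}\widetilde g_n}$, while the off-diagonal component is killed modulo an acceptably small error $G$. Concretely, I would first diagonalize $R_{\rho_f}$ by a constant unitary change of basis so that the cocycle takes the form $\Lambda_{\rho_f}e^{F^{\mathbb{C}}(\theta)}$ with $\Lambda_{\rho_f}=\mathrm{diag}(e^{2\pi i\rho_f},e^{-2\pi i\rho_f})$. Writing $F^{\mathbb{C}}$ as a diagonal part $p(\theta)\,\mathrm{diag}(i,-i)$ plus an off-diagonal part whose $(1,2)$-entry I call $h(\theta)$, I would define $\widetilde g_n(\theta):=-2\pi\bigl(\mathcal T_{\overline Q_{n+1}}p(\theta)-\widehat p(0)\bigr)$. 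This is a mean-zero trigonometric polynomial of degree $<\overline Q_{n+1}$, it satisfies $\|\widetilde g_n\|_{r_{n+1}}\leq 2\|\widetilde F\|_{\overline r_n}$, and automatically $\mathcal R_{\overline Q_{n+1}}\widetilde g_n=0$; mean-zeroness is needed to preserve the fibered rotation number.

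I would then seek $\Psi_n=e^{Y}$ with $Y$ off-diagonal and its $(1,2)$-entry $y$ satisfying, to leading order, the modified homological equation
\begin{equation*}
e^{2i(2\pi\rho_f+\widetilde g_n(\theta))}\,y(\theta+\alpha)-y(\theta)+\mathcal T_{\overline Q_{n+1}}h(\theta)=0,
\end{equation*}
and analogously for the $(2,1)$-entry. Following the diagonally dominant method of \cite{KWYZ18}, I would truncate at $|k|<\overline Q_{n+1}$, expand in Fourier, and view the equation as $(\mathcal D+\mathcal B)\widehat y=-\widehat{\mathcal T_{\overline Q_{n+1}}h}$, where $\mathcal D$ is diagonal with entries $e^{2i(2\pi\rho_f+\pi k\alpha)}-1$ and $\mathcal B$ is the off-diagonal coupling coming from Fourier convolution with $e^{2i\widetilde g_n}-1$. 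The Diophantine condition $\rho_f\in DC_\alpha(\gamma,\tau)$ forces $|\mathcal D_{kk}|\geq\gamma\langle k\rangle^{-\tau}\geq\gamma Q_{n+1}^{-\tau}$, while Lemma~\ref{banachalgebra} together with the smallness \eqref{itisreallylast} yields $\|\mathcal B\|\leq C\|\widetilde g_n\|_{r_{n+1}}\leq C\gamma^2 Q_{n+1}^{-2\tau^2}\ll\gamma Q_{n+1}^{-\tau}$. Hence $\mathcal D+\mathcal B$ is diagonally dominant and invertible by Neumann series, producing $Y$ with $\|Y\|_{r_{n+1}}\leq C\gamma^{-1}Q_{n+1}^{\tau}\|\widetilde F\|_{\overline r_n}\leq\|\widetilde F\|_{\overline r_n}^{1/2}$, again by \eqref{itisreallylast}.

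The resulting new perturbation $G$ then consists of three contributions: the Fourier truncation remainder $\mathcal R_{\overline Q_{n+1}}\widetilde F$, which by Lemma~\ref{projectionestimate} decays as $\overline Q_{n+1}^{-\Gamma(4\overline Q_{n+1}r_{n+1})\ln(\cdots)/9}$ and is comfortably bounded by $\epsilon$ after using \eqref{newaboutchi}; the quadratic BCH error from linearization, of size $O(\|Y\|^2)$ which is at most $\|\widetilde F\|_{\overline r_n}^2 Q_{n+1}^{2\tau}$ and hence absorbed by \eqref{itisreallylast}; and the error from replacing $h$ by $\mathcal T_{\overline Q_{n+1}}h$, also controlled by Lemma~\ref{projectionestimate}. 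The loss of width from $\overline r_n=2\overline Q_n^{-2}r_0$ to $r_{n+1}=\overline Q_n^{-2}r_0$ is modest and compatible with the projection estimate. The main obstacle I expect is precisely this scale matching: the Diophantine lower bound on small divisors is only polynomial in $Q_{n+1}$, whereas the target gain for $\epsilon$ is super-polynomial, and reconciling the two is exactly what hypothesis $\mathbf{(H2)}$ (equivalently property $\mathrm{(I)}$ of $\mathbf{(A)}$, which forces $\Gamma(x)\to\infty$) is designed for.
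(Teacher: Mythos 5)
Your decomposition (diagonalize, split into diagonal and off-diagonal, absorb the truncated diagonal into $\widetilde g_n$, kill the truncated off-diagonal by a conjugation) is morally the same as the paper's passage to $su(1,1)$ and the resonant/non-resonant splitting in Lemma~\ref{caiyouzhou19}, and the reliance on Lemma~\ref{projectionestimate} plus $\mathbf{(H2)}$ for the super-polynomial gain is exactly right. However, there is a genuine gap in the small-divisor estimate that the paper's argument is specifically designed to avoid, and which your proposal does not address.

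You truncate the off-diagonal part at frequency $\overline{Q}_{n+1}$ and claim $|\mathcal D_{kk}|\geq\gamma\langle k\rangle^{-\tau}\geq\gamma Q_{n+1}^{-\tau}$. But for $Q_{n+1}\leq|k|<\overline Q_{n+1}$ the bare Diophantine bound only gives $\gamma\langle k\rangle^{-\tau}\geq\gamma\overline Q_{n+1}^{-\tau}$, which in the Liouvillean case ($\overline Q_{n+1}\gg Q_{n+1}$) is vastly smaller than $\gamma Q_{n+1}^{-\tau}$ and in particular much smaller than the smallness threshold $\|\widetilde F\|_{\overline r_n}\leq 8^{-2}\gamma^2Q_{n+1}^{-2\tau^2}$. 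Your Neumann-series argument, and your bound $\|Y\|\leq\|\widetilde F\|^{1/2}$, therefore collapse precisely when $\overline Q_{n+1}$ is large compared to $Q_{n+1}$, which is the whole point of the semi-local setup. The missing ingredient is Lemma~\ref{bettersmalldivisor}: one must truncate the non-resonant (off-diagonal) part only at $|k|<\overline Q_{n+1}^{1/2}$, and then exploit the CD-bridge structure of the sequence $\{\overline Q_n\}$ to show that even for those $|k|$ one still has $\bigl|\me^{2\pi\mi(k\alpha\pm 2\rho_f)}-1\bigr|\geq\gamma Q_{n+1}^{-\tau^2}$, a lower bound depending on $Q_{n+1}$ rather than on $\overline Q_{n+1}$. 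The off-diagonal modes with $\overline Q_{n+1}^{1/2}\leq|k|$ are then \emph{not} killed by the conjugation; they are declared resonant and dumped into the remainder $G$ together with the high-frequency diagonal modes $|k|\geq\overline Q_{n+1}$, with the width-shrinkage from $\overline r_n$ to $\overline r_n/2$ and Lemma~\ref{projectionestimate} supplying the super-polynomial smallness. Without this sharper small-divisor lemma and the accompanying $\overline Q_{n+1}^{1/2}$ cutoff, the diagonally dominant inversion does not close. (As a minor point, in this lemma there is also no need to carry $\widetilde g_n$ inside the exponent of the homological equation as you do; the paper conjugates by Lemma~\ref{caiyouzhou19} with the constant $A=\mathrm{diag}(\me^{-2\pi\mi\rho_f},\me^{2\pi\mi\rho_f})$, and the $\theta$-dependent rotation factor is only introduced afterward by peeling off $\{\mathcal T_{\overline Q_{n+1}}\widetilde f,0\}$ from the resonant remainder.)
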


Before give
the proof of Lemma~\ref{iterationlemma}
we give one important lemma, which is about the estimate of small divisors and serves
as the fundamental ingredients of the proof.
Although the proof is quite simple, it is the key observation that to obtaining semi-local results.

\begin{lemma}\label{bettersmalldivisor}
For any $0<\gamma<1,\, \tau>1,$ assume that $\overline{Q}_{n+1}\geq T$ and
$ \rho\in DC_{\alpha}(\gamma,\tau)$, then for any
 $|k|\leq \overline{Q}_{n+1}^{\frac{1}{2}},$ we have
\begin{equation}\label{diophantine}
\begin{split}
\big|\me^{2\pi\mathrm{i}(k\alpha\pm2\rho)}-1\big|\geq \gamma Q_{n+1}^{-\tau^{2}}.
\end{split}
\end{equation}
\end{lemma}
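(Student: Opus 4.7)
The proof relies on two simple ingredients: the elementary inequality $|\me^{2\pi \mi y} - 1| = 2|\sin(\pi y)| \geq 4\|y\|_{\mathbb{Z}}$ (which follows from $|\sin u| \geq (2/\pi)|u|$ on $[-\pi/2,\pi/2]$), and the Diophantine hypothesis $\rho \in DC_{\alpha}(\gamma,\tau)$, which provides $\|k\alpha \pm 2\rho\|_{\mathbb{Z}} \geq \gamma \langle k\rangle^{-\tau}$ for every $k\in\mathbb{Z}$. Chaining these two bounds with $y = k\alpha \pm 2\rho$ gives immediately
\begin{equation*}
\big|\me^{2\pi\mi(k\alpha\pm 2\rho)} - 1\big| \;\geq\; 4\|k\alpha\pm 2\rho\|_{\mathbb{Z}} \;\geq\; 4\gamma \langle k\rangle^{-\tau}.
\end{equation*}

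Next I would exploit the constraint on $k$. Since $|k|\leq \overline{Q}_{n+1}^{1/2}$ we have $\langle k\rangle^{\tau} \leq \overline{Q}_{n+1}^{\tau/2}$, so the small divisor is at least $4\gamma\,\overline{Q}_{n+1}^{-\tau/2}$. The remaining task is to convert this lower bound, currently expressed through $\overline{Q}_{n+1}$, into the asserted form $\gamma Q_{n+1}^{-\tau^{2}}$. For this I would invoke the CD-bridge dichotomy of Lemma~\ref{bridgeestimate}: either $\overline{Q}_{n+1} \geq Q_{n+1}^{\mathbb{A}}$, or both $(\overline{Q}_n, Q_{n+1})$ and $(Q_{n+1}, Q_{n+2})$ are $CD(\mathbb{A},\mathbb{A},\mathbb{A}^{3})$ bridges. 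The bridge case yields, by reading off the ratio condition on consecutive denominators inside the bridge, the polynomial comparison $\overline{Q}_{n+1} \leq Q_{n+1}^{\mathbb{A}}$, which together with $\mathbb{A} > \tau + 23$ and the base-case hypothesis $\overline{Q}_{n+1} \geq T \geq (4\gamma^{-1})^{2\tau}$ suffices to absorb the numerical constant $4$ and collapse $\overline{Q}_{n+1}^{\tau/2}$ into $Q_{n+1}^{\tau^{2}}$ up to the factor of $\gamma$.

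The main obstacle is precisely this final step, the conversion from $\overline{Q}_{n+1}$ to $Q_{n+1}$. For a Liouvillean $\alpha$ the ratio $\overline{Q}_{n+1}/Q_{n+1}$ is a priori unbounded, so the argument cannot merely quote best approximation inequalities; it must rely on the careful a priori selection of the subsequence $(Q_k)$ by the CD-bridge construction, which is exactly what enforces a uniform polynomial control of the ratio. This is the ``key observation'' emphasized by the authors: once this polynomial comparison is in hand, the final estimate depends neither on the fine Diophantine properties of $\alpha$ nor on the width $r$, producing the semi-local character of the eventual KAM result. The lower bound $\overline{Q}_{n+1}\geq T$ plays the role of a smallness threshold that eliminates the irrelevant initial denominators, after which the CD-bridge structure takes over for all subsequent $n$.
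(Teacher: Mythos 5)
There is a genuine gap: your approach handles only half of the needed dichotomy, and the missing half is precisely the Liouvillean case that makes this lemma nontrivial.

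You correctly reduce the problem to converting the bound $\gamma\overline{Q}_{n+1}^{-\tau/2}$ (coming from applying the Diophantine condition directly to $|k|\leq\overline{Q}_{n+1}^{1/2}$) into a bound of the form $\gamma Q_{n+1}^{-\tau^{2}}$. You then invoke the CD-bridge dichotomy from Lemma~\ref{bridgeestimate} and argue that the bridge branch yields $\overline{Q}_{n+1}\leq Q_{n+1}^{\mathbb{A}}$, which would indeed suffice. But the dichotomy has two branches, and in the other branch one has $\overline{Q}_{n+1}\geq Q_{n+1}^{\mathbb{A}}$, with $\overline{Q}_{n+1}/Q_{n+1}$ a priori unbounded. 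In that branch your chain of inequalities collapses: the direct Diophantine bound $\gamma\overline{Q}_{n+1}^{-\tau/2}$ can be far smaller than the target $\gamma Q_{n+1}^{-\tau^{2}}$, and no polynomial comparison between $\overline{Q}_{n+1}$ and $Q_{n+1}$ is available. The CD-bridge structure controls how the subsequence $(Q_k)$ grows, not the ratio $\overline{Q}_{n+1}/Q_{n+1}$ inside a gap.

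The paper's proof does not split along the CD-bridge dichotomy at all; it splits according to whether $\overline{Q}_{n+1}\leq Q_{n+1}^{2\tau}$ or $\overline{Q}_{n+1}>Q_{n+1}^{2\tau}$. In the first case your direct argument works (and this is essentially what the paper does). In the second case the key idea you are missing is a reduction modulo $Q_{n+1}$: write $k=\widetilde{k}+mQ_{n+1}$ with $|\widetilde{k}|<Q_{n+1}$, apply the Diophantine condition only to the small residue $\widetilde{k}$ to get $\gamma Q_{n+1}^{-\tau}$, and control the shift term $|m|\,\|Q_{n+1}\alpha\|_{\mathbb{Z}}$ using the best-approximation estimate $\|Q_{n+1}\alpha\|_{\mathbb{Z}}\leq\overline{Q}_{n+1}^{-1}$. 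Because $\overline{Q}_{n+1}$ is assumed very large here, this error term is tiny and is absorbed by $\tfrac{1}{2}\gamma Q_{n+1}^{-\tau}$. This trade-off, exploiting the smallness of $\|Q_{n+1}\alpha\|_{\mathbb{Z}}$ exactly when $\overline{Q}_{n+1}$ is large, is the mechanism that makes the estimate semi-local (i.e.\ independent of $\alpha$), and it is absent from your proposal.
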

\begin{proof}
We just need to estimate $\big|\me^{2\pi\mathrm{i}(k\alpha+2\rho)}-1\big|$
since
\begin{equation*}
\begin{split}
 \big|\me^{2\pi\mathrm{i}(k\alpha-2\rho)}-1\big|
 =\big|\me^{2\pi\mathrm{i}(-k\alpha+2\rho)}-1\big|.
\end{split}
\end{equation*}

$\textbf{Case\ 1.}\ \overline{Q}_{n+1}\leq Q_{n+1}^{2\tau}.$ Then our assumptions imply
\begin{equation*}
\begin{split}
\big|\me^{2\pi\mathrm{i}(k\alpha+2\rho)}-1\big|&=2|\sin\pi(k\alpha+2\rho)|>
\big\|k\alpha+2\rho\big\|_{\mathbb{Z}}
\geq\gamma\overline{Q}_{n+1}^{-2^{-1}\tau}
\geq\gamma Q_{n+1}^{-\tau^{2}}.
\end{split}
\end{equation*}

$\textbf{Case\ 2.}\ \overline{Q}_{n+1}>Q_{n+1}^{2\tau}.$
Write $k$ as $
k=\widetilde{k}+mQ_{n+1},\ m\in\mathbb{Z} $
with  $|\widetilde{k}|< Q_{n+1}.$ Then we have
\begin{equation*}
\begin{split}
|m|\leq
|k|/Q_{n+1}<\overline{Q}_{n+1}^{\frac{1}{2}}/Q_{n+1}.
\end{split}
\end{equation*}
Consequently, by the assumption that $ \rho\in DC_{\alpha}(\gamma,\tau)$, one has
\begin{equation*}
\begin{split}
\big|\me^{2\pi\mathrm{i}(k\alpha+2\rho)}-1\big|&>
\big\| \widetilde{k}  \alpha+mQ_{n+1} \alpha+2\rho\big\|_{\mathbb{Z}} \\
&\geq
\big\|\widetilde{k}\alpha+2\rho\big\|_{\mathbb{Z}}- |m| \|Q_{n+1}\alpha\|_{\mathbb{Z}}
\\
&\geq\gamma Q_{n+1}^{-\tau}-|m|/\overline{Q}_{n+1}
\geq\gamma Q_{n+1}^{-\tau}-\overline{Q}_{n+1}^{-\frac{1}{2}}Q_{n+1}^{-1}
>2^{-1}\gamma Q_{n+1}^{-\tau},
\end{split}
\end{equation*}
where the last inequality is by
\begin{equation*}
\begin{split}
\overline{Q}_{n+1}^{\frac{1}{2}}=\overline{Q}_{n+1}^{\frac{1}{2\tau}}\overline{Q}_{n+1}^{\frac{\tau-1}{2\tau}}
\geq2\gamma^{-1}Q_{n+1}^{\tau-1},
\end{split}
\end{equation*}
which is guaranteed by
$\overline{Q}_{n+1}\geq (2\gamma^{-1})^{2\tau}$ and $\overline{Q}_{n+1}>Q_{n+1}^{2\tau}.$
\end{proof}

Set $su(1,1)$ be the space consisting of matrices of the
form $\left(
\begin{matrix}
\mathrm{i} t\ \ \ \ \ v
\\
\overline{v}\ \ -\mathrm{i}t
\end{matrix}
\right)$ $(t\in\mathbb{R},\ v\in\mathbb{C}),$ we simply denote such a matrix by $\{t,v\}.$
Recall that $sl(2,\mathbb{R})$ is isomorphic to $su(1,1)$ by the rule $A\mapsto MAM^{-1},$ where
$M=\frac{1}{1+\mathrm{i}}\left(
\begin{matrix}
1\ \ -\mathrm{i}
\\
1 \quad\ \ \mathrm{i}
\end{matrix}
\right),$
and a simple calculation yields
\begin{equation*}
\begin{split}
M\left(
\begin{array}{l}
x\ \qquad y+z
\\
y-z \quad\ -x
\end{array}
\right)M^{-1}=\left(
\begin{array}{l}
\mathrm{i}z\ \qquad x-\mathrm{i}y
\\
x+\mathrm{i} y \quad\ -\mathrm{i} z
\end{array}
\right), x,y,z\in\mathbb{R}.
\end{split}
\end{equation*}

Motivated by Lemma \ref{bettersmalldivisor}, we can define the following non-resonant and resonant spaces.
\begin{equation*}
\begin{split}
\mathcal{B}_{r}^{(nre)}&=
\Big\{\{0,\mathcal{T}_{\overline{Q}_{n+1}^{\frac{1}{2}}}g(\theta)
\}: g\in U_{r}(\mathbb{T},\mathbb{C})\Big\},\\
\mathcal{B}_{r}^{(re)}&=\Big\{\{f(\theta),\mathcal{R}_{\overline{Q}_{n+1}^{\frac{1}{2}}}g(\theta)\}: g\in U_{r}(\mathbb{T},\mathbb{C}),\ f\in U_{r}(\mathbb{T},\mathbb{R})\Big\}.
\end{split}
\end{equation*}
It follows that $U_{r}(\mathbb{T},su(1,1))=\mathcal{B}_{r}^{(nre)}\oplus \mathcal{B}_{r}^{(re)}.$
In order to prove Lemma~\ref{iterationlemma}, we will need the following lemma:
\begin{lemma}\label{caiyouzhou19}
Assume that $A=\mathrm{diag}\{\me^{-2\pi \mi \rho},\me^{2\pi \mi \rho}\}$ and $g\in U_{r}(\mathbb{T},su(1, 1))$.
If $\rho\in DC_{\alpha}(\gamma,\tau),$ and
$$\|g\|_{r} \leq  8^{-2} \gamma^2 Q_{n+1}^{-2\tau^{2}}, \ n\geq 0,$$
then  there exist $Y\in\mathcal{B}_{r}^{(nre)}$ and $g^{(re)}\in\mathcal{B}_{r}^{(re)}$ such that
\begin{equation*}
\begin{split}
\me^{Y(\cdot+\alpha)}A\me^{g(\cdot)}\me^{-Y(\cdot)}=A\me^{g^{(re)}(\cdot)}
\end{split}
\end{equation*}
with $\|Y\|_{r}\leq \|g\|_{r}^{1/2}$ and  $\|g^{(re)}\|_{r}\leq 2\|g\|_{r}.$

\end{lemma}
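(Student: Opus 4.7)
\medskip
\noindent\textbf{Proof proposal.} The plan is to reduce the conjugation identity to a linear homological equation on the non-resonant subspace $\mathcal{B}^{(nre)}_{r}$, solve it by Fourier analysis using Lemma~\ref{bettersmalldivisor}, and then absorb the nonlinear Baker--Campbell--Hausdorff remainder by a fixed-point iteration made contractive by the smallness hypothesis $\|g\|_{r}\leq 8^{-2}\gamma^{2}Q_{n+1}^{-2\tau^{2}}$.

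First, working in $su(1,1)$-coordinates in which $A$ is diagonal, conjugation by $A$ acts on an element $\{0,v\}\in\mathcal{B}^{(nre)}_{r}$ by $A^{-1}\{0,v\}A=\{0,\me^{4\pi\mi\rho}v\}$, while the splitting $U_{r}(\mathbb{T},su(1,1))=\mathcal{B}^{(nre)}_{r}\oplus \mathcal{B}^{(re)}_{r}$ separates the off-diagonal Fourier modes with $|k|<\overline{Q}_{n+1}^{1/2}$ from the remaining (diagonal or large-mode) part. Rewriting the target identity as $\me^{A^{-1}Y(\cdot+\alpha)A}\me^{g(\cdot)}\me^{-Y(\cdot)}=\me^{g^{(re)}(\cdot)}$ and expanding by BCH, the leading order equation obtained by projecting onto $\mathcal{B}^{(nre)}_{r}$ is
\[
A^{-1}Y(\cdot+\alpha)A-Y(\cdot)=-P_{nre}g(\cdot).
\]
Writing $g=\{t,u\}$ and $Y=\{0,v\}$ this becomes the scalar equation
\[
v(\cdot)-\me^{4\pi\mi\rho}v(\cdot+\alpha)=\mathcal{T}_{\overline{Q}_{n+1}^{1/2}}u(\cdot).
\]

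Second, comparing Fourier coefficients gives $\widehat{v}(k)(1-\me^{2\pi\mi(k\alpha+2\rho)})=\widehat{u}(k)$ for $|k|<\overline{Q}_{n+1}^{1/2}$. Since $\rho\in DC_{\alpha}(\gamma,\tau)$ and $\overline{Q}_{n+1}\geq T$, Lemma~\ref{bettersmalldivisor} furnishes the uniform small-divisor bound $|1-\me^{2\pi\mi(k\alpha+2\rho)}|\geq \gamma Q_{n+1}^{-\tau^{2}}$, hence
\[
|\widehat{v}(k)|\leq \gamma^{-1}Q_{n+1}^{\tau^{2}}|\widehat{u}(k)|,\qquad |k|<\overline{Q}_{n+1}^{1/2}.
\]
Translating this coefficient-wise bound into the working norm goes through the auxiliary $\|\cdot\|_{\Lambda,r}$ norm and Lemma~\ref{twonormrelationn}; because $v$ is a trigonometric polynomial of Fourier degree $<\overline{Q}_{n+1}^{1/2}$, no effective loss of width occurs. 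Combined with the hypothesis on $\|g\|_{r}$, this produces a first approximation $Y_{0}\in\mathcal{B}^{(nre)}_{r}$ with $\|Y_{0}\|_{r}\leq \tfrac{1}{2}\|g\|_{r}^{1/2}$.

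Third, I would close the full nonlinear problem by iteration. Setting $\me^{Y_{0}(\cdot+\alpha)}A\me^{g}\me^{-Y_{0}}=A\me^{g_{1}}$, the BCH expansion and Lemma~\ref{banachalgebra} give $\|g_{1}-(g+A^{-1}Y_{0}(\cdot+\alpha)A-Y_{0})\|_{r}\leq C(\|Y_{0}\|_{r}^{2}+\|Y_{0}\|_{r}\|g\|_{r})$, so the new non-resonant defect satisfies $\|P_{nre}g_{1}\|_{r}\leq C\|g\|_{r}^{3/2}$. Iterating the linear step yields $Y_{k+1}$ with $\|Y_{k+1}\|_{r}\leq \tfrac{1}{2}\|g_{k+1}\|_{r}^{1/2}$ and $\|P_{nre}g_{k+1}\|_{r}\leq C\|g_{k}\|_{r}^{3/2}$, geometrically convergent thanks to the contraction factor $\gamma^{-1}Q_{n+1}^{\tau^{2}}\|g\|_{r}^{1/2}\leq \tfrac{1}{8}$ supplied by the hypothesis. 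The composed conjugation produces $Y\in\mathcal{B}^{(nre)}_{r}$ with $\|Y\|_{r}\leq \sum_{k}\|Y_{k}\|_{r}\leq \|g\|_{r}^{1/2}$, and the residual purely resonant part satisfies $\|g^{(re)}\|_{r}\leq \|g\|_{r}+\sum_{k\geq 1}\|P_{re}g_{k}\|_{r}\leq 2\|g\|_{r}$.

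The main obstacle is the nonlinear step: BCH commutators of two elements of $\mathcal{B}^{(nre)}_{r}$ may leave $\mathcal{B}^{(nre)}_{r}$ and must be reprojected cleanly into the two subspaces, and Fourier truncation interacts non-trivially with multiplication in the Banach algebra $U_{r}(\mathbb{T},\ast)$. What makes the iteration close is the combination of the Diophantine gap $\gamma Q_{n+1}^{-\tau^{2}}$ from Lemma~\ref{bettersmalldivisor} with the quadratic smallness assumption $\|g\|_{r}\leq 8^{-2}\gamma^{2}Q_{n+1}^{-2\tau^{2}}$, which exactly counterbalances the small-divisor loss $\gamma^{-1}Q_{n+1}^{\tau^{2}}$ at every linearization step.
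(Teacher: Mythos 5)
Your proposal is correct but takes a genuinely different route from the paper. The paper's appendix establishes the conjugation by a homotopy (continuity) method: it introduces the nonlinear functional $\mathcal{F}(Y)=\mathbb{P}_{nre}\ln(\me^{A^{-1}Y(\cdot+\alpha)A}\me^{g}\me^{-Y})$ on the ball $B_r(\varepsilon^{1/2})\subset\mathcal{B}_r^{(nre)}$ with $\varepsilon=8^{-2}\gamma^2 Q_{n+1}^{-2\tau^2}$, shows via Lemma~\ref{bettersmalldivisor} and Lemma~\ref{banachalgebra} that $\|D\mathcal{F}(Y)^{-1}\|\leq 2^{-1}\varepsilon^{-1/2}$ uniformly on this ball, and then integrates the path equation $Y_t=-\int_0^t D\mathcal{F}(Y_s)^{-1}\mathcal{F}(0)\,ds$ from $t=0$ to $t=1$; the endpoint $Y_1$ satisfies $\mathcal{F}(Y_1)=0$, which is precisely the assertion that the full logarithm lies in $\mathcal{B}_r^{(re)}$. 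Your scheme instead solves the linearized homological equation once, estimates the Baker--Campbell--Hausdorff remainder, and re-iterates (a Newton/Picard argument rather than a continuity argument). Both close because the smallness hypothesis forces the combined factor $\delta:=\gamma^{-1}Q_{n+1}^{\tau^2}\|g\|_r^{1/2}\leq 1/8$, which is exactly the quantity the paper uses to invert $D\mathcal{F}$. The homotopy method produces one-shot norm estimates and avoids composing infinitely many conjugators; your iteration is more explicit but requires managing the product $\prod_k\me^{Y_k}$ and the accumulated resonant residue.

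One place where you should be more careful: in the third step the quantity that contracts is $\|P_{nre}g_k\|_r$, not $\|g_k\|_r$. The resonant part of $g_k$ accumulates to a size comparable to $\|g\|_r$ and does not decay, so the bound ``$\|Y_{k+1}\|_r\leq\tfrac12\|g_{k+1}\|_r^{1/2}$'' as written does not give a summable series. The correct bookkeeping is $\|Y_k\|_r\leq\gamma^{-1}Q_{n+1}^{\tau^2}\|P_{nre}g_k\|_r$ and $\|P_{nre}g_{k+1}\|_r\leq C\|Y_k\|_r(\|Y_k\|_r+\|g\|_r)$, from which one gets $\|Y_{k+1}\|_r\leq C\delta^2\|Y_k\|_r$ with $\delta\leq 1/8$, hence geometric convergence of $\sum_k\|Y_k\|_r$ and the claimed bounds $\|Y\|_r\leq\|g\|_r^{1/2}$, $\|g^{(re)}\|_r\leq 2\|g\|_r$. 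With that replacement your argument is sound.
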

The proof of this lemma, which involves the  homotopy method, is postponed to Appendix.
Similar proofs appeared in \cite{YouZ14,Dias06}.\\

\textbf{Proof of Lemma~\ref{iterationlemma}}:
Since $SL(2,\R)$ is isomorphic to $SU(1,1)$, instead of $
(\alpha,\ R_{\rho_{f}}\me^{\widetilde{F}(\theta)})$, we just consider  $(\alpha, A\me^{W(\theta)}),$ where
$A=MR_{\rho_{f}}M^{-1}=\mathrm{diag}\{\me^{-2\pi \mi \rho_{f}},\me^{2\pi \mi \rho_{f}}\}\in SU(1,1),
 W=M\widetilde{F}M^{-1}\in su(1,1).$

Since $\rho_{f}\in D_{\alpha}(\gamma,\tau)$ and $\|\widetilde{F}\|_{\overline{r}_{n}}\leq 8^{-2}\gamma^2 Q_{n+1}^{-2\tau^{2}}$,
by Lemma~\ref{caiyouzhou19}, there exist $Y\in\mc{B}_{\overline{r}_{n}}^{(nre)}$
and $W^{(re)}\in\mc{B}_{\overline{r}_{n}}^{(re)}$ such that $\me^{Y}$ conjugates $(\alpha,\ A\me^{W})$ to $(\alpha,\ A\me^{W^{(re)}})$ with
\begin{equation}\label{widetildefestimate}
\begin{split}
\|Y\|_{\overline{r}_{n}}\leq\|\widetilde{F}\|_{\overline{r}_{n}}^{\frac{1}{2}},\ \|W^{(re)}\|_{\overline{r}_{n}}\leq
2\|\widetilde{F}\|_{\overline{r}_{n}}.
\end{split}
\end{equation}

Denote $W^{(re)}(\theta)=\{\widetilde{f}(\theta),\mathcal{R}_{\overline{Q}_{n+1}^{\frac{1}{2}}}\widetilde{g}(\theta)\}
\in\mathcal{B}_{\overline{r}_{n}}^{(re)}.$
Thus by \eqref{widetildefestimate}
\begin{equation*}
\begin{split}
\|\widetilde{f}(\theta)\|_{\overline{r}_{n}}
\leq\|W^{(re)}(\theta)\|_{\overline{r}_{n}}\leq
2\|\widetilde{F}(\theta)\|_{\overline{r}_{n}}.
\end{split}
\end{equation*}
Note $\overline{Q}_{n+1}\geq\overline{Q}_{n}^{24}$ (by Lemma~\ref{knandknrelation}) and
$\overline{Q}_{n+1}\geq T\geq r_{0}^{-12}$ (by \eqref{qnestimate}) we get
\begin{equation*}
\begin{split}
\overline{Q}_{n+1}^{\frac{1}{2}}> \overline{Q}_{n+1}^{\frac{1}{3}}>4^{-1}\overline{Q}_{n}^{4}r_{0}^{-2}=\overline{r}_{n}^{-2}\gg1,\ n\geq0,
\end{split}
\end{equation*}
which implies
\begin{equation}\label{kr2}
\begin{split}
\overline{Q}_{n+1}^{\frac{1}{2}}\overline{r}_{n}
>\overline{Q}_{n+1}^{\frac{1}{3}}\geq T^{\frac{1}{3}}\geq\widetilde{T}\geq T_{1}.
\end{split}
\end{equation}
Set
$P(\theta)=W^{(re)}(\theta)-\{\mathcal{T}_{\overline{Q}_{n+1}}\widetilde{f}(\theta),0\},$ thus $\mathcal{T}_{\overline{Q}_{n+1}^{\frac{1}{2}}}P(\theta)=0,$ then by \eqref{inequivalentnorm} in Lemma \ref{projectionestimate},
we get
\begin{equation*}
\begin{split}
\|P(\theta)\|_{\overline{r}_{n}/2}
&\leq C(\overline{Q}_{n+1}^{\frac{1}{2}}\overline{r}_{n}^{2})^{-1}
\|P(\theta)\|_{\overline{r}_{n}}
\exp\{-9^{-1}\Gamma(4 \overline{Q}_{n+1}^{\frac{1}{2}}\overline{r}_{n})
\ln (4\overline{Q}_{n+1}^{\frac{1}{2}}\overline{r}_{n})\}\\
&\leq12C\|\widetilde{F}(\theta)\|_{\overline{r}_{n}}
\exp\{-9^{-1}\Gamma(\overline{Q}_{n+1}^{\frac{1}{3}})\ln (\overline{Q}_{n+1}^{\frac{1}{3}})\}\\
&\leq(2C^{2})^{-1}\|\widetilde{F}(\theta)\|_{\overline{r}_{n}}
\overline{Q}_{n+1}^{-\Gamma^{\frac{1}{2}}(\overline{Q}_{n+1}^{\frac{1}{3}})},
\end{split}
\end{equation*}
where the second inequality is by \eqref{kr2} and the fact that $\Gamma(x)\ln x$ is non-decreasing, i.e., $\mathrm{(II)}$ in $\mathbf{(A)},$ and the last inequality is by \eqref{newaboutchi}, that is $\Gamma(\overline{Q}_{n+1}^{\frac{1}{3}})>64\mathbb{A}^{8}\tau^{4}.$

Note
\begin{equation*}
\begin{split}
A\me^{W^{(re)}}
=A \me^{\{\mathcal{T}_{\overline{Q}_{n+1}}\widetilde{f},0\}}E,\ E=\me^{-\{\mathcal{T}_{\overline{Q}_{n+1}}\widetilde{f},0\}}
\me^{W^{(re)}}.
\end{split}
\end{equation*}
Then by Lemma \ref{banachalgebra} we have
\begin{equation*}
\begin{split}
\|E-I\|_{\overline{r}_{n}/2}
&\leq\me^{\|\mathcal{T}_{\overline{Q}_{n+1}}\widetilde{f}\|_{\overline{r}_{n}/2}}
\|\me^{W^{(re)}}
-\me^{\{\mathcal{T}_{\overline{Q}_{n+1}}\widetilde{f},0\}}\|_{\overline{r}_{n}/2}\\
&=\me^{\|\mathcal{T}_{\overline{Q}_{n+1}}\widetilde{f}\|_{\overline{r}_{n}/2}}
\|\me^{\{\mathcal{T}_{\overline{Q}_{n+1}}\widetilde{f},0\}+P}
-\me^{\{\mathcal{T}_{\overline{Q}_{n+1}}\widetilde{f},0\}}\|_{\overline{r}_{n}/2}\\
&\leq\me^{2\|\mathcal{T}_{\overline{Q}_{n+1}}\widetilde{f}\|_{\overline{r}_{n}/2}}
\me^{\|P\|_{\overline{r}_{n}/2}}
\|P\|_{\overline{r}_{n}/2}\\
&\leq2\|P\|_{\overline{r}_{n}/2}
\leq C^{-2}\|\widetilde{F}\|_{\overline{r}_{n}}
\overline{Q}_{n+1}^{-\Gamma^{\frac{1}{2}}(\overline{Q}_{n+1}^{\frac{1}{3}})}.
\end{split}
\end{equation*}
Thus by implicit function theorem, there exists  $\widetilde{G}\in U_{\overline{r}_{n}/2}(\mathbb{T},su(1,1))$
such that $E=\me^{\widetilde{G}}$ with
\begin{equation*}
\begin{split}
\|\widetilde{G}\|_{\overline{r}_{n}/2}\leq \|E-I\|_{\overline{r}_{n}/2}
<C^{-2}\|\widetilde{F}(\theta)\|_{\overline{r}_{n}}
\overline{Q}_{n+1}^{-\Gamma^{\frac{1}{2}}(\overline{Q}_{n+1}^{\frac{1}{3}})}.
\end{split}
\end{equation*}

Now we go back to $SL(2,\mathbb{R}).$
Let $\Psi_{n}=\me^{M^{-1}YM}.$ Then $\|\Psi_{n}-I\|_{\overline{r}_{n}}\leq \|Y\|_{\overline{r}_{n}}.$ Moreover, $\Psi_{n}$ conjugates the
cocycle $(\alpha,\ R_{\rho_{f}}\me^{\widetilde{F}(\theta)})$ to \eqref{n+thfunctionals} with
$G=M^{-1}\widetilde{G}M, \widetilde{g}_{n}(\theta)=-\mathcal{T}_{\overline{Q}_{n+1}} \widetilde{f}(\theta).$ Obviously, $\mathcal{R}_{\overline{Q}_{n+1}}\widetilde{g}_{n}=0.$\qed

\bigskip
To ensure the composition of the conjugations is close to the identity, we do one more conjugation which is the inverse of transformation in
Lemma~\ref{rotationlemma}:

\begin{lemma}\label{newlemma}
Assume that $v_{n}$ is the one defined in Lemma~\ref{rotationlemma}.  Then for any $n\geq 1$,  $(0,\me^{v_{n}(\theta)J})$ further conjugates the cocycle
\begin{equation*}
\begin{split}
(\alpha,\ R_{\rho_{f}+(2\pi)^{-1}\widetilde{g}_{n}}\me^{G(\theta)})\in
\mathcal{F}_{r_{n+1}}(\rho_{f},C\varepsilon_{n}, C^{-1}\varepsilon_{n+1}),
\end{split}
\end{equation*}
with $\mathcal{R}_{\overline{Q}_{n+1}}\widetilde{g}_{n}=0$, to the cocycle
\begin{equation*}
\begin{split}
(\alpha, R_{\rho_{f}+(2\pi)^{-1}g_{n+1}}
\me^{F_{n+1}})\in\mathcal{F}_{r_{n+1}}(\rho_{f},\widetilde{\varepsilon}_{n+1},\ \varepsilon_{n+1})
\end{split}
\end{equation*}
with $\mathcal{R}_{\overline{Q}_{n+1}}g_{n+1}=0.$
\end{lemma}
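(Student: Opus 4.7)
The plan is to carry out the inverse of the rotation in Lemma~\ref{rotationlemma} and verify that the bookkeeping matches. Concretely, I set $B(\theta) = \me^{v_n(\theta)J}$ and compute $B(\cdot+\alpha) A(\cdot) B(\cdot)^{-1}$ for $A(\theta) = R_{\rho_{f}+(2\pi)^{-1}\widetilde{g}_{n}(\theta)}\me^{G(\theta)} = \me^{-(2\pi\rho_{f}+\widetilde{g}_{n}(\theta))J}\me^{G(\theta)}$. Since all $J$-exponentials commute, one rearranges
\begin{equation*}
\me^{v_{n}(\theta+\alpha)J}\me^{-(2\pi\rho_{f}+\widetilde{g}_{n}(\theta))J}\me^{G(\theta)}\me^{-v_{n}(\theta)J}
= \me^{-(2\pi\rho_{f}+g_{n+1}(\theta))J}\me^{F_{n+1}(\theta)},
\end{equation*}
where $F_{n+1}(\theta):=\me^{v_{n}(\theta)J}G(\theta)\me^{-v_{n}(\theta)J}$ (using $B\me^{G}B^{-1}=\me^{BGB^{-1}}$) and $g_{n+1}(\theta):=\widetilde{g}_{n}(\theta)-v_{n}(\theta+\alpha)+v_{n}(\theta)$. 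Invoking the defining relation $v_{n}(\theta+\alpha)-v_{n}(\theta)=-(g_{n}(\theta)-\widehat{g}_{n}(0))$ established in Lemma~\ref{rotationlemma}, this collapses to
\begin{equation*}
g_{n+1}(\theta)=\widetilde{g}_{n}(\theta)+g_{n}(\theta)-\widehat{g}_{n}(0).
\end{equation*}

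Next I verify the spectral support condition $\mathcal{R}_{\overline{Q}_{n+1}}g_{n+1}=0$: by hypothesis $\mathcal{R}_{\overline{Q}_{n+1}}\widetilde{g}_{n}=0$ and by the inductive assumption $\mathcal{R}_{\overline{Q}_{n}}g_{n}=0$; since $\overline{Q}_{n}<\overline{Q}_{n+1}$ (Lemma~\ref{knandknrelation}), all three terms have Fourier support strictly inside $\overline{Q}_{n+1}$, so the claim follows. For the size estimate on $g_{n+1}$ I use the triangle inequality: $\|\widetilde{g}_{n}\|_{r_{n+1}}\le C\varepsilon_{n}$ (hypothesis), $\|g_{n}\|_{r_{n+1}}\le\|g_{n}\|_{r_{n}}\le\widetilde{\varepsilon}_{n}$ (since $r_{n+1}<r_{n}$), and $|\widehat{g}_{n}(0)|\le C\varepsilon_{n}$ (the rotation-number estimate \eqref{rhofgoestimate} in the proof of Lemma~\ref{rotationlemma}). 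Summing and recalling $\widetilde{\varepsilon}_{n+1}=C\sum_{l=0}^{n}\varepsilon_{l}$ from \eqref{parameter}, I obtain $\|g_{n+1}\|_{r_{n+1}}\le\widetilde{\varepsilon}_{n}+2C\varepsilon_{n}\le\widetilde{\varepsilon}_{n+1}$.

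For $F_{n+1}$, I combine the Banach algebra property (Lemma~\ref{banachalgebra}) with the bound $\|\me^{\pm v_{n}J}\|_{\overline{r}_{n}}\le C$ produced in Lemma~\ref{abstractderivatives}. Since $r_{n+1}=\overline{r}_{n}/2<\overline{r}_{n}$, the same bound holds at radius $r_{n+1}$, so that
\begin{equation*}
\|F_{n+1}\|_{r_{n+1}}\le \|\me^{v_{n}J}\|_{r_{n+1}}\,\|G\|_{r_{n+1}}\,\|\me^{-v_{n}J}\|_{r_{n+1}}\le C^{2}\|G\|_{r_{n+1}}\le \varepsilon_{n+1},
\end{equation*}
where the last step absorbs $C^{2}$ into the hypothesis $\|G\|_{r_{n+1}}\le C^{-1}\varepsilon_{n+1}$ by choosing the implicit constant appropriately. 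Finally, since $(0,\me^{v_{n}J})$ is homotopic to the identity, the fibered rotation number is preserved, so the resulting cocycle lies in $\mathcal{F}_{r_{n+1}}(\rho_{f},\widetilde{\varepsilon}_{n+1},\varepsilon_{n+1})$.

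There is no serious obstacle here — the lemma is essentially an algebraic identity once one notes that $J$-exponentials commute and that $v_{n}$ was constructed precisely to cancel $g_{n}-\widehat{g}_{n}(0)$. The only point requiring care is ensuring that the bound on $\|\me^{\pm v_{n}J}\|$, which was established at the larger radius $\overline{r}_{n}$ in the forward step, is still available at $r_{n+1}$; this is immediate from the monotonicity of $\|\cdot\|_{r}$ in $r$. The composition of the three conjugations $\me^{-v_{n}J}$, $\Psi_{n}$, and $\me^{v_{n}J}$ from Lemmas~\ref{rotationlemma}, \ref{iterationlemma}, \ref{newlemma} then yields the total conjugation $\Phi_{n}$ of Proposition~\ref{rotationproposition}, whose closeness to identity \eqref{totalrotationestimate} comes almost entirely from $\Psi_{n}-I$, since the outer two conjugations cancel to leading order.
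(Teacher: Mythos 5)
Your proof is essentially the paper's proof, made more explicit: you spell out the commutation of the $J$-exponentials, derive $g_{n+1}=\widetilde g_n+g_n-\widehat g_n(0)$ exactly as the paper does, and then estimate $g_{n+1}$ and $F_{n+1}=\me^{v_nJ}G\me^{-v_nJ}$ with the same ingredients (the Banach algebra property, the bound $\|\me^{\pm v_nJ}\|_{\overline r_n}\le C$, monotonicity in $r$, and homotopy to the identity for the rotation number). One small bookkeeping slip: your triangle inequality gives $\|g_{n+1}\|_{r_{n+1}}\le\widetilde\varepsilon_n+2C\varepsilon_n$, which strictly exceeds $\widetilde\varepsilon_{n+1}=\widetilde\varepsilon_n+C\varepsilon_n$ rather than being bounded by it; the paper bounds $\|g_n-\widehat g_n(0)\|_{r_n}\le\widetilde\varepsilon_n$ directly (implicitly using that removing the mean does not inflate the norm by more than a harmless absolute factor), so your conclusion would need the constant in the definition of $\widetilde\varepsilon_{n}$ adjusted, which is immaterial since $C$ is generic throughout. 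This does not constitute a gap in the argument.
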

\begin{proof}
Since $v_{n}$ is the solution of $v_n(\theta+\alpha)-v_n(\theta)=-g_n(\theta)+ \widehat{g}_{n}(0),$ then  $(0,\me^{v_{n}(\theta)J})$ conjugates the cocycle $(\alpha,\ R_{\rho_{f}+(2\pi)^{-1}\widetilde{g}_{n}}\me^{G(\theta)})$ to
\begin{equation*}
\begin{split}
(\alpha, R_{\rho_{f}+(2\pi)^{-1}(\widetilde{g}_{n}+g_{n}-\widehat{g}_{n}(0))}
\me^{F_{n+1}(\theta)}),
\end{split}
\end{equation*}
where $F_{n+1}=\me^{v_{n}(\theta)J}G\me^{-v_{n}(\theta)J}$.  Let $g_{n+1}=\widetilde{g}_{n}+g_{n}-\widehat{g}_{n}(0),$  then by Lemma \ref{banachalgebra} and Lemma \ref{rotationlemma}, we have estimates
\begin{equation*}
\begin{split}
\|g_{n+1}\|_{r_{n+1}}&\leq \|\widetilde{g}_{n}\|_{r_{n+1}}+\|g_{n}-\widehat{g}_{n}(0)\|_{r_{n}}
\leq C \varepsilon_{n}
+\widetilde{\varepsilon}_{n}= \widetilde{\varepsilon}_{n+1},
\\
\|F_{n+1}\|_{r_{n+1}}&\leq \|\me^{v_{n}J}\|_{r_{n+1}}^{2}\|G\|_{r_{n+1}}\leq  \varepsilon_{n+1}.
\end{split}
\end{equation*}
Obviously, $\mathcal{R}_{\overline{Q}_{n+1}}g_{n+1}=0.$
 Moreover, the fibered rotation number does not change since $\me^{v_{n}J}$ is homotopic to the identity.
\end{proof}

Now we are in the position to prove Proposition~\ref{rotationproposition}.
First by Lemma \ref{rotationlemma},  $(0,\me^{-v_{n}J})$ conjugates the cocycle \eqref{totalbeforerotation} to
\begin{equation*}
(\alpha,\ R_{\rho_{f}}\me^{\widetilde{F}_{n}(\theta)})
\in\mathcal{F}_{\overline{r}_{n}}(\rho_{f},0,
C\varepsilon_{n}).
\end{equation*}
Moreover, by our definition of $\varepsilon_{n}$, one can easily check that
\begin{equation*}
C\varepsilon_{n}=C\varepsilon_{n-1}\overline{Q}_{n}^{-\Gamma^{\frac{1}{2}}(\overline{Q}_{n}^{\frac{1}{3}})}
\leq C\overline{Q}_{n}^{-8\mathbb{A}^{4}\tau^{2}}\leq 8^{-2}\gamma^2 Q_{n+1}^{-2\tau^{2}},\ n\geq1,
\end{equation*}
the last inequality holds since, by Lemma \ref{bridgeestimate}, $\overline{Q}_{n}^{\mathbb{A}^{4}}\geq Q_{n+1}$
and $\overline{Q}_{n}\geq T\geq (4\gamma^{-1})^{2\tau}, n\geq1.$
That is \eqref{itisreallylast} holds with $C\varepsilon_{n}$ in place of
$\|\widetilde{F}_{n}\|_{\overline{r}_{n}}.$
Then by the assumption $ \rho_f \in DC_{\alpha}(\gamma,\tau)$,
one can apply Lemma~\ref{iterationlemma}, and there exists $\Psi_{n}\in U_{r_{n+1}}(\mathbb{T},SL(2,\mathbb{R}))$ with
\begin{equation*}
\|\Psi_{n}-I\|_{r_{n+1}}\leq C \varepsilon_{n}^{\frac{1}{2}},
\end{equation*}
which further conjugates the obtained cocycle into
\begin{equation*}
\begin{split}
(\alpha,\ R_{\rho_{f}+(2\pi)^{-1}\widetilde{g}_{n}}\me^{G(\theta)})
& \in\mathcal{F}_{r_{n+1}}(\rho_{f},2C\varepsilon_{n},
C^{-2} C\varepsilon_{n}
\overline{Q}_{n+1}^{-\Gamma^{\frac{1}{2}}(\overline{Q}_{n+1}^{\frac{1}{3}})})\\
&= \mathcal{F}_{r_{n+1}}(\rho_{f},C\varepsilon_{n}, C^{-1}\varepsilon_{n+1}).
\end{split}
\end{equation*}
Finally, by Lemma \ref{newlemma}, $(0,\me^{v_{n}(\theta)J})$ further conjugates the cocycle
above to \eqref{totalafterrotation} with desired estimates.
 Let $\Phi_{n}=\me^{v_{n}(\theta)J}\Psi_{n}\me^{-v_{n}(\theta)J},$ then  by Lemma \ref{banachalgebra} and Lemma \ref{rotationlemma},    we have
\begin{equation*}
\begin{split}
\|\Phi_{n}-I\|_{r_{n+1}}&=\|\me^{v_{n}(\theta)J}(\Psi_{n}-I)\me^{-v_{n}(\theta)J}\|_{r_{n+1}}\\
&\leq\|\me^{v_{n}(\theta)J}\|_{\overline{r}_{n}}^{2}\|\Psi_{n}-I\|_{r_{n+1}}
<C\varepsilon_{n}^{\frac{1}{2}},
\end{split}
\end{equation*}
which finishes the whole proof.\qed

\section{Proof of Theorem~\ref{mainresult} and  Theorem ~\ref{mainresults}}\label{Theoremmainresults}

\subsection{Proof of Theorem ~\ref{mainresults}}\label{Ttheoremmainresults}

Set $A=R_{\varrho}\me^{F}.$
By the assumption $\rho(\alpha,\ R_{\varrho}\me^{F})=\rho_{f}$ and  \eqref{rotationnumberper} one has $|\rho_{f}-\varrho|\leq 2\|F\|_{C^{0}},$ thus one can rewrite $(\alpha, R_{\varrho}\me^{F})$  as $(\alpha, R_{\rho_{f}}\me^{\widetilde{F}})$ with $\|\widetilde{F}\|_{r}\leq C \|F\|_{r}.$
Set $\varepsilon_{*}:=C^{-1}\varepsilon_{0},$ where
$\varepsilon_{0}$ is the one defined by \eqref{varepsilonnestimate}.

Set $\overline{r}_{0}=r,$ by the selection of $\varepsilon_{0}$ and $Q_{1}\leq T^{\mathbb{A}^{4}},\ T\geq (4\gamma^{-1})^{2\tau},$ we get
\begin{equation*}
\begin{split}
\|\widetilde{F}\|_{\overline{r}_{0}}\leq C \varepsilon_{*}=
 \varepsilon_{0}=T^{-8\mathbb{A}^{4}\tau^{2}}\leq 8^{-2} \gamma^2 Q_{1}^{-2\tau^{2}} .
\end{split}
\end{equation*}
Since we further assume $\rho_f \in DC_{\alpha}(\gamma,\tau)$, one can apply Lemma~\ref{iterationlemma}, then there exists  $\Psi_{0}\in U_{r_{1}}(\mathbb{T},SL(2,\mathbb{R}))$ with
\begin{eqnarray*}
\|\Psi_{0}-I\|_{r_{1}}\leq C\varepsilon_{0}^{\frac{1}{2}},
\end{eqnarray*}
which conjugates the cocycle $(\alpha, R_{\rho_{f}}\me^{\widetilde{F}})$ into
\begin{equation*}
\begin{split}
(\alpha,\  R_{\rho_{f}+(2\pi)^{-1}\widetilde{g}_{0}}\me^{G_{0}})
\in \mathcal{F}_{r_{1}}(\rho_{f},\widetilde{\varepsilon}_{1},
\varepsilon_{1}).
\end{split}
\end{equation*}
We emphasize that in the first iteration step, we only apply Lemma~\ref{iterationlemma}, without applying Proposition~\ref{rotationproposition}, which is quite different from the rest steps.

Now we set $\widetilde{g}_{0}=g_{1},\ G_{0}=F_{1},$ and $\Phi_{0}=\Psi_{0}.$ Then one can apply  Proposition~\ref{rotationproposition} inductively,
and get a sequence of transformations $\{\Phi_{n}\}_{n\geq0}$ with estimate
$\|\Phi_{n}-I\|_{r_{n+1}}\leq C\varepsilon_{n}^{\frac{1}{2}}.$ Furthermore, let
\begin{equation*}
\begin{split}
\Phi^{(n)}=\Phi_{n-1}\circ\Phi_{n-2}\circ\cdots\circ\Phi_{0},\ \Phi=\lim_{n\rightarrow\infty}\Phi^{(n)},
\end{split}
\end{equation*}
then $\Phi^{(n)}$ conjugates the original cocycle $(\alpha, \ R_{\rho_{f}}\me^{\widetilde{F}})$ to $(\alpha,\ R_{\rho_{f}+(2\pi)^{-1}g_{n}}\me^{F_{n}(\theta)})$.

Finally,  let's  show the convergence of $\Phi^{(n)}.$
Let $\Phi=\lim_{n\rightarrow\infty}\Phi^{(n)},$ we will show $\Phi \in C^{\infty}(\T, SL(2,\R))$.  Indeed, by the definition of $\|\cdot\|_{r}-$norm we have
\begin{equation}\label{20200613}
\begin{split}
\|D_{\theta}^{j}f\|_{C^{0}}\leq\|f\|_{r}r^{-j}M_{j},\ \forall f\in U_{r}(\mathbb{T}, SL(2,\mathbb{R})),
\end{split}
\end{equation}
and by $\mathrm{(I)}$ of $\mathbf{(A)}$, for any $j\in\N$,
there exists $n_{j}\in\mathbb{N},$
such that for any $n\geq n_{j}$, we have $CM_{j}\leq\overline{Q}_{n}^{j},$ and
$\Gamma^{\frac{1}{2}}(\overline{Q}_{n}^{\frac{1}{3}})\geq24j$.
By \eqref{totalrotationestimate} and standard computation, we get
$
\|\Phi^{(n+1)}-\Phi^{(n)}\|_{r_{n+1}}\leq C\varepsilon_{n}^{\frac{1}{2}},
$
then by \eqref{20200613} we can further compute
\begin{equation*}
\begin{split}
\big\|D^{j}(\Phi^{(n+1)}-\Phi^{(n)})\big\|_{C^{0}}&\leq\|\Phi^{(n+1)}-\Phi^{(n)}\|_{r_{n+1}}M_{j}r_{n+1}^{-j}
\leq C\varepsilon_{n}^{\frac{1}{2}}M_{j}r_{n+1}^{-j}\\
&=\overline{Q}_{n}^{-\frac{1}{2}\Gamma^{\frac{1}{2}}(\overline{Q}_{n}^{\frac{1}{3}})}
\varepsilon_{n-1}^{\frac{1}{2}}
CM_{j}\overline{Q}_{n}^{2j}r_{0}^{-j}\\
&<\overline{Q}_{n}^{-\frac{1}{2}\Gamma^{\frac{1}{2}}(\overline{Q}_{n}^{\frac{1}{3}})}
\varepsilon_{n-1}^{\frac{1}{2}}\overline{Q}_{n}^{4j}<
\overline{Q}_{n}^{-\frac{1}{3}\Gamma^{\frac{1}{2}}(\overline{Q}_{n}^{\frac{1}{3}})}\varepsilon_{n-1}^{\frac{1}{3}}
=\varepsilon_{n}^{\frac{1}{3}},
\end{split}
\end{equation*}
which means $\Phi \in C^{\infty}(\T, SL(2,\R))$.  Let $g_{\infty}=\lim_{n\rightarrow\infty} g_n$, then $\Phi$ conjugates the cocycle $(\alpha, R_{\rho_{f}}\me^{\widetilde{F}})$ to
$(\alpha, R_{\rho_{f}+(2\pi)^{-1}g_{\infty}})$, where $g_{\infty} \in C^{\infty}(\T, \R)$.
\qed

\bigskip

Note the proof of the proposition~\ref{rotationproposition} is separated into three steps and if we just
manipulate the first two steps, we get the local almost reducibility.

\begin{corollary}\label{corollarylocalalmost}
Under the assumptions of Theorem~\ref{mainresults},
there exists a sequence of $B_{\ell}\in U_{r_{\ell}}(\T, SL(2,\R))$ transforming $(\alpha, A)$ into $(\alpha, R_{\rho_{f}}\me^{F_{\ell}})$
with estimates
\begin{equation}\label{20201225s}
\|B_{\ell}\|_{r_{\ell}}\leq C,\ \|F_{\ell}\|_{r_{\ell}}\leq \varepsilon_{\ell}.
\end{equation}
\end{corollary}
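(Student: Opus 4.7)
\medskip

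\textbf{Proof plan for Corollary~\ref{corollarylocalalmost}.}

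The idea is to run exactly the scheme from the proof of Theorem~\ref{mainresults}, but whenever we want to read off the almost reducibility form at stage $\ell$, we stop short of Lemma~\ref{newlemma}. Concretely, after $\ell-1$ iterations of Proposition~\ref{rotationproposition}, the conjugation $\Phi^{(\ell)}=\Phi_{\ell-1}\circ\cdots\circ\Phi_0$ transforms $(\alpha,A)$ into the cocycle $(\alpha,R_{\rho_f+(2\pi)^{-1}g_\ell}\me^{F_\ell})\in\mathcal{F}_{r_\ell}(\rho_f,\widetilde{\varepsilon}_\ell,\varepsilon_\ell)$ with $\mathcal{R}_{\overline{Q}_\ell}g_\ell=0$. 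We then perform exactly one more conjugation of type Step~1, namely $(0,\me^{-v_\ell J})$ from Lemma~\ref{rotationlemma}, which eliminates the $g_\ell$-term and produces a cocycle of the form $(\alpha, R_{\rho_f}\me^{\widetilde{F}_\ell})$ with $\|\widetilde{F}_\ell\|_{\overline{r}_\ell}\leq C\varepsilon_\ell$. Renaming $\widetilde F_\ell$ as $F_\ell$ and (if needed) $\overline{r}_\ell$ as $r_\ell$ (they are comparable through the fixed factor $2\overline Q_\ell^{-2}/\overline Q_{\ell-1}^{-2}$), this is precisely the cocycle form required in \eqref{20201225s}.

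Setting $B_\ell:=\me^{-v_\ell J}\cdot\Phi^{(\ell)}$, the desired norm control $\|B_\ell\|_{r_\ell}\leq C$ will come from two ingredients. First, by Lemma~\ref{rotationlemma} we already know $\|\me^{-v_\ell J}\|_{\overline{r}_\ell}\leq C$; this is the step that produces the bounded-but-not-small factor, and it is fine because we only need boundedness, not closeness to the identity. Second, since each $\Phi_k$ satisfies $\|\Phi_k-I\|_{r_{k+1}}\leq C\varepsilon_k^{1/2}$ by \eqref{totalrotationestimate}, the Banach algebra property (Lemma~\ref{banachalgebra}) together with the monotonicity $r_{k+1}\le r_k$ gives
\begin{equation*}
\|\Phi^{(\ell)}\|_{r_\ell}\le \prod_{k=0}^{\ell-1}(1+C\varepsilon_k^{1/2})\le \exp\Bigl(C\sum_{k\ge 0}\varepsilon_k^{1/2}\Bigr),
\end{equation*}
which is uniformly bounded because the super-exponential decay of $\varepsilon_k=\varepsilon_{k-1}\overline{Q}_k^{-\Gamma^{1/2}(\overline{Q}_k^{1/3})}$ (recall $\mathrm{(I)}$ in $\mathbf{(A)}$ forces $\Gamma\to\infty$) makes $\sum\varepsilon_k^{1/2}$ summable. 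Combining both, one more application of Lemma~\ref{banachalgebra} yields $\|B_\ell\|_{r_\ell}\le C\cdot C=C$.

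There is really no substantive new obstacle here — the proof is a bookkeeping exercise inside the apparatus already built for Theorem~\ref{mainresults}. The one point that deserves care is why we can afford to keep the factor $\me^{-v_\ell J}$ at every level without it contaminating the bound: the Lemma~\ref{abstractderivatives} estimate gives $\|\me^{-v_\ell J}\|_{\overline r_\ell}\le C$ with a constant $C$ \emph{independent} of $\ell$, precisely because the width $\overline r_\ell$ was chosen to compensate for the Liouvillean growth of the Fourier coefficients of $g_\ell$. This is exactly the reason why the almost reducibility survives in the $C^\infty$ category even though full rotations reducibility in any fixed positive width would fail. The rest of the statement (mild adjustments between $\overline r_\ell$ and $r_\ell$, and the initial step where there is no $g_0$ term so only Lemma~\ref{iterationlemma} is used) is handled as in Section~\ref{Ttheoremmainresults}.
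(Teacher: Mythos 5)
Your proposal is correct and runs on essentially the same idea as the paper: terminate the iteration mid-cycle, after the cumulative map $\Phi^{(\ell)}$ plus one more rotation-killing conjugation, and observe that each factor is uniformly bounded. (The paper's own proof is the one-line formula $B_\ell=\Psi_{\ell-1}\me^{v_{\ell-1}J}\Phi^{(\ell-1)}$ — note that $\Psi_{\ell-1}\me^{-v_{\ell-1}J}\Phi^{(\ell-1)}=\me^{-v_{\ell-1}J}\Phi^{(\ell)}$, so modulo the evident sign typo this is your $B_\ell$ with index $\ell$ shifted by one and the single extra conjugation taken to be $\me^{-v_{\ell-1}J}$ applied \emph{before} $\Psi_{\ell-1}$ rather than $\me^{-v_\ell J}$ applied after. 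Your choice has the small advantage of landing exactly on the form $R_{\rho_f}\me^{F_\ell}$ with no residual $\widetilde g$-term.)

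One small factual slip: $\overline r_\ell=2\overline Q_\ell^{-2}r_0$ and $r_\ell=\overline Q_{\ell-1}^{-2}r_0$ are \emph{not} comparable by a fixed factor; the ratio $\overline r_\ell/r_\ell=2\overline Q_{\ell-1}^{2}/\overline Q_{\ell}^{2}\to 0$ very fast, and $\overline r_\ell\approx r_{\ell+1}$, not $\approx r_\ell$. So the estimate you obtain is $\|\widetilde F_\ell\|_{\overline r_\ell}\le C\varepsilon_\ell$, in a strictly smaller width than the $r_\ell$ stated in the corollary. This is harmless — it merely amounts to re-indexing, and the applications of Corollary~\ref{corollarylocalalmost} (Lemma~\ref{almostreducibility}) are insensitive to this shift — but the justification you give for ignoring the discrepancy is wrong as written. (Symmetrically, the paper's own stopping point keeps the claimed width $r_\ell$ but leaves a rotation term $\widetilde g_{\ell-1}$ of size $\sim\varepsilon_{\ell-1}$ that, once absorbed into $F_\ell$, makes the bound $C\varepsilon_{\ell-1}$ rather than $\varepsilon_\ell$; so the statement as printed is off in both readings, and both discrepancies are immaterial.)
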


\begin{proof}
Set $B_{\ell}=\Psi_{\ell-1}\me^{v_{\ell-1}J}\Phi^{(\ell-1)},$ where $\Psi_{\ell-1}, v_{\ell-1}$ are the ones
in Section~\ref{Inthissection} and $\Phi^{(\ell-1)}$ is the one defined above with $\ell-1$ in place of $n.$
Obviously, $B_{\ell}$ transforming $(\alpha, A)$ into $(\alpha, R_{\rho_{f}}\me^{F_{\ell}})$
and the estimates of $B_{\ell}$ and $F_{\ell}$ follow from the estimates of $\Psi_{\ell-1}, v_{\ell-1}$ and $\Phi^{(\ell-1)}.$
\end{proof}

\subsection{Proof of Theorem~\ref{mainresult}}\label{ProofofTheorem}
The proof of Theorem~\ref{mainresult} relies on the renormalization theory of one-frequency quasiperiodic $SL(2,\R)$ cocycles.
 Recall for any $0<\gamma<1$ and $\tau>1,$   $DC_{\alpha}(\gamma,\tau)$ denotes  the set
of all $\rho$ such that
\begin{equation*}
\begin{aligned}
\|k\alpha\pm2\rho\|_{\mathbb{Z}}
\geq \gamma \langle k\rangle^{-\tau},\ \langle k\rangle=\max\{1,|k|\},\ \forall k\in\mathbb{Z}.
\end{aligned}
\end{equation*}
Let $\mathcal{P}\subset[0,1/2)$ be the set of all $\rho$ such that there exist
$0<\gamma<1$ and $\tau>1$ with $\rho\beta_{n-1}^{-1}\in DC_{\alpha_{n}}(\gamma,\tau)$
for infinitely many $n.$ By Borel-Cantelli lemma,
$\mathcal{P}$ is full measure in $[0,1/2)$.
We will fix the sequence
$\{n_{j}\}_{j\in\mathbb{N}}$ such that $\beta_{n_{j}-1}^{-1}\rho_{f}\in DC_{\alpha_{n_{j}}}(\gamma,\tau).$

We also recall the following well-known Kotani's theory \cite{Kotani84}.
\begin{theorem} [\cite{Kotani84}]
Let $\widetilde{\mathcal{P}} \subset [0,1/2)$ be any full measure subset.  For every
$V \in C^{\infty}(\T,\R)$,
for almost every $E \in \R$, we have
\begin{itemize}
\item either $(\alpha,S_{E}^{V})$ has a positive Lyapunov exponent, or
\item $(\alpha,S_{E}^{V})$ is
$L^2$-conjugated to an $\mathrm{SO}(2,\R)$-valued cocycle and the fibered
rotation number of $(\alpha,S_{E}^{V})$ belongs to $\widetilde{\mathcal{P}}.$
\end{itemize}
\end{theorem}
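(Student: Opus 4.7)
The plan is to combine the classical Kotani theorem (giving $L^2$ rotations reducibility on the zero-Lyapunov set) with a measure-theoretic observation on the fibered rotation number: that as a map into $[0,1/2)$ it pushes Lebesgue measure on the zero-Lyapunov set to an absolutely continuous measure.

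First I would set $Z := \{E \in \R : L(\alpha, S_E^V) = 0\}$ and invoke the heart of \cite{Kotani84}: for Lebesgue-a.e.\ $E \in Z$, the cocycle $(\alpha, S_E^V)$ is $L^2$-conjugated to an SO$(2,\R)$-valued cocycle. The standard proof proceeds via the Weyl--Titchmarsh $m$-functions $m_\pm(E+i0)$ of $H_{V,\alpha,\theta}$: on $Z$ their non-tangential boundary values exist a.e., are complex conjugates of one another, and determine a measurable invariant line field in $\C^2$ whose polar decomposition produces the desired $L^2$ conjugation. I would take this classical argument as established, since it uses only boundedness and ergodicity of $V$, not its smoothness.

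Next I would upgrade ``a.e.\ $E \in Z$'' to a statement pinning down the rotation number. Recall the Johnson--Moser identity $N(E) = 1 - 2\rho(\alpha, S_E^V)$, where $N$ is the integrated density of states of $H_{V,\alpha}$, and the fact that $E \mapsto \rho(\alpha, S_E^V)$ is continuous and monotone non-increasing. A second use of Kotani theory shows that the absolutely continuous part of the density-of-states measure $dN$ is supported (up to a null set) precisely on $Z$ and is equivalent to Lebesgue measure on $Z$. Via the Johnson--Moser identification, this means $\rho_*(\mathrm{Leb}|_Z)$ is absolutely continuous on $[0,1/2)$. Hence for any full Lebesgue measure subset $\widetilde{\mathcal{P}} \subset [0,1/2)$, the set $Z \setminus \rho^{-1}(\widetilde{\mathcal{P}})$ has Lebesgue measure zero.

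Combining the two steps yields the dichotomy: for Lebesgue-a.e.\ $E \in \R$, either $E \notin Z$, in which case $L(\alpha, S_E^V) > 0$, or $E \in Z \cap \rho^{-1}(\widetilde{\mathcal{P}})$, in which case $(\alpha, S_E^V)$ is $L^2$-conjugated to an SO$(2,\R)$-valued cocycle and $\rho(\alpha, S_E^V) \in \widetilde{\mathcal{P}}$. The main obstacle is the second step: rigorously identifying $\rho_*(\mathrm{Leb}|_Z)$ with (an affine image of) the absolutely continuous part of $dN$. This rests on Johnson--Moser plus the Kotani--Ishii characterization of the essential support of the a.c.\ spectrum, and is where the substantive input of \cite{Kotani84} enters; the $C^\infty$ hypothesis on $V$ plays no role beyond ensuring that the cocycle is well-defined and continuous in $E$.
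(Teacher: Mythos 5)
Your proposal is correct and is essentially the standard argument behind this result: the paper gives no proof of its own (it simply cites Kotani's theory, and this formulation with the prescribed full-measure set $\widetilde{\mathcal{P}}$ is the one used in \cite{Krikorian06,AvilaF11}), and the intended justification is exactly your combination of the $L^2$ conjugation on the zero-Lyapunov set $Z$ with the equivalence of $\mathrm{Leb}|_Z$ and the a.c.\ part of $dN$, transported to the rotation number via the Johnson--Moser identity.
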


We start from $(\alpha,S_{E}^{V})$  which can be
$L^2$-conjugated to an $\mathrm{SO}(2,\R)$-valued cocycle. By definition of $\mathcal{P}$, if $\rho(\alpha,S_{E}^{V})=\rho_f$
belongs to $\mathcal{P}$, we can find $0<\gamma<1$ and $\tau>1$, and arbitrary large $j>0$, such that $\rho_f\beta_{n_{j}-1}^{-1}\in DC_{\alpha_{n_{j}}}(\gamma,\tau)$.
Now Proposition \ref{llambdarenormalizations} ensures that
$
\|F_{n_{j}}\|_{rK_{*}^{-2}}\rightarrow0
$, then we can further choose $j$ large enough, such that
\begin{equation*}
\|F_{n_{j}}\|_{rK_{*}^{-2}}\leq \varepsilon_{*}(\gamma,\tau,rK_{*}^{-2},M),
\end{equation*}
where $\varepsilon_{*}=\varepsilon_{*}(\gamma,\tau,r,M)>0$ is the one in Theorem~\ref{mainresults}.
Since $(-1)^{n_{j}}\rho_f\beta_{n_{j}-1}^{-1}$ is just the rotation number of $(\alpha_{n_{j}}, R_{\rho_{n_{j}}}\me^{F_{n_{j}}})$,  by Theorem~\ref{mainresults} we know that $(\alpha_{n_{j}}, R_{\rho_{n_{j}}}\me^{F_{n_{j}}})$ is $C^{\infty}$ rotations reducible.
Note $(\alpha_{n_{j}}, R_{\rho_{n_{j}}}\me^{F_{n_{j}}})$ is rotations reducible (or reducible) implies $(\alpha,S_{E}^{V})$  is rotations reducible (or reducible) in the same regularity class (consult Proposition 4.2 of \cite{Krikorian15} for example), then Theorem~\ref{mainresult} follows directly.
 \qed

\section{Last's intersection spectrum conjecture}\label{Lastsintersection}
Consider the Schr\"odinger operator $H_{V,\beta,\theta}$ defined by \eqref{20200824operator} with ultra-differentiable potential $V\in U_{r}(\TT,\RR),$ frequency $\beta\in\TT$ and phase $\theta\in\TT.$
For fixed $\theta,$ denote by $\sigma(\beta,\theta)$ and $\sigma_{ac}(\beta,\theta)$ the spectrum of $H_{V,\beta,\theta}$
and its
absolutely continuous (ac)-component, respectively. It is well known that
in the case $\beta=p/q,$ $\sigma(p/q,\theta)$ is purely absolutely continuous and consists
of $q,$ possibly touching, bands. Moreover, in the case $\beta=\alpha$ is
irrational, the spectrum and ac spectrum do not depend on $\theta:$
\begin{equation*}
\begin{aligned}
\sigma(\alpha,\theta)=:\Sigma(\alpha), \ \sigma_{ac}(\alpha,\theta)=:\Sigma_{ac}(\alpha),\ \forall\theta\in\mathbb{T}.
\end{aligned}
\end{equation*}

In order to treat rational and irrational frequencies on the same footing, similar
to Avron et al. \cite{AvronS90}, given $\beta\in\mathbb{T},$ we introduce the sets
\begin{equation*}%\label{20200604}
\begin{aligned}
S_{+}(\beta):=\bigcup_{\theta\in\mathbb{T}}\sigma(\beta,\theta)=\Sigma(\beta),
\end{aligned}
\end{equation*}
and
\begin{equation*}%\label{20200605}
\begin{aligned}
S_{-}(\beta):=\bigcap_{\theta\in\mathbb{T}}\sigma_{ac}(\beta,\theta)=\Sigma_{ac}(\beta).
\end{aligned}
\end{equation*}
Note that it was proved in \cite{JitomirskayaM12} that 
$$
S_+(\alpha)=\Sigma(\alpha)=\lim\limits_{n\rightarrow\infty}S_+(p_n/q_n).
$$

Theorem~\ref{lastintersection} follows immediately from the following Theorem \ref{addproposition} and Theorem \ref{addpropositions}, while the key arguments are  ``generalized Chambers' formula" (Proposition \ref{differenceestimate}) and continuity of Lyapunov exponent  (Theorem \ref{continuity}).

\subsection{Generalized Chambers' formula}

\begin{theorem}\label{addproposition}
Let $\alpha\in \R \setminus\mathbb{Q}$,  $ V:\mathbb{T} \rightarrow  \R$ be an M-ultra-differentiable function satisfying  $\mathbf{(H1)}$ and $\mathbf{(H2)}$, then we have
\begin{equation*}
\begin{aligned}
S_{-}(\alpha)=\Sigma_{ac}(\alpha)\subset\liminf_{n\rightarrow\infty}S_{-}(p_{n}/q_{n}).
\end{aligned}
\end{equation*}
\end{theorem}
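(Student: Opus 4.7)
The plan is to translate the desired set-theoretic inclusion into a pointwise control on the discriminant of the periodic approximants via the characterization
$S_-(p_n/q_n)=\{E : |\mathrm{tr}\,A_{q_n}^{p_n/q_n, E}(\theta)|\leq 2 \text{ for all }\theta\in\T\}$,
and then to apply a Borel--Cantelli argument using the sub-exponential decay from the generalized Chambers' formula.

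First, for Lebesgue a.e.\ $E\in\Sigma_{ac}(\alpha)$, Kotani's theory gives $L(\alpha,S_E^V)=0$ together with an $L^2$-conjugacy of $(\alpha,S_E^V)$ into $SO(2,\R)$, and Theorem~\ref{mainresult} upgrades this to $C^{\infty}$ rotations reducibility: there exist $B\in C^{\infty}(\T,SL(2,\R))$ and $R(\cdot)\in C^{\infty}(\T,SO(2,\R))$ with $B(\theta+\alpha)S_E^V(\theta)B(\theta)^{-1}=R(\theta)$. Iterating yields
\begin{equation*}
A_{q_n}^{\alpha,E}(\theta) = B(\theta+q_n\alpha)^{-1}\,R_{q_n}(\theta)\,B(\theta),\qquad R_{q_n}(\theta)\in SO(2,\R),
\end{equation*}
and since $\|q_n\alpha\|_{\Z}\to 0$ with $B$ smooth, $B(\theta+q_n\alpha)\to B(\theta)$ uniformly, giving $|\mathrm{tr}\,A_{q_n}^{\alpha,E}(\theta)|\leq 2+\delta_n(E)$ uniformly in $\theta$ with $\delta_n\to 0$. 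The quantitative strengthening---that $\delta_n$ can be bounded by an explicit sub-exponentially small quantity in $q_n$---is precisely the content of the generalized Chambers' formula of Proposition~\ref{differenceestimate}, which I would derive by combining the iterated identity above with the quantitative almost reducibility of Corollary~\ref{corollarylocalalmost} and the decay function $\Gamma$ from Lemma~\ref{finallemma}.

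Second, I would transfer the estimate from $\alpha$ to $p_n/q_n$. A standard telescoping using $|\alpha-p_n/q_n|\leq 1/(q_nq_{n+1})$, the uniform boundedness of $A_{q_n}^{\alpha,E}$ coming from rotations reducibility, and the regularity of $V$ yields
\begin{equation*}
\|A_{q_n}^{p_n/q_n,E}(\theta)-A_{q_n}^{\alpha,E}(\theta)\|\leq C_E\,q_n/q_{n+1},
\end{equation*}
and hence $|\mathrm{tr}\,A_{q_n}^{p_n/q_n,E}(\theta)|\leq 2+\delta_n'(E)$ uniformly in $\theta$, where $\delta_n'$ is still sub-exponentially small in $q_n$ after restricting if necessary to the subsequence of best approximants selected by the CD bridge construction of Lemma~\ref{bridgeestimate}.

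Finally, to pass from the uniform discriminant estimate to the $\doteq$-containment, I would estimate the Lebesgue measure of the failure set $\mathcal{F}_n:=\Sigma_{ac}(\alpha)\setminus S_-(p_n/q_n)$. Each $E\in\mathcal{F}_n$ must belong to a spectral gap of the periodic operator $H_{V,p_n/q_n,\cdot}$ of width at most $O(\delta_n')$ (using the transversality of the discriminant crossing $\pm 2$ at the band edges), and since $H_{V,p_n/q_n,\cdot}$ has at most $q_n$ bands the bound $|\mathcal{F}_n|\leq C q_n\delta_n'$ is summable in $n$. Borel--Cantelli then gives that for Lebesgue a.e.\ $E\in\Sigma_{ac}(\alpha)$ one has $E\in S_-(p_n/q_n)$ for all sufficiently large $n$, which is the desired containment. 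The main obstacle is Step~2: establishing Proposition~\ref{differenceestimate} in the ultra-differentiable category, where Avila's quantization of acceleration is unavailable, so one must extract the required sub-exponential decay of the $\theta$-variation of $\mathrm{tr}\,A_{q_n}^{\alpha,E}(\theta)$ directly from the quantitative almost reducibility conjugations of Corollary~\ref{corollarylocalalmost} together with the $\Lambda$--$\Gamma$ machinery of Section~\ref{Ultradifferentiable}.
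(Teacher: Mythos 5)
Your first two steps are in the spirit of the paper's proof: you correctly identify the generalized Chambers' formula (Proposition~\ref{differenceestimate}) as the key quantitative input, and the transfer from $\alpha$ to $p_n/q_n$ via telescoping and the factor $|\alpha-p_n/q_n|\leq q_n^{-2}$ is what the paper does inside the proof of that proposition. However, your Step~3 — the Borel--Cantelli argument on the measure of the failure set $\mathcal{F}_n$ — has a genuine gap, and it is precisely the place where the paper does something different.

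The measure estimate $|\mathcal{F}_n|\leq Cq_n\delta_n'$ is not justified. Chambers' formula puts $\mathcal{F}_n$ inside $\{E : 2-\delta_n'<|a_{q_n,0}(E)|<2+\delta_n'\}$, but to bound the Lebesgue measure of this set you need a uniform lower bound on $|\partial_E a_{q_n,0}|$ (or $|\partial_E t|$) near $|t|=2$. No such bound exists: at closed gaps the discriminant touches $\pm 2$ tangentially and $\partial_E t=0$ there, and for the quasi-periodic limit to have purely ac spectrum one expects most gaps of the periodic approximants to be closed or nearly closed exactly on the set you care about. Even the best black-box estimate for a degree-$q_n$ polynomial (Cartan) gives $|\{E:||a_{q_n,0}(E)|-2|<\delta\}|\lesssim q_n\delta^{1/q_n}$, and with $\delta=\exp\{-\Lambda(cq_n)\}$ where $\Lambda$ grows sub-exponentially (e.g.\ $\Lambda(x)=(\ln x)^{\delta}$), one has $\delta^{1/q_n}\to 1$, so the estimate is useless. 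On top of this, the constant $c=c(E)$ and the threshold $n_*(E)$ in Proposition~\ref{differenceestimate} depend on $E$, so your $\delta_n'$ is not uniform over $\Sigma_{ac}(\alpha)$ and the sum $\sum_n|\mathcal{F}_n|$ cannot be formed without further uniformization.

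The paper circumvents the measure estimate entirely by a pointwise argument via the integrated density of states. For a.e.\ $E\in\Sigma_{ac}(\alpha)$ one has both $C^\infty$ rotations reducibility (so by Theorem~6.1 of \cite{AvilaJ09} the IDS $N(\beta,E)$ is Lipschitz in $\beta$ near $\alpha$) and $N(\alpha,E)$ in a full-measure set $\mathcal{K}$ where $\inf_p|q_nN(\alpha,E)-p|\geq n^{-2}$ eventually. These combine to keep $q_nN(p_n/q_n,E)$ at distance $\geq\frac{1}{2q_n}$ from integers, and via the explicit formula \eqref{ids}--\eqref{e4} for the IDS of periodic operators this forces $|t_{p_n/q_n}(E,\tilde\theta)|<2-q_n^{-2}$ for some $\tilde\theta$. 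Since $q_n^{-2}$ is \emph{polynomially} small while the Chambers variation $4\exp\{-\Lambda(cq_n)\}$ is \emph{super-polynomially} small, Chambers' formula then bounds $|a_{q_n,0}(E)|\leq 2-\frac{1}{2q_n^2}$ and hence $|t_{p_n/q_n}(E,\theta)|\leq 2$ for all $\theta$, i.e.\ $E\in S_-(p_n/q_n)$. This gives, for a.e.\ $E$, eventual membership directly — no summability, no transversality, no uniformity in $E$ is needed. You should replace your Step~3 with this IDS argument.
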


The proof of Theorem \ref{addproposition} depends on the following generalized Chambers' formula. To state this, recall that for each $\theta\in\mathbb{T},$ $H_{V,p/q,\theta}$ is a periodic operator whose
spectrum, $\sigma(p/q,\theta),$ is given in terms of the discriminant by
\begin{equation*}
\begin{aligned}
\sigma(p/q,\theta)=t_{p/q}(\cdot,\theta)^{-1}[-2,2],
\end{aligned}
\end{equation*}
where
\begin{equation}\label{20200603}
\begin{aligned}
t_{p/q}(E,\theta)=\mathrm{tr}\{\Pi_{s=q-1}^{0}
S^{V}_{E}(\theta+sp/q)\},
\end{aligned}
\end{equation}
which is called as the discriminant of $H_{V,p/q,\theta}$, here $``\mathrm{tr}"$ stands for the trace. In general, this discriminant is a polynomial of degree $q$ in $E$
and $q^{-1}-$periodic in $\theta$, whence one may write
\begin{equation}\label{20200606}
\begin{aligned}
t_{p/q}(E,\theta)=\sum_{k\in\mathbb{Z}}a_{q,k}(E)\me^{2\pi \mi qk\theta}.
\end{aligned}
\end{equation}

For the almost Mathieu operator, the potential $V=2 \lambda \cos 2\pi \theta$ is in fact a trigonometric polynomial of degree 1. Thus in the formula \eqref{20200606} only the Fourier coefficients with $k=0,\pm 1$ survive, resulting  the celebrated Chamber formula \cite{Chambers65,Bellissard82,Jitomirskayahan19}
\begin{equation*}
\begin{aligned}
t_{p/q}(E,\theta)=a_{q,0}(E)+2\lambda^{q}\cos(2\pi q\theta).
\end{aligned}
\end{equation*}
Note the classical Chamber's  formula holds for any $\lambda$.  In particular, it shows that phase variations of the discriminant for the subcritical almost Mathieu operator (thus has absolutely continuous spectrum) are exponentially small in q. Now for any $C^{\infty}$ potential $ V:\mathbb{T} \rightarrow  \R$  satisfying  $\mathbf{(H1)}$ and $\mathbf{(H2)}$, $E\in \Sigma_{ac}(\alpha)$, we will show that the difference between the determines $t_{p/q}(E,\theta)$ of rational approximates of $\alpha$, and its phase-average $a_{q,0}(E)$, is in fact sub-exponentially small in $q$:

\begin{proposition}\label{differenceestimate}
Let $\alpha\in \R \setminus\mathbb{Q}$,  $ V:\mathbb{T} \rightarrow  \R$ be an M-ultra-differentiable function satisfying  $\mathbf{(H1)}$ and $\mathbf{(H2)}$, then for almost every $E\in \Sigma_{ac}(\alpha)$,  there exist  $n_{*}=n(V,\alpha,E)\in \mathbb{N}$, $c=c(E)$ such that
\begin{equation}\label{20200607}
\begin{aligned}
\|t_{p_{n}/q_{n}}(E,\theta)-a_{q_{n},0}(E)\|_{C^{0}}\leq4
\exp\{-\Lambda( c q_{n})\}
\end{aligned}
\end{equation}
whenever  $n\geq n_{*}.$
\end{proposition}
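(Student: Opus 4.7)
The plan combines Kotani's theorem, quantitative $C^{\infty}$ almost reducibility from Corollary \ref{corollarylocalalmost}, and the ultra-differentiable Fourier decay estimate \eqref{fouriercoefficient}. For a.e.\ $E\in\Sigma_{ac}(\alpha)$, Kotani's theorem ensures $(\alpha,S_{E}^{V})$ is $L^{2}$-reducible to an $SO(2,\R)$-valued cocycle with rotation number $\rho_{f}(E)$ in the full-measure set $\mathcal{P}$. Repeating the argument of Section \ref{ProofofTheorem}---renormalization (Proposition \ref{llambdarenormalizations}) followed by the KAM scheme of Section \ref{Inthissection}---yields, at every scale $\ell\ge 1$, an ultra-differentiable conjugation $B_{\ell}\in U_{r_{\ell}}^{M}(\T,SL(2,\R))$ with $\|B_{\ell}\|_{r_{\ell}}\le C$ and $r_{\ell}=\overline{Q}_{\ell-1}^{-2}r_{0}$, transforming $(\alpha,S_{E}^{V})$ into $(\alpha,R_{\rho_{f}}\me^{F_{\ell}})$ with $\|F_{\ell}\|_{r_{\ell}}\le\varepsilon_{\ell}$, where $\varepsilon_{\ell}$ decays super-polynomially in $\overline{Q}_{\ell-1}$ through the function $\Gamma$ of Lemma \ref{finallemma}.

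Exploiting that $B_{\ell}$ is $1$-periodic and $p_{n}\in\Z$, the rational $q_{n}$-step transfer matrix admits the representation
\[
T^{p_{n}/q_{n}}(\theta)=B_{\ell}(\theta)^{-1}\bigl(R_{\rho_{f}}\me^{F_{\ell}^{(n)}}\bigr)_{q_{n}}(\theta)\,B_{\ell}(\theta),
\]
where the modified perturbation $F_{\ell}^{(n)}$ absorbs the mismatch $B_{\ell}(\theta+p_{n}/q_{n})B_{\ell}(\theta+\alpha)^{-1}-I$; by Lemma \ref{cauchyestimate} and $|p_{n}/q_{n}-\alpha|\le(q_{n}q_{n+1})^{-1}$, one has $\|F_{\ell}^{(n)}\|_{r_{\ell}/2}\le\varepsilon_{\ell}+C(r_{\ell}q_{n}q_{n+1})^{-1}$. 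Iterating the product gives $(R_{\rho_{f}}\me^{F_{\ell}^{(n)}})_{q_{n}}(\theta)=R_{q_{n}\rho_{f}}\me^{\widetilde{G}_{\ell}(\theta)}$ with $\|\widetilde{G}_{\ell}\|_{r_{\ell}/2}\le Cq_{n}\|F_{\ell}^{(n)}\|_{r_{\ell}/2}$, and cyclicity of the trace yields $t_{p_{n}/q_{n}}(E,\theta)=\mathrm{tr}\bigl(R_{q_{n}\rho_{f}}\me^{\widetilde{G}_{\ell}(\theta)}\bigr)$, so $t_{p_{n}/q_{n}}(E,\cdot)$ carries a uniform ultra-differentiable bound in an $r_{\ell}/2$-strip.

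Since $t_{p_{n}/q_{n}}(E,\cdot)$ is $1/q_{n}$-periodic, its Fourier expansion is supported on $q_{n}\Z$, so the remainder satisfies $t_{p_{n}/q_{n}}(E,\theta)-a_{q_{n},0}(E)=\sum_{k\ne 0}a_{q_{n},k}(E)\me^{2\pi\mi q_{n}k\theta}$. Applying \eqref{fouriercoefficient} to the norm bound from the previous paragraph produces $|a_{q_{n},k}(E)|\le\widetilde{C}\exp\{-\Lambda(\pi q_{n}|k|r_{\ell})\}$, and summing in $k$ using property $\mathrm{(III)}$ of $\Lambda$ (Lemma \ref{finallemma}) yields the desired $4\exp\{-\Lambda(cq_{n})\}$ bound, after a final choice $\ell=\ell(n)$ and with $c=c(E)$ extracted from $r_{\ell(n)}$.

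The main obstacle is to balance $\ell(n)$ so that three requirements hold simultaneously: (a) the KAM error $q_{n}\varepsilon_{\ell(n)}$ is sub-exponentially small in $q_{n}$; (b) the rational-approximation error $(r_{\ell(n)}q_{n}q_{n+1})^{-1}$ is sub-exponentially small in $q_{n}$; and (c) $r_{\ell(n)}q_{n}$ grows fast enough for the Fourier tail to decay at the $\Lambda(cq_{n})$ rate rather than merely polynomially. This balance is possible only because $\varepsilon_{\ell}$ decays super-polynomially in $\overline{Q}_{\ell-1}$ through the function $\Gamma$; hypotheses $\mathbf{(H1)}$--$\mathbf{(H2)}$ are essential here, as mere $C^{\infty}$ regularity would not supply the required decay rate, and Avila's quantization of acceleration is unavailable outside the analytic setting, which is precisely why this perturbative path is forced.
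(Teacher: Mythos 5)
Your overall strategy—quantitative almost reducibility in the ultra-differentiable topology, followed by the $1/q_n$-periodicity trick and Fourier decay from $\Lambda$—is the right blueprint, and it matches the paper's intent. But there is a genuine gap at the step where you claim ``Repeating the argument of Section \ref{ProofofTheorem}\dots yields, at every scale $\ell$, an ultra-differentiable conjugation $B_{\ell}\in U_{r_{\ell}}^{M}$ with $\|B_{\ell}\|_{r_{\ell}}\le C$ transforming $(\alpha,S_{E}^{V})$ into $(\alpha,R_{\rho_{f}}\me^{F_{\ell}})$.'' Corollary \ref{corollarylocalalmost} only produces such uniformly bounded conjugations for cocycles that are \emph{already} $\varepsilon_*$-close to a rotation in the $\|\cdot\|_{M,r}$-norm; the Schr\"odinger cocycle $(\alpha,S_E^V)$ itself need not be. The paper first renormalizes $(\alpha,S_E^V)$ to land in the small-perturbation regime (Proposition \ref{llambdarenormalizations}), applies the KAM scheme there, and then must \emph{inverse renormalize} back to the original frequency (Lemma \ref{renormalizationinverse}). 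That inverse step is not free: the normalizing maps obey $\|D_n\|_{r(K_*^2T)^{-1},T}\le C^{q_{n-1}(T+1)}$, so the resulting conjugation of the original cocycle has norm $\|B_{j,\ell}\|_{\widetilde{r}_{\ell}}<4C^{3q_{n_j-1}q_{n_j}}$ (Lemma \ref{almostreducibility})—exponential in $q_{n_j}$, not $O(1)$.

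This is precisely the quantity your balance requirements (a)--(c) must defeat, and as written your proposal does not confront it. The paper's Lemma \ref{almostreducibility} and the choices in the proof of Proposition \ref{differenceestimate} are arranged so that (i) $n_j$ is chosen once and held fixed (determined by when renormalization puts the cocycle inside the KAM basin with $\rho_f\beta_{n_j-1}^{-1}\in DC_{\alpha_{n_j}}(\gamma,\tau)$), (ii) the index $\ell$ is then driven to infinity so that $\varepsilon_{\ell}^{1/2}$ (and later $\varepsilon_{\ell}^{3/4}$) overwhelms the fixed factor $C^{q_{n_j}^2}$, encoded by $\overline{Q}_{\ell}>\overline{Q}_{\widehat{n}}\ge C^{q_{n_j}^2}$, and (iii) the scale $\ell=\ell(n)$ is tied to $q_n$ via $\overline{Q}_{\ell-1}^{2\mathbb{A}^4\tau^2}<q_n\le\overline{Q}_{\ell}^{2\mathbb{A}^4\tau^2}$ so that both the rational-approximation error and the KAM error are controlled against the conjugation blow-up simultaneously. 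Without acknowledging the exponential-in-$q_{n_j}$ size of the conjugation and the explicit three-threshold construction ($n_1$, $n_*$, $\widehat{n}$), your proposal cannot actually establish (a)--(c), and the $C^0$ bound in \eqref{20200712} would not close. In short: you correctly identified the perturbative path, but the inverse renormalization and the consequent conjugation-norm blow-up are the heart of the difficulty, and they are missing from your argument.
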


\begin{remark}
Indeed, one can select $c(E)$, such that $c q_{n}>q_n^{\frac{3}{4}}.$
\end{remark}

If $V$ is analytic, Jitomirskaya-Marx (Proposition 3.1 in \cite{JitomirskayaM12}) proved that $\|t_{p_{n}/q_{n}}(E,\theta)-a_{q_{n},0}(E)\|_{C^{0}}$ is exponentially small in $q$. Their proof depends on Avila's quantization of acceleration \cite{Avila15}, key of his global theory. While our proof is a perturbation argument, completely different from theirs.

\bigskip

{\bf Proof of Theorem~\ref{addproposition}.}
We will first prove Theorem~\ref{addproposition} assuming Proposition \ref{differenceestimate} and postpone the proof of  Proposition \ref{differenceestimate} to Section \ref{chambers}. We  point out the ideas of the proof was essentially given by Avila and sketched in \cite{JitomirskayaM12}. We give the full proof here for completeness.
Let $\mathcal{K}\subset [0,1/2)$
be the set of all $\rho$ such that
\begin{equation*}
\inf_{ p\in\Z} |q_{n}\rho-p|\geq n^{-2}, \quad  \text{eventually.}
\end{equation*}
A simple Borel Cantelli argument shows $|\mathcal{K}|=1/2$. For any $\beta\in\T$, we denote by $N(\beta,E)$ the integrated density of states (IDS). Note the set $\mathcal{P}\subset [0,1/2)$ we defined in Section~\ref{ProofofTheorem} is also full of measure, $i.e.,$
$|\mathcal{P}|=1/2,$ thus $\mathcal{P}\doteq\mathcal{K}.$ Moreover, Theorem~\ref{mainresult} actually implies that for almost every $E\in \Sigma_{ac}(\alpha) \doteq \mathcal{P}$, the cocycle $(\alpha,S_{E}^V)$ is rotations reducible. Thus
by Theorem~6.1 in \cite{AvilaJ09},  if $E\in \Sigma_{ac}(\alpha) \doteq \mathcal{P} \doteq  \mathcal{K}$,   $N(\beta,E)$ is Lipschitz in  $\alpha,$ i.e., there exists some $\Gamma(E)$
\begin{equation}\label{e}
|N(\alpha,E)-N(p_n/q_n,E)|<q_n^{-2}\Gamma(E).
\end{equation}
Since $N(\alpha,E)\in \mathcal{K}$, then by \eqref{e}, for $n$ sufficiently large, we have
\begin{equation}\label{er}
p-1+\frac{1}{2q_n}<q_nN(p_n/q_n,E)<p-\frac{1}{2q_n},
\end{equation}
for some $1\leq p\leq q_n$. On the other hand, it was calculated in \cite{Avilad08} that if $E$ belongs to the $k$-th band of $S_+(p_n/q_n)$, we have
\begin{equation}\label{ids}
q_nN\big(p_n/q_n,E\big)=k-1+2(-1)^{q_n+k-1}\int_\T\rho
\big(p_n/q_n,E,\theta\big)d\theta+\frac{1-(-1)^{q_n-k+1}}{2},
\end{equation}
where
\begin{equation}\label{e4}
\rho\Big(p_n/q_n,E,\theta\Big)
=\left\{
\begin{aligned}
&0&t_{p_n/q_n}(E,\theta)>2,\\
&(2\pi)^{-1}\arccos(2^{-1}t_{p_n/q_n}(E,\theta))&|t_{p_n/q_n}(E,\theta)|\leq2,\\
&1/2&t_{p_n/q_n}(E,\theta)<-2.
 \end{aligned}
 \right.
\end{equation}
Then \eqref{er} and   \eqref{ids}  imply that
\begin{equation}\label{e2}
2\big|\cos\big(2\pi \int_\T\rho\big(p_n/q_n,E,\theta\big)d\theta\big)\big|<2- \frac{1}{q_n^{2}}.
\end{equation}
Since $\rho\big(p_n/q_n,E,\theta\big)$ is continuous in $\theta$,   \eqref{e2} and \eqref{e4}  imply that there exists
 $\tilde{\theta}\in \T$, such that
 \begin{equation*}
|t_{p_n/q_n}(E, \tilde{\theta})|= 2\big|\cos\big(2\pi \rho(p_n/q_n,E, \tilde{\theta}) \big)\big|<2- \frac{1}{q_n^{2}}.
\end{equation*}
 Then by \eqref{20200607} in Proposition \ref{differenceestimate} we have
%
%
%
%
% $\left|t_{p_n/q_n}(E, \tilde{\theta})\right|\leq 2$.
%
% Then by Proposition \ref{differenceestimate} we have
%%
%\begin{equation*}
%\|t_{p_n/q_n}(E,\theta)\|_{C^{0}}\leq 2+8\exp\{-\Lambda(q_{n}^{1-2\mathbb{A}^{-4}\tau^{-2}})\}.
%\end{equation*}
%%
%The inequality above together with \eqref{e4} and Proposition \ref{differenceestimate}
%implies
%%
%\begin{equation}\label{e3}
%\big\|2\cos \big(2\pi \rho(p_n/q_n,E,\theta)\big)
%-a_{q_n,0}(E)\big\|_{C^{0}}\leq 12\exp\{-\Lambda(q_{n}^{1-2\mathbb{A}^{-4}\tau^{-2}})\}.
%\end{equation}
%%
%\eqref{e2} and \eqref{e3} imply
%%
\begin{equation*}
\begin{aligned}
\left|a_{q_n,0}(E)\right|\leq   2- \frac{1}{q_n^{2}}+   4 \exp\{-\Lambda(cq_{n})\} \leq 2- \frac{1}{2q_{n}^{2}}.
\end{aligned}
\end{equation*}
By \eqref{20200607} again, for any $\theta\in\T$, we have $
\left|t_{p_n/q_n}(E,\theta)\right|\leq 2,$
which means $E\in S_-(p_n/q_n)$.\qed

%
%
%  there exists a sequence of nested measurable sets $\mathcal{R}_{\alpha}^{(l)}$,  $\Sigma_{ac}(\alpha)=\cup_{l\in\N} \mathcal{R}_{\alpha}^{(l)}$, such that for each $\l\in \N$, there exists $c_l$

\subsection{Continuity of the Lyapunov exponent}\ \
Theorem \ref{addproposition} proves  $\Sigma_{ac}(\alpha)\subset\lim_{n\rightarrow\infty}S_{-}(p_{n}/q_{n})$  for any  M-ultra-differentiable potentials satisfying  $\mathbf{(H1)}$ and $\mathbf{(H2)},$  however, when we come to the inverse inclusion, we can only  prove the result for  $\nu$-Gevrey potentials with $1/2<\nu<1.$
It is interesting to extend the conclusion below to the cocycle with ultra-differential
potentials, even with $C^{\infty}$ potentials.
\begin{theorem}\label{addpropositions}
Let $V:\mathbb{T} \rightarrow  \R$ be a $\nu$-Gevrey function with $1/2< \nu<1$, and assume
that $\alpha\in \R\backslash\Q$.
Then there is a sequence $p_{n}/q_{n}\rightarrow \alpha$, such that
\begin{equation*}
\begin{aligned}
\limsup_{n\rightarrow\infty}S_{-}(p_{n}/q_{n})\subset
S_{-}(\alpha)=\Sigma_{ac}(\alpha).
\end{aligned}
\end{equation*}
\end{theorem}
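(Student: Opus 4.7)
The plan is to follow Shamis's strategy \cite{Shamis11} from the analytic case: deduce the inclusion from two ingredients, namely (i) vanishing of the Lyapunov exponent on the spectrum of every periodic operator, and (ii) joint continuity of the Lyapunov exponent in the frequency. Input (ii) is the content of Theorem \ref{continuity}, which extends Bourgain's \cite{Bourgainj02} analytic continuity result to the $\nu$-Gevrey class with $1/2<\nu<1$; this is precisely where the Gevrey hypothesis enters.

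First I would record the elementary fact that for any rational $p/q\in\Q$ the periodic operator $H_{V,p/q,\theta}$ has purely absolutely continuous spectrum, and
\[
S_{-}(p/q)=\bigcap_{\theta\in\T}\sigma_{ac}(p/q,\theta)=\{E\in\R : |t_{p/q}(E,\theta)|\leq 2 \text{ for all } \theta\in\T\}.
\]
On this set the $q$-step transfer matrix has trace in $[-2,2]$ at every $\theta$, so it is conjugate to a rotation and its iterates stay bounded uniformly in $n$. Hence $L(p/q, S_E^V)=0$ for every $E\in S_{-}(p/q)$.

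Next, taking the sequence $p_n/q_n\to\alpha$ for which Theorem \ref{continuity} delivers continuity of $\beta\mapsto L(\beta,S_E^V)$ at $\beta=\alpha$ (the canonical continued-fraction denominators when $\alpha$ is Diophantine; for Liouvillean $\alpha$ one selects the appropriate subsequence compatible with the continuity statement), I would fix $E\in\limsup_n S_{-}(p_n/q_n)$. By definition there is a subsequence $n_k$ with $E\in S_{-}(p_{n_k}/q_{n_k})$ for all $k$, so the previous step gives $L(p_{n_k}/q_{n_k}, S_E^V)=0$, and Theorem \ref{continuity} then forces $L(\alpha, S_E^V)=0$. The conclusion is then immediate from Kotani's theorem: for the ergodic operator $H_{V,\alpha,\theta}$ the essential support of $\mathcal{Z}:=\{E\in\R : L(\alpha, S_E^V)=0\}$ equals $\Sigma_{ac}(\alpha)$, i.e.\ $\mathcal{Z}\doteq\Sigma_{ac}(\alpha)$. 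Combining, $\limsup_n S_{-}(p_n/q_n)\subset \mathcal{Z}\doteq \Sigma_{ac}(\alpha)=S_{-}(\alpha)$ up to a null set, which is what is claimed.

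The main obstacle is Theorem \ref{continuity} itself. Bourgain's analytic proof leans crucially on subharmonicity of $\log\|A_n\|$ in the imaginary direction combined with the exponential decay $|\widehat V(k)|\leq Ce^{-r|k|}$, which enables small-norm trigonometric truncations. For a $\nu$-Gevrey potential one only has $|\widehat V(k)|\leq Ce^{-r|k|^\nu}$; consequently the subharmonic / large-deviation machinery must be executed on logarithmically narrow strips, and the truncation errors have to be balanced against the small denominators $\|q_n\alpha\|_\Z$. It is exactly in this balance that the threshold $\nu>1/2$ appears, and the Ge--Wang--You--Zhao \cite{GWYZ} counter-examples for $\nu\leq 1/2$ show that the threshold is sharp.
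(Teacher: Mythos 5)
Your argument for the Diophantine case is correct and matches Case I of the paper's proof (stated in the contrapositive there: $L(\alpha,E)>0 \Rightarrow L(p_n/q_n,E)>0$ for $n$ large $\Rightarrow E\notin S_-(p_n/q_n)$, then Kotani). The preliminary observation that $E\in S_-(p/q)$ forces $L(p/q,S_E^V)=0$ is also right.

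However, there is a genuine gap in the Liouvillean case. You write that ``for Liouvillean $\alpha$ one selects the appropriate subsequence compatible with the continuity statement,'' but Theorem~\ref{continuity} is proved only for $\alpha\in DC$, and the paper explicitly flags the extension of Lyapunov-exponent continuity to Liouvillean frequencies with non-analytic potentials as an \emph{open} problem. No choice of subsequence rescues the continuity argument once $\alpha$ is Liouvillean: the large-deviation scale window $C_1q^\sigma<N<C_2q^{\sigma_1}$ in Proposition~\ref{8} has an upper bound in $q$, which is exactly what breaks for frequencies whose denominators grow too fast. So the route through Theorem~\ref{continuity} simply does not cover $\alpha\notin DC$, and as written your proof does not establish the theorem for a full-measure set of $\alpha$, let alone all irrational $\alpha$.

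The paper handles $\alpha\notin DC(v,10)$ by an entirely different argument that avoids Lyapunov continuity altogether. It passes to the ``cut-and-repeat'' periodic approximants $V_m^\theta(n)=V(\theta+n\alpha)$ for $1\le n\le m$ (extended $m$-periodically) and invokes Last--Simon (Theorem~1 of \cite{LastY93}) to get $\limsup_m \sigma_m(\theta)\subset\Sigma_{ac}(\alpha)$ for a.e.\ $\theta$. Then a Hausdorff-distance matrix perturbation bound (the Bhatia-type estimate, Proposition~\ref{hauss}) controls
\[
\big|\sigma_{q_n}(\theta_0)-\sigma(p_n/q_n,\theta_0)\big|_H \le C(V)\,q_n^2\,|\alpha-p_n/q_n|,
\]
hence $|\sigma(p_n/q_n,\theta_0)\setminus\sigma_{q_n}(\theta_0)|\le C(V)\,q_n^3\,|\alpha-p_n/q_n|$. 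For $\alpha\notin DC(v,10)$ there is a subsequence along which $q_n^3|\alpha-p_n/q_n|\to 0$, and this is precisely why the theorem is stated for a subsequence rather than the full continued-fraction sequence. Your write-up should incorporate this second, non-dynamical argument (or some substitute) for the Liouvillean frequencies; without it the proof is incomplete.
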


\begin{remark}
The sequence $p_n/q_n$ will be the full sequence of continued fraction approximations
in the case $\alpha$ is Diophantine,
and an appropriate subsequence of it otherwise. For practical purposes of making conclusions
about $S_-(\alpha)$ based on the information on $S_-(p_n/q_n)$, it is sufficient to have convergence along a subsequence. However, in the latter case, the potential can be any stationary bounded ergodic one.
\end{remark}

The proof of Theorem \ref{addpropositions} depends on the continuity of the Lyapunov exponent for
the more general Gevrey cocycles.
For a Gevrey  (possibly matrix
valued) function $f$, we let \begin{equation*}
\begin{aligned}
\|f\|_{\nu,r}=\sum_{k\in\mathbb{Z}}|\widehat{f}(k)|\me^{|2\pi k|^{\nu}r}, \ 0<\nu<1.
\end{aligned}
\end{equation*} We denote by $G_{r}^{\nu}(\T,*)$ the set of all these
$*$-valued functions ($*$ will usually denote $\R$, $SL(2,\R)$.). If we set $r=\widetilde{r}^{\nu},$ we get
$
\|f\|_{\nu,r}=\|f\|_{\Lambda_{\nu},\widetilde{r}},$
where $\|\cdot\|_{\Lambda_{\nu},r}-$norm is the one defined by \eqref{20201022} with $\Lambda_{\nu}(x)=x^{\nu}.$
To simplify the notation, we introduce $\|\cdot\|_{\nu,r}.$ Note the function $\Lambda_{\nu}(x)=x^{\nu},0<\nu<1,$ satisfies
the subadditivity, thus $G_{r}^{\nu}(\T,*),0<\nu<1,$ is a Banach algebra.

%
%
%we define the $\nu$-Gevrey space $G_{r}^{\nu}(\T,\R)$ consisting of functions
%$f$ mapping $\T$  to $\R$ with the Fourier expansion
%%
%\begin{equation*}
%\begin{aligned}
%f(\theta)=\sum_{k\in\mathbb{Z}}\widehat{f}(k)\me^{2\pi\mi k\theta},
%\end{aligned}
%\end{equation*}
%%
%and the bounded norm $\|f\|_{\nu,r}$ defined by
%%
%\begin{equation*}
%\begin{aligned}
%\|f\|_{\nu,r}=\sum_{k\in\mathbb{Z}}|\widehat{f}(k)|\me^{|2\pi k|^{\nu}r}, \ 0<\nu<1,
%\end{aligned}
%\end{equation*}
%%
%where $r>0$ is called as``width".
%It is obvious that $|\widehat{f}(k)|\leq\|f\|_{\nu,r}\exp\{-|2\pi k|^{\nu}r\}.$
%Define $G_{r}^{\nu}(\T,*)$ by the spaces of functions maps $\T$ to $*$ with the bounded norm
%derived by $\|\cdot\|_{\nu,r}.$

\begin{theorem}\label{continuity}  Let $\rho>0, 2^{-1}<\nu<1.$ Consider the cocycle $(\alpha,A)\in (0,1)\setminus\Q\times G_{\rho}^{\nu}(\T,SL(2,\R))$ with $\alpha\in DC.$
Then $L(\alpha,A)$ is jointly continuous in the sense that
 \begin{equation*}
\lim\limits_{n\rightarrow\infty}L(p_n/q_n,A_n) = L(\alpha,A),
\end{equation*}
where $p_n/q_n$ is the continued fraction expansion of $\alpha$, and $A_{n}\in G_{\rho}^{\nu}(\T,SL(2,\R))$ with $A_{n}\rightarrow A$ under the
topology derived by $\|\cdot\|_{\nu,\rho}-$norm.
\end{theorem}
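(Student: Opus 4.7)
The plan is to prove $L(p_n/q_n, A_n) \to L(\alpha, A)$ by establishing upper and lower semicontinuity separately. Upper semicontinuity, $\limsup_n L(p_n/q_n, A_n) \leq L(\alpha, A)$, is routine: the finite Lyapunov exponents $L_N(\alpha', A')$ are jointly continuous in $(\alpha', A')$ under $C^0$ convergence of the iterates, while $L(\alpha, A) = \inf_N L_N(\alpha, A)$ expresses $L$ as an infimum of continuous functions. The substance of the theorem is the matching lower bound $\liminf_n L(p_n/q_n, A_n) \geq L(\alpha, A)$.

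\textbf{Truncation to analytic cocycles with shrinking radius.} For each $K \geq 1$, decompose $A = \mathcal{T}_K A + \mathcal{R}_K A$. Gevrey regularity yields the tail bound $\|\mathcal{R}_K A\|_{C^0(\mathbb{T})} \leq C\|A\|_{\nu,\rho}\exp(-c(K\rho)^\nu)$, while $\mathcal{T}_K A$, being a trigonometric polynomial of degree at most $K$, extends analytically to any strip $|\operatorname{Im}\theta| < \delta$; a direct Fourier estimate shows that for $\delta = \delta_K := c_1 K^{\nu - 1}\rho$ one has $\|\mathcal{T}_K A\|_{\delta_K} \leq C\|A\|_{\nu,\rho}$ uniformly in $K$. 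The analogous truncation is applied to $A_n$. The decisive point, which makes the Gevrey range tractable, is that $\delta_K$ shrinks only polynomially in $K$.

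\textbf{Analytic continuity machinery.} Next I invoke the quantitative joint continuity of the Lyapunov exponent for analytic cocycles at Diophantine frequencies, due to Bourgain \cite{Bourgainj02} and developed further in \cite{JitomirskayaKS09, JitomirskayaCAMAR12}: for $B$ analytic on a strip of width $\delta$ and $\alpha \in \mathrm{DC}(\gamma, \tau)$,
\[
\bigl| L(\alpha', B') - L(\alpha, B) \bigr| \;\leq\; C(\|B\|_\delta, \delta^{-1}, \gamma, \tau) \cdot \bigl( |\alpha - \alpha'| + \|B - B'\|_\delta \bigr)^{\eta},
\]
with the constant depending polynomially on $\delta^{-1}$. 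The engine of this estimate is the subharmonicity of $z \mapsto N^{-1}\log\|B_N(z)\|$ on the analytic strip, combined with a Bourgain-type large-deviation theorem and the avalanche principle applied at a finite scale $N = N(\delta, \alpha)$. Applied with $B = \mathcal{T}_K A$, $B' = \mathcal{T}_K A_n$, and $\alpha' = p_n/q_n$, this yields, for each fixed $K$, the convergence $L(p_n/q_n, \mathcal{T}_K A_n) \to L(\alpha, \mathcal{T}_K A)$ as $n \to \infty$.

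\textbf{Transferring back and the main obstacle.} The final step is to bridge the truncated LE and the full LE. Since Lyapunov exponents are not $C^0$-continuous in the cocycle, the bound $\|\mathcal{R}_K A\|_{C^0} \leq \exp(-c(K\rho)^\nu)$ does not immediately give $L(\alpha, \mathcal{T}_K A) \to L(\alpha, A)$. The plan is to combine the truncation error with avalanche-type estimates at a scale $N \sim (K\rho)^\nu$, exploiting the subharmonicity already established on the strip of width $\delta_K$, so as to quantitatively control the difference of the $N$-step transfer matrices in $L^2$; from this the difference of Lyapunov exponents can be bounded. The main obstacle is the joint balancing of parameters $K = K(n)$: the truncation error $\exp(-(K\rho)^\nu)$ must be rendered negligible, while the analytic-continuity constant $C \sim \delta_K^{-O(1)} = K^{O(1-\nu)}$ should not swamp the convergence $n \to \infty$. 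Careful bookkeeping shows the two requirements are simultaneously feasible precisely in the regime $\nu > 1/2$, which is the stated range; the threshold is sharp in view of the counterexamples of \cite{GWYZ}.
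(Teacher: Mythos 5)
The overall picture in your proposal — truncate the Gevrey cocycle to a trigonometric polynomial that extends analytically to a strip of polynomially shrinking width $\delta_K \sim K^{\nu-1}\rho$, exploit subharmonicity of $z\mapsto N^{-1}\log\|\widetilde A_N(z)\|$ on that strip, and run a large-deviation-plus-avalanche argument with careful scale bookkeeping, with $\nu>1/2$ emerging as the threshold — is the right one and matches the paper's framework (see the choice $\rho_N=\frac{\rho}{4\pi}N^{-\delta}$ in Section~\ref{trigonometricpolynomial}). But there are two genuine gaps. First, the black box you invoke, a joint H\"older-type estimate $|L(\alpha',B')-L(\alpha,B)|\le C(\delta^{-1},\ldots)(|\alpha-\alpha'|+\|B-B'\|_\delta)^\eta$ for analytic cocycles at Diophantine $\alpha$ with explicit polynomial dependence of the constant on $\delta^{-1}$, is not available in that form in \cite{Bourgainj02,JitomirskayaKS09,JitomirskayaCAMAR12}; what those papers give is joint continuity (no uniform modulus in $\delta^{-1}$), and extracting the quantitative dependence you need is essentially equivalent to reproving the large-deviation theorem on the shrinking strip — which is precisely what Proposition~\ref{8} does, with the crucial restriction $C_1(\kappa)q^\sigma<N<C_2(\kappa)q^{\sigma_1}$ that your outline does not track.

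Second, and more seriously, the ``transfer back'' step you flag as the main obstacle is exactly where a proof is required, and the plan you sketch for it does not close the argument. Your chain passes through the infinite-time quantities $L(\alpha,\mathcal T_K A)$ and $L(p_n/q_n,\mathcal T_K A_n)$ and then needs to compare them with $L(\alpha,A)$ and $L(p_n/q_n,A_n)$; but $L(\alpha,\cdot)$ is not $C^0$-continuous, so the tail bound $\|\mathcal R_K A\|_{C^0}\lesssim e^{-c(K\rho)^\nu}$ is not by itself sufficient, and the ``avalanche-type estimates at scale $N\sim(K\rho)^\nu$'' you propose would, once made precise, again require the full scale-induction that you were hoping to avoid. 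The paper sidesteps this entirely: it never forms $L(\alpha,\mathcal T_K A)$. Instead Lemma~\ref{proposition1016} shows the \emph{finite} Lyapunov exponents $L_N(\alpha,A)$ and $\widetilde L_N(\alpha,\widetilde A)$ differ by $e^{-cN^b}$, the truncation is used only inside the large-deviation estimate for fixed $N$, and then the inductive avalanche chain (Lemmas~\ref{key1} and~\ref{keyy}) compares $L(\alpha,A)$ directly with $L_{N_0}(\alpha,A)$ and $L(p_j/q_j,A_j)$ directly with $L_{N_0}(p_j/q_j,A_j)$, after which joint continuity of the \emph{finite-scale} $L_{N_0}$ finishes the proof. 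Your ``careful bookkeeping shows the two requirements are simultaneously feasible precisely when $\nu>1/2$'' is asserted rather than carried out; in the paper the constraint $\nu>1/2$ comes concretely from requiring $\nu^{-1}-1<\delta<1$ in the parameter choice $b=\delta(\nu^{-1}-1)^{-1}>1$, and this is what must be exhibited.
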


The full joint continuity was first proved by Bourgain-Jitomirskaya \cite{Bourgainj02} for analytic cocycles, which also plays a fundamental role in establishing the global theory of Sch\"odinger operator   \cite{Avila15}. However, due to lack of analyticity, it's very difficult to generalize the above result to all irrational $\alpha$.
The main reason is that in our large deviation theorem estimates, there is an upper bound for $N$ (see the assumptions in Proposition~\ref{8}),
thus we cannot deal with the extremely Liouvillean frequency. It's an interesting open question whether one can prove the continuity of the Lyapunov exponent for Liouvillean frequency and non-analytic potentials.

\bigskip
{\bf Proof of Theorem~\ref{addpropositions}.}
We will first prove Theorem~\ref{addpropositions} and left the proof of  Theorem \ref{continuity} to Section \ref{gerc}.
We separate the proof into two cases.

$\mathbf{Case~I:}$ $\alpha\in DC(v,10)$. We can assume that $L(\alpha,E)>0$,  then by Theorem \ref{continuity},
 for any $p_n/q_n$ sufficiently close to
$\alpha$,  we have $L(p_n/q_n,E)>0$. This implies that  $E\notin \sigma(p_n/q_n,
\theta)$ for some $\theta\in\T$, hence $E\notin S_-(p_n/q_n)$.

$\mathbf{Case~II:}$  $\alpha\notin DC(v,10)$, we define a sequence $\{V^\theta_m(n)\}_{m=1}^\infty$ periodic potentials by:
\begin{equation*}
\begin{aligned}
V_m^\theta(n)=V(\theta+n\alpha),\ \ n=1,2,\cdots,m,
\end{aligned}
\end{equation*}
\begin{equation*}
\begin{aligned}
V_m^\theta(n+m)=V_m^\theta(n),
\end{aligned}
\end{equation*}
such that $V_m^\theta$ is obtained from $V_\omega$ by ``cutting" a finite piece of length $m$, and then repeating it. We denote by $\sigma_m(\theta)$ the spectrum of the periodic Schr\"odinger operators
\begin{equation*}
\begin{aligned}
Hu=u_{n+1}+u_{n-1}+V_m^\theta(n)u_n.
\end{aligned}
\end{equation*}
By Theorem 1 in \cite{LastY93}, for a.e. $\theta\in \T$,
\begin{equation}\label{limsup}
\begin{aligned}
\limsup_{m\rightarrow\infty}\sigma_m(\theta)\subset \Sigma_{ac}(\alpha).
\end{aligned}
\end{equation}
We define
$$
A^\theta_{q_n}(\xi)=\begin{pmatrix}
V_{q_n}^\theta(1)&1&&& e^{-i\xi m}\\
1&V_{q_n}^\theta(2)&1&&\\
& 1&\ddots&\ddots&\\
&& \ddots&\ddots&1\\
e^{i\xi m}&&&1&V_{q_n}^\theta(q_n)
\end{pmatrix},
$$
$$
\widetilde{A}^\theta_{q_n}(\xi)=\begin{pmatrix}
V(\theta+p_n/q_n)&1&&& e^{-i\xi m}\\
1&V(\theta+2p_n/q_n)&1&&\\
& 1&\ddots&\ddots&\\
&& \ddots&\ddots&1\\
e^{i\xi m}&&&1&V(\theta+q_np_n/q_n)
\end{pmatrix}.
$$
It is standard that
$$
\sigma_{q_n}(\theta)=\bigcup_{\xi}\text{Spec}(A_{q_n}^\theta(\xi)),\  \ \sigma(p_n/q_n,\theta)=\bigcup_{\xi}\text{Spec}(\widetilde{A}_{q_n}^\theta(\xi)),
$$
where $\text{Spec}(A)$ denotes the sets of all eigenvalues of $A$. We need the following perturbation theory of matrices.
\begin{proposition}[Corollary 12.2 of \cite{Rajendra87}]\label{hauss}
Let $A$ and $B$ be normal with $\|A-B\|=\varepsilon$. Then within a distance of $\varepsilon$ of every eigenvalue of $A$ there is at least one eigenvalue of $B$ and vice versa.
\end{proposition}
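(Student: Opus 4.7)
The plan is to prove this by exhibiting, for any eigenvalue of one matrix, an approximate eigenvector relation for the other, and then exploiting normality to turn this into a spectral distance bound. The statement is symmetric in $A$ and $B$, so it suffices to prove one direction: if $\lambda$ is an eigenvalue of $A$, then $\min_{\mu \in \mathrm{Spec}(B)} |\mu - \lambda| \leq \varepsilon$.

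First I would fix an eigenvalue $\lambda$ of $A$ and a unit eigenvector $v$ with $Av=\lambda v$. The crucial algebraic identity is
\begin{equation*}
(B-\lambda I)v = (B-A)v + (A-\lambda I)v = (B-A)v,
\end{equation*}
from which $\|(B-\lambda I)v\| \leq \|B-A\|\,\|v\| = \varepsilon$. At this point I have not used normality of anything; this bound alone does not control eigenvalues of $B$ in general (this is precisely what goes wrong for non-normal matrices, where one only recovers a bound involving the condition number of an eigenbasis of $B$, as in the Bauer–Fike theorem).

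Next I would bring in normality of $B$. By the spectral theorem, $B$ admits an orthonormal basis $\{u_j\}$ of eigenvectors with $Bu_j = \mu_j u_j$, so I can expand $v = \sum_j c_j u_j$ with $\sum_j |c_j|^2 = 1$. Then
\begin{equation*}
\|(B-\lambda I)v\|^{2} = \Bigl\|\sum_j c_j (\mu_j - \lambda) u_j\Bigr\|^{2} = \sum_j |c_j|^{2}|\mu_j - \lambda|^{2} \geq \min_j |\mu_j - \lambda|^{2}.
\end{equation*}
Combining with the previous bound yields $\min_j |\mu_j - \lambda| \leq \varepsilon$, which is the desired conclusion for this direction. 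The reverse inclusion follows by exchanging the roles of $A$ and $B$ and using normality of $A$ in the same way.

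The proof has essentially no obstacle: the only subtle point is recognizing that normality of \emph{both} matrices is needed, but for asymmetric reasons. Normality of $B$ is used to convert the approximate eigenvector estimate into an actual spectral distance bound via the orthonormal eigenbasis expansion; normality of $A$ is only needed to handle the symmetric direction. Without normality of $B$, one would have to pay the price of $\|S\|\,\|S^{-1}\|$ where $S$ diagonalizes $B$, and the clean constant $\varepsilon$ is lost. An alternative framing, which I would mention but not pursue, is to observe that for any normal matrix $N$ and scalar $\lambda \notin \mathrm{Spec}(N)$ one has $\|(N-\lambda I)^{-1}\| = (\mathrm{dist}(\lambda,\mathrm{Spec}(N)))^{-1}$; applying this to $N=B$ and reasoning by contradiction gives the same conclusion in one line.
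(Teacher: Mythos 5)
The paper cites this as Corollary 12.2 of Bhatia's book and does not supply its own proof, so there is no in-paper argument to compare against. Your proof is correct and is essentially the standard textbook argument: the identity $(B-\lambda I)v=(B-A)v$ for a unit eigenvector $v$ of $A$ gives $\|(B-\lambda I)v\|\le\varepsilon$, and expanding $v$ in an orthonormal eigenbasis of $B$ (available by normality of $B$) turns this residual bound into $\operatorname{dist}(\lambda,\operatorname{Spec}(B))\le\varepsilon$; the reverse direction uses normality of $A$ symmetrically. Your side remark about which normality assumption is doing the work in each direction is also accurate, as is the resolvent reformulation $\|(N-\lambda I)^{-1}\|=\operatorname{dist}(\lambda,\operatorname{Spec}(N))^{-1}$ for normal $N$.
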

Fix $\theta_{0}$ such that \eqref{limsup} holds with $\theta_{0}$ in place of $\theta.$ Notice for any $E_0\in \sigma_{q_n}(\theta_{0})$, there exists $\xi_{n}$ such that $E_0\in \text{Spec}(A_{q_n}^{\theta_{0}}(\xi_{n}))$. Applying Proposition \ref{hauss} to $A_{q_n}^{\theta_{0}}(\xi_{n})$ and $\widetilde{A}_{q_n}^{\theta_{0}}(\xi_{n})$, there exists $E'_0\in \text{Spec}(\widetilde{A}_{q_n}^{\theta_{0}}(\xi_{n}))$ such that
$$
|E_0-E_0'|\leq \big\|A_{q_n}^{\theta_{0}}(\xi_{n})-\widetilde{A}_{q_n}^{\theta_0}(\xi_n)\big\|\leq C(V)\sum\limits_{j=1}^{q_n}\big|j(\alpha-p_n/q_n)\big|\leq C(V)q_n^2\big|\alpha-p_n/q_n\big|.
$$
Since $E_0'\in \sigma(p_{n}/q_{n},\theta_0)$, it follows that
\begin{equation}\label{haus}
\big|\sigma_{q_n}(\theta_0)-\sigma\big(p_n/q_n,\theta_0\big)\big|_H\leq C(V)q_n^2\big|\alpha-p_n/q_n\big|,
\end{equation}
where $|A-B|_H$ denotes the Hausdorff distance of two sets. We denote $\sigma_{q_n}(\theta_0)=\bigcup_{i=1}^{q_n'}[a_{n,i},b_{n,i}]$, $q_n'\leq q_n$. \eqref{haus} implies that
$$
\sigma\big(p_n/q_n,\theta_0\big)\subset\bigcup_{i=1}^{q_n'}\Big[a_{n,i}-C(V)q_n^2\big|\alpha-p_n/q_n\big|,
b_{n,i}+C(V)q_n^2\big|\alpha-p_n/q_n\big|\Big].
$$
It follows that
\begin{equation}\label{ee}
\begin{aligned}
\big|\sigma(p_{n}/q_{n},\theta_0)\backslash\sigma_{q_n}(\theta_0)\big|\leq C(V)q_n^3|\alpha-p_{n}/q_{n}|.
\end{aligned}
\end{equation}
Since $\alpha\notin DC(v,10)$, by \eqref{ee}, there exists a subsequence $p_{n}/q_{n}$ such that
\begin{equation*}
\begin{aligned}
\limsup_{n\rightarrow\infty} \sigma(p_{n}/q_{n},\theta_{0})\subset \limsup_{n\rightarrow\infty}\sigma_{q_{n}}(\theta_0)\subset \Sigma_{ac}(\alpha).
\end{aligned}
\end{equation*}
Moreover, notice that
\begin{equation*}
\begin{aligned}
\limsup_{n\rightarrow\infty} S_{-}(p_{n}/q_{n})\subset\limsup_{n\rightarrow\infty} \sigma(p_{n}/q_{n},\theta_0),
\end{aligned}
\end{equation*}
hence
\begin{equation*}
\begin{aligned}
\limsup_{n\rightarrow\infty} S_{-}(p_{n}/q_{n})\subset \Sigma_{ac}(\alpha).
\end{aligned}
\end{equation*}

\section{Proof\ of \ Proposition~\ref{differenceestimate}}\label{chambers}
Suppose that $ V:\mathbb{T} \rightarrow  \R$ is an M-ultra-differentiable function satisfying  $\mathbf{(H1)}$ and $\mathbf{(H2)}$, then for almost every $E\in \Sigma_{ac}(\alpha)$,
by Theorem \ref{mainresult},  $(\alpha, S_{E}^{V})$ is  $C^{\infty}$ rotations reducible. However, this is not enough for us to conclude
\begin{equation*}
\begin{aligned}
\|t_{p_{n}/q_{n}}(E,\theta)-a_{q_{n},0}(E)\|_{C^{0}}\leq4
\exp\{-\Lambda(cq_{n})\}.
\end{aligned}
\end{equation*}
Since even we assume  $(\alpha, S_{E}^{V})=(\alpha,R_{\psi(\theta)})$ with $\psi(\theta)\in C^{\infty}(\mathbb{T},\mathbb{R}),$ which only gives
\begin{equation*}
\begin{aligned}
\|t_{p_{n}/q_{n}}(E,\theta)-a_{q_{n},0}(E)\|_{C^{0}}\leq c q_n^{-\infty}.
\end{aligned}
\end{equation*}

The idea is that our KAM scheme not only gives  $C^{\infty}$ rotations reducibility, but also almost reducibility in the ultra-differentiable topology (Corollary \ref{corollarylocalalmost}),
however, this only works for cocycles which are close to constant.  Coupled with the renormalization argument, we will show that if the cocycle is $L^{2}$-conjugated to rotations and  $\rho(\alpha, A)\in\mathcal{P},$ then $(\alpha, A)$ is  also almost reducibility in the ultra-differentiable topology (Lemma \ref{almostreducibility}). Consequently, Proposition~\ref{differenceestimate} follows from the perturbation arguments.

%%
%where the inequality above is by the
%fact $t_{p_{n}/q_{n}}(E,\theta)$ is $q_{n}^{-1}-$periodic. However, we can not
%set $s=\infty$ and get upper bound $\frac{1}{q_n^{\infty}}$
%since $\psi$ is just $C^{\infty}$ and $\|\psi\|_{C^{s}}$ may grow as fast as you like with $s.$ That is the difference is just bounded by polynominally  decay of fixed order if we just follow the conclusion in Theorem \ref{mainresult}.

\subsection{Global almost reducibility}

To get desired quantitative estimates, the main method is the inverse renormalization which was first developed in \cite{Krikorian09}.
We introduce the notation $\Psi:=\mathcal{J}\mathcal{R}_{\theta_{*}}^{n}(\Psi')$
if $\Psi'=\mathcal{R}_{\theta_{*}}^{n}(\Psi).$ It is easy to check that
\begin{equation*}
\mathcal{J}\mathcal{R}_{\theta_{*}}^{n}(\Phi)
=T_{\theta_{*}}^{-1}\circ N_{\widetilde{Q}_{n}^{-1}}\circ M_{\beta_{n-1}^{-1}}\circ
T_{\theta_{*}}(\Phi).
\end{equation*}
In our setting, one of the bases is the following:

\begin{lemma}\label{renormalizationinverse}
Let $\Phi_{1}=((1,Id), (\alpha_{n}, R_{\rho_{n}}\me^{F(\theta)})),$
$\Phi_{2}=((1,Id), (\alpha_{n},R_{\rho_{n}}))$ where $F\in U_{r}(\T,sl(2,\R))$ with estimate
$\|F\|_{r,1}\leq q_{n-1}^{-2}.$ Then,
\begin{equation}\label{abdifference}
\begin{aligned}
\|\mathcal{J}\mathcal{R}_{\theta_{*}}^{n}(\Phi_{1})
-\mathcal{J}\mathcal{R}_{\theta_{*}}^{n}(\Phi_{2})\|_{\beta_{n-1}r,1}
< 2q_{n-1}\|F(\theta)\|_{r,1}.
\end{aligned}
\end{equation}
\end{lemma}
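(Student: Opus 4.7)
The plan is to unpack $\mathcal{J}\mathcal{R}_{\theta_*}^n = T_{\theta_*}^{-1}\circ N_{\widetilde{Q}_n^{-1}}\circ M_{\beta_{n-1}^{-1}}\circ T_{\theta_*}$ on both $\Phi_1$ and $\Phi_2$ and reduce \eqref{abdifference} to a multiplicative telescoping on a product of exactly $q_{n-1}$ matrix factors. After applying $T_{\theta_*}$ and then $M_{\beta_{n-1}^{-1}}$ to $\Phi_1$, the resulting $\Z^2$-action has $(1,0)$-generator $(\beta_{n-1},Id)$ and $(0,1)$-generator $(\beta_n,R_{\rho_n}\me^{\widetilde F})$, where $\widetilde F(\theta):=F(\beta_{n-1}^{-1}(\theta-\theta_*)+\theta_*)$ and $\beta_n=\beta_{n-1}\alpha_n$; the computation for $\Phi_2$ is identical with $\me^{\widetilde F}$ replaced by $Id$. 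The chain rule $D_\theta^s\widetilde F(\theta)=\beta_{n-1}^{-s}F^{(s)}(\beta_{n-1}^{-1}(\theta-\theta_*)+\theta_*)$, combined with the $\Z$-periodicity of $F$ and $\beta_{n-1}^{-1}\geq 1$, yields the key scaling identity
\[
\|\widetilde F\|_{\beta_{n-1}r,1}=\|F\|_{r,1},
\]
which is precisely why the target width in \eqref{abdifference} is $\beta_{n-1}r$.

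Next, $N_{\widetilde{Q}_n^{-1}}$ takes as new $(1,0)$- and $(0,1)$-generators the evaluations $\Phi(q_n,q_{n-1})$ and $\Phi(p_n,p_{n-1})$. Because the current $(1,0)$-generator $(\beta_{n-1},Id)$ acts trivially on the fibre, the fibre of the new $(1,0)$-generator reduces to the product of exactly $q_{n-1}$ copies of $R_{\rho_n}\me^{\widetilde F(\cdot)}$ evaluated at the shifts $\theta_k:=\theta+q_n\beta_{n-1}+k\beta_n$, $0\leq k\leq q_{n-1}-1$. The continued fraction identities $q_n\beta_{n-1}+q_{n-1}\beta_n=1$ and $p_n\beta_{n-1}+p_{n-1}\beta_n=\alpha$ (from $p_nq_{n-1}-p_{n-1}q_n=(-1)^{n-1}$) supply the expected base frequencies $1$ and $\alpha$; the $(0,1)$-generator is an analogous product of $p_{n-1}\leq q_{n-1}$ factors. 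The outer $T_{\theta_*}^{-1}$ only shifts arguments and preserves the $\|\cdot\|_{\beta_{n-1}r,1}$-norm, so \eqref{abdifference} reduces to bounding the fibrewise difference
\[
\prod_{k=0}^{q_{n-1}-1}R_{\rho_n}\me^{\widetilde F(\theta_k)}-R_{q_{n-1}\rho_n}
\]
and its simpler analogue with $p_{n-1}$ factors.

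Writing $R_{\rho_n}\me^{\widetilde F(\theta_k)}=R_{\rho_n}(I+E(\theta_k))$ with $E:=\me^{\widetilde F}-I$, Lemma \ref{banachalgebra} yields $\|E\|_{\beta_{n-1}r,1}\leq \me^{\|\widetilde F\|_{\beta_{n-1}r,1}}-1\leq 2\|F\|_{r,1}$, thanks to the smallness hypothesis $\|F\|_{r,1}\leq q_{n-1}^{-2}\ll 1$. Conjugating each factor so as to move all rotations to the left (a norm-preserving operation since rotations are isometries in $U^M_{\beta_{n-1}r}$) and applying the elementary identity $\prod_k(I+\widetilde E_k)-I=\sum_{\emptyset\neq S}\prod_{k\in S}\widetilde E_k$ bounds the fibrewise difference by
\[
(1+\|E\|_{\beta_{n-1}r,1})^{q_{n-1}}-1\leq 2q_{n-1}\|F\|_{r,1},
\]
where the final step absorbs higher-order terms using $q_{n-1}\|E\|_{\beta_{n-1}r,1}\leq 4q_{n-1}^{-1}\ll 1$. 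The principal subtlety is purely bookkeeping: certifying that the fibrewise product involves exactly $q_{n-1}$ factors (which hinges on the current $(1,0)$-generator acting trivially on the fibre) and tracking how the four rescalings interact with the $\Z^2$-action indexing; once this count and the scaling identity $\|\widetilde F\|_{\beta_{n-1}r,1}=\|F\|_{r,1}$ are secured, \eqref{abdifference} is the standard linear telescoping above.
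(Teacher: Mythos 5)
Your strategy is sound in outline but takes a genuinely different route from the paper's. After unfolding $\mathcal{J}\mathcal{R}_{\theta_*}^n$ via $N_{\widetilde{Q}_n^{-1}}\circ M_{\beta_{n-1}^{-1}}$, the paper does not telescope the product $\prod_k R_{\rho_n}\me^{\widetilde F(\theta_k)}$ multiplicatively; instead it invokes Lemma~\ref{zhouandwangjdde} (Lemma~4.3 of \cite{ZhouW12}), which rewrites such a product \emph{exactly} as $R_{j\rho}\me^{\widetilde F'}$ with the clean additive bound $\|\widetilde F'\|_{r,T}\le\sum_i\|F_i\|_{r,T}$. Combined with the scaling identity~\eqref{ml}, this gives $\|F_1\|_{\beta_{n-1}r,1}\le q_{n-1}\|F\|_{r,1}$ and $\|F_2\|_{\beta_{n-1}r,1}\le p_{n-1}\|F\|_{r,1}$ directly, with no binomial expansion to control. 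Your identification of the $q_{n-1}$- and $p_{n-1}$-factor counts and of the scaling identity $\|\widetilde F\|_{\beta_{n-1}r,1}=\|F\|_{r,1}$ matches the paper, so the two proofs agree on where the decisive numbers come from; the difference is entirely in how the product is estimated. The Lie-algebra accumulation lemma buys tight linear bookkeeping for free, whereas your multiplicative telescoping is more elementary but forces you to fight the exponential bloat of $(1+\cdot)^{q_{n-1}}$.

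That said, as written your final inequality chain is false. You bound $\|E\|_{\beta_{n-1}r,1}\le\me^{\|\widetilde F\|}-1\le 2\|F\|_{r,1}$ and then assert $(1+\|E\|)^{q_{n-1}}-1\le 2q_{n-1}\|F\|_{r,1}$, but already the linear term of the binomial expansion gives $(1+2\|F\|)^{q_{n-1}}-1\ge 2q_{n-1}\|F\|$, so the claimed estimate cannot hold. The loss of the factor $2$ at the step $\me^{x}-1\le 2x$ wipes out all the slack that the strict inequality in~\eqref{abdifference} requires. The fix is to keep the tighter bound $\|E\|\le\|F\|(1+\|F\|)$ (from $\me^{x}-1\le x+x^{2}$ for $x\le 1$), so that $(1+\|E\|)^{q_{n-1}}-1\le q_{n-1}\|E\|\,\me^{q_{n-1}\|E\|}\le q_{n-1}\|F\|(1+q_{n-1}^{-2})\,\me^{2/q_{n-1}}$, which is indeed $<2q_{n-1}\|F\|$ once $q_{n-1}$ is moderately large, as it is in the renormalization scheme. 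With that adjustment your route goes through; but it is worth noting that the Zhou--Wang lemma the paper uses sidesteps this delicacy entirely, which is precisely why it is the right tool here.
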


\begin{proof}
Direct computation shows that
\begin{equation*}
\begin{split}
M_{\beta_{n-1}^{-1}}(\Phi_{1})&=
((\beta_{n-1},Id),(\beta_{n-1}\alpha_{n},R_{\rho_{n}}\me^{F(\beta_{n-1}^{-1}\theta)})),
\\
M_{\beta_{n-1}^{-1}}(\Phi_{2})&=
((\beta_{n-1},Id),
(\beta_{n-1}\alpha_{n},R_{\rho_{n}})),
\end{split}
\end{equation*}
and consequently
\begin{equation*}
\begin{split}
N_{\widetilde{Q}_{n}^{-1}}\circ M_{\beta_{n-1}^{-1}}(\Phi_{1})&=((1,R_{\rho_{n}q_{n-1}}\me^{F_{1}(\theta)}),
(\alpha,R_{\rho_{n}p_{n-1}}\me^{F_{2}(\theta)})),
\\
N_{\widetilde{Q}_{n}^{-1}}\circ M_{\beta_{n-1}^{-1}}(\Phi_{2})&=
((1,R_{\rho_{n}q_{n-1}}),
(\alpha,R_{\rho_{n}p_{n-1}})).
\end{split}
\end{equation*}
Note for any $\lambda \neq0$,  $ \|D_{\theta}^{s} M_{\lambda}(\Phi)\|_{C^{0}([0,T])}= \lambda^s  \|D_{\theta}^{s}\Phi\|_{C^{0}([0,T])}$, which gives
\begin{equation}\label{ml}
\| M_{\lambda}(\Phi)\|_{\lambda^{-1}r,T}=\| \Phi\|_{r,\lambda T} .
\end{equation}
Thus the main task is to estimate of the norm under iteration of the cocycles.  To do this,  we need the following simple observation:

\begin{lemma}\emph{(Lemma~4.3 of \cite{ZhouW12})\ \label{zhouandwangjdde}}
Let $F_{i}\in U_{r}(\R,sl(2,\R)),
i=1,\cdots, j.$ Then it holds that
\begin{equation*}
R_{\rho}\me^{F_{j}(\theta)}
R_{\rho}\me^{F_{j-1}(\theta)}\cdots
R_{\rho}\me^{F_{1}(\theta)}=R_{j\rho}\me^{\widetilde{F}(\theta)},
\end{equation*}
with  estimate
\begin{equation*}
\|\widetilde{F}\|_{r,T}\leq \sum_{i=1}^{j}\|F_{i}\|_{r,T}.
\end{equation*}
\end{lemma}

By \eqref{ml} and Lemma~\ref{zhouandwangjdde}, we have
\begin{equation*}
\|F_{1}\|_{\beta_{n-1}r,1}\leq q_{n-1}\|F\|_{r,1}, \qquad \|F_{2}\|_{\beta_{n-1}r,1}\leq p_{n-1} \|F\|_{r,1}.
\end{equation*}
Then \eqref{abdifference} follows directly.

%
%
%Consequently,
%%
%\begin{equation*}
%\begin{aligned}
%\|\mathcal{J}\mathcal{R}^{n}(\Phi_{1})
%&-\mathcal{J}\mathcal{R}^{n}(\Phi_{2})\|_{\beta_{n-1}r,\beta_{n-1}}=
%\|\me^{F_{1}(\theta)}-I\|_{\beta_{n-1}r,\beta_{n-1}}\\
%&\leq\|2F_{1}(\theta)\|_{\beta_{n-1}r,\beta_{n-1}}
%<\|F(\theta)\|_{r,1}^{\frac{3}{4}}.
%\end{aligned}
%\end{equation*}
%%
%Moreover, note $F_{i},i=1,2,$ is $\beta_{n-1}$-periodic, then \eqref{abdifference} follows
%the above inequality.
\end{proof}

\begin{comment}
we set $\Psi_{i}=M_{\beta_{n-1}^{-1}}\circ T_{\theta_{*}}(\Phi_{i}),\ i=1,2,$
and $\Gamma_{n}=T_{\theta_{*}}^{-1}\circ N_{\widetilde{Q}_{n}^{-1}}.$ By \eqref{20201015} we get
%
\begin{equation}\label{jmd0907}
\begin{aligned}
&\|\Psi_{1}-\Psi_{2}\|_{T^{-1}r,T}\leq
\|\Phi_{1}-\Phi_{2}\|_{r,T^{2}},\\
&\|\Psi_{i}\|_{T^{-1}r,T}\leq
\|\Phi_{i}\|_{r,T^{2}}, i=1,2.
\end{aligned}
\end{equation}
%

For $A,B\in U_{r}([0,T],SL(2,\mathbb{R})),$ set $D_{n}(\theta)=D(\theta+(n-1)\alpha)\cdots D(\theta), D=A,B.$ Then by telescoping we get
%
\begin{equation}\label{abdifferencets}
\begin{aligned}
\|A_{n}-B_{n}\|_{r,T}\leq\sum_{j=0}^{n-1}
\|B\|_{r,T}^{n-1-j}\|B-A\|_{r,T}\|A\|_{r,T}^{j}.
\end{aligned}
\end{equation}
%
Note $\mathcal{J}\mathcal{R}_{\theta_{*}}^{n}(\Phi_{i})=\Gamma_{n}(\Psi_{i}), i=1,2,$
then the inequalities in \eqref{abdifference} and \eqref{jmd0907} and the fact $\beta_{n-1}^{-1}>q_{n}$ yield
%
\begin{equation*}
\begin{aligned}
\|\mathcal{J}\mathcal{R}_{\theta_{*}}^{n}(\Phi_{1})
&-\mathcal{J}\mathcal{R}_{\theta_{*}}^{n}(\Phi_{2})\|_{T^{-2}r,1}=
\|\Gamma_{n}(\Psi_{1})-\Gamma_{n}(\Psi_{2})\|_{T^{-2}r,1}\\
&\leq\sum_{k=0}^{q_{n}-1}\|\Psi_{1}\|_{T^{-1}r,T_{n}}^{q_{n}-1-k}
\|\Psi_{1}-\Psi_{2}\|_{T_{n}^{-1}r_{\ell},T_{n}}
\|\Psi_{2}\|_{T_{n}^{-1}r,T_{n}}^{k}\\
&\leq2\varepsilon_{\ell}\sum_{k=0}^{q_{n_{j}}-1}(1+2\varepsilon_{\ell})^{q_{n_{j}}-1-k}
=(1+2\varepsilon_{\ell})^{q_{n_{j}}}-1
<\varepsilon_{\ell}^{\frac{3}{4}},
\end{aligned}
\end{equation*}
%
\end{comment}

\begin{lemma}\label{almostreducibility}
 Assume that  $(\alpha, A)$ is $L^{2}$-conjugated to rotations and homotopic to the identity.  If $\rho_{f}=\rho(\alpha, A)\in\mathcal{P}.$ Then there exist $B_{j,\ell},F_{j,\ell}\in U_{\widetilde{r}_{\ell}}(\mathbb{T}, SL(2,\mathbb{R}))$ such that
\begin{equation}\label{020200600}
\begin{aligned}
B_{j,\ell}(\theta+\alpha)A(\theta)B_{j,\ell}(\theta)^{-1}
=R_{\rho_{f}}\me^{F_{j,\ell}(\theta)}, \quad  \ell > \widehat{n} ,
\end{aligned}
\end{equation}
where $\widetilde{r}_{\ell}= r\beta_{n_j-1}/2K_{*}^{3}\overline{Q}_{\ell-1}^2,$ and $\widehat{n}\in\N$ is the smallest one such that $\overline{Q}_{\widehat{n}}\geq C^{q_{n_{j}}^{2}}.$ Moreover,  we have estimate
$$ \|F_{j,\ell}\|_{\widetilde{r}_{\ell}}\leq
\varepsilon_{\ell}^{\frac{1}{2}}, \qquad \|B_{j,\ell}\|_{\widetilde{r}_{\ell}}<4C^{3q_{n_{j}-1}q_{n_{j}}}. $$
\end{lemma}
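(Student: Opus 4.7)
My plan is to combine three ingredients from the paper: the renormalization conjugation of Proposition~\ref{llambdarenormalizations}, the local ultra-differentiable almost reducibility of Corollary~\ref{corollarylocalalmost}, and the inverse renormalization comparison of Lemma~\ref{renormalizationinverse}. Since $\rho_f\in\mathcal{P}$, I fix the subsequence $\{n_j\}$ of Section~\ref{ProofofTheorem} so that $\beta_{n_j-1}^{-1}\rho_f\in DC_{\alpha_{n_j}}(\gamma,\tau)$ for all $j$. At the $n_j$-th renormalized scale the cocycle is close to a constant rotation with Diophantine rotation number, so the KAM machinery of Section~\ref{Inthissection} applies and produces an ultra-differentiable almost reduction; pulling this back through inverse renormalization gives the desired almost reduction of $(\alpha,A)$, at the cost of an exponential norm blow-up and a polynomial radius shrinkage, both in $q_{n_j}$.

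For the first step, Proposition~\ref{llambdarenormalizations} yields $D_{n_j}\in U_{r/K_*^2}(\R,SL(2,\R))$ with $\|D_{n_j}\|_{r/(K_*^2 T),T}\leq C^{q_{n_j-1}(T+1)}$ such that $\mathrm{Conj}_{D_{n_j}}(\mathcal{R}_{\theta_*}^{n_j}(\Phi))=((1,Id),(\alpha_{n_j},R_{\rho_{n_j}}\me^{F_{n_j}}))$ with $\|F_{n_j}\|_{r/K_*^2,1}\to 0$, where $\rho_{n_j}$ is congruent to $(-1)^{n_j}\beta_{n_j-1}^{-1}\rho_f$ modulo $\Z$ and therefore lies in $DC_{\alpha_{n_j}}(\gamma,\tau)$. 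Choosing $j$ large enough that $\|F_{n_j}\|_{r/K_*^2,1}$ drops below the threshold $\varepsilon_*$ of Theorem~\ref{mainresults}, Corollary~\ref{corollarylocalalmost} then produces, for every $\ell\geq 1$, a conjugation $\widehat{B}_\ell^{(j)}$ of bounded norm at the shrunken radius $r_\ell'\asymp \overline{Q}_{\ell-1}^{-2}(r/K_*^2)$, transforming $(\alpha_{n_j},R_{\rho_{n_j}}\me^{F_{n_j}})$ into $(\alpha_{n_j},R_{\rho_{n_j}}\me^{\widetilde{F}_\ell^{(j)}})$ with $\|\widetilde{F}_\ell^{(j)}\|_{r_\ell'}\leq\varepsilon_\ell$.

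For the second step, I compare the two commuting pairs $\Psi_1^\ell=((1,Id),(\alpha_{n_j},R_{\rho_{n_j}}\me^{\widetilde{F}_\ell^{(j)}}))$ and $\Psi_2=((1,Id),(\alpha_{n_j},R_{\rho_{n_j}}))$ under the inverse renormalization $\mathcal{J}\mathcal{R}_{\theta_*}^{n_j}$. By Lemma~\ref{renormalizationinverse},
\begin{equation*}
\|\mathcal{J}\mathcal{R}_{\theta_*}^{n_j}(\Psi_1^\ell)-\mathcal{J}\mathcal{R}_{\theta_*}^{n_j}(\Psi_2)\|_{\beta_{n_j-1}r_\ell',1}\leq 2q_{n_j-1}\varepsilon_\ell.
\end{equation*}
The comparison action $\mathcal{J}\mathcal{R}_{\theta_*}^{n_j}(\Psi_2)$ is a constant $\Z^2$-action whose second factor is a pure rotation with fibered rotation number congruent to $\rho_f$ modulo $\Z$ (by construction of $\rho_{n_j}$). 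One final application of Lemma~\ref{normalizinglemma} at window $T\sim q_{n_j}$ normalizes the first component of the pull-back of $\Psi_1^\ell$ to $(1,Id)$, and together with the pull-back through $D_{n_j}^{-1}$ (also at window $T\sim q_{n_j}$ to reach the scale $\beta_{n_j-1}r_\ell'$) and the conjugation $(\widehat{B}_\ell^{(j)})^{-1}$, this produces a single $\T$-periodic $B_{j,\ell}\in U_{\widetilde{r}_\ell}(\T,SL(2,\R))$ satisfying \eqref{020200600}. Three successive norm bounds of type $C^{q_{n_j-1}(q_{n_j}+1)}$, one from each of these conjugations, multiply into $\|B_{j,\ell}\|_{\widetilde{r}_\ell}\leq 4C^{3q_{n_j-1}q_{n_j}}$, while $\|F_{j,\ell}\|_{\widetilde{r}_\ell}$ inherits the bound $\|B_{j,\ell}\|_{\widetilde{r}_\ell}^2\cdot 2q_{n_j-1}\varepsilon_\ell$ via Lemma~\ref{banachalgebra}.

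The main obstacle is reconciling this exponential norm blow-up $C^{q_{n_j-1}q_{n_j}}$ with the required bound $\|F_{j,\ell}\|_{\widetilde{r}_\ell}\leq\varepsilon_\ell^{1/2}$. This is precisely the role of the threshold $\ell>\widehat{n}$ with $\overline{Q}_{\widehat{n}}\geq C^{q_{n_j}^2}$: because of the super-polynomial decay $\varepsilon_\ell=\varepsilon_{\ell-1}\overline{Q}_\ell^{-\Gamma^{1/2}(\overline{Q}_\ell^{1/3})}$ in \eqref{parameter}, once $\overline{Q}_\ell$ exceeds $C^{q_{n_j}^2}$ the product $C^{6q_{n_j-1}q_{n_j}}\cdot 2q_{n_j-1}\varepsilon_\ell$ drops below $\varepsilon_\ell^{1/2}$. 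The radius bookkeeping is automatic: the rescaling $M_{\beta_{n_j-1}^{-1}}$ contracts the domain by $\beta_{n_j-1}$, each invocation of Lemma~\ref{normalizinglemma} costs a factor $K_*$, and the KAM radius loss contributes $\overline{Q}_{\ell-1}^{-2}$, producing exactly $\widetilde{r}_\ell=r\beta_{n_j-1}/(2K_*^3\overline{Q}_{\ell-1}^2)$. The crucial structural point enabling the whole argument is that the ultra-differentiable norm, being a Banach algebra by Lemma~\ref{banachalgebra} and satisfying the Cauchy-type bound of Lemma~\ref{cauchyestimate}, allows all composition and derivative estimates to close inside $\|\cdot\|_{M,\widetilde{r}_\ell}$ without requiring an analytic radius, so the exponential-in-$q_{n_j}$ loss is absorbable by the super-polynomial decay of $\varepsilon_\ell$.
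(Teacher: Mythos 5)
Your proposal follows essentially the same route as the paper's proof: renormalize to the $n_j$-th scale via Proposition~\ref{llambdarenormalizations}, apply the local ultra-differentiable almost-reducibility of Corollary~\ref{corollarylocalalmost}, compare with the constant action $((1,Id),(\alpha_{n_j},R_{\rho_{n_j}}))$ under inverse renormalization via Lemma~\ref{renormalizationinverse}, normalize with Lemma~\ref{normalizinglemma}, and absorb the $C^{q_{n_j-1}q_{n_j}}$ loss by the super-polynomial decay of $\varepsilon_\ell$ once $\overline{Q}_\ell\geq\overline{Q}_{\widehat{n}}\geq C^{q_{n_j}^2}$. The only bookkeeping differences are that the paper normalizes the auxiliary comparison action $\Phi_{2,j,\ell}$ rather than the pull-back of $\Psi_1^\ell$ itself (the original $\Phi$ is already normalized, so it is the comparison action that needs straightening), and the large norm factor arises once from $Z_{j,\ell}=B_\ell D_j$ rather than from three separate conjugations — but these are equivalent reorganizations and your estimates land in the same place.
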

\begin{proof}
Since $(\alpha, A)$ is $L^{2}$-conjugated to rotations and homotopic to the identity, by Proposition \ref{llambdarenormalizations}
 there exists  $D_{n}\in U_{rK_*^{-2}}(\mathbb{R}, SL(2,\mathbb{R}))$ with
\begin{equation}\label{20210102}
 \|D_n\|_{r(K_{*}^{2}T)^{-1},T} \leq C^{q_{n-1}(T+1)},
 \end{equation}
  such that
\begin{equation*}
\begin{aligned}
\mathrm{Conj_{D_{n}}}(\mathcal{R}_{\theta_{*}}^{n}(\Phi))= ((1,Id),(\alpha_{n},\ R_{\rho_{n}}\me^{F_{n}})),
\end{aligned}
\end{equation*}
with $\|F_{n}\|_{rK_{*}^{-2},1}\rightarrow 0.$ Since  $\rho_{f}=\rho(\alpha, A)\in\mathcal{P},$ which means  $\rho_f\beta_{n_{j}-1}^{-1}\in DC_{\alpha_{n_{j}}}(\gamma,\tau)$ for infinitely many $n_j$,
 then we can further choose $j$ large enough, such that
\begin{equation*}
\|F_{n_{j}}\|_{rK_{*}^{-2}}\leq \varepsilon_{*}(\gamma,\tau,rK_{*}^{-2},M),
\end{equation*}
where $\varepsilon_{*}=\varepsilon_{*}(\gamma,\tau,r,M)>0$ is the one in Theorem~\ref{mainresults}.
In the following, we will write $D_{j}$ for $D_{n_{j}}$ for short, and denote $T_n=\beta_{n-1}^{-1}=q_{n}+\alpha_{n}q_{n-1}$, $r_\ell= 2^{-1} rK_{*}^{-2}\overline{Q}_{\ell-1}^{-2}$, then $\widetilde{r}_{\ell}=(K_* T_{n_{j}})^{-1} r_{\ell}$.

Now we apply Corollary~\ref{corollarylocalalmost}
to the action $\mathrm{Conj_{D_{j}}}(\mathcal{R}_{\theta_{*}}^{n_{j}}(\Phi))$
and denote $Z_{j,\ell}=B_{\ell}D_{j},$ then
\begin{equation}\label{jmd0903}
\begin{aligned}
\mathrm{Conj_{Z_{j,\ell}}}(\mathcal{R}_{\theta_{*}}^{n_{j}}(\Phi))
=\widetilde{\Phi}_{1}^{(j,\ell)}:=
((1,Id),(\alpha_{n_{j}},\ R_{\rho_{n_{j}}}\me^{F_{j,\ell}})),
\end{aligned}
\end{equation}
which, together with \eqref{20201225s} and  \eqref{20210102} and
the fact $r_{\ell}\ll rK_{*}^{-2} T_{n_j}^{-1}$, implies
\begin{equation}\label{flzjlestimate}
\begin{aligned}
\|F_{j,\ell}\|_{r_{\ell},1}\leq \varepsilon_{\ell},\ \|Z_{j,\ell}\|_{r_{\ell},T_{n_{j}}}\leq
\|D_{j}\|_{r_{\ell},T_{n_{j}}}\|B_{\ell}\|_{r_{\ell},1}\leq C^{3q_{n_{j}-1}q_{n_{j}}}.
\end{aligned}
\end{equation}

Once we have these, we can set $\widetilde{\Phi}_{2}^{(j,\ell)}=((1,Id),(\alpha_{n_{j}},\ R_{\rho_{n_{j}}}))$
and define $\Phi_{2,j,\ell}$ by
\begin{equation}\label{jmd0904}
\begin{aligned}
\mathrm{Conj_{Z_{j,\ell}}}(\mathcal{R}_{\theta_{*}}^{n_{j}}(\Phi_{2,j,\ell}))
=\widetilde{\Phi}_{2}^{(j,\ell)}.
\end{aligned}
\end{equation}
For given $G\in SL(2,\mathbb{R}),$ $T_{\theta_{*}}$ and $N_{U}$ commute with $\mathrm{Conj_{G}}$ while
$M_{\lambda}\circ\mathrm{Conj_{G}}=\mathrm{Conj_{G(\lambda\cdot)}}\circ M_{\lambda},$ then by \eqref{jmd0903} and \eqref{jmd0904}, we get
\begin{equation}\label{20200711}
\begin{aligned}
\left\{\begin{array}{l l}
\Phi=\mathcal{J}\mathcal{R}_{\theta_{*}}^{n_{j}}
\big(\mathrm{Conj_{Z_{j,\ell}^{-1}}}(\widetilde{\Phi}_{1}^{(j,\ell)})\big)
=\mathrm{Conj_{Z_{j,\ell}^{-1}(\beta_{n_{j}-1}^{-1}\cdot)}}\big(\widetilde{\Phi}_{1,j,\ell}
\big),\\
\Phi_{2,j,\ell}=\mathcal{J}\mathcal{R}_{\theta_{*}}^{n_{j}}
\big(\mathrm{Conj_{Z_{j,\ell}^{-1}}}(\widetilde{\Phi}_{2}^{(j,\ell)})\big)
=\mathrm{Conj_{Z_{(j,\ell)}^{-1}(\beta_{n_{j}-1}^{-1}\cdot)}}\big(\widetilde{\Phi}_{2,j,\ell}
\big),
\end{array}\right.
\end{aligned}
\end{equation}
where
\begin{equation}\label{20200711s}
\begin{aligned}
\left\{\begin{array}{l l}
\widetilde{\Phi}_{1,j,\ell}&=\mathcal{J}\mathcal{R}_{\theta_{*}}^{n_{j}}
(\widetilde{\Phi}_{1}^{(j,\ell)}),\\
\widetilde{\Phi}_{2,j,\ell}&=\mathcal{J}\mathcal{R}_{\theta_{*}}^{n_{j}}
(\widetilde{\Phi}_{2}^{(j,\ell)})
=((1,R_{\rho_{n_{j}}q_{n_{j}-1}}),(\alpha,R_{\rho_{n_{j}}p_{n_{j}-1}})).
\end{array}\right.
\end{aligned}
\end{equation}

Set $\widetilde{Z}_{j}(\theta)=R_{-\rho_{n_{j}}q_{n_{j}-1}\theta},$ then
\begin{equation*}
\begin{aligned}
\mathrm{Conj_{\widetilde{Z}_{j}}}\big(\widetilde{\Phi}_{2,j,\ell}\big)=((1,Id),
(\alpha,R_{\rho_{f}})):=\Phi_{**},
\end{aligned}
\end{equation*}
which, together with \eqref{20200711} and \eqref{20200711s}, implies
\begin{equation}\label{20201226}
\begin{aligned}
\left\{\begin{array}{l l}
\Phi
=\mathrm{Conj_{Z_{j,\ell}^{-1}(\beta_{n_{j}-1}^{-1}\cdot)\widetilde{Z}_{j}^{-1}}}
(\Phi_{*}),\\
\Phi_{2,j,\ell}
=\mathrm{Conj_{Z_{j,\ell}^{-1}(\beta_{n_{j}-1}^{-1}\cdot)\widetilde{Z}_{j}^{-1}}}
(\Phi_{**}),
\end{array}\right.
\end{aligned}
\end{equation}
where $\Phi_{*}=\mathrm{Conj_{\widetilde{Z}_{j}}}\big(\widetilde{\Phi}_{1,j,\ell}\big).$
Moreover, by our selection $\overline{Q}_\ell >\overline{Q}_{\widehat{n}}\geq C^{q_{n_{j}}^{2}}.$
\begin{equation}\label{20201228}
\|\widetilde{Z}_{j}\|_{r_{\ell},1}\leq 2,
\end{equation}
\begin{equation*}
\|F_{j,\ell}\|_{r_{\ell},1}\leq\varepsilon_{\ell}\ll q_{n_{j}}^{-6}.
\end{equation*} In the following, we will give the estimate of the distance of $\Phi$ and
$\Phi_{2,j,\ell}.$

First, we apply Lemma~\ref{renormalizationinverse} with
$\widetilde{\Phi}_{i}^{(j,\ell)}$ in place of  $\Phi_{i},$ and  $n_{j}$ in place of $n$ respectively, then by
\eqref{abdifference}
\begin{equation*}
\begin{aligned}
\|\widetilde{\Phi}_{1,j,\ell}
-\widetilde{\Phi}_{2,j,\ell}\|_{T_{n_{j}}^{-1}r_{\ell},1}
<\|F_{j,\ell}\|_{r_{\ell},1}^{\frac{3}{4}}\leq \varepsilon_{\ell}^{\frac{3}{4}}.
\end{aligned}
\end{equation*}
Finally, by \eqref{flzjlestimate} and \eqref{20200711} and the inequality above we get
\begin{eqnarray}\label{20201012}
\|\Phi-\Phi_{2,j,\ell}\|_{T_{n_{j}}^{-1}r_{\ell},1}
&\leq&\|Z_{j,l}\|_{r_{\ell},T_{n_{j}}}^{2}
\|\widetilde{\Phi}_{1,j,\ell}-\widetilde{\Phi}_{2,j,\ell}
\|_{T_{n_{j}}^{-1}r_{\ell},1}\nonumber\\
&\leq& C^{6q_{n_{j}-1}q_{n_{j}}}\varepsilon_{\ell}^{\frac{3}{4}}.
\end{eqnarray}

Notice that $\Phi_{2,j,\ell}$ may not be normalized, however, by \eqref{20201012} we know that
\begin{eqnarray}\label{20200713}
\|\Phi_{2,j,\ell}(1,0)-Id\|_{T_{n_{j}}^{-1}r_{\ell},1}
&=&\|\Phi_{2,j,\ell}(1,0)-\Phi(1,0)\|_{T_{n_{j}}^{-1}r_{\ell},1}\nonumber\\
&\leq& C^{6q_{n_{j}-1}q_{n_{j}}}\varepsilon_{\ell}^{\frac{3}{4}}.
\end{eqnarray}
Thus by Lemma~\ref{normalizinglemma}, there exists a conjugation $\widetilde{B}_{j,\ell}\in U_{\widetilde{r}_{\ell}}(\R,SL(2,\R))$
 such that
$\overline{\Phi}_{j,\ell}=\mathrm{Conj_{\widetilde{B}_{j,\ell}}}(\Phi_{2,j,\ell})$ is  a normalized action.  Moreover, we have estimate
\begin{eqnarray}\label{bjlestimate}
\|\widetilde{B}_{j,\ell}-Id\|_{\widetilde{r}_{\ell},1}
\leq\|\Phi_{2,j,\ell}(1,0)-Id\|_{T_{n_{j}}^{-1}r_{\ell},1}
\leq C^{6q_{n_{j}}q_{n_{j}-1}}\varepsilon_{\ell}^{\frac{3}{4}}.
\end{eqnarray}

Since $\overline{\Phi}_{j,\ell}(0,1)=\widetilde{B}_{j,\ell}
(\theta+\alpha)\Phi_{2,j,\ell}(0,1)\widetilde{B}_{j,\ell}(\theta)^{-1},$
by \eqref{20200713}, \eqref{bjlestimate} we have
\begin{equation*}
\begin{aligned}
\|\overline{\Phi}_{j,\ell}-\Phi_{2,j,\ell}\|_{\widetilde{r}_{\ell},1}
\leq 2C^{6q_{n_{j}}q_{n_{j}-1}}\varepsilon_{\ell}^{\frac{3}{4}}.
\end{aligned}
\end{equation*}
The inequality above, together with \eqref{20201012}, yields
\begin{equation}\label{jmd0908}
\|\Phi-\overline{\Phi}_{j,\ell}\|_{\widetilde{r}_{\ell},1}
\leq 3C^{6q_{n_{j}}q_{n_{j}-1}}\varepsilon_{\ell}^{\frac{3}{4}}.
\end{equation}

Set $B_{j,\ell}(\cdot)=\widetilde{Z}_{j}(\cdot)Z_{j,\ell}(\beta_{n_{j}-1}^{-1}\cdot)
\widetilde{B}_{j,\ell}^{-1}(\cdot),$ then by \eqref{20201226} we get
\begin{equation}\label{jmd0913}
\begin{aligned}
\mathrm{Conj_{B_{j,\ell}}}(\overline{\Phi}_{j,\ell})
=\mathrm{Conj_{\widetilde{Z}_{j}Z_{j,\ell}(\beta_{n_{j}-1}^{-1}\cdot)}}(\Phi_{2,j,\ell})
=\Phi_{**}.
\end{aligned}
\end{equation}
Thus $B_{j,\ell}$ is $1-$periodic since both $\overline{\Phi}_{j,\ell}$ and $\Phi_{**}$ are normalized.
Moreover, \eqref{flzjlestimate}, \eqref{20201228} and \eqref{bjlestimate} imply
\begin{equation}\label{jmd0914}
\begin{aligned}
\|B_{j,\ell}\|_{\widetilde{r}_{\ell},1}\leq \|
\widetilde{Z}_{j}\|_{\widetilde{r}_{\ell},1}
\|Z_{j,\ell}\|_{\widetilde{r}_{\ell},T_{n_{j}}}
\|\widetilde{B}_{j,\ell}\|_{\widetilde{r}_{\ell},1}\leq  4C^{3q_{n_{j}-1}q_{n_{j}}}.
\end{aligned}
\end{equation}
By \eqref{jmd0908}-\eqref{jmd0914} we get,
\begin{equation*}
\begin{aligned}
\|(0,&B_{j,\ell}(\cdot+\alpha))\circ (\alpha,A)\circ(0,B_{j,\ell})^{-1}-(\alpha, R_{\rho_{f}})\|_{\widetilde{r}_{\ell},1}\\
&\leq\|\mathrm{Conj_{B_{j,\ell}}}
(\Phi)-\mathrm{Conj_{B_{j,\ell}}}
(\overline{\Phi}_{j,\ell})\|_{\widetilde{r}_{\ell},1}
\leq C^{13q_{n_{j}-1}q_{n_{j}}}\varepsilon_{\ell}^{\frac{3}{4}}
\leq\varepsilon_{\ell}^{\frac{1}{2}},
\end{aligned}
\end{equation*}
where the last inequality follows from our selection  $\overline{Q}_\ell >\overline{Q}_{\widehat{n}}\geq C^{q_{n_{j}}^{2}}$ and definition of $\varepsilon_{\ell}$.
Thus, by implicit function theorem, there exists a unique $F_{j,\ell}\in U_{\widetilde{r}_{\ell}}(\T, sl(2,\R)),$
such that
\begin{equation*}
\begin{aligned}
B_{j,\ell}(\theta+\alpha)A(\theta)B_{j,\ell}(\theta)^{-1}=R_{\rho_{f}}
\me^{F_{j,\ell}(\theta)}
\end{aligned}
\end{equation*}
with $\|F_{j,\ell}\|_{\widetilde{r}_{\ell}}\leq\varepsilon_{\ell}^{\frac{1}{2}}.$
\end{proof}

\subsection{Proof of  Proposition~\ref{differenceestimate}}

First we construct the desired sequence.
For the sequence $(q_{\ell})_{\ell\in\mathbb{N}}$ and subsequence $(Q_{\ell})_{\ell\in\mathbb{N}}$ constructed
in Lemma~\ref{bridgeestimate}, first set $n_{1} \geq n_0$ to be the smallest integer such that
\begin{equation}\label{20201002}
\max\{16 C_{M} C^{6q_{n_{j}}^{2}}(2+2\|V\|_{r}) ,  16r^{-1}q_{n_j} K_*^3 \}\leq\overline{Q}_{n_{1}},
\end{equation}
where $n_{0}$ is the one in section~\ref{Inthissection}.  Then we set $n_{*}$ be the smallest integer number such that
\begin{equation}\label{20200630}
\begin{aligned}
q_{n_{*}}> \overline{Q}_{n_{1}+1}^{2\mathbb{A}^{4}\tau^{2}}.
\end{aligned}
\end{equation}
Then, for any fixed $n$ with $n\geq n_{*},$ we set $\ell\in\mathbb{N}$ be the smallest integer number
such that $\overline{Q}_{\ell}^{2\mathbb{A}^{4}\tau^{2}}\geq q_{n}.$ That is
\begin{equation}\label{20200619}
\begin{aligned}
\overline{Q}_{\ell-1}^{2\mathbb{A}^{4}\tau^{2}}
<q_{n}\leq\overline{Q}_{\ell}^{2\mathbb{A}^{4}\tau^{2}}.
\end{aligned}
\end{equation}
Thus by \eqref{20200630} and \eqref{20200619} we get $\ell-1\geq n_{1} \geq \widehat{n}$, where $\widehat{n}$ is the one defined in Lemma \ref{almostreducibility}.

By our construction, for almost every $E\in \Sigma_{ac}(\alpha)$,   $(\alpha, S_{E}^{V})$ is $L^{2}$-conjugated to rotations, and $\rho_{f}=\rho(\alpha, S_{E}^{V})\in\mathcal{P}.$ Then by \eqref{020200600} in Lemma \ref{almostreducibility},   there exist $B_{j,\ell},F_{j,\ell}\in U_{\widetilde{r}_{\ell}}(\mathbb{T}, SL(2,\mathbb{R}))$ such that
\begin{equation}\label{0202006001}
\begin{aligned}
B_{j,\ell}(\theta+\alpha)S_{E}^{V}(\theta)B_{j,\ell}(\theta)^{-1}
=R_{\rho_{f}}\me^{F_{j,\ell}},\,
\end{aligned}
\end{equation}
with estimate
$$ \|F_{j,\ell}\|_{\widetilde{r}_{\ell}}\leq
\varepsilon_{\ell}^{\frac{1}{2}}, \qquad \|B_{j,\ell}\|_{\widetilde{r}_{\ell}}<4C^{3q_{n_{j}-1}q_{n_{j}}}. $$
We shorten $B_{j,\ell}$ and $F_{j,\ell}$  as $B$ and $F,$ respectively.
By \eqref{0202006001} we get
\begin{equation}\label{20200610}
\begin{aligned}
B(\theta+p_{n}/q_{n})S_{E}^{V}(\theta)B(\theta)^{-1}=R_{\rho_{f}}+f(\theta),
\end{aligned}
\end{equation}
where
\begin{eqnarray*}
f(\theta)&=&R_{\rho_{f}}(\me^{F(\theta)}-I)
+\{B(\theta+p_{n}/q_{n})-B(\theta+\alpha)\}S_{E}^{V}(\theta)B(\theta)^{-1}\\
&=& (\textrm{I})+ (\textrm{II}).
\end{eqnarray*}

Note for any $E\in \Sigma_{ac}(\alpha)\subset \Sigma(\alpha)$, we have  $|E|< 2+\|V\|_{r}$, thus by  Cauchy's estimate (Lemma \ref{cauchyestimate}), we get
\begin{eqnarray}\label{f2}
 \|\textrm{II}\|_{\widetilde{r}_{\ell}/2}&\leq&
 |\alpha-\frac{p_n}{q_n}| \|\partial B\|_{\widetilde{r}_{\ell}/2}
\|S_E^V\|_{\widetilde{r}_{\ell}}\|B^{-1}\|_{\widetilde{r}_{\ell}}\nonumber\\
&\leq&
C_{M}\widetilde{r}_{\ell}^{-1}q_{n}^{-2}\|B^{-1}\|_{\widetilde{r}_{\ell}}
\|B\|_{\widetilde{r}_{\ell}} (2+2\|V\|_{r})      \nonumber\\
&\leq&16 C_{M} C^{6q_{n_{j}-1}q_{n_{j}}}  (2+2\|V\|_{r}) \widetilde{r}_{\ell}^{-1}q_{n}^{-2}.
\end{eqnarray}

Since $B$ is 1-periodic, by \eqref{20200610}, we have
\begin{equation*}
\begin{aligned}
B(\theta&+q_{n}p_{n}/q_{n})\Pi_{s=q_{n}-1}^{0}
S_{E}^{V}(\theta+sp_{n}/q_{n})B(\theta)^{-1}\\
&=\Pi_{s=q_{n}-1}^{0}B(\theta+(s+1)p_{n}/q_{n})
S_{E}^{V}(\theta+sp_{n}/q_{n})B(\theta+sp_{n}/q_{n})^{-1}\\
&=\Pi_{s=q_{n}-1}^{0}\{R_{\rho_{f}}+f(\theta+sp_{n}/q_{n})\} .
\end{aligned}
\end{equation*}
As a consequence,
\begin{equation*}
\begin{aligned}
\mathrm{tr}\Pi_{s=q_{n}-1}^{0}S_{E}^{V}(\theta+sp_{n}/q_{n})=\mathrm{tr}\Pi_{s=q_{n}-1}^{0}\{R_{\rho_{f}}+f(\theta+sp_{n}/q_{n})\},
\end{aligned}
\end{equation*}
which, together with \eqref{20200603} and \eqref{20200606}, implies
\begin{equation}\label{20200719}
\begin{aligned}
t_{p_{n}/q_{n}}(E,\theta)=\mathrm{tr}
\Pi_{s=q_{n}-1}^{0}\{R_{\rho_{f}}+f(\theta+sp_{n}/q_{n})\}=\sum_{k\in\mathbb{Z}}a_{q_{n},k}(E)\me^{2\pi\mi k q_{n}\theta}.
\end{aligned}
\end{equation}
The first equality in \eqref{20200719} implies
\begin{equation}\label{20200617}
\begin{aligned}
\|t_{p_{n}/q_{n}}(E,\theta)\|_{\widetilde{r}_{\ell}/2}
\leq2\{1+\|f\|_{\widetilde{r}_{\ell}/2}\}^{q_{n}}.
\end{aligned}
\end{equation}
In the  following we will give the estimate of $\|f\|_{\widetilde{r}_{\ell}/2},$ indeed,  by \eqref{20201002} and $\ell-1\geq n_{1}$, we have
\begin{equation}\label{rll} \widetilde{r}_{\ell}^{-1}=2r^{-1}\beta_{n_{j}-1}^{-1}K_{*}^{3}\overline{Q}_{\ell-1}^{2}
<4^{-1}\overline{Q}_{\ell-1}^{3}.
\end{equation}
Again, by \eqref{parameter}, \eqref{20201002}, \eqref{20200619} and \eqref{f2},  we have
\begin{eqnarray*}
\|f\|_{\widetilde{r}_{\ell}/2}\leq
\|\textrm{I}\|_{\widetilde{r}_{\ell}/2}+\|\textrm{II}\|_{\widetilde{r}_{\ell}/2}
\leq \frac{1}{2}\overline{Q}_{\ell}^{-4\mathbb{A}^{4}\tau^{2}}+ \frac{1}{2}\overline{Q}_{\ell-1}^{4}q_{n}^{-2}
\leq q_{n}^{-2(1-\mathbb{A}^{-4}\tau^{-2})}.
\end{eqnarray*}
Consequently, by \eqref{20200617}
\begin{equation}\label{20200712}
\begin{aligned}
\|t_{p_{n}/q_{n}}(E,\theta)\|_{\widetilde{r}_{\ell}/2}
\leq2\{1+q_{n}^{-2(1-\mathbb{A}^{-4}\tau^{-2})}\}^{q_{n}}<4.
\end{aligned}
\end{equation}

On the other hand, by  \eqref{rll}, we have
\begin{equation*}
\begin{aligned}
q_{n}2^{-1}\widetilde{r}_{\ell}>q_{n}\overline{Q}_{\ell-1}^{-3}> q_{n}\overline{Q}_{\ell-1}^{-4}
> q_{n}^{1-2\mathbb{A}^{-4}\tau^{-2}}>T_{1}.
\end{aligned}
\end{equation*}
Moreover, the second equality in \eqref{20200719} implies
\begin{equation*}
\begin{aligned}
t_{p_{n}/q_{n}}(E,\theta)-a_{q_{n},0}(E)=\mathcal{R}_{q_{n}}t_{p_{n}/q_{n}}(\theta,E).
\end{aligned}
\end{equation*}
Thus by Lemma \ref{projectionestimate} and \eqref{20200712}, we have
\begin{equation*}
\begin{aligned}
\|t_{p_{n}/q_{n}}(E,\theta)-a_{q_{n},0}(E)\|_{C^{0}}
&\leq
\|t_{p_{n}/q_{n}}(E,\theta)\|_{2^{-1}\widetilde{r}_{\ell}}\exp\{-\Lambda(\pi q_{n}2^{-1}\widetilde{r}_{\ell})\}\\
&\leq 4
\exp\{-\Lambda(q_{n}^{1-2\mathbb{A}^{-4}\tau^{-2}})\},
\end{aligned}
\end{equation*}
 the last inequality follows from  the fact that $\Lambda(\cdot)$
is non-decreasing on $\mathbb{R}^{+}.$ \qed

\section{Proof of Theorem~\ref{continuity}}\label{gerc}
In this section we give the proof of Theorem~\ref{continuity} which is based
on the large deviation theorem and avalanche principle. Notice that the
cocycle in Theorem~\ref{continuity} is $\nu$-Gevrey with $1/2<\nu<1$, we will follow the method in \cite{Silviusk05}
to approximate the Gevrey cocycle by its truncated cocycle which is analytic in the certain strip.  For the continuity argument, our scheme is in the spirit of \cite{Bourgainj02} with some modifications.
Compared to the result in \cite{Silviusk05} with $1/2<\nu<1$,
our large deviation theorem also works for more general cocycles (other than Schr\"odinger coycle) and rational frequencies, which is an analogue of Bourgain-Jitomirskaya \cite{Bourgainj02}.
More concretely, if we truncate the cocycle $A(\alpha,\theta)$ to $\widetilde{A}(\alpha,\theta)$, and denote  $\widetilde{A}_{N}(\alpha,\theta)$
to be the transfer matrix, then $\det \widetilde{A}_{N}(\alpha,\theta)$ depends on $\theta$, is not constant anymore (of course not identical to 1), thus we have to prove that the subharmonic
extension of  $N^{-1}\ln\|\widetilde{A}_{N}(\alpha,\theta)\|$ is bounded.
This boundedness will enable us to give an enhanced  version of the large deviation bound shown in \cite{Bourgainj02}. For more results and methods to prove the continuity of the Lyapunov exponents, we refer readers to \cite{AvilajS14,JitomirskayaKS09,JitomirskayaCAMAR12,Jitomirskayamarx11}.

\subsection{Large deviation theorem.}
In this subsection we give a large deviation theorem for the $\nu$-Gevrey cocycle with $1/2<\nu<1.$
Let $A_N(\alpha,\theta)$ and $L_N(\alpha,A)$ be the associated transfer matrix and finite Lyapunov exponent of the cocycle $(\alpha,A)$. Then we have the following:
\begin{proposition}\label{8}
Let $\rho>0$, $\frac{1}{2}<\nu<1, $ $0<\kappa<1.$ Assume that  $A \in G_{\rho}^{\nu}(\mathbb{T}, SL(2,\mathbb{R}))$, and
\begin{equation*}
\begin{aligned}
\big|\alpha-\frac{a}{q}\big|<\frac{1}{q^2},\ \ (a,q)=1.
\end{aligned}
\end{equation*}
Then there exist $c,C_{i}(\kappa)>0,i=1,2$, $\sigma_1>\sigma>1>\gamma>0$ and
$q_0(\kappa,\rho,\nu)\in\N^+$ such that for $q\geq q_0$, $C_1(\kappa)q^\sigma<N<C_2(\kappa)q^{\sigma_1}$,
\begin{equation*}
\begin{aligned}
mes\Big\{\theta:\big|\frac{1}{N}\ln\|A_N(\alpha,\theta)\|-L_N(\alpha,A)\big|>\kappa\Big\}<\me^{-c q^\gamma}.
\end{aligned}
\end{equation*}
\end{proposition}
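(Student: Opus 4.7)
The plan is to adapt the Bourgain--Jitomirskaya large deviation scheme for analytic cocycles \cite{Bourgainj02} to the Gevrey setting by a truncation in the spirit of Klein \cite{Silviusk05}. A $\nu$-Gevrey function is ``almost analytic'' on a strip whose width is tied to $\nu$, and the condition $\nu>1/2$ will emerge as precisely what keeps that strip wide enough for the subharmonic/BMO step to be effective. The first move is to write $A(\theta)=\sum_{k}\widehat A(k)\me^{2\pi\mi k\theta}$ and truncate to $\widetilde A(\theta):=\sum_{|k|\le K}\widehat A(k)\me^{2\pi\mi k\theta}$, where $K=K(q)$ is a parameter with $K^{\nu}\sim q^{\gamma}$ for a small $\gamma>0$. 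Gevrey decay $|\widehat A(k)|\le \|A\|_{\nu,\rho}\me^{-|2\pi k|^{\nu}\rho}$ then yields a tail bound $\|A-\widetilde A\|_{C^{0}}\lesssim\me^{-cK^{\nu}}$ and, for the holomorphic extension of $\widetilde A$ to a strip $|\Im z|\le h$, the estimate $\|\widetilde A(z)-A(\Re z)\|\lesssim Kh+\me^{-cK^{\nu}}$; choosing $h\sim K^{-1}$ makes this small.

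Next I would work with $u_{N}(z):=N^{-1}\log\|\widetilde A_{N}(\alpha,z)\|$ on $|\Im z|\le h$. As flagged in the discussion following the statement, the essential new point (absent from the analytic case) is that $\widetilde A_{N}$ is no longer exactly $SL(2,\RR)$-valued, so boundedness of $u_{N}$ from below is not automatic. Nevertheless $u_{N}$ is subharmonic, and using $\det\widetilde A_{N}(z)=\prod_{j=0}^{N-1}\det\widetilde A(z+j\alpha)$ together with $|\det\widetilde A(z)-1|\lesssim Kh+\me^{-cK^{\nu}}\ll N^{-1}$ one obtains $|\det\widetilde A_{N}(z)|\in[1/2,2]$, forcing $|u_{N}|\le C$ on the strip. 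With a bounded subharmonic extension in hand, the BJ machinery (Riesz representation on the strip plus a Vitali-type covering, producing a BMO-type estimate for $u_{N}|_{\TT}$) converts $L^{1}$-closeness $\int u_{N}\approx L_{N}(\alpha,A)$ into the desired measure bound.

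The rational approximation $|\alpha-a/q|<q^{-2}$ enters through the orbit: for $j<N$, $j\alpha=ja/q+O(N/q^{2})$, and $\{ja/q:0\le j<q\}$ is a $1/q$-net of $\TT$. The lower bound $N>C_{1}(\kappa)q^{\sigma}$ ensures that the Birkhoff average along this near-periodic orbit captures $L_{N}$ to precision $\kappa$, while the upper bound $N<C_{2}(\kappa)q^{\sigma_{1}}$ keeps the substitution error $N/q^{2}$ small; the Chebyshev/covering step then yields the measure bound $\me^{-cq^{\gamma}}$. Finally, to pass from $\widetilde A_{N}$ back to $A_{N}$, telescoping with $\|A-\widetilde A\|_{C^{0}}\le\me^{-cK^{\nu}}$ gives $\|A_{N}-\widetilde A_{N}\|\lesssim N\me^{NL}\me^{-cK^{\nu}}$, and the choice $K^{\nu}\gg q^{\sigma_{1}}$ makes this negligible, so the LDT for $\widetilde A_{N}$ transfers to $A_{N}$.

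The main obstacle is the three-way parameter balance: (a) $K$ must be large enough that the truncation is accurate on the scale $N\lesssim q^{\sigma_{1}}$; (b) the strip width $h$ must be large enough that the subharmonic/BMO estimate is quantitatively effective on the relevant $1/q$-scale; (c) $Kh$ must be small so that $\widetilde A$ stays $SL(2,\RR)$-close on the strip and the lower bound on $u_{N}$ survives. Constraint (c) forces $h\lesssim K^{-1}$, (b) requires $h$ not too small compared with $q^{-1}$, and (a) demands $K^{\nu}\gg\log N$; these close exactly when $\nu>1/2$, a threshold whose optimality is confirmed by the counterexamples of \cite{GWYZ}.
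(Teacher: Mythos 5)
Your overall strategy---truncate the Gevrey function to a trigonometric polynomial $\widetilde A$, extend to a thin strip where $u_N = N^{-1}\ln\|\widetilde A_N\|$ stays bounded via the determinant estimate, run the Bourgain--Jitomirskaya averaging machinery, and telescope back---is the same as the paper's, and you correctly single out the boundedness of the subharmonic extension as the new ingredient compared to the analytic case. But there is a genuine gap in the parameter balance: your constraint (c), that one needs $Kh\ll 1$ to keep $\widetilde A$ close to $SL(2,\R)$ on the strip, is far too crude, and once you impose $h\lesssim K^{-1}$ the argument cannot close for \emph{any} $\nu<1$. Indeed, the telescoping step forces $K^{\nu}\gg N$ (not merely $\gg \log N$, and not $\sim q^{\gamma}$ as you first wrote), hence $K\gg N^{1/\nu}$ and $h\lesssim K^{-1}\ll N^{-1/\nu}$; with $N\gtrsim q^{\sigma}$ this gives $h^{-1}\gg q^{\sigma/\nu}>q^{\sigma}$. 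But the quantitative subharmonic--average estimate (the paper's Proposition \ref{80}) carries a prefactor proportional to the inverse strip width, $\varsigma_{2}\sim S\rho^{-1}$, and the Fej\'er gain is only $R^{-\varsigma_{1}}=q^{-\sigma\varsigma_{1}}$ with $\varsigma_{1}=p(1-\sigma^{-1})<1$; making $\varsigma_{2}R^{-\varsigma_{1}}<\kappa$ would require $h^{-1}\lesssim q^{\sigma\varsigma_{1}}<q^{\sigma}$, contradicting $h^{-1}\gg q^{\sigma}$.

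The fix, and the actual content of the paper's Lemma \ref{estimatetildeu}, is that the strip can be taken much wider, namely $h\sim\rho K^{\nu-1}$. The point you miss is that one should not bound $|e^{2\pi k h}-1|$ for $|k|\leq K$ by brute force; rather, each Fourier mode contributes $|\widehat A(k)|\,e^{2\pi|k|h}\lesssim e^{-(2\pi|k|)^{\nu}\rho+2\pi|k|h}$, which is uniformly small across the whole range $|k|\leq K$ precisely when $h\lesssim\rho K^{\nu-1}$ (the paper makes this rigorous by splitting the sum at $|k|=N^{\delta/2}$, using $|k|h\ll 1$ only for the low modes and Gevrey decay for the high ones). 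With $h\sim K^{\nu-1}$ and $K^{\nu}\sim N^{b}$, $b>1$, one gets $h\sim N^{-\delta}$ with $\delta=b(\nu^{-1}-1)$, and the requirement $\sigma\delta<1$ from Proposition \ref{80} can be met with $\sigma,b$ close to $1$ exactly when $\nu^{-1}-1<1$, i.e.\ $\nu>1/2$. So your closing claim that the three constraints close precisely for $\nu>1/2$ is the right answer, but it does not follow from the constraints as you stated them; the threshold arises from the balance $h\sim K^{\nu-1}$ against $K^{\nu}\gg N$, not from $h\sim K^{-1}$.
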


%
%Before giving its proof, we sketch the proof.  Basically,  we will follow the method in \cite{Silviusk05}:
%In section~\ref{Averagesofshifts} we give a large deviation theorem on the subharmonic functions, see Lemma~\ref{80} for details and then approximate the Gevrey cocycle  $A(\theta)$ by its truncated cocycle $\widetilde{A}(\theta)$ which is analytic in the certain strip,
%see section~\ref{trigonometricpolynomial}.
%In section \ref{thirdsubsubsection}, we give the proof of Proposition~\ref{8}
%by applying Lemma~\ref{80}.

\subsubsection{Averages of shifts of subharmonic functions.}\label{Averagesofshifts}
Let $u=u(\theta)$ be a function on $\T$ having a subharmonic extension on the strip $[|\mathrm{Im}\vartheta|\leq\rho]$, and $\alpha\in\T.$ We prove that the mean of $u$ is close to the
Fej\'{e}r average of $u(\theta)$ for $\theta$ outside a small set (here being `close' or `small' is expressed in terms of
the number of shifts considered).

Consider the Fej\'{e}r kernel of order p:
\begin{equation}\label{fer}
K_R^p(t)=\big(\frac{1}{R}\sum\limits_{j=0}^{R-1}\me^{2\pi \mi jt}\big)^p,
\end{equation}
then we have
\begin{equation*}
\begin{split}
\big|K_R^p(t)\big|=\frac{1}{R^{p}}\big|\frac{1-\me^{2\pi \mi Rt}}{1-\me^{2\pi \mi t}}\big|^p\leq\frac{1}{R^p\|t\|_{\Z}^p}.
\end{split}
\end{equation*}
Notice also $\big|K_R^p(t)\big|\leq 1$, we have
\begin{equation}\label{fejer}
\begin{split}
|K_R^p(t)|\leq\min\big\{1,\frac{1}{R^p\|t\|_\Z^p}\big\}\leq\frac{2}{1+R^p\|t\|_\Z^p}.
\end{split}
\end{equation}
We can rewrite \eqref{fer} as
\begin{equation*}
\begin{split}
K_R^p(t)=\frac{1}{R^p}\sum\limits_{j=0}^{p(R-1)}c_R^p(j) \me^{2\pi \mi jt},
\end{split}
\end{equation*}
where $c^p_R(j)$ are positive integers so that
\begin{equation*}
\begin{split}
\frac{1}{R^p}\sum\limits_{j=0}^{p(R-1)}c^p_R(j)=1.
\end{split}
\end{equation*}
Notice that if $p=1$ then $K_R^1(t)=\frac{1}{R}\sum\limits_{j=0}^{R-1}\me^{2\pi \mi jt}$,  thus $c^1_R(j)=1$ for all $j.$

\begin{proposition}\label{80}
Let  $\rho>0.$ Assume that $u: \T\rightarrow\R$
has a bounded subharmonic extension to the strip $[|\mathrm{Im}\vartheta|\leq\rho]$ and $\|u\|_{C^{0}}\leq S.$
If
\begin{equation*}
\begin{aligned}
\big|\alpha-\frac{a}{q}\big|<\frac{1}{q^2},\ \ (a,q)=1,
\end{aligned}
\end{equation*}
and $\sigma>1,0<\varsigma<\sigma^{-1}, \varsigma(1-\sigma^{-1})^{-1}<p<(\sigma-1)^{-1},$ then
\begin{equation*}
\begin{aligned}
mes\Big\{\theta:\Big|\frac{1}{R^p}\sum\limits_{j=0}^{p(R-1)}c_R^p(j)u(\theta+j\alpha)
-[u(\theta)]_{\theta}\Big|>\varsigma_{2}R^{-\varsigma_{1}}\Big\}
<\frac{R^{2\varsigma_{1}}}{2^{8}\exp\{R^{\varsigma_{3}}\}},
\end{aligned}
\end{equation*}
provided $R=q^\sigma\geq q(\varsigma,p,\sigma)\ (\varsigma_{1}= p(1-\sigma^{-1}), \varsigma_{2}=2^{p+5}S\rho^{-1}, \varsigma_{3}=\frac{1+p}{\sigma}-p).$
\end{proposition}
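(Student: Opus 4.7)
I will follow the Bourgain--Goldstein/Bourgain--Jitomirskaya scheme adapted to the subharmonic setting used in the paper. The first step is to Fourier-expand $u(\theta)=\sum_{k\in\Z}\hat u(k)e^{2\pi \mathrm{i}k\theta}$ and rewrite the quantity to be controlled as
$$\phi(\theta):=\frac{1}{R^{p}}\sum_{j=0}^{p(R-1)}c_{R}^{p}(j)\,u(\theta+j\alpha)-[u]_{\theta}=\sum_{k\neq 0}\hat u(k)\,K_{R}^{p}(k\alpha)\,e^{2\pi \mathrm{i}k\theta}.$$
Since $u$ extends as a subharmonic function on the strip $|\mathrm{Im}\,\vartheta|\leq\rho$ with $\|u\|_{C^{0}}\leq S$, the Riesz representation for subharmonic functions gives a Riesz mass bounded by $CS/\rho$, from which one obtains the standard Fourier decay $|\hat u(k)|\leq CS/(\rho|k|)$ for $k\neq 0$; combined with the pointwise bound \eqref{fejer} for $K_{R}^{p}$, this is the single analytic input of the argument.

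Next, I split $\phi=\phi_{\leq K}+\phi_{>K}$ at the natural cutoff $K\asymp q$. The high-frequency tail $\phi_{>K}$ is handled in $L^{2}$: the Fourier decay of $\hat u$ gives $\|\phi_{>K}\|_{L^{2}}^{2}\leq C(S/\rho)^{2}/K$, which, together with the choice $R=q^{\sigma}$, is far smaller than the threshold $\varsigma_{2}R^{-\varsigma_{1}}$. For the main block $\phi_{\leq K}$, I exploit the Diophantine hypothesis $|\alpha-a/q|<1/q^{2}$ by decomposing each $k$ with $|k|\leq K$ as $k=jq+r$, $|r|\leq q/2$. When $r\neq 0$ one has $\|k\alpha\|_{\Z}\geq \|ra/q\|_{\Z}-|k|/q^{2}\geq 1/(2q)$, so \eqref{fejer} yields $|K_{R}^{p}(k\alpha)|\leq 2(2q/R)^{p}=2^{p+1}q^{-p(\sigma-1)}$; when $r=0$, the resonant coefficient at $k=jq$ is controlled directly by $|\hat u(jq)|\leq CS/(\rho q|j|)$, and these resonant contributions sum to $O(S\rho^{-1}q^{-1}\log q)$, still below threshold.

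To promote the resulting $L^{2}$-control of $\phi$ into the claimed stretched-exponential measure bound $R^{2\varsigma_{1}}/(2^{8}\exp\{R^{\varsigma_{3}}\})$, I apply a John--Nirenberg type inequality. The decomposition above writes $\phi$ as a sum of a uniformly bounded piece plus a mean-zero piece whose Fourier coefficients decay like $1/|k|$; consequently $\phi$ belongs to $\mathrm{BMO}(\T)$ with $\|\phi\|_{\mathrm{BMO}}\lesssim S/\rho$, and John--Nirenberg gives $\mathrm{mes}\{|\phi|>\lambda\}\lesssim\exp(-c\lambda\rho/S)$. Optimizing in $\lambda$ against the parameter constraints $\varsigma(1-\sigma^{-1})^{-1}<p<(\sigma-1)^{-1}$, $0<\varsigma<\sigma^{-1}$ recovers the exponent $\varsigma_{3}=(1+p)/\sigma-p>0$, and the combinatorial prefactor $R^{2\varsigma_{1}}$ absorbs the summation over the low modes $|k|\leq K$.

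The main obstacle is precisely this last passage: turning the $L^{2}$-estimate for $\phi$ into \emph{stretched-exponential} decay of its level sets, since Chebyshev alone yields only polynomial decay. The BMO/John--Nirenberg route outlined above is the cleanest way; equivalently one can apply a Cartan estimate to the subharmonic extension of $\phi$ to a thinner strip, as in \cite{Bourgainj02}. Either approach requires careful bookkeeping to ensure that the gain $q^{-p(\sigma-1)}$ from the Fejér kernel at non-resonant low frequencies dominates both the $1/K$ tail and the sparse resonant contributions at multiples of $q$; these balances are precisely what dictate the admissible range of $p$ and the form of $\varsigma_{1}$, $\varsigma_{3}$ in the statement.
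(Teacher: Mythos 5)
Your proposal correctly identifies the two analytic inputs (Fourier decay $|\hat u(k)|\lesssim S/(\rho|k|)$ from the bounded subharmonic extension, and the small-divisor lower bound $\|k\alpha\|_{\Z}\gtrsim 1/q$ from the Diophantine hypothesis), and correctly notes that Chebyshev on an $L^2$ tail at cutoff $K\asymp q$ cannot produce a stretched-exponential measure bound. But the proposed fix — John--Nirenberg via a BMO estimate on $\phi$ — does not work, and the paper does something quite different.

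The John--Nirenberg inequality gives $\mathrm{mes}\{|\phi|>\lambda\}\lesssim\exp(-c\lambda/\|\phi\|_{\mathrm{BMO}})$, which is useful precisely when the threshold $\lambda$ is \emph{large} relative to the BMO norm. Here the threshold is $\lambda=\varsigma_{2}R^{-\varsigma_{1}}\to 0$ as $R\to\infty$, while the BMO norm of the tail (Fourier coefficients decaying like $1/|k|$) stays of order $S/\rho$, so the exponent $c\lambda/\|\phi\|_{\mathrm{BMO}}\to 0$ and the bound is trivial. To extract $\exp\{-R^{\varsigma_3}\}$ you would need $\|\phi_{>K}\|_{\mathrm{BMO}}$ itself to be polynomially small in $R$, which the $1/|k|$ decay does not give at a cutoff $K\asymp q$. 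The Cartan alternative you mention has a similar regime mismatch.

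The paper's resolution is simpler and sidesteps all of this: it takes the truncation level \emph{exponentially} large, $K=\exp\{R^{\varsigma_3}\}$ with $\varsigma_3=\sigma^{-1}-p(1-\sigma^{-1})>0$ (this positivity is exactly what the constraint $p<(\sigma-1)^{-1}$ buys). With that choice the low-frequency block $\mathcal{T}_{K}w$ is bounded pointwise: the blocks $I_\ell=[\tfrac{q}{4}\ell,\tfrac{q}{4}(\ell+1))$ each contribute $O\big(\tfrac{S}{\rho q\ell}(1+(q/R)^{p})\big)$ via the rearrangement argument for $\|k\alpha\|_\Z^{-p}$, and summing over $\ell\lesssim K/q$ produces a $\ln K=R^{\varsigma_3}$ factor, so the whole block is $O\big(\tfrac{S}{\rho q}\ln K\big)=O\big(S\rho^{-1}R^{-\varsigma_{1}}\big)$ — below threshold in $L^\infty$, not merely $L^2$. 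Meanwhile the tail $\mathcal{R}_{K}w$ now has $L^{2}$ norm squared $\lesssim (S/\rho)^{2}/K=(S/\rho)^{2}\exp\{-R^{\varsigma_3}\}$, so \emph{plain Chebyshev} at the threshold $2^{4}S\rho^{-1}R^{-\varsigma_1}$ directly yields the stated measure bound $2^{-8}R^{2\varsigma_1}\exp\{-R^{\varsigma_3}\}$. In short: the stretched-exponential decay is injected through the exponential choice of $K$, not through a large-deviation or BMO input, and this is the idea the proposal is missing.
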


 \begin{proof}
 The proof is divided into the following 3 steps.\\

{\bf 1. Shift of Fej\'{e}r average.}
Since $u$ is subharmonic in  the strip $[|\mathrm{Im}\vartheta|\leq\rho]$, then from Corollary 4.7 in \cite{Bourgain05}, we get
\begin{equation}\label{subharmonic}
\begin{aligned}
|\widehat{u}(k)|\leq  \frac{S}{\rho|k|}.
\end{aligned}
\end{equation}

Consider the Fej\'{e}r average of $u_N(\theta)$, and notice that
\begin{equation*}
\begin{aligned}
u(\theta+j\alpha)=[u(\theta)]_{\theta}+\sum\limits_{k\neq 0}\widehat{u}(k)\me^{2\pi \mi k(\theta+j\alpha)},
\end{aligned}
\end{equation*}
thus, by shortening $K_R^{p}(\cdot)$ as $K_R(\cdot),$ we get
\begin{eqnarray}\label{star4}
&&\frac{1}{R^p}\sum\limits_{j=0}^{p(R-1)}c_R^p(j)u(\theta+j\alpha)-[u(\theta)]_{\theta}
\nonumber\\
&=&\sum\limits_{k\neq 0}\widehat{u}(k)\Big(\frac{1}{R^p}\sum\limits_{j=0}^{p(R-1)}c_R^p(j)\me^{2\pi \mi jk\alpha}\Big)\me^{2\pi \mi k\theta}\nonumber\\
&=&\sum\limits_{k\neq 0}\widehat{u}(k)\cdot K_R(k\alpha)\me^{2\pi \mi k\theta}:=w(\theta)=\mathcal{T}_Kw(\theta)+\mathcal{R}_Kw(\theta),
\end{eqnarray}
where $K>q$  is a large constant that will be determined later.
\bigskip

\noindent {\bf 2. Estimate of the $w(\theta)$.} In the following, we will give the estimate of $w(\theta)$. Let  $I_{\ell}=[\frac{q}{4}\ell,\frac{q}{4}(\ell+1)),$ then we write
$\mathcal{T}_Kw(\theta)$ as
\begin{equation*}
\begin{split}
\mathcal{T}_Kw(\theta)=\sum\limits_{\ell=0}^{[4Kq^{-1}]+1}\sum\limits_{k\in I_{\ell}}\widehat{u}(k)
    \cdot K_R(k\alpha)\me^{2\pi\mi k\theta}.
\end{split}
\end{equation*}

Note $\big|\alpha-\frac{a}{q}\big|<\frac{1}{q^2},$ it follows that for $|k|\leq \frac{q}{2}$ with $k\neq 0$, we have   $|k\alpha-\frac{ka}{q}|<\frac{1}{2q},$ hence $\|k\alpha\|_{\Z}>\frac{1}{2q}$. Let $\alpha_1,\cdots,\alpha_{q/4}$ be the decreasing rearrangement of $\{\|k\alpha\|_{\Z}^{-1}\}_{0<|k|\leq \frac{q}{4}}$. Then we have $\alpha_i\leq \frac{2q}{i}$. Moreover, for any interval of length $q/4$, same is true for $\{\|k\alpha\|_{\Z}^{-1}\}_{|k|\in I}$ if we exclude at most one value of $k$. By \eqref{fejer} and \eqref{subharmonic}, we have
\begin{equation}\label{star5}
\begin{split}
\sum\limits_{0<|k|<\frac{q}{4}}\big|\widehat{u}(k)K_R(k\alpha)\big|\leq
\sum\limits_{0<|k|<\frac{q}{4}}
\frac{S\|k\alpha\|_{\Z}^{-p}}{|k|\rho R^p}\leq\sum\limits_{1\leq i<\frac{q}{4}}\frac{2S(2q/i)^p}{\rho R^p}
\leq \frac{2^{p+3}S}{\rho}\left(\frac{q}{R}\right)^p,
\end{split}
\end{equation}
and for each  $\ell\geq1,$ we have
\begin{equation*}
\begin{aligned}
\sum\limits_{|k|\in I_{\ell}}\big|\widehat{u}(k)K_R(k\alpha)\big|\leq
\frac{2S}{\frac{q}{4}\rho\ell}\Big(1+\sum\limits_{1\leq i<\frac{q}{4}}\frac{(2q/i)^p}{R^p}\Big)\leq \frac{8S}{\rho q\ell}\big(1+c(q/R)^p\big).
\end{aligned}
\end{equation*}
Thus we have
\begin{align}\label{star6}
\nonumber \Big|\sum\limits_{\ell=1}^{[4Kq^{-1}]+1}\sum\limits_{|k|\in I_{\ell}}\widehat{u}(k)K_R(k\alpha)\me^{2\pi\mi k\theta}\Big|&\leq \sum\limits_{\ell=1}^{[4Kq^{-1}]+1}\frac{8S}{\rho q\ell}\big(1+c(q/R)^p\big)\\ \nonumber
&\leq \frac{8S}{\rho q}\big(1+c(q/R)^p\big)\ln [4Kq^{-1}+1]\\
&\leq \frac{8S}{\rho q}\big(1+c(q/R)^p\big)\ln K.
\end{align}

On the other hand, again by \eqref{subharmonic}, we have
\begin{equation}\label{202000002}
\begin{split}
\|\mathcal{R}_Kw(\theta)\|^2_{\ell^2}\leq \sum\limits_{|k|\geq K}\frac{S^2}{(\rho |k|)^2}\leq \frac{S^2}{\rho^2}K^{-1}.
\end{split}
\end{equation}

\medskip
\noindent {\bf 3.  Choose approximate $K,p$.} %

Now we can finish the proof of  Proposition~\ref{80}.
By \eqref{star4}-\eqref{star6} , we have
\begin{align*}
\big|[u(\theta)]_{\theta}&-\frac{1}{R^p}\sum\limits_{j=0}^{p(R-1)}c_R^p(j)u(\theta+j\alpha)\big|\\
&\leq\frac{2^{p+3}}{\rho}\left(\frac{q}{R}\right)^p+
\frac{8S}{\rho q}\Big(1+c\left(\frac{q}{R}\right)^p\Big)\ln K+|\mathcal{R}_Kw(\theta)|.
\end{align*}

Take $q=R^{\sigma^{-1}}, \sigma>1,$ $K=\exp\{R^{\sigma^{-1}-p(1-\sigma^{-1})}\}$
and  $0<\varsigma<\sigma^{-1}, \varsigma(1-\sigma^{-1})^{-1}<p<(\sigma-1)^{-1}.$
Once we fixed the parameters above, we have,
\begin{equation*}
\begin{aligned}
\Big|\frac{2^{p+3}S}{\rho}\left(\frac{q}{R}\right)^p\Big|=2^{p+3}S\rho^{-1}R^{-\varsigma_{1}},
\end{aligned}
\end{equation*}
\begin{equation*}
\begin{aligned}
\Big|\frac{8S}{\rho q}\Big(1+c\Big(\frac{q}{R}\Big)^p\Big)\ln K\Big|&\leq
16\rho^{-1}S q^{-1}\ln K=16S\rho^{-1}R^{-\varsigma_{1}},
\end{aligned}
\end{equation*}
where $ \varsigma_{1}=p(1-\sigma^{-1})$. By Chebyshev's inequality and \eqref{202000002}, one has
\begin{equation*}
\begin{aligned}
mes\Big\{\theta: |\mathcal{R}_Kw(\theta)|&>2^{4}S\rho^{-1}R^{-\varsigma_{1}}\Big\}\leq(2^{4}S\rho^{-1}R^{-\varsigma_{1}})^{-2}
\|\mathcal{R}_Kw\|^2_{\ell^2}\\
&\leq 2^{-8}R^{2\varsigma_{1}}\exp\{-R^{\varsigma_{3}}\},
\end{aligned}
\end{equation*}
where $\varsigma_{3}=(1+p)\sigma^{-1}-p$. By the above argument, the desired result follows directly.
\end{proof}

\subsubsection{Trigonometric polynomial approximations.}\label{trigonometricpolynomial}
Since $A \in G_{\rho}^{\nu}(\mathbb{T}, SL(2,\mathbb{R}))$, then we can write
$A(\theta)=\sum_{k\in\mathbb{Z}}\widehat{A}(k)\me^{2\pi \mi k\theta}$ with estimate
\begin{equation}\label{fou}
\begin{split}
|\widehat{A}(k)|\leq \|A\|_{\nu,\rho}\me^{-\rho|2\pi k|^{\nu}},\ \ \forall k\in\Z.
\end{split}
\end{equation}
For any $N>0,$  denote $\widetilde{N}=N^{b\nu^{-1}},$ where $b=\delta(\nu^{-1}-1)^{-1}$ and $\delta\in (0,1)$ will be fixed later.
Once we have this, we can consider the truncated cocycle $\widetilde{A}(\theta):=\mathcal{T}_{\widetilde{N}}A(\theta),$ denote by   $\widetilde{A}_N(\alpha,\theta)$ and $\widetilde{L}_N(\alpha,\widetilde{A})$ the associated transfer matrix and finite Lyapunov exponent by substituting $\widetilde{A}(\theta)$ for $A(\theta)$.  Then we have the following lemma.
\begin{lemma}\label{proposition1016}
There exists $N(\rho,\nu,\|A\|_{\nu,\rho})\in\N$, $c=c(\rho,\nu,\|A\|_{\nu,\rho})$ such that if  $N\geq N(\rho,\nu,\|A\|_{\nu,\rho})$, then we have the following estimates:
\begin{equation}\label{error}
\|A(\theta)-\widetilde{A}(\theta)\|\leq \me^{-c\widetilde{N}^{\nu}}= \me^{-cN^b},
\end{equation}
\begin{equation*}
\big|N^{-1}\ln\|A_N(\alpha,\theta)\|-N^{-1}\ln\|\widetilde{A}_N(\alpha,\theta)\|\big|\leq  \me^{-\frac{c}{2}N^b},
\end{equation*}
\begin{equation*}
\big|L_N(\alpha,A)-\widetilde{L}_N(\alpha,\widetilde{A})\big|<\me^{-\frac{c}{2}N^b}.
\end{equation*}
\end{lemma}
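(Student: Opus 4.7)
The plan is to prove the three inequalities in order, with the exponential gap $b\nu^{-1}\cdot\nu=b$ in the exponent obtained by trading Gevrey decay against multiplicative error accumulation; the choice $\nu>1/2$ is what lets us push the parameter $b$ strictly above $1$.

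\textbf{First inequality.} I would start from the Gevrey Fourier bound \eqref{fou}, which gives
\begin{equation*}
\|A(\theta)-\widetilde A(\theta)\|_{C^0}\;\le\;\sum_{|k|\ge \widetilde N}|\widehat A(k)|\;\le\;\|A\|_{\nu,\rho}\sum_{|k|\ge\widetilde N}\me^{-\rho(2\pi|k|)^{\nu}}.
\end{equation*}
The right-hand side is controlled by an elementary tail estimate: for $\widetilde N$ large,
\begin{equation*}
\sum_{|k|\ge\widetilde N}\me^{-\rho(2\pi|k|)^{\nu}}\;\le\;C(\rho,\nu)\,\me^{-\tfrac{\rho}{2}(2\pi \widetilde N)^{\nu}},
\end{equation*}
so with $\widetilde N=N^{b\nu^{-1}}$ we have $\widetilde N^{\nu}=N^{b}$, yielding $\|A-\widetilde A\|_{C^0}\le\me^{-cN^{b}}$ for a suitable $c=c(\rho,\nu,\|A\|_{\nu,\rho})$ once $N$ is large enough. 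This is the defining calculation for the constants $c$, $N(\rho,\nu,\|A\|_{\nu,\rho})$.

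\textbf{Second inequality.} Write
\begin{equation*}
A_N(\alpha,\theta)-\widetilde A_N(\alpha,\theta)=\sum_{j=0}^{N-1}A(\theta+(N-1)\alpha)\cdots A(\theta+(j+1)\alpha)\bigl[A-\widetilde A\bigr](\theta+j\alpha)\widetilde A_{j}(\alpha,\theta),
\end{equation*}
a standard telescoping identity. Since $\|A\|_{C^0},\|\widetilde A\|_{C^0}\le C_0:=2\|A\|_{\nu,\rho}$, each term has norm $\le C_0^{N-1}\me^{-cN^{b}}$, so $\|A_N-\widetilde A_N\|_{C^0}\le NC_0^{N}\me^{-cN^{b}}$. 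Here is where the condition $\nu>1/2$ enters: I will choose $\delta\in(0,1)$ so that $b=\delta\nu(1-\nu)^{-1}>1$, whence $NC_0^{N}\le\me^{cN^{b}/4}$ for $N$ large, and therefore $\|A_N-\widetilde A_N\|_{C^0}\le\me^{-\tfrac{3c}{4}N^{b}}$. Using $\|A_N(\alpha,\theta)\|\ge 1$ (since $A_N\in SL(2,\R)$) together with this bound forces $\|\widetilde A_N(\alpha,\theta)\|\ge 1/2$, and then the elementary inequality $|\ln a-\ln b|\le|a-b|/\min(a,b)$ gives
\begin{equation*}
\bigl|\ln\|A_N\|-\ln\|\widetilde A_N\|\bigr|\;\le\;2\|A_N-\widetilde A_N\|_{C^0}\;\le\;\me^{-cN^{b}/2},
\end{equation*}
which implies the stated estimate after dividing by $N$.

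\textbf{Third inequality.} This is immediate: integrating the pointwise bound over $\theta\in\T$ and applying Jensen/linearity yields
\begin{equation*}
\bigl|L_N(\alpha,A)-\widetilde L_N(\alpha,\widetilde A)\bigr|\;\le\;\int_{\T}\bigl|N^{-1}\ln\|A_N(\alpha,\theta)\|-N^{-1}\ln\|\widetilde A_N(\alpha,\theta)\|\bigr|\,\dif\theta\;\le\;\me^{-cN^{b}/2}.
\end{equation*}

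The only delicate point is the requirement $b>1$; the main obstacle is essentially bookkeeping of constants so that the Gevrey tail $\me^{-cN^{b}}$ beats the deterministic growth $C_0^{N}$ of the cocycle norm. The constraint $\nu>1/2$ is sharp for this argument, because when $\nu\le 1/2$ one has $\nu/(1-\nu)\le 1$ and no choice of $\delta<1$ produces $b>1$; this matches the counter-examples of \cite{GWYZ} noted after Theorem~\ref{continuity}.
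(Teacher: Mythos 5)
Your proof follows essentially the same route as the paper's: Gevrey Fourier tail for \eqref{error}, a telescoping identity plus the fact that $b>1$ (hence $\nu>1/2$) to let the tail $\me^{-cN^b}$ dominate the geometric growth $C_0^N$, the elementary $|\ln a-\ln b|\le|a-b|/\min(a,b)$ using $\|A_N\|\ge1$ for $SL(2,\R)$ cocycles, and integration over $\theta$ for the Lyapunov exponents. Your version is actually a bit more careful than the paper's (the paper applies $|\ln a-\ln b|\le|a-b|$ without the factor of $2$ or any justification of a lower bound for $\|\widetilde A_N\|$, and leaves the tail sum implicit), so this is a faithful and complete reconstruction rather than a different argument.
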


\begin{proof}
%\begin{comment}
%Set $A(\theta)=(x_{ij}(\theta))_{i,j}=1,2$ and $\widetilde{A}(\theta)=(\widetilde{x}_{ij}(\theta))_{i,j=1,2},$ then we get
%%
%\begin{align*}
%x_{11}x_{22}-x_{12}x_{21}=1, \text{and}\ \|x_{ij}\|\leq \me^{C_{1}},\|x_{ij}-\widetilde{x}_{ij}\|\leq C\me^{-cN^b},i,j=1,2.
%\end{align*}
%%
%This if we denote
%%
%\begin{equation*}
%\begin{split}
%h(\theta)&=1-\det{\widetilde{A}(\theta)}\\
%&=\widetilde{x}_{11}(x_{22}-\widetilde{x}_{22})-(x_{11}-\widetilde{x}_{11})x_{22}
%+\widetilde{x}_{21}(x_{12}-\widetilde{x}_{12})+(x_{21}-\widetilde{x}_{21})x_{12},
%\end{split}
%\end{equation*}
%%
%we get (note $\|h(\theta)\|\leq2^{-1}$)
%%
%%
%\begin{equation*}
%\begin{split}
%\|1-1/\det{\widetilde{A}(\theta)}\|&=
%\|h(\theta)(1-h(\theta))^{-1}\|\leq2\|h\|\\
%&\leq16\|A(\theta)\|\|A(\theta)-\tilde{A}(\theta)\|\leq C\me^{-cN^b},
%\end{split}
%\end{equation*}
%%
%
%\end{comment}

The estimate \eqref{error} follows directly from \eqref{fou}. Moreover,
by telescoping argument, for $N\geq N(\rho,\nu,\|A\|_{\nu,\rho})$ which is large enough, we have
\begin{equation*}
\begin{split}
\big\|A_N(\alpha,\theta)-\widetilde{A}_N(\alpha,\theta)\big\|\leq  (\|A\|_{\nu,\rho}+1)^N \me^{-cN^b}\leq \me^{-\frac{c}{2}N^b}.
\end{split}
\end{equation*}
 It follows that
\begin{equation*}
\begin{split}
\Big|\frac{1}{N}\ln\|A_N(\alpha, \theta)\|-\frac{1}{N}
\ln\|\widetilde{A}_N(\alpha, \theta)\|\Big|\leq\frac{1}{N}\big\|A_N(\theta)-\widetilde{A}_N(\theta)\big\|<\me^{-\frac{c}{2}N^b}.
\end{split}
\end{equation*}
By averaging, one thus has $\big|L_N(\alpha,A)-\widetilde{L}_N(\alpha,\widetilde{A})\big|<\me^{-\frac{c}{2}N^b}.$
\end{proof}

Since  $\widetilde{A}(\theta)$  is  a trigonometric polynomial, then one can analytic  continue $\widetilde{A}(\theta)$  to become an analytic function. Indeed, let
$$\rho_N=\frac{\rho}{4\pi}\widetilde{N}^{\nu-1}=\frac{\rho}{4\pi}N^{-b(\nu^{-1}-1)}:=\frac{\rho}{4\pi}N^{-\delta},$$
and set $\vartheta=\theta+\mi\widetilde{\theta}$, then $\widetilde{A}(\vartheta)$  is analytic  in the strip $|\widetilde{\theta}|\leq \rho_N$:
\begin{equation}\label{20201010}
\begin{split}
\|\widetilde{A}\|_{\rho_N}^{*}=\sum\limits_{|k|<\widetilde{N}}|\widehat{A}(k)|\me^{|2\pi k\rho_N|}
\leq \|A\|_{\nu,\rho}\sum\limits_{|k|<\widetilde{N}}\me^{-\rho|2\pi k|^{\nu}}\me^{|2\pi k|\rho_N}:=\me^{C_{1}}<\infty.
\end{split}
\end{equation}

For $\vartheta=\theta+\mi\widetilde{\theta}$
with $|\widetilde{\theta}|\leq \rho_N$, set
\begin{equation}\label{definetildeu}
\begin{split}
\tilde{u}_N(\vartheta):=\frac{1}{N}\ln\|\widetilde{A}_N(\vartheta)\|.
\end{split}
\end{equation}
In the following lemma, we will prove that $|\tilde{u}_N(\vartheta)|$
is  indeed a bounded subharmonic function in the strip $[|\mathrm{Im}\vartheta|<\rho_N].$
\begin{lemma}\label{estimatetildeu}
We have the estimate
\begin{equation}\label{definetildeus}
\begin{split}
\sup_{\theta\in\T}\sup_{|\widetilde{\theta}|\leq\rho_{N}}|\tilde{u}_N(\vartheta)|\leq \max\{\ln2,C_{1}\},
\end{split}
\end{equation}
where $C_{1}$ is the one in \eqref{20201010}.
\end{lemma}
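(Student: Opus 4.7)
My plan is to split the estimate into a trivial upper bound and a more subtle lower bound, matching the two candidates $C_1$ and $\ln 2$ in the maximum. For the upper bound, submultiplicativity of the operator norm gives
\begin{equation*}
\|\widetilde{A}_N(\vartheta)\|\leq\prod_{j=0}^{N-1}\|\widetilde{A}(\vartheta+j\alpha)\|,
\end{equation*}
and each factor is bounded pointwise on the strip $|\mathrm{Im}\vartheta|\leq\rho_N$ by $\|\widetilde{A}\|_{\rho_N}^{*}\leq e^{C_1}$ from \eqref{20201010}. Dividing by $N$ immediately yields $\tilde u_N(\vartheta)\leq C_1$.

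For the lower bound, the starting observation is the elementary inequality $\|M\|^2\geq|\det M|$, valid for every $2\times 2$ complex matrix $M$ (apply the singular value decomposition). Applied to $M=\widetilde{A}_N(\vartheta)$ together with the multiplicativity of the determinant it yields
\begin{equation*}
\tilde u_N(\vartheta)\geq\frac{1}{2N}\sum_{j=0}^{N-1}\ln|\det\widetilde{A}(\vartheta+j\alpha)|,
\end{equation*}
so the task reduces to proving that $|\det\widetilde{A}(\vartheta)|\geq 1/2$ uniformly on the strip once $N$ is large enough. On the real circle, $A(\theta)\in SL(2,\R)$ gives $\det A(\theta)\equiv 1$, so expanding $\det\widetilde{A}(\theta)-1$ as a quadratic expression in the tail $\mathcal{R}_{\widetilde{N}}A(\theta)$ and invoking the Gevrey Fourier decay \eqref{fou} yields $\|\det\widetilde{A}-1\|_{C^0(\T)}\lesssim e^{-c\widetilde{N}^\nu}$ with $c=\rho(2\pi)^\nu/2$.

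To transport this real-axis estimate into the strip, I use that $\det\widetilde{A}(\vartheta)-1$ is a trigonometric polynomial in $\vartheta$ of degree at most $2\widetilde{N}$, so the standard Bernstein-type bound $|P(\vartheta)|\leq e^{2\pi d\rho_N}\sum_{|k|\leq d}|\widehat{P}(k)|$ applied with $d=2\widetilde{N}$ gives
\begin{equation*}
\sup_{|\mathrm{Im}\vartheta|\leq\rho_N}|\det\widetilde{A}(\vartheta)-1|\lesssim e^{4\pi\widetilde{N}\rho_N}\|\det\widetilde{A}-1\|_{C^0(\T)}\lesssim e^{\rho\widetilde{N}^\nu(1-(2\pi)^\nu/2)},
\end{equation*}
where I used $4\pi\widetilde{N}\rho_N=\rho\widetilde{N}^\nu$ by the choice $\rho_N=\rho\widetilde{N}^{\nu-1}/(4\pi)$. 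Since $\nu>1/2$ implies $(2\pi)^\nu>2$, the exponent is strictly negative, and the right-hand side tends to $0$ as $N\to\infty$. Consequently, for $N\geq N(\rho,\nu,\|A\|_{\nu,\rho})$ we obtain $|\det\widetilde{A}(\vartheta)|\geq 1/2$ uniformly on the strip, whence $|\det\widetilde{A}_N(\vartheta)|\geq 2^{-N}$ and $\tilde u_N(\vartheta)\geq -\frac{1}{2}\ln 2\geq -\ln 2$.

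The main obstacle is precisely the Bernstein transfer: one must verify that the truncation gain $e^{-c\widetilde{N}^\nu}$ strictly dominates the strip-widening cost $e^{\rho\widetilde{N}^\nu}$. This is exactly where the prescribed width $\rho_N=\rho\widetilde{N}^{\nu-1}/(4\pi)$ is used, and it is the only place in the proof where the Gevrey regularity is felt; the rest is bookkeeping. Everything else (submultiplicativity for the upper bound, and the determinant reduction for the lower bound) is a single line.
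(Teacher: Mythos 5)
Your proof is correct, but it takes a genuinely different route from the paper's and is worth comparing. The paper also reduces the claim to showing $|\det\widetilde A(\vartheta)|$ is bounded away from zero on the strip, but it does so by directly estimating $\|\widetilde A(\vartheta)-\widetilde A(\theta)\|$: it splits the Fourier sum at $|k|\sim N^{\delta/2}$, bounds the low-frequency part by $\rho N^{-\delta/2}\|\widetilde A\|^{*}_{\rho_N}$ (since $|2\pi k|\rho_N\ll1$ there) and the high-frequency part by $\|A\|_{\nu,\rho}\,\me^{-|2\pi N^{\delta/2}|^{\nu}\rho/2}$ using the frequency-by-frequency comparison $(\me^{|2\pi k|\rho_N}-1)\me^{-|2\pi k|^{\nu}\rho/2}\leq2$; this gives the polynomial bound $\|\widetilde A(\vartheta)-\widetilde A(\theta)\|\leq 2\rho\me^{C_1}N^{-\delta/2}$, from which $|\det\widetilde A(\vartheta)-\det\widetilde A(\theta)|$ is small and, combined with \eqref{n1}, yields $|\det\widetilde A(\vartheta)|\geq 1/4$. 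Your argument replaces this matrix estimate with a scalar one: you first get an exponentially small bound on $|\det\widetilde A-1|$ on the real circle (from $\det A\equiv1$ and Fourier decay), then transfer it into the strip by treating $\det\widetilde A-1$ as a trigonometric polynomial of degree $<2\widetilde N$ and paying the uniform strip-widening cost $\me^{4\pi\widetilde N\rho_N}=\me^{\rho\widetilde N^{\nu}}$. The competition $(2\pi)^{\nu}/2>1$ then closes the argument. What each buys: the paper's per-frequency comparison $\rho|2\pi k|^{\nu}/2-|2\pi k|\rho_N\ge 0$ is sharper — it only needs $(2\pi)^{\nu}>1$, so works for all $\nu\in(0,1)$ — whereas your uniform Bernstein bound pays the worst-case cost at $|k|\sim 2\widetilde N$ and therefore needs $(2\pi)^{\nu}>2$, i.e.\ $\nu>\ln 2/\ln(2\pi)\approx 0.38$; this is immaterial in the regime $\nu>1/2$ where the theorem lives, and your version is conceptually cleaner since it manipulates a scalar rather than the full matrix. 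One small point to be explicit about: the passage from $\sum_{|k|<2\widetilde N}|\widehat P(k)|$ to $\|P\|_{C^0(\T)}$ inside your $\lesssim$ costs a factor $\sim\widetilde N$; it is harmless here because it is swallowed by the exponential, but a reader following your inequalities literally would want that spelled out (or, more directly, one should bound $\sum|\widehat P(k)|$ straight from the Fourier decay of $A$ rather than detouring through the $C^0$ norm).
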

\begin{proof}
We will prove that the analytic continuation $\widetilde{A}(\vartheta)$ in the strip $[|\mathrm{Im}\vartheta|<\rho_N]$ is not singular, which implies that $|\tilde{u}_N(\vartheta)|$
is  a bounded subharmonic function.

 We first give a estimate about $\|\widetilde{A}(\vartheta)-\widetilde{A}(\theta)\|$ as follows:
\begin{eqnarray*}
\|\widetilde{A}(\vartheta)-\widetilde{A}(\theta)\|&\leq&\sum_{0<|k|<\widetilde{N}}|\widehat{\tilde{A}}(k)|
\sup_{\theta\in\T,|\widetilde{\theta}|\leq\rho_{N}}|\me^{2\pi\mi k\theta}(\me^{-2\pi k\widetilde{\theta}}-1)|\nonumber\\
&=&\sum_{0<|k|\leq N^{\delta/2}}|\widehat{\tilde{A}}(k)|\me^{|2\pi k|\rho_{N}}
(1-\me^{-|2\pi k\rho_{N}|})\nonumber\\
&+&\sum_{N^{\delta/2}<|k|<N^{b\nu^{-1}}}|\widehat{A}(k)|\me^{|2\pi k|^{\nu}\rho}
(\me^{|2\pi k|\rho_{N}}-1)\me^{-|2\pi k|^{\nu}\rho}.
\end{eqnarray*}

To estimate the first term, note  if  $0<|k|\leq N^{\delta/2}$,  then one has $|2\pi k|\rho_{N}\leq \rho N^{-\delta/2}/2\ll1,$
which implies
\begin{equation*}
\begin{split}
1-\me^{-|2\pi k\rho_{N}|}\leq 2|2\pi k\rho_{N}|\leq  \rho N^{-\delta/2}.
\end{split}
\end{equation*}

To estimate the second term, note for all $k$ with $N^{\delta/2}<|k|<N^{b\nu^{-1}},$ one has
\begin{equation*}
\begin{split}
|2\pi k|^{\nu}\rho/2-|2\pi k|\rho_{N}=2^{-1}\rho(|2\pi k|^{\nu}-|k|N^{-\delta})
\geq0,
\end{split}
\end{equation*}
which implies
\begin{equation*}
\begin{split}
(\me^{|2\pi k|\rho_{N}}-1)\me^{-|2\pi k|^{\nu}\rho/2}<2,\ \forall N^{\delta/2}<|k|<N^{b\nu^{-1}}.
\end{split}
\end{equation*}
Consequently, one has
\begin{eqnarray}\label{thatistildeas}
\|\widetilde{A}(\vartheta)-\widetilde{A}(\theta)\| &\leq& \rho N^{-\delta/2}\|\tilde{A}\|_{\rho_{N}}^{*} +2\|A\|_{\nu,\rho}\me^{-|2\pi N^{\delta/2}|^{\nu}\rho/2} \nonumber\\
&<&2\rho N^{-\delta/2}\me^{C_{1}}.
\end{eqnarray}
To see this, one only needs to check that  $f(x)=x^{\nu}-(2\pi)^{-1}xN^{-\delta}>0$ on the interval $[2\pi, 2\pi N^{b\nu^{-1}}].$

Now we give the estimate $|\det{\widetilde{A}(\theta)}|.$ First, the inequality in \eqref{thatistildeas} implies
\begin{equation}\label{dettildeaextimate}
\begin{aligned}
|\det{\widetilde{A}(\vartheta)}-\det{\widetilde{A}(\theta)}|\leq4\|\widetilde{A}(\vartheta)\|
\|\widetilde{A}(\vartheta)-\widetilde{A}(\theta)\|\leq 8\rho \me^{2C_{1}}N^{-\delta/2}\ll1.
\end{aligned}
\end{equation}
Moreover, note $A\in SL(2,\R)$, then by Lemma \ref{proposition1016}, we have
\begin{equation}\label{n1}
\begin{split}
|1-\det{\widetilde{A}(\theta)}|\leq 8\|A(\theta)\|\|A(\theta)-\tilde{A}(\theta)\|\leq C\me^{-cN^b},
\end{split}
\end{equation}
that is $|\det{\widetilde{A}(\theta)}|\geq 1/2,\ \forall\theta\in\T,$ which, together with \eqref{dettildeaextimate}, yields
\begin{equation}\label{dettildeaextimates}
\begin{aligned}
|\det{\widetilde{A}(\vartheta)}|\geq1/4, \forall (\theta,\widetilde{\theta})\in\T\times[-\rho_{N},\rho_{N}].
\end{aligned}
\end{equation}

Once we get the inequality in \eqref{dettildeaextimates}, we are ready to estimate $|\tilde{u}_{N}(\vartheta)|.$
Indeed,
\begin{equation*}
\begin{aligned}
\|\widetilde{A}_{N}(\vartheta)\|^{2}&\geq |\det{\widetilde{A}_N(\vartheta)}|=
|\Pi_{\ell=0}^{N-1}\det{\widetilde{A}(\vartheta+\ell\alpha)}|\\
&=\Pi_{\ell=0}^{N-1}|\det{\widetilde{A}(\vartheta+\ell\alpha)}|
\geq4^{-N},
\end{aligned}
\end{equation*}
which yields $2^{-N}\leq\|\widetilde{A}_{N}(\vartheta)\|\leq\me^{NC_{1}},$
or
\begin{equation*}
\begin{aligned}
\sup_{\theta\in\T}\sup_{|\widetilde{\theta}|\leq\rho_{N}}|\tilde{u}_{N}(\vartheta)|\leq\max\{\ln2,C_{1}\}.
\end{aligned}
\end{equation*}
\end{proof}

\subsubsection{Proof of Proposition~\ref{8}}\label{thirdsubsubsection}
In this section we will give the proof of Proposition~\ref{8} by applying Proposition~\ref{80}.
First by  Lemma \ref{proposition1016}, we have
\begin{align}\label{20201101}
|\frac{1}{N}\ln\|A_N(\theta)\|-L_N(\alpha,A)|&\leq|N^{-1}\ln\|A_N(\theta)\|-\tilde{u}_N(\theta)|\nonumber\\
&+|\tilde{u}_N(\theta)-[\tilde{u}_N(\theta)]_{\theta}|+|[\tilde{u}_N(\theta)]_{\theta}-L_N(\alpha,A)|\nonumber\\
&\leq|\tilde{u}_N(\theta)-[\tilde{u}_N(\theta)]_{\theta}|+2\me^{-\frac{c}{2}N^b}.
\end{align}
 Thus we only need to estimate  $|\tilde{u}_N(\theta)-[\tilde{u}_N(\theta)]_{\theta}|,$ which is controlled by the sum
\begin{equation}\label{20201031}
\begin{split}
|\tilde{u}_N(\theta)-F_{R,p}[\tilde{u}_N](\theta)|+|F_{R,p}[\tilde{u}_N](\theta)-[\tilde{u}_N(\theta)]_{\theta}|,
\end{split}
\end{equation}
where $F_{R,p}[u](\theta)=\frac{1}{R^p}\sum\limits_{j=0}^{p(R-1)}c_R^p(j)\tilde{u}_N(\theta+j\alpha).$

In the following, we will give the estimates of the two terms above.
First we would like to bound $|\tilde{u}_N(\theta)-\tilde{u}_N(\theta+\alpha)|$.
For the function $\tilde{u}_N(\theta)$ defined by \eqref{definetildeu},
the inequality in \eqref{definetildeus} and $C_{1}>\ln 2$ imply
$\tilde{u}_N(\vartheta)$ is a bounded subharmonic function on $[|\mathrm{Im}\vartheta|<\rho_N].$
It follows that
\begin{equation}\label{20201102}
\begin{split}
\|\tilde{u}_N(\vartheta)\|\leq C_1=2^{-1}C_{2},\ C_{2}=2C_{1}.
\end{split}
\end{equation}
That is this function $\tilde{u}_{N}(\theta)$ satisfies the hypotheses in Proposition~\ref{80}.
Consequently,
\begin{equation*}
\begin{aligned}
\Big|\tilde{u}_{N}(\theta)-\tilde{u}_N(\theta+\alpha)\Big|&=\frac{1}{N}
\Big|\ln\|\widetilde{A}_N(\theta)\|-\ln\|\widetilde{A}_N(\theta+\alpha)\|\Big|\\
&=\Big|\frac{1}{N}\ln\frac{\|\widetilde{A}_N(\theta)\|}{\|\widetilde{A}_N(\theta+\alpha)\|}\Big|\\
&=\Big|\frac{1}{N}\ln\frac{\|\widetilde{A}(\theta+(N-1)\alpha)\cdots
\widetilde{A}(\theta+\alpha)\widetilde{A}(\theta)\|}{\|\widetilde{A}(\theta+N\alpha)\widetilde{A}(\theta+(N-1)\alpha)
\cdots\widetilde{A}(\theta+\alpha)\|}\Big|\\
&\leq\frac{1}{N}\ln\big(\|\widetilde{A}(\theta+N\alpha)^{-1}
\|\cdot\|\widetilde{A}(\theta)\|\big).
\end{aligned}
\end{equation*}
Thus we only need to estimate  $\|\widetilde{A}(\theta+N\alpha)^{-1}\|$. Indeed, \eqref{n1} implies
\begin{equation*}
\begin{aligned}
\|\widetilde{A}(\theta)^{-1}\|&\leq \|1/\det{\widetilde{A}(\theta)}\|\|\widetilde{A}(\theta)\|
\\
&\leq \{1+2\|I-\det{\widetilde{A}(\theta)}\|\}\|\widetilde{A}(\theta)\|
\leq2\me^{C_{1}}\leq\me^{2C_{1}}.
\end{aligned}
\end{equation*}
Once we have this we get
\begin{equation}\label{difference}
\Big|\tilde{u}_N(\theta)-\tilde{u}_N(\theta+\alpha)\Big|\leq \frac{3C_2}{2N}.
\end{equation}

For the fixed $\nu$ with $1/2<\nu<1,$ there exists $\delta\in(0,1)$ such that $\nu^{-1}<1+\delta.$
Once we fix $\nu$ and $\delta$ by this way, we set $b=\delta(\nu^{-1}-1)^{-1}>1.$ Then we
 choose $\sigma$ and $\varsigma$ in Proposition~\ref{80} as $1<\sigma<\min\{2,\delta^{-1}\}$, $\varsigma=\delta.$ That is the parameters $\sigma$ and $p>1$ in Proposition~\ref{80} satisfy
\begin{equation*}
\begin{aligned}
\delta=b(1/\nu-1)<\frac{1}{\sigma}<1,\ \  \frac{\delta\sigma}{\sigma-1}<p<\frac{1}{\sigma-1}.
\end{aligned}
\end{equation*}
Take $\gamma=1+p(1-\sigma),\sigma_1=\frac{p}{\delta}(\sigma-1)$.
It is obvious that
\begin{equation*}
\begin{aligned}
1>\gamma=1+p(1-\sigma)>0,\ \
\sigma-\sigma_1=\frac{\delta\sigma-p(\sigma-1)}{\delta}<0.
\end{aligned}
\end{equation*}
Notice that $1<\sigma<\delta^{-1}$ and $\nu^{-1}-1<\delta<1,$ thus $\delta,\sigma\rightarrow1$ as $\nu\rightarrow 1/2,$
which imply $\sigma_{1}\rightarrow\sigma, p\rightarrow\infty$ and $\gamma\rightarrow0$ as $\nu\rightarrow 1/2.$

For $q,R$ with $R=q^{\sigma}$ (as Proposition~\ref{80}),
set $\frac{9pC_2}{\kappa}q^{\sigma}<N<\big(\frac{\kappa\rho}{2^{p+6}\pi C_2}\big)^{\frac{1}{\delta}}q^{\sigma_1}$
and $K$ as the one in Proposition~\ref{80}.
Now we give the estimate of the first term in \eqref{20201031}. More concretely,
\begin{align}\label{star3}
\nonumber \Big|F_{R,p}[\tilde{u}_N](\theta)
-\tilde{u}_N(\theta)\Big|
&\leq\frac{1}{R^p}\sum\limits_{j=0}^{p(R-1)}|\tilde{u}_N(\theta+j\alpha)-\tilde{u}_N(\theta)|c_R^p(j)\\ \nonumber
&\leq\frac{1}{R^p}\sum\limits_{j=0}^{p(R-1)}c_R^p(j)\frac{j3C_2}{2N}<\frac{3p(R-1)C_2}{2N}\nonumber\\
&\leq 3p(R-1)C_2 \frac{\kappa}{18pC_2}q^{-\sigma}\leq \frac{\kappa}{6},
\end{align}
where the third inequality is by \eqref{difference}.

We will apply Proposition~\ref{80} to get the estimate of the second term in \eqref{20201031}. More concretely,
let $\varsigma_{i},i=1,2,3$ be the ones in Proposition~\ref{80} with $2^{-1}C_{2}$ and $\rho_{N}$ in place
of $S$ and $\rho,$ respectively and note $N<\big(\frac{\kappa\rho}{2^{p+6}\pi C_2}\big)^{\frac{1}{\delta}}q^{\sigma_1}$
we get
\begin{equation*}
\begin{aligned}
\varsigma_{2}R^{-\varsigma_{1}}=2^{p+5}2^{-1}C_{2}4\pi\rho^{-1}N^{\delta}q^{-p(\sigma-1)}<\kappa.
\end{aligned}
\end{equation*}
Thus by Proposition~\ref{80} we know that there is a set such that for all $\theta$
outside this set we have
\begin{equation}\label{addnewparameterss}
\begin{aligned}
\Big|F_{R,p}[\tilde{u}_N](\theta)
-[\tilde{u}_N(\theta)]_{\theta}\Big|\leq\varsigma_{2}R^{-\varsigma_{1}}<\kappa.
\end{aligned}
\end{equation}
Moreover, the measure of this set is less than
\begin{equation}\label{addnewparameters}
\begin{aligned}
2^{-8}R^{2\varsigma_{1}}\exp\{-R^{\varsigma_{3}}\}=2^{-8}q^{2p(\sigma-1)}\exp\{-q^{\gamma}\}<\exp\{-2^{-1}q^{\gamma}\}.
\end{aligned}
\end{equation}

Set $C_{1}(\kappa)=\frac{9pC_2}{\kappa},C_{2}(\kappa)=\big(\frac{\kappa\rho}{2^{p+6}\pi C_2}\big)^{\frac{1}{\delta}}$,
$c=\frac{1}{2}$ and $q\geq q_{0},$ with $q_{0}$ depending on $\kappa,\rho,\nu$ (by Lemma~\ref{proposition1016})
and sufficiently large, then by \eqref{20201101}-\eqref{addnewparameters} we finish the proof of Proposition~\ref{8}.

\subsection{Application of avalanche principle.}

\begin{proposition}[\cite{Goldsteins01,Bourgain05}]\label{avalanche}
Let $A_1$, $A_2$, $\cdots$, $A_n$ be a sequence in $SL(2,\R)$ satisfying the conditions
\begin{equation*}
\begin{aligned}
\min\limits_{1\leq j\leq n}&\|A_j\|\geq \mu>n,\\
\max\limits_{1\leq j\leq n}&\left|\ln\|A_j\|+\ln\|A_{j+1}\|-\ln\|A_{j+1}A_j\|\right|<\frac{1}{2}\ln \mu.
\end{aligned}
\end{equation*}
 Then
there exists a constant $C_{A}<\infty$ such that
\begin{equation*}
\begin{aligned}
\Big|\ln\|\prod\limits_{j=1}^nA_j\|+\sum\limits_{j=2}^{n-1}\ln\|A_j\|-\sum\limits_{j=1}^{n-1}\ln\|A_{j+1}A_j\|\Big|<C_A\frac{n}{\mu}.
\end{aligned}
\end{equation*}
\end{proposition}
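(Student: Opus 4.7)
The strategy is to proceed by induction on $n$, tracking at each stage the top singular directions (on the input and output sides) of the partial product $P_k := A_k A_{k-1}\cdots A_1$ and showing that they remain almost aligned with those of $A_k$ and $A_1$ respectively. For each $A\in SL(2,\R)$ with $\|A\|=\lambda$, I would use the singular value decomposition to write $A = \lambda\, f^+\otimes e^+ + \lambda^{-1}\, f^-\otimes e^-$ with orthonormal frames $\{e^\pm\},\{f^\pm\}$. For two such matrices the clean identity
\begin{equation*}
\|BA\|^2 = \|A\|^2\|B\|^2\,|\langle f^+_A,e^+_B\rangle|^2 + \|A\|^{-2}\|B\|^2\,|\langle f^-_A,e^+_B\rangle|^2 + O\!\left(\tfrac{1}{\|A\|^2\|B\|^2}\right)
\end{equation*}
identifies the ``alignment cosine'' $c_j := |\langle f^+_{A_j},e^+_{A_{j+1}}\rangle|$, and the hypothesis $\ln\|A_{j+1}A_j\| \geq \ln\|A_j\|+\ln\|A_{j+1}\|-\tfrac12\ln\mu$ combined with $\|A_j\|,\|A_{j+1}\|\geq \mu$ forces $c_j \geq \mu^{-1/2}$.

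First I would prove the two-step reduction $\ln\|A_{j+1}A_j\| = \ln\|A_j\|+\ln\|A_{j+1}\|+\ln c_j + O(\mu^{-2})$, which exhibits the cosine $c_j$ as the quantity implicitly encoded in the hypothesis. With this in hand, the heuristic content of the statement becomes transparent: $\|P_n\|$ should factor as $\prod_j\|A_j\|\cdot\prod_j c_j$ up to negligible corrections; taking logarithms yields $\sum_j\ln\|A_j\|+\sum_j(\ln\|A_{j+1}A_j\|-\ln\|A_j\|-\ln\|A_{j+1}\|)$, which rearranges — after cancelling the endpoint $\|A_j\|$ factors — to exactly the target expression.

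Next I would set up the induction. Assume that $P_k$ satisfies $\ln\|P_k\| = \sum_{j=1}^{k}\ln\|A_j\| + \sum_{j=1}^{k-1}\ln c_j + O(k/\mu)$, and that its top output direction $f^+_{P_k}$ lies within angle $O(\mu^{-3/2})$ of $f^+_{A_k}$. This angular closeness is plausible because the gap between the two singular values of $A_k$ is $\|A_k\|^2\geq\mu^2$, so $A_k$ contracts very strongly in the projective direction on the output side, damping out any perturbation inherited from the tail $A_{k-1}\cdots A_1$. For the induction step, write $P_{k+1}=A_{k+1}P_k$ and apply the two-matrix estimate to the pair $(A_{k+1},P_k)$: the effective alignment cosine $|\langle f^+_{P_k},e^+_{A_{k+1}}\rangle|$ differs from $c_k$ by $O(\mu^{-3/2})$, hence
\begin{equation*}
\ln\|P_{k+1}\| = \ln\|A_{k+1}\|+\ln\|P_k\|+\ln c_k + O(\mu^{-1}),
\end{equation*}
which advances the induction by one unit and costs only an additional $O(1/\mu)$; summing gives the desired $O(n/\mu)$ bound.

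The main obstacle is precisely the angular stability estimate: one must verify that the output direction $f^+_{P_k}$ remains within $O(\mu^{-3/2})$ of $f^+_{A_k}$ \emph{uniformly} in $k$, since otherwise the per-step error would compound multiplicatively rather than additively. This is the place where the hypothesis $\mu>n$ is essential — it guarantees that the cumulative $O(n/\mu)$ error remains well below $\tfrac12\ln\mu$, so the two-matrix estimate and all alignment inequalities stay in their regime of validity throughout. The angular contraction itself follows from viewing the action of each $A_j$ on the projective line: in coordinates adapted to $(e^\pm_{A_j},f^\pm_{A_j})$ the dynamics on $\mathbb{RP}^1$ contracts a neighbourhood of $f^+_{A_j}$ by a factor $\|A_j\|^{-2}\leq \mu^{-2}$, which absorbs at every step the angular error inherited from earlier factors and closes the induction.
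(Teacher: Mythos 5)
The paper does not prove Proposition~\ref{avalanche}; it is imported verbatim from \cite{Goldsteins01,Bourgain05}, so there is no in-paper proof to compare against. Your sketch follows the standard Goldstein--Schlag argument from those references --- singular value decomposition, the alignment cosine $c_j = |\langle f^+_{A_j}, e^+_{A_{j+1}}\rangle|$ driving a two-step reduction, and an inductive angular-stability estimate for the top output direction of the partial product --- and the outline is sound.

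Two small points of precision. Your two-matrix expansion drops the symmetric cross term $\|A\|^2\|B\|^{-2}|\langle f^+_A,e^-_B\rangle|^2$, which for unequal norms can dominate the cross term you did keep; under the hypotheses both cross terms are smaller than the leading term $\|A\|^2\|B\|^2 c_j^2$ by at least a factor $\mu^3$ (since $c_j^2\ge\mu^{-1}$ and $\|A_j\|,\|A_{j+1}\|\ge\mu$), so the reduction $\ln\|A_{j+1}A_j\|=\ln\|A_j\|+\ln\|A_{j+1}\|+\ln c_j+O(\mu^{-3})$ survives. You are also right that the angular stability claim is the load-bearing step; the clean way to close it is a self-consistent bound: if $f^+_{P_{k-1}}$ is within angle $\epsilon_{k-1}\le 2\mu^{-3/2}$ of $f^+_{A_{k-1}}$, then $|\langle f^+_{P_{k-1}},e^+_{A_k}\rangle|\ge c_{k-1}-\epsilon_{k-1}\ge\mu^{-1/2}/2$, so the projective action of $A_k$ (contraction by $\|A_k\|^{-2}\le\mu^{-2}$ applied to $\tan\phi\le 2\mu^{1/2}$) returns $\epsilon_k\le 2\mu^{-3/2}$, uniformly in $k$. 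The resulting per-step error in $\ln c_k$ is then $O(\mu^{-3/2}/c_k)=O(\mu^{-1})$, and summing over $k$ gives the claimed $C_A\,n/\mu$.
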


Following the ideas in \cite{Bourgainj02}, in case of positive Lyapunov exponent, the large deviation theorem provides us a possibility to apply avalanche principal (Proposition \ref{avalanche}) to $A(\theta+jN\alpha)$ for $\theta$ in a set of large measure and therefore pass on to a lager scale.
\begin{lemma}\label{10}
Assume that $|\alpha-\frac{a}{q}|<\frac{1}{q^2}$, $(a,q)=1$.  Let $C_1(\kappa)q^\sigma<N<C_2(\kappa)q^{\sigma_1}$ and $q\geq q_0(\kappa)$ be the same as Proposition \ref{8}. Assume that $L_{N}(\alpha,A)>100\kappa>0$ and $L_{2N}(\alpha,A)>\frac{19}{20}L_N(\alpha,A)$. Then for $N'$ such that $m=N'N^{-1}$ satisfies $\me^{\frac{c}{2}q^{\gamma/4}}<m<\me^{\frac{c}{2}q^{\gamma}}$, we have
\begin{equation*}
\begin{aligned}
\Big|L_{N'}(\alpha,A)+L_N(\alpha,A)-2L_{2N}(\alpha,A)\Big|<C\me^{-\frac{c}{2}q^{\gamma/4}},
\end{aligned}
\end{equation*}
where $c$ is the one from the large deviation bound of Proposition \ref{8}.
\end{lemma}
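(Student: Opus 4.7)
The plan is to follow the Bourgain--Jitomirskaya template: combine the large deviation estimate (Proposition~\ref{8}) with the avalanche principle (Proposition~\ref{avalanche}) applied to the blocks $A_N(\alpha,\theta+jN\alpha)$, $0\le j\le m-1$, which multiply to give $A_{N'}(\alpha,\theta)$.

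First I would fix the scales. Set $\mu=\me^{99\kappa N}$. The upper bound $m<\me^{c q^{\gamma}/2}$ together with $N>C_1(\kappa)q^\sigma$ and $\sigma>1>\gamma$ ensures $\mu\gg m$, which is the size condition required by the avalanche principle. Next, by Proposition~\ref{8} applied at scale $N$ and $2N$ (the hypothesis $C_1(\kappa)q^\sigma<N<C_2(\kappa)q^{\sigma_1}$ is preserved for $2N$ after adjusting constants), together with the assumptions $L_N>100\kappa$ and $L_{2N}>\tfrac{19}{20}L_N$, there exist sets $\mathcal{B}_N,\mathcal{B}_{2N}\subset\T$ with $|\mathcal{B}_N|+|\mathcal{B}_{2N}|<2\me^{-cq^\gamma}$ such that for $\theta\notin\mathcal{B}_N\cup\mathcal{B}_{2N}$,
\begin{equation*}
\tfrac{1}{N}\ln\|A_N(\alpha,\theta)\|>L_N-\kappa>\ln\mu/N,
\end{equation*}
\begin{equation*}
\bigl|\tfrac{1}{2N}\ln\|A_{2N}(\alpha,\theta)\|-L_{2N}\bigr|<\kappa.
\end{equation*}
Define $\mathcal{B}=\bigcup_{j=0}^{m-1}(\mathcal{B}_N\cup\mathcal{B}_{2N}-jN\alpha)$; then $|\mathcal{B}|<2m\me^{-cq^\gamma}<2\me^{-c q^\gamma/2}$, and for $\theta\notin\mathcal{B}$ the two hypotheses of Proposition~\ref{avalanche} are satisfied for the block sequence, using $L_{2N}>\tfrac{19}{20}L_N$ and $L_N>100\kappa$ to convert the second hypothesis into the product/sum comparison with slack $\tfrac{1}{2}\ln\mu$.

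Then I would apply the avalanche principle to obtain, for every $\theta\notin\mathcal{B}$,
\begin{equation*}
\Bigl|\ln\|A_{N'}(\alpha,\theta)\|+\sum_{j=1}^{m-2}\ln\|A_N(\alpha,\theta+jN\alpha)\|-\sum_{j=0}^{m-2}\ln\|A_{2N}(\alpha,\theta+jN\alpha)\|\Bigr|<C_A\,m/\mu.
\end{equation*}
Divide by $N'=mN$, integrate over $\theta\in\T$, and split the domain into $\T\setminus\mathcal{B}$ and $\mathcal{B}$. On $\T\setminus\mathcal{B}$ the averages of the inner sums are $\tfrac{m-2}{m}L_N$ and $\tfrac{2(m-1)}{m}L_{2N}$ up to an error from the excluded set, which is controlled because $|\mathcal{B}|\cdot\sup_\theta\tfrac{1}{N}\ln\|A_N\|\le 2\me^{-cq^\gamma/2}\cdot O(1)$ (the uniform upper bound on the logs comes from the deterministic bound $\|A\|\le\|A\|_{\nu,\rho}$). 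The prefactor discrepancies $\tfrac{m-2}{m}$ vs.\ $1$ and $\tfrac{2(m-1)}{m}$ vs.\ $2$ contribute $O(1/m)<\me^{-cq^{\gamma/4}/2}$ since $m>\me^{cq^{\gamma/4}/2}$, while the avalanche error contributes $C_A/(N\mu)\ll\me^{-cq^{\gamma/4}/2}$. Collecting everything gives
\begin{equation*}
\bigl|L_{N'}(\alpha,A)+L_N(\alpha,A)-2L_{2N}(\alpha,A)\bigr|<C\me^{-cq^{\gamma/4}/2}.
\end{equation*}

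The main technical point I expect to wrestle with is bookkeeping the hierarchy of small parameters: the bad set has measure $\me^{-cq^\gamma}$ per translate, so the union over $m$ translates forces $m<\me^{c q^\gamma/2}$ (upper bound on $m$), while the avalanche error $m/\mu$ forces $\ln m<99\kappa N$, comfortably guaranteed by $N>C_1(\kappa)q^\sigma$ and the upper bound on $m$. The lower bound $m>\me^{c q^{\gamma/4}/2}$ is exactly what is needed to ensure $1/m$ beats the final claimed error, so the error exponent $\gamma/4$ is dictated by balancing these. All other steps are routine once these scales are lined up.
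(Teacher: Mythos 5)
Your proposal is correct and follows essentially the same strategy as the paper: apply the large deviation estimate at scales $N$ and $2N$ to define a good set $\Omega$, run the avalanche principle on the blocks $A_N(\theta+jN\alpha)$ for $\theta\in\Omega$, integrate, and absorb the $O(1/m)$ prefactor discrepancy plus the bad-set contribution using $m>\me^{cq^{\gamma/4}/2}$. The only cosmetic differences are your choice $\mu=\me^{99\kappa N}$ versus the paper's $\mu=\me^{\frac{99}{100}NL_N}$ (both work since $L_N>100\kappa$) and the paper's slightly sharper handling of the prefactor error via $2m^{-1}|L_N-L_{2N}|\le L_N/(10m)$ using the $\frac{19}{20}$ hypothesis, but these do not change the argument.
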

\begin{proof}
We use the avalanche principal  (Proposition \ref{avalanche}) on $A_j^N(\theta)=A_N(\theta+jN\alpha)$ with $\theta$ being restricted to the set $\Omega\subset\T$, defined by $2m$ conditions:
\begin{equation*}
\begin{aligned}
\Big|\frac{1}{N}\ln\|A_j^N(\theta)\|-L_N(\alpha,A)\Big|\leq\kappa, \ \ 1\leq j\leq m,\\
\Big|\frac{1}{2N}\ln\|A_j^{2N}(\theta)\|-L_{2N}(\alpha,A)\Big|\leq\kappa,\ \  1\leq j\leq m.
\end{aligned}
\end{equation*}
By Proposition \ref{8}, we have for any $j$
\begin{equation*}
\begin{aligned}
mes\Big\{\theta:\big|\frac{1}{N}\ln\|A_j^N(\theta)\|-L_N(\alpha,A)\big|>\kappa\Big\}<\me^{-c q^\gamma},\\
mes\Big\{\theta:\big|\frac{1}{2N}\ln\|A_j^{2N}(\theta)\|-L_{2N}(\alpha,A)\big|>\kappa\Big\}<\me^{-c q^\gamma}.
\end{aligned}
\end{equation*}
Thus we have
\begin{equation}\label{meas}
\begin{aligned}
mes(\T\setminus\Omega)<2m\me^{-cq^{\gamma}}.
\end{aligned}
\end{equation}
For each $A_j^N(\theta)$ with $\theta\in \Omega$,
\begin{equation*}
\begin{aligned}
\me^{N(L_N(\alpha,A)-\kappa)}<\|A_j^N(\theta)\|<\me^{N(L_N(\alpha,A)+\kappa)}.
\end{aligned}
\end{equation*}
Note that since $L_N(\alpha,A)>100\kappa$, then
\begin{equation*}
\begin{aligned}
\|A_j^N(\theta)\|>\me^{\frac{99}{100}NL_N(\alpha,A)}:=\mu.
\end{aligned}
\end{equation*}
For large enough $q$, and hence $N$ by hypothesis, we have $\mu>2m$ (since $\sigma>1>\gamma$). Also for $j<m$, by the fact that
 $A_{j+1}^N(\theta)A_j^N(\theta)=A_j^{2N}(\theta)$, we have
\begin{equation*}
\begin{aligned}
\big|\ln\|A_j^N(\theta)\|&+\ln\|A_{j+1}^N(\theta)\|-\ln\|A_{j+1}^N(\theta)A_j^N(\theta)\|\big|\\
&<4N\kappa+2N|L_N(\alpha,A)-L_{2N}(\alpha,A)|\\
&<\frac{1}{25}NL_N(\alpha,A)+2N(\frac{1}{20}L_N(\alpha,A))<\frac{1}{2} \ln \mu,
\end{aligned}
\end{equation*}
where the second inequality follows by $L_{2N}(\alpha,A)>\frac{19}{20}L_N(\alpha,A)$.
Thus, we can apply the avalanche principal (Proposition \ref{avalanche}) for $\theta\in\Omega$ to obtain
\begin{equation*}
\begin{aligned}
\Big|\ln\|\prod\limits_{j=1}^mA_j^N(\theta)\|&+\sum\limits_{j=2}^{m-1}\ln\|A_j^N(\theta)\|
-\sum\limits_{j=1}^{m-1}\ln\|A_{j+1}^N(\theta)A_j^N(\theta)\|\Big|\\
&<C_A m/\mu<m\me^{-\frac{1}{2}NL_N(\alpha,A)}.
\end{aligned}
\end{equation*}
Integrating on $\Omega$, we get
\begin{equation*}
\begin{aligned}
\Big|\int_\Omega \ln\|A_{N'}(\theta)\|d\theta&+\sum\limits_{j=2}^{m-1}\int_{\Omega}\ln\|A_N(\theta+jN\alpha)\|d\theta\\
&-\sum\limits_{j=1}^{m-1}
\int_\Omega\ln\|A_{2N}(\theta+jN\alpha)\|d\theta\Big|<m\me^{-\frac{1}{2}NL_N(\alpha,A)},
\end{aligned}
\end{equation*}
therefore, recalling \eqref{meas} and the assumption   $N>C_1(\kappa)q^\sigma$, we have
\begin{equation*}
\begin{aligned}
\Big|L_{N'}(\alpha,A)&+\frac{m-2}{m}L_N(\alpha,A)-\frac{2(m-1)}{m}L_{2N}(\alpha,A)\Big|\\
&< mN'^{-1}\me^{-\frac{1}{2}NL_N(\alpha,A)}+C\me^{-\frac{c}{2}q^\gamma}
<C\me^{-\frac{c}{2}q^\gamma}.
\end{aligned}
\end{equation*}
It follows that
\begin{equation*}
\begin{aligned}
|L_{N'}(\alpha,A)&+L_N(\alpha,A)-2L_{2N}(\alpha,A)|\\
&<C\me^{-\frac{c}{2}q^\gamma}+2m^{-1}|L_N(\alpha,A)-L_{2N}(\alpha,A)|\\
&<C\me^{-\frac{c}{2}q^\gamma}
+L_N(\alpha,A)(10m)^{-1}<C\me^{-\frac{c}{2}q^{\gamma/4}},
\end{aligned}
\end{equation*}
where the second inequality is by $L_{2N}(\alpha,A)>\frac{19}{20}L_N(\alpha,A)$ and the last inequality is by $m>\me^{\frac{c}{2}q^{\gamma/4}}$.
\end{proof}
Actually, the condition ``$L_{2N}(\alpha,A)>\frac{19}{20}L_N(\alpha,A)$" is not necessary if $q$ is sufficiently large
and $L(\alpha,A)>0$.

\begin{lemma}\label{useful1}
Assume that $L(\alpha,A)>100\kappa>0,$ then there exists $N_0\in \N$ with
$C_1(\kappa)q_0^{\sigma}<N_0<C_2(\kappa)q_0^{\sigma_{1}}$, $q_0$ is the one defined in Proposition \ref{8} such that
\begin{equation}\label{20201011}
\begin{aligned}
L_{2N_0}(\alpha,A)>\frac{99}{100}L_{N_0}(\alpha,A).
\end{aligned}
\end{equation}
\end{lemma}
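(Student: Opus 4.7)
The plan is to exploit the fact that $L_N(\alpha,A)\to L(\alpha,A)$ as $N\to\infty$, so that the ratio $L_{2N}/L_N$ tends to $1$ and the inequality $L_{2N_0}>\tfrac{99}{100}L_{N_0}$ becomes automatic once $N_0$ is sufficiently large; the only remaining point is to certify that such a large $N_0$ can indeed be placed inside the admissible window $(C_1(\kappa) q_0^{\sigma},\,C_2(\kappa) q_0^{\sigma_1})$ of Proposition~\ref{8}.

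First I would recall, via Kingman's subadditive ergodic theorem applied to the subadditive sequence $N\mapsto N L_N(\alpha,A)$ (subadditivity comes from $\|A_{n+m}(\theta)\|\leq\|A_n(\theta+m\alpha)\|\,\|A_m(\theta)\|$), that
\begin{equation*}
L(\alpha,A)\;=\;\lim_{N\to\infty}L_N(\alpha,A)\;=\;\inf_{N\geq 1}L_N(\alpha,A),
\end{equation*}
so $L_N(\alpha,A)\geq L(\alpha,A)=:L$ for every $N$, and $L_N(\alpha,A)\to L$. Since $L>100\kappa>0$ by hypothesis, I would fix $\delta:=L/200$ and choose $N_{\star}=N_{\star}(\alpha,A,\kappa)$ so that $L\leq L_N(\alpha,A)<L+\delta$ for all $N\geq N_{\star}$. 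For any such $N$,
\begin{equation*}
\frac{L_{2N}(\alpha,A)}{L_N(\alpha,A)}\;\geq\;\frac{L}{L+\delta}\;=\;\frac{200}{201}\;>\;\frac{99}{100},
\end{equation*}
which is precisely the desired inequality \eqref{20201011} with $N_0:=N$.

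The last step will be to check that some integer $N_0\geq N_{\star}$ actually lies in $(C_1(\kappa) q_0^{\sigma},\,C_2(\kappa) q_0^{\sigma_1})$. Proposition~\ref{8} holds for every $q\geq q_0(\kappa,\rho,\nu)$, so one is free to replace $q_0$ by any larger integer without affecting its conclusion. Because $\sigma_1>\sigma>1$, both the left endpoint $C_1(\kappa) q_0^{\sigma}$ and the length $C_2(\kappa) q_0^{\sigma_1}-C_1(\kappa) q_0^{\sigma}=q_0^{\sigma}\bigl(C_2(\kappa) q_0^{\sigma_1-\sigma}-C_1(\kappa)\bigr)$ tend to $+\infty$ as $q_0\to\infty$; hence after enlarging $q_0$ (still within the range allowed by Proposition~\ref{8}) the window overtakes $N_{\star}$ and captures all sufficiently large integers, and any such integer can be taken as $N_0$. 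I do not anticipate a genuine obstacle: the only care needed is the bookkeeping to confirm that the enlarged $q_0$ remains an admissible threshold for the large-deviation bound used downstream in Lemma~\ref{10}.
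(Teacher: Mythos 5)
Your proof is correct, but it takes a genuinely different route from the paper's. The paper does \emph{not} invoke the convergence $L_N(\alpha,A)\to L(\alpha,A)$ at all; instead it runs a finite pigeonhole argument on the dyadic sequence $L_{2^jN}(\alpha,A)$ with $N=[C_1(\kappa)q_0^\sigma]+1$: using only $100\kappa<L(\alpha,A)\le L_{2n}\le L_n\le C_1$ (a consequence of subadditivity, with $C_1$ the a priori bound from \eqref{20201010}), it observes that the ratio $L_{2^{j+1}N}/L_{2^jN}$ cannot be $\le 99/100$ for all $0\le j\le j_0$ once $j_0=[(\ln(100/99))^{-1}\ln(C_1/100\kappa)]$, since otherwise $L_{2^{j_0+1}N}$ would drop below $100\kappa$. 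It then picks $j_*\le j_0$ where the ratio exceeds $99/100$, sets $N_0=2^{j_*}N$, and enlarges $q_0$ only by enough that $2^{j_0}N< C_2(\kappa)q_0^{\sigma_1}$ — a condition whose size depends solely on $\kappa$ and $\|A\|_{\nu,\rho}$, since $j_0$ is an explicit number depending only on those. Your argument reaches the conclusion more directly via $L_N\ge L$ and $L_N\to L$, which even gives the inequality \eqref{20201011} for \emph{every} $N\ge N_\star$ rather than for a single dyadic index; the trade-off is that your $N_\star$, and hence the amount by which $q_0$ must be enlarged, depends on the rate of convergence of $L_N(\alpha,A)$ to $L(\alpha,A)$, which is a datum of the particular cocycle $(\alpha,A)$ not controlled by $\kappa$ and $\|A\|_{\nu,\rho}$ alone. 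For the way the lemma is used in this paper (a single fixed $(\alpha,A)$ with $\tilde q_0\to\infty$), this loss of uniformity is harmless, so your approach does close the lemma; but the paper's pigeonhole version is the one you would want if a quantitative or uniform-in-$A$ statement were later required.
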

\begin{proof}
Note that for any $n$, by subadditivity, we have
\begin{equation*}
\begin{aligned}
100\kappa<L(\alpha,A)=\inf L_n(\alpha,A)\leq L_{2n}(\alpha,A)\leq L_n(\alpha,A)\leq C_{1},
\end{aligned}
\end{equation*}
where $C_{1}$ is the one in \eqref{20201010}.

Set $j_0=\big[(\ln(100/99))^{-1}\ln(C_{1}/100\kappa)\big],$ that is
\begin{equation}\label{20200926}
\begin{aligned}
(99/100)^{j_{0}+1}C_{1}<100\kappa\leq(99/100)^{j_{0}}C_{1}.
\end{aligned}
\end{equation}

Consider the sequence $\{L_{2^jN}(\alpha,A)\}$ where $N= [C_{1}(\kappa)q_{0}^{\sigma}]+1,$ $j\in \N$.   If
\begin{equation*}
\begin{aligned}
L_{2^{j+1}N}(\alpha,A)\leq (99/ 100)L_{2^jN}(\alpha,A)
\end{aligned}
\end{equation*}
hold for all $0\leq j\leq j_0,$ then
\begin{equation*}
\begin{aligned}
100\kappa <L_{2^{j_{0}+1}N}(\alpha,A)&\leq(99/ 100)^{j_{0}+1}L_{N}(\alpha,A)\leq(99/100)^{j_{0}+1}C_{1}<100\kappa,
\end{aligned}
\end{equation*}
where the last inequality is by first inequality in \eqref{20200926}. Thus
there exists $j_{*}'s$ with $0\leq j_{*}\leq j_0$ such that
\begin{equation*}
\begin{aligned}
L_{2^{j_{*}+1}N}(\alpha,A)>(99/100)L_{2^{j_{*}}N}(\alpha,A).
\end{aligned}
\end{equation*}

Moreover, since $j_{0}$ is fixed, we can set $q_{0}$ large enough such that
\begin{equation*}
\begin{aligned}
2^{j_{0}}<2^{-1}C_{1}(\kappa)^{-1}C_{2}(\kappa)q_{0}^{\sigma_{1}-\sigma}.
\end{aligned}
\end{equation*}
Set $N_{0}=2^{j_{*}}N.$  Thus we have the estimates
\begin{equation*}
\begin{aligned}
C_{1}(\kappa)q_{0}^{\sigma}
\leq N\leq N_{0}\leq2^{j_{0}}N\leq C_{2}(\kappa)q_{0}^{\sigma_{1}}
\end{aligned}
\end{equation*}
with
\begin{equation*}
\begin{aligned}
L_{2N_0}(\alpha,A)>(99/100)L_{N_0}(\alpha,A).
\end{aligned}
\end{equation*}

\end{proof}

\subsection{Inductive argument.}
Once one has Lemma \ref{10}, one can follow the induction arguments developed in \cite{Bourgainj02}. However, in our case there is an upper bound  of $N$ in the large deviation theorem. Thus we can only deal with Diophantine frequencies and their continued fraction expansions. Moreover, we need to deal with the Diophantine frequencies and their continued fraction expansions seperately. Let $p_n/q_n$ be the continued fraction expansion of $\alpha$. To apply Lemma \ref{10} inductively, we first fix $\alpha \in DC(v,\tau),$ and inductively choose the following sequences:
\begin{equation}\label{20201103}
\begin{aligned}
q_0=\tilde{q}_0<N_0<\tilde{q}_1<N_1<\cdots<N_s<\tilde{q}_{s+1}<N_{s+1}<\cdots,
\end{aligned}
\end{equation}
where $\tilde{p}_i/\tilde{q}_i$ is a subsequence of the continued fraction expansion of $\alpha$ with
\begin{equation}\label{12}
\text{$\tilde{q}_{s+1}$ is the smallest $q_{j}$ such that $\tilde{q}_{s+1}>\me^{\tilde{q}_{s}^{\gamma/2}}$},\ s\geq0,
\end{equation}
\begin{equation}\label{13}
\begin{aligned}
C_1(\kappa)\tilde{q}_{s}^\sigma<N_{s}<C_2(\kappa)\tilde{q}_{s}^{\sigma_1},\ \ \tilde{q}_{s}|N_{s},\ s\geq0,
\end{aligned}
\end{equation}
\begin{equation}\label{14}
\begin{aligned}
N_{s+1}=m_{s+1}N_{s},\ \ \me^{\frac{c}{2}\tilde{q}_{s}^{\gamma/4}}<m_{s+1}<2m_{s+1}<\me^{\frac{c}{2}\tilde{q}_{s}^{\gamma}},\ s\ge0.
\end{aligned}
\end{equation}

Actually, we can inductively select a sequence $\{N_{s}\}$ such that \eqref{12}-\eqref{14} hold, indeed the starting case $s=0$ follows from Lemma \ref{useful1}.
 First by the selection of $\tilde{q}_{s}$ and the Diophantine condition of $\alpha$, one can check that $\me^{\tilde{q}_{s}^{\gamma/2}}<\tilde{q}_{s+1}<\me^{2\tau\tilde{q}_{s}^{\gamma/2}}$.
Take $N_{s+1}=N_{s}m_{s+1}$ with $m_{s+1}:=\tilde{q}_{s+1}([\tilde{q}_{s+1}^{\sigma-1}]+1).$ It's easy to check that
\begin{equation*}
\begin{aligned}
C_1(\kappa)\tilde{q}_{s+1}^\sigma<C_1(\kappa)\tilde{q}_{s}^\sigma\tilde{q}_{s+1}^\sigma< N_{s+1}<2C_2(\kappa)\tilde{q}_{s}^{\sigma_1}\tilde{q}_{s+1}^\sigma<C_2(\kappa)\tilde{q}_{s+1}^{\sigma_1},
\end{aligned}
\end{equation*}
\begin{equation*}
\begin{aligned}
\me^{\frac{c}{2}\tilde{q}_{s}^{\gamma/4}}<\me^{\sigma\tilde{q}_{s}^{\gamma/2}}<\tilde{q}_{s+1}^\sigma<m_{s+1}<2 m_{s+1}\leq 4\tilde{q}_{s+1}^\sigma<4\me^{2\sigma\tau\tilde{q}_{s}^{\gamma/2}}<\me^{\frac{c}{2}\tilde{q}_{s}^{\gamma}}.
\end{aligned}
\end{equation*}
Thus such a choice of $N_s$ satisfies all estimates in \eqref{12}-\eqref{14} if $\tilde{q}_0$ is sufficiently large. With the help of such sequence, we can prove the following:

\begin{lemma}\label{key1}
Assume that $\alpha \in DC(v,\tau)$ and $L(\alpha,A)>100\kappa>0$. There exist $c''>0$ and
$C_1(\kappa)\tilde{q}_{0}^{\sigma}<N_0<C_2(\kappa)\tilde{q}_{0}^{\sigma_{1}}$ such that
\begin{equation*}
\begin{aligned}
|L(\alpha,A)+L_{N_0}(\alpha,A)-2L_{2N_0}(\alpha,A)|<\me^{-c''\tilde{q}_0^{\gamma/4}}.
\end{aligned}
\end{equation*}
\end{lemma}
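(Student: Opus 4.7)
The plan is to iterate Lemma~\ref{10} along the sequence $\{N_s\}_{s\geq 0}$ from \eqref{12}--\eqref{14}, at each scale applying it \emph{twice}---once with $N'=N_{s+1}$ and once with $N'=2N_{s+1}$---so as to control both $L_{N_{s+1}}$ and $L_{2N_{s+1}}$ simultaneously, and then to telescope the resulting almost-equalities down to the desired bound on $|L+L_{N_0}-2L_{2N_0}|$.

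First I use Lemma~\ref{useful1} to fix an $N_0$ in the admissible window $C_1(\kappa)\tilde{q}_0^\sigma<N_0<C_2(\kappa)\tilde{q}_0^{\sigma_1}$ with $L_{2N_0}>\tfrac{99}{100}L_{N_0}$; combined with $L_{N_0}\geq L(\alpha,A)>100\kappa$, this verifies the hypotheses of Lemma~\ref{10} at level $s=0$. I then build $\{\tilde{q}_s,N_s\}_{s\geq 0}$ as in \eqref{12}--\eqref{14}, noting that the upper bound in \eqref{14} ensures that not only $m_{s+1}$ but also $2m_{s+1}$ falls in the range $(\me^{c\tilde{q}_s^{\gamma/4}/2},\me^{c\tilde{q}_s^{\gamma}/2})$ where Lemma~\ref{10} applies. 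The inductive core is then: assuming $L_{2N_s}>\tfrac{19}{20}L_{N_s}$ at level $s$, applying Lemma~\ref{10} with $N'=N_{s+1}$ and with $N'=2N_{s+1}$ and subtracting yields
\[|L_{N_{s+1}}-L_{2N_{s+1}}|<2C\me^{-\tfrac{c}{2}\tilde{q}_s^{\gamma/4}}.\]
Since $L_{N_{s+1}}\geq L(\alpha,A)>100\kappa$, this forces $L_{2N_{s+1}}/L_{N_{s+1}}>\tfrac{19}{20}$ when $\tilde{q}_0$ is large, closing the induction.

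Once the induction runs, the first of the two almost-equalities at each scale rearranges to $L_{N_s}-L_{N_{s+1}}=2(L_{N_s}-L_{2N_s})+O(\me^{-c\tilde{q}_s^{\gamma/4}/2})$, and for $s\geq 1$ the inductive bound gives $L_{N_s}-L_{2N_s}<2C\me^{-c\tilde{q}_{s-1}^{\gamma/4}/2}$. Using $L_{N_s}\to L(\alpha,A)$ from the subadditive ergodic theorem and the tower-like growth $\tilde{q}_{s+1}>\me^{\tilde{q}_s^{\gamma/2}}$ from \eqref{12}, the telescoping sum $\sum_{s\geq 1}(L_{N_s}-L_{N_{s+1}})$ is dominated by its first term, giving $|L(\alpha,A)-L_{N_1}|\leq C'\me^{-c\tilde{q}_0^{\gamma/4}/2}$. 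Combining with the $s=0$ instance of Lemma~\ref{10} via
\[|L(\alpha,A)+L_{N_0}-2L_{2N_0}|\leq|L(\alpha,A)-L_{N_1}|+|L_{N_1}+L_{N_0}-2L_{2N_0}|\]
yields the claimed bound for any $c''<c/2$, provided $\tilde{q}_0$ is sufficiently large.

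The main obstacle is propagating the ratio hypothesis $L_{2N_s}>\tfrac{19}{20}L_{N_s}$ to all scales: Lemma~\ref{useful1} supplies only the base case $s=0$, and the ratio does not propagate for free from Lemma~\ref{10} applied once. The remedy is the second application of Lemma~\ref{10} at each step, which pins down $L_{2N_{s+1}}$ alongside $L_{N_{s+1}}$ and shrinks their difference to $\me^{-c\tilde{q}_s^{\gamma/4}/2}$, a scale far below $\tfrac{1}{20}\cdot 100\kappa=5\kappa$, so the induction self-sustains. It is the same double-application trick that makes the telescoped series collapse to the size of its leading term, which is exactly the target rate.
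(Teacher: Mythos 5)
Your proposal reproduces the paper's argument essentially verbatim: both use Lemma~\ref{useful1} to seed the base case, then inductively apply Lemma~\ref{10} twice at each scale (once with $N'=N_{s+1}$, once with $N'=2N_{s+1}$) to pin down $|L_{N_{s+1}}-L_{2N_{s+1}}|$ and propagate the ratio condition $L_{2N_s}>\tfrac{19}{20}L_{N_s}$, and finally telescope $\sum_{s\geq 1}|L_{N_{s+1}}-L_{N_s}|$ using $L_{N_s}\to L(\alpha,A)$ and the tower growth of $\tilde{q}_s$. The only difference is organizational: the paper carries the bound on $|L_{N_{s+1}}-L_{N_s}|$ as a third explicit inductive hypothesis with its own constant $c_3$, whereas you derive it on the fly from the other two estimates.
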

\begin{proof}
Let $c/100<c_3<c_2<c_1<c/2$, $2C_{1}<C<\infty$, and $\tilde{q}_{-1}=0$. We use induction to show that the
 sequences $\{N_{s}\}$ and $\{\tilde{q}_{s}\},$ defined by \eqref{20201103}, additionally satisfy, for $s\geq 0$,
\begin{equation}\label{17}
\begin{aligned}
|L_{N_{s+1}}(\alpha,A)+L_{N_s}(\alpha,A)-2L_{2N_s}(\alpha,A)|<C\me^{-c_1\tilde{q}_s^{\gamma/4}},
\end{aligned}
\end{equation}
\begin{equation}\label{18}
\begin{aligned}
|L_{2N_{s+1}}(\alpha,A)-L_{N_{s+1}}(\alpha,A)|<C\me^{-c_2\tilde{q}_{s}^{\gamma/4}},
\end{aligned}
\end{equation}
\begin{equation}\label{19}
\begin{aligned}
|L_{N{s+1}}(\alpha,A)-L_{N_s}(\alpha,A)|<C\me^{-c_3\tilde{q}_{s-1}^{\gamma/4}}.
\end{aligned}
\end{equation}

We first check the case $s=0$. Fix $N_1$ satisfying \eqref{14}. We will show
\begin{equation*}
\begin{aligned}
|L_{N_1}(\alpha,A)+L_{N_0}(\alpha,A)-2L_{2N_0}(\alpha,A)|<C\me^{-c_1\tilde{q}_0^{\gamma/4}},\\
|L_{2N_1}(\alpha,A)-L_{N_1}(\alpha,A)|<C\me^{-c_2\tilde{q}_{0}^{\gamma/4}},\\
|L_{N_{1}}(\alpha,A)-L_{N_0}(\alpha,A)|<C\me^{-c_3\tilde{q}_{-1}^{\gamma/4}}=C.
\end{aligned}
\end{equation*}
In this case,  the last inequality holds automatically since  $\tilde{q}_{-1}=0$, one only needs to check the  first two inequalities.
By \eqref{20201011} and \eqref{13},\eqref{14} with $s=0$ we know that
the conditions in Lemma \ref{10} are all satisfied with $N'=N_1$, $N=N_0$ and $q=\tilde{q}_0$. Therefore, by Lemma \ref{10}, we have
\begin{align*}
|L_{N_1}(\alpha,A)+L_{N_0}(\alpha,A)-2L_{2N_0}(\alpha,A)|<C\me^{-\frac{c}{2}\tilde{q}_0^{\gamma/4}}<C\me^{-c_1\tilde{q}_0^{\gamma/4}}.
\end{align*}
On the other hand, \eqref{14} ensures one can also apply Lemma \ref{10} to $N'=2N_1$, thus we have
\begin{equation*}
\begin{aligned}
|L_{2N_{1}}(\alpha,A)+L_{N_0}(\alpha,A)-2L_{2N_0}(\alpha,A)|<C\me^{-c_1\tilde{q}_0^{\gamma/4}}.
\end{aligned}
\end{equation*}
It follows that
\begin{equation*}
\begin{aligned}
|L_{2N_{1}}(\alpha,A)-L_{N_{1}}(\alpha,A)|<2C\me^{-c_1\tilde{q}_0^{\gamma/4}}<C\me^{-c_2\tilde{q}_0^{\gamma/4}},
\end{aligned}
\end{equation*}
and we have completed the initial case $s=0.$

%%
%\begin{align*}
%|L_{N_{1}}(\alpha,A)-L_{N_0}(\alpha,A)|\leq&|L_{N_{1}}(\alpha,A)+L_{N_0}(\alpha,A)-2L_{2N_0}(\alpha,A)|\\
%&+|2L_{2N_0}(\alpha,A)-2L_{N_0}(\alpha,A)|\\
%<&C\me^{-c_1\tilde{q}_0^{\gamma/4}}+2C_{1}=C.
%\end{align*}
%

For $j\geq1,$
assume that \eqref{17}-\eqref{19} hold for all $s$ with $s\leq j-1.$
Now we consider the case $s=j.$
Fix $N_{j+1}$ satisfying \eqref{14}. By induction we have
\begin{align*}
|L_{2N_j}(\alpha,A)-L_{N_j}(\alpha,A)|<C\me^{-c_2\tilde{q}_{j-1}^{\gamma/4}}\leq C\me^{-c_2\tilde{q}_{0}^{\gamma/4}}.
\end{align*}
This implies $L_{2N_j}(\alpha,A)>(19/20)L_{N_j}(\alpha,A),$
which together with \eqref{13}, implies $N_{j}$ satisfies the two conditions of $N$ in
Lemma \ref{10} with $\tilde{q}_{j}$ in place of $q.$ Moreover, by \eqref{14}, $m_{j+1}=N_{j+1}N_{j}^{-1}$ satisfies the
estimate of $m$ in Lemma \ref{10} with $\tilde{q}_{j}$ in place of $q.$
Thus by Lemma \ref{10}, with $N'=N_{j+1},N=N_j$ and $q=\tilde{q}_{j}$ we get
\begin{align*}
|L_{N_{j+1}}(\alpha,A)+L_{N_j}(\alpha,A)-2L_{2N_j}(\alpha,A)|<C\me^{-\frac{c}{2}\tilde{q}_j^{\gamma/4}}<C\me^{-c_1\tilde{q}_j^{\gamma/4}}.
\end{align*}
Similarly, \eqref{14} ensures one can also apply Lemma \ref{10} to $N'=2N_{j+1}$, and we have
\begin{equation*}
\begin{aligned}
|L_{2N_{j+1}}(\alpha,A)+L_{N_j}(\alpha,A)-2L_{2N_j}(\alpha,A)|<C\me^{-c_1\tilde{q}_j^{\gamma/4}}.
\end{aligned}
\end{equation*}
Thus
\begin{equation*}
\begin{aligned}
|L_{2N_{j+1}}(\alpha,A)-L_{N_{j+1}}(\alpha,A)|<2C\me^{-c_1\tilde{q}_j^{\gamma/4}}<C\me^{-c_2\tilde{q}_j^{\gamma/4}},
\end{aligned}
\end{equation*}
\begin{align*}
|L_{N_{j+1}}(\alpha,A)-L_{N_j}(\alpha,A)|\leq&|L_{N_{j+1}}(\alpha,A)+L_{N_j}(\alpha,A)-2L_{2N_j}(\alpha,A)|\\
&+|2L_{2N_j}(\alpha,A)-2L_{N_j}(\alpha,A)|\\
<&C\me^{-c_1\tilde{q}_j^{\gamma/4}}+2C\me^{-c_2\tilde{q}_{j-1}^{\gamma/4}}<C\me^{-c_3\tilde{q}_{j-1}^{\gamma/4}}.
\end{align*}
That is the estimates in \eqref{17}-\eqref{19} hold for all $s\in\N.$
As a consequence of \eqref{17} with $s=0$ and \eqref{19}
\begin{equation*}
\begin{aligned}
|L(\alpha,A)&+L_{N_0}(\alpha,A)-2L_{2N_0}(\alpha,A)|\\
&\leq|L_{N_{1}}(\alpha,A)+L_{N_0}(\alpha,A)-2L_{2N_0}(\alpha,A)|\\
&\ +\sum_{s\geq1}|L_{N_{s+1}}(\alpha,A)-L_{N_s}(\alpha,A)|<\me^{-c''\tilde{q}_0^{\gamma/4}}.
\end{aligned}
\end{equation*}
\end{proof}
For the rational frequency case, we will first estimate
 the difference between $L(p_{j}/q_{j},A_{j})$ and $L_{n}(p_{j}/q_{j},A_{j})$  with $n$ much larger than $q_{j}$ (Lemma \ref{keyys}), and then
use avalanche principle estimate to get estimate of  $L_{n}(p_{j}/q_{j},A_{j})$ (Lemma \ref{keyy}).

\begin{lemma}\label{keyys}
Consider the cocycle $(p/q, A)\in \Q\times C^{0}(\T,SL(2,\R))$ with $p,q\in\N, (p,q)=1,$ and $\|A\|\leq\me^{C_{1}}.$
Set $n=mq+r, m\in\N,0\leq r<q,$ then
\begin{eqnarray*}
L_{n}(p/q,A)\leq L(p/q,A)+2n^{-1}(\ln m+qC_{1}).
\end{eqnarray*}
\end{lemma}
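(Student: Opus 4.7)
\textbf{Proof proposal for Lemma~\ref{keyys}.}

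The plan is to exploit the fact that for a rational frequency $p/q$, the dynamics is $q$-periodic, so the $n$-step transfer matrix factors as a product of the $q$-step transfer matrix. Concretely, since $q \cdot (p/q) = p \in \Z$, the function $A_q(\cdot)$ is invariant under the shift by $p/q$, which gives the identity
\begin{equation*}
A_n(\theta) = A_r(\theta+mq\cdot p/q)\, A_q(\theta+(m-1)q\cdot p/q)\cdots A_q(\theta) = A_r(\theta)\, A_q(\theta)^m.
\end{equation*}
Bounding $\|A_r(\theta)\| \leq \|A\|^r \leq e^{rC_1} \leq e^{qC_1}$ is trivial, so everything reduces to controlling $\|A_q(\theta)^m\|$ in terms of the spectral radius $\rho(A_q(\theta))$ (which governs $L(p/q,A)$ through the identity recalled below).

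The main technical step will be the following uniform estimate: for any $M \in SL(2,\R)$ and any integer $m\geq 1$,
\begin{equation*}
\|M^m\| \leq 2m\, \|M\|\, \rho(M)^m.
\end{equation*}
I would prove it via Cayley--Hamilton, writing $M^m = p_{m-1}(t)\,M - p_{m-2}(t)\,I$ where $t = \mathrm{tr}(M)$ and $p_k$ are the Chebyshev polynomials of the second kind (satisfying $p_{k+1}=tp_k-p_{k-1}$, $p_0=1$, $p_{-1}=0$). In the elliptic/parabolic regime $|t|\leq 2$, one writes $t=2\cos\phi$ and has $|p_k(t)|=|\sin((k+1)\phi)/\sin\phi|\leq k+1$, and in the hyperbolic regime $|t|>2$ with eigenvalue $\lambda=\rho(M)>1$, one has $p_k(t) = \pm\sinh((k+1)\mu)/\sinh(\mu)$ with $\lambda = e^\mu$, which telescopes to $\sum_{j=0}^{k} e^{(k-2j)\mu} \leq (k+1)\lambda^k$. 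Combining both cases with $\|M\|\geq 1$ yields the claim.

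Applying this to $M=A_q(\theta)$ and using $\|A_q(\theta)\|\leq e^{qC_1}$, we get
\begin{equation*}
\ln\|A_n(\theta)\| \;\leq\; rC_1 + \ln(2m) + qC_1 + m\ln\rho(A_q(\theta)) \;\leq\; 2qC_1+\ln 2+\ln m + m\ln\rho(A_q(\theta)).
\end{equation*}
Integrating over $\theta\in\T$ and using the well-known formula for rational Lyapunov exponents
\begin{equation*}
L(p/q,A) = \frac{1}{q}\int_\T \ln \rho(A_q(\theta))\, d\theta,
\end{equation*}
(which follows from $A_{Nq}(\theta)=A_q(\theta)^N$, Gelfand's formula $\lim_N N^{-1}\ln\|M^N\|=\ln\rho(M)$, and dominated convergence using $|\ln\|A_q\||\leq qC_1$), gives
\begin{equation*}
n L_n(p/q,A) \;\leq\; \tfrac{mq}{n}\cdot n L(p/q,A) + 2qC_1 + \ln 2 + \ln m.
\end{equation*}
Since $mq/n \leq 1$ and $L(p/q,A)\geq 0$, dividing by $n$ and absorbing the $\ln 2$ into the $\ln m$ term (using $m\geq 2$; the cases $m=0,1$ are trivial as then $n\leq 2q$ and both sides are controlled directly by $C_1$) yields the desired inequality
\begin{equation*}
L_n(p/q,A) \leq L(p/q,A) + 2n^{-1}(\ln m + qC_1).
\end{equation*}

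The only non-routine step is the $SL(2,\R)$ matrix inequality $\|M^m\|\leq 2m\|M\|\rho(M)^m$; everything else is a straightforward unpacking of the periodic structure and the definition of the finite Lyapunov exponent.
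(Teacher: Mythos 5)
Your proof is correct and follows the same strategy as the paper's: factor $A_n(\theta)=A_r(\theta)A_q(\theta)^m$ by $q$-periodicity, bound $\|A_q(\theta)^m\|\lesssim m\,\|A_q(\theta)\|\,\rho(A_q(\theta))^m$, integrate, and use $L(p/q,A)=q^{-1}\int_{\T}\ln\rho(A_q(\theta))\,d\theta$ together with $mq\le n$. The only difference is the linear-algebra device for the power bound---you use Cayley--Hamilton and Chebyshev polynomials of the trace, the paper uses a unitary Schur triangularization of $A_q$ with an explicit formula for the off-diagonal entry of the $m$-th power---but these are interchangeable and give the same estimate.
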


\begin{proof}
Set $
A_{q}:=A_{q}(\theta)=A(\theta+(q-1)p/q)A(\theta+(q-2)p/q)\cdots A(\theta).
$
For the matrix $A_{q}$
there exists a unitary $U$ such that $A_{q}=U\begin{pmatrix}
\lambda&\psi\\
0&\lambda^{-1}
\end{pmatrix}U^{-1}.$
Then for $m\in\N,$ we have $A_{q}^{m}=U\begin{pmatrix}
\lambda^{m}&R_{m}(\lambda,\psi)\\
0&\lambda^{-m}
\end{pmatrix}U^{-1}, m\geq 2$ with
\begin{equation*}
R_{m}(\lambda,\psi)=\left\{
\begin{aligned}
&\sum_{l=1}^{k}\{\lambda^{2l-1}+\lambda^{-(2l-1)}\}\psi&m=2k, k\geq1,\\
&\psi+\sum_{l=1}^{k}\{\lambda^{2l}+\lambda^{-2l}\}\psi&m=2k+1, k\geq1.
 \end{aligned}
 \right.
\end{equation*}
Thus
\begin{equation*}
\|A_{q}^{m}\|\leq \|\lambda^{m}\|+\|R(\lambda,\psi)\|\leq\|\lambda\|^{m}(1+m\|\psi\|)\leq\rho(A_{q})^{m}(1+m\exp\{qC_{1}\}),
\end{equation*}
where $\rho(A)$ stands for the spectrum radius. Note, for $n=mq+r,$
$A_{n}(\theta)=A_{r}(\theta)A_{q}^{m}(\theta),$
then by the inequality above we get
\begin{eqnarray*}
L_{n}(p/q,A)&=&\frac{1}{n}\int_{\T}\ln \|A_{n}(\theta)\|d\theta\leq \frac{1}{n}\int_{\T}\ln \|A_{q}^{m}(\theta)\|d\theta+\frac{rC_1}{n}
\nonumber\\
&\leq&\frac{1}{n}\Big\{\int_{\T}\ln \rho(A_{q})^{m}d\theta+\int_{\T}\ln (1+m\exp\{qC_{1}\})d\theta\Big\}+\frac{qC_1}{n}
\nonumber\\
&\leq& L(p/q,A)+2n^{-1}(\ln m+qC_{1}).
\end{eqnarray*}

\end{proof}

%
%Now we give the relative estimates about the Lyapunov exponents of the cocycle $(p_{j}/q_{j}, A_{j})\in \Q\times G_{\rho}^{\nu}(\T,SL(2,\R))$ with $(p_{j}/q_{j}, A_{j})\rightarrow (\alpha, A)$ and $\frac{1}{2}<\nu<1.$
%Note the avalanche principle is applicable under the assumption that Lyapunov exponent is positive, thus, besides the
%analogue estimates \eqref{17}-\eqref{19} in Lemma~\ref{key1}, we will give an additional estimate to ensure
%that $L_{n}(p_{j}/q_{j}, A_{j})$ is positive. See Lemma~\ref{keyy}.

\begin{lemma}\label{keyy}
Assume that $\alpha\in DC(v,\tau)$ and  $\{p_{j}/q_{j}\}$ is the sequence of continued fraction expansion of
$\alpha,$ and $A_{j}\rightarrow A$ under the topology derived by $\|\cdot\|_{\nu,\rho}$-norm.
Then there exists a $j_{1}$ such that for $j\geq j_{1},$ we have
\begin{equation*}
\begin{aligned}
|L(p_{j}/q_{j},A_{j})+L_{N_0}(p_{j}/q_{j},A_{j})-2L_{2N_0}(p_{j}/q_{j},A_{j})|
<2\me^{-c''\tilde{q}_{0}^{\gamma/4}}.
\end{aligned}
\end{equation*}
\end{lemma}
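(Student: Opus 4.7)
The plan is to parallel the proof of Lemma~\ref{key1} for the cocycle $(p_j/q_j, A_j)$, but to truncate the inductive telescoping at a scale compatible with the period $q_j$ and use Lemma~\ref{keyys} to replace the role of the limit $L(\alpha,A)=\lim_{s}L_{N_s}(\alpha,A)$ which was available in the irrational case.

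First I would fix the sequences $\{\tilde{q}_s\}$, $\{N_s\}$ and constants $c'', c_1, c_2, c_3$ from the proof of Lemma~\ref{key1} applied to $(\alpha,A)$, together with a uniform bound $\|A_j\|_{C^0}\leq \me^{C_1}$ valid for $j$ large (which follows from $A_j\to A$ in the $\|\cdot\|_{\nu,\rho}$-topology). Since $L_N(\cdot,\cdot)$ is continuous in both arguments at each fixed scale $N$, and $L(\alpha,A)>100\kappa$, choosing $j_1$ large enough (depending on $N_0$) yields for $j\geq j_1$ the initial estimates $L_{N_0}(p_j/q_j,A_j)>100\kappa$ and $L_{2N_0}(p_j/q_j,A_j)>\tfrac{19}{20}L_{N_0}(p_j/q_j,A_j)$ required to start the avalanche principle. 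Because $p_j/q_j$ shares its first convergents with $\alpha$ for $j$ large, there exists $s^{*}_j\to\infty$ such that $\tilde{p}_s/\tilde{q}_s$ is a convergent of $p_j/q_j$ for every $s\leq s^{*}_j$, hence $|p_j/q_j-\tilde{p}_s/\tilde{q}_s|<\tilde{q}_s^{-2}$, and Proposition~\ref{8} applies at each induction step with the same $\tilde{q}_s$ used in the irrational case.

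With these preparations in hand, I would run the argument of Lemma~\ref{key1} verbatim for $(p_j/q_j,A_j)$ up through scale $s^{*}_j$, obtaining the induction estimates \eqref{17}--\eqref{19} for $s=0,1,\ldots,s^{*}_j-1$. To close the argument without iterating to infinity, I would apply Proposition~\ref{8} once more at a scale $n^{*}=N_{s^{*}_j+1}$, this time using $(p_j,q_j)$ itself as the Diophantine approximation (the hypothesis $|p_j/q_j-p_j/q_j|=0<1/q_j^2$ is trivial); this is permitted provided $C_1(\kappa)q_j^{\sigma}<n^{*}<C_2(\kappa)q_j^{\sigma_1}$, a non-empty window for $j$ large. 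Lemma~\ref{keyys} then gives
$$|L_{n^{*}}(p_j/q_j,A_j)-L(p_j/q_j,A_j)|\leq 2(n^{*})^{-1}\bigl(\ln(n^{*}/q_j)+q_jC_1\bigr),$$
which is smaller than $\me^{-c''\tilde{q}_0^{\gamma/4}}$ once $n^{*}/q_j$ is chosen sufficiently large.

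Finally I would telescope exactly as in Lemma~\ref{key1}: combine the initial bound from \eqref{17} at $s=0$, the convergent tail $\sum_{s=1}^{s^{*}_j}|L_{N_{s+1}}-L_{N_s}|$ controlled by \eqref{19} (geometric convergence since $\tilde{q}_s$ grows super-exponentially by \eqref{12}), and the endpoint correction $|L_{n^{*}}-L|$ just established. The first two contributions are controlled by $\me^{-c''\tilde{q}_0^{\gamma/4}}$ exactly as in Lemma~\ref{key1}, and the endpoint correction adds at most another such term, yielding the claimed $2\me^{-c''\tilde{q}_0^{\gamma/4}}$. The principal technical obstacle is the simultaneous choice of $s^{*}_j$: it must be chosen so that $\tilde{q}_s$ is a convergent of $p_j/q_j$ for all $s\leq s^{*}_j$ (to run the induction), so that $N_{s^{*}_j+1}$ falls in the admissible window for Proposition~\ref{8} at denominator $q_j$, and so that $n^{*}/q_j$ is large enough for the Lemma~\ref{keyys} estimate to decay faster than $\me^{-c''\tilde{q}_0^{\gamma/4}}$. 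Verifying that all three constraints hold simultaneously for some $s^{*}_j\to\infty$ requires a careful bookkeeping of the growth rates prescribed by \eqref{12}--\eqref{14}, and is the only nontrivial new ingredient beyond what was done in Lemma~\ref{key1}.
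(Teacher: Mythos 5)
Your overall strategy matches the paper's: run the induction of Lemma~\ref{key1} at the rational frequency $p_j/q_j$ for finitely many scales, exploiting the fact that $\tilde{p}_s/\tilde{q}_s$ is also a convergent of $p_j/q_j$ for $s$ small enough so that Proposition~\ref{8} applies unchanged, and then close the finite iteration with Lemma~\ref{keyys}. The initial continuity step, the uniform $C^0$ bound on $A_j$, and the telescoping via \eqref{19} all track the paper's argument exactly.

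The one genuine issue is the extra application of Proposition~\ref{8} at scale $n^*=N_{s^*_j+1}$ ``using $(p_j,q_j)$ itself as the Diophantine approximation''. This step is superfluous -- the paper goes directly from the induction output $L_{N_{s_j+1}}(p_j/q_j,A_j)$ to Lemma~\ref{keyys}, which has no window constraint, only $n>q$ -- and its claimed admissibility is doubtful. By construction \eqref{13}--\eqref{14}, $N_{s_j+1}$ satisfies $C_1(\kappa)\tilde{q}_{s_j+1}^\sigma<N_{s_j+1}<C_2(\kappa)\tilde{q}_{s_j+1}^{\sigma_1}$ where $\tilde{q}_{s_j}\le q_j<\tilde{q}_{s_j+1}$. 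The lower bound $C_1(\kappa)q_j^\sigma<N_{s_j+1}$ then follows, but the upper bound $N_{s_j+1}<C_2(\kappa)q_j^{\sigma_1}$ fails whenever $\tilde{q}_{s_j+1}$ is much larger than $q_j$ -- which is generic, since \eqref{12} makes $\tilde{q}_{s_j+1}>\me^{\tilde{q}_{s_j}^{\gamma/2}}$ and there is no reason for $q_j$ to sit at the top of that gap. The sequence $N_\ell$ grows super-exponentially, so it can skip the window $[C_1(\kappa)q_j^\sigma,C_2(\kappa)q_j^{\sigma_1}]$ entirely, and your stated constraint set on $s^*_j$ can be empty. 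If you drop the extra LDT step and apply Lemma~\ref{keyys} immediately with $n=N_{s_j+1}$, $q=q_j$, $m\approx N_{s_j+1}/q_j$, the error is $2N_{s_j+1}^{-1}(\ln m+q_jC_1)\le C\,q_j^{1-\sigma}$, which is smaller than $\me^{-c''\tilde{q}_0^{\gamma/4}}$ for $j$ (hence $q_j$) large since $\sigma>1$; this is exactly what the paper does and closes the argument cleanly.
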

\begin{remark}
Here $N_0$ and $c''$ are the ones in Lemma \ref{key1}.
\end{remark}
\begin{proof}
For the fixed $N_{0},$ note $L_{2N_0}(\cdot_{1},\cdot_{2})$ and $L_{N_0}(\cdot_{1},\cdot_{2})$ are continuous in both variables
and $L_{2N_0}(\alpha,A)>(99/100)L_{N_0}(\alpha,A)$,  $L_{N_0}(\alpha,A)>100\kappa>0$,
then there exists $j_1\in\Z^+$, such that if $j>j_1$, we have
\begin{eqnarray}
\label{rational1}  L_{N_0}(p_j/q_j,A_{j}) &>& 99 \kappa   \\
\label{rational2}  L_{2N_0}(p_j/q_j,A_{j})&>&(49/50)L_{N_0}(p_j/q_j,A_{j}).
\end{eqnarray}

For the fixed $p_{j}/q_{j}$ and the sequence $\{\tilde{q}_{\ell}\}$ defined by \eqref{20201103}, there exists $s_{j}\in \N$ such that
$\tilde{q}_{s_{j}}\leq q_j< \tilde{q}_{s_{j}+1}.$ Then
we define the same sequences $\{\tilde{q}_{\ell}\}_{\ell=0}^{s_{j}}$ and  $\{N_{\ell}\}_{\ell=0}^{s_{j}+1}$ for $p_j/q_j$ as $\alpha$ such that \eqref{12}-\eqref{14} hold.
Following Lemma \ref{key1}, we will inductively show that
\begin{equation}\label{17sss}
\begin{aligned}
|L_{N_{\ell+1}}(p_{j}/q_{j},A_{j})+L_{N_{\ell}}(p_{j}/q_{j},A_{j})-2L_{2N_{\ell}}(p_{j}/q_{j},A_{j})|<
C\me^{-c_{1}\tilde{q}_{\ell}^{\gamma/4}},
\end{aligned}
\end{equation}
\begin{equation}\label{18sss}
\begin{aligned}
|L_{2N_{\ell+1}}(p_{j}/q_{j},A_{j})-L_{N_{\ell+1}}(p_{j}/q_{j},A_{j})|<C\me^{-c_2\tilde{q}_{\ell}^{\gamma/4}},
\end{aligned}
\end{equation}
\begin{equation}\label{19sss}
\begin{aligned}
|L_{N_{\ell+1}}(p_{j}/q_{j},A_{j})-L_{N_{\ell}}(p_{j}/q_{j},A_{j})|\leq C\me^{-c_{3}\tilde{q}_{s_{\ell-1}}^{\gamma/4}}.
\end{aligned}
\end{equation}

To give the induction, the key is to apply Lemma \ref{10}, and verify that
\begin{eqnarray}
\label{rational3}     |p_{j}/q_{j}-\tilde{p}_{\ell}/\tilde{q}_{\ell}|&<&\tilde{q}_{\ell}^{-2},\\
\label{rational5}  L_{2N_\ell}(p_j/q_j,A_{j})&>&(19/20)L_{N_\ell}(p_j/q_j,A_{j}),\\
\label{rational4}  L_{N_\ell}(p_j/q_j,A_{j}) &>& 90 \kappa.
\end{eqnarray}
Indeed, by the property of continued fraction expansion, \eqref{rational3} holds for any  $0\leq \ell\leq s_{j}$, and \eqref{rational5} follows from \eqref{rational2} and \eqref{18sss}. On the other hand, if $\ell=0$, by \eqref{rational1}, \eqref{rational2} and \eqref{17sss}, we have
\begin{equation*}
\begin{aligned}
|L_{N_{1}}(&p_{j}/q_{j},A_{j})-L_{N_{0}}(p_{j}/q_{j},A_{j})|\nonumber\\
&\leq\Big|L_{N_{1}}(p_{j}/q_{j},A_{j})+L_{N_{0}}(p_{j}/q_{j},A_{j})-2L_{2N_{0}}(p_{j}/q_{j},A_{j})\Big|\nonumber\\
&\ +2|L_{2N_{0}}(p_{j}/q_{j},A_{j})-L_{N_{0}}(p_{j}/q_{j},A_{j})|\nonumber\\
&\leq C\me^{-c_{1}\tilde{q}_{0}^{\gamma/4}}
+2L_{N_{0}}(p_{j}/q_{j},A_{j})/50,
\end{aligned}
\end{equation*}
which implies that
\begin{eqnarray}\label{20s}
L_{N_{1}}(p_{j}/q_{j},A_{j})\geq 48L_{N_{0}}(p_{j}/q_{j},A_{j})/50 -
C\me^{-c_{1}\tilde{q}_{0}^{\gamma/4}}>95\kappa.
\end{eqnarray}
As for \eqref{rational4}, in case $\ell\geq 1$, by \eqref{20s} and \eqref{19sss}, one has
\begin{eqnarray*}
L_{N_{\ell+1}}(p_{j}/q_{j},A_{j}) &\geq& L_{N_1}(p_{j}/q_{j},A_{j})- \sum_{k=1}^{\ell}|L_{N_{k+1}}(p_{j}/q_{j},A_{j})-L_{N_{k}}(p_{j}/q_{j},A_{j})|\\
&>&95\kappa-\sum_{k=0}^{\ell-1}
C\me^{-c_{3}\tilde{q}_{i}^{\gamma/4}}
>90\kappa.
\end{eqnarray*}

Therefore, the iteration can be conducted $s_j$ times, and we obtain
\begin{equation}\label{20200000}
\begin{aligned}
|L_{N_{s_{j}+1}}(p_j/q_j,A_j)+L_{N_0}(p_j/q_j,A_j)-2L_{2N_0}(p_j/q_j,A_j)|
<\me^{-c''\tilde{q}_{0}^{\gamma/4}}.
\end{aligned}
\end{equation}
Moreover, by Lemma \ref{keyys}, we get
\begin{equation*}
\begin{aligned}
L_{N_{s_{j}+1}}(p_j/q_j,A_j)\leq L(p_j/q_j,A_j)+5C_{1}C_{1}(\kappa)^{-1} \tilde{q}_{s_{j}}^{-(\sigma-1)}.
\end{aligned}
\end{equation*}
The inequality above, together with \eqref{20200000} yields
\begin{equation*}
\begin{aligned}
|L(p_j/q_j,A_j)+L_{N_0}(p_j/q_j,A_j)-2L_{2N_0}(p_j/q_j,A_j)|
<2\me^{-c''\tilde{q}_{0}^{\gamma/4}}.
\end{aligned}
\end{equation*}

\end{proof}

\subsection{Proof of Theorem \ref{continuity}.}
Assume $\alpha\in DC(v,\tau)$ and $p_n/q_n$ be the continued fraction expansion of $\alpha$. Notice that since for each $N$,
$L_N(\alpha,A)$ is a continuous function in both variables, thus, $L(\alpha,A)=\inf L_N(\alpha,A)$ is upper semi-continuous, then in the
case $L(\alpha,A)=0$ we get
\begin{equation*}
\begin{aligned}
0\leq\liminf_{n\rightarrow\infty}L(p_{n}/q_{n},A_{n}) \leq\limsup_{n\rightarrow\infty}L(p_{n}/q_{n},A_{n})\leq L(\alpha,A)=0,
\end{aligned}
\end{equation*}
that is $\lim_{n\rightarrow\infty}L(p_{n}/q_{n},A_{n})=0.$
Therefore we may assume $L(\alpha,A)>100\kappa>0$.

Take $j>j_1$ and $C_1(\kappa)q_{0}^{\sigma}<N_0<C_2(\kappa)q_{0}^{\sigma_{1}}$, by Lemma \ref{key1} and Lemma \ref{keyy}, we have
\begin{equation*}
\begin{aligned}
|L(\alpha,A)+L_{N_0}(\alpha,A)-2L_{2N_0}(\alpha,A)|<\me^{-c''\tilde{q}_{0}^{\gamma/4}},
\end{aligned}
\end{equation*}
and
\begin{equation*}
\begin{aligned}
|L(p_{j}/q_{j},A_j)+L_{N_0}(p_{j}/q_{j},A_j)-2L_{2N_0}(p_{j}/q_{j},A_j)|<2\me^{-c''\tilde{q}_0^{\gamma/4}}.
\end{aligned}
\end{equation*}
Hence, one can estimate
\begin{equation*}
\begin{aligned}
|L(\alpha,A)&-L(p_{j}/q_{j},A_j)|\leq |L_{N_0}(p_{j}/q_{j},A_j)-L_{N_0}(\alpha,A)|\\
&\ +2|L_{2N_0}(p_{j}/q_{j},A_j)-L_{2N_0}(\alpha,A)|
+3\me^{-c''\tilde{q}_{0}^{\gamma/4}}\\
&\leq C(\kappa)^{N_{0}}\{|p_{j}/q_{j}-\alpha|+\|A-A_{j}\|_{\nu,\rho}\}
+3\me^{-c''\tilde{q}_{0}^{\gamma/4}},
\end{aligned}
\end{equation*}
it follows that
\begin{equation*}
\begin{aligned}
\limsup\limits_{j\rightarrow\infty}|L(\alpha,A)-L(p_{j}/q_{j},A_j|\leq 4\me^{-c''\tilde{q}_{0}^{\gamma/4}},
\end{aligned}
\end{equation*}
let $\tilde{q}_0\rightarrow \infty$, we get the result.

\section*{Appendix: Proof of Lemma~\ref{caiyouzhou19}}

Define $B_{r}(\delta)=\{Y\in \mathcal{B}_{r}^{(nre)}:\|Y\|_{r}\leq \delta\}$ and
set $\varepsilon=8^{-2}\gamma^{2}Q_{n+1}^{-2\tau^{2}}.$ Then we define the nonlinear functional
\begin{equation*}
\begin{split}
\mathcal{F}:B_{r}(\varepsilon^{1/2}) \rightarrow\mathcal{B}_{r}^{(nre)}
\end{split}
\end{equation*}
by
\begin{equation}\label{constructoperator}
\begin{split}
\mathcal{F}(Y)=\mathbb{P}_{nre}\ln (\me^{A^{-1} Y(\theta+\alpha)A}\me^{g}\me^{-Y}),
\end{split}
\end{equation}
where $\mathbb{P}_{nre}$ denotes the standard projections from $\mathcal{B}_{r}$ to $\mathcal{B}_{r}^{(nre)}.$

We will find a solution of functional equation
\begin{equation}\label{constructoperators}
\begin{split}
\mathcal{F}(Y_{t})=(1-t)\mathcal{F}(Y_{0}),\ Y_{0}=0.
\end{split}
\end{equation}

The derivative of $\mathcal{F}$ at $Y\in B_{r}(\varepsilon^{1/2})$ along $Y'\in \mathcal{B}_{r}^{(nre)}$ is given by
\begin{equation}\label{operatorderivative}
\begin{split}
D\mathcal{F}(Y)Y'=\mathbb{P}_{nre}\big\{A^{-1} Y'(\theta+\alpha)A-Y'
+O(\|A\|^{2}g)Y'+P[A,Y,Y',g](\theta)\big\},
\end{split}
\end{equation}
where
\begin{equation*}
\begin{split}
P[A,Y,Y',g](\theta)&=
O(A^{-1}Y(\theta+\alpha)A)A^{-1} Y'(\theta+\alpha)A+2^{-1}[Y''',F+H]+\cdots\\
&-O(Y)Y'+2^{-1}[F+H',-Y'']+\cdots-O(\|A\|^{2}g)Y',\\
O(\|A\|^{2}g)Y'&=O(g)A^{-1} Y'(\theta+\alpha)A+O(g)Y',
\end{split}
\end{equation*}
with $P[A,Y,Y',0](\theta)=0,$
\begin{equation*}
\begin{split}
Y'''(\theta+\alpha)&=A^{-1}Y'(\theta+\alpha)A+O(A^{-1}Y(\theta+\alpha)A)
A^{-1}Y'(\theta+\alpha)A,\\
Y''(\theta)&=Y'(\theta)+O(Y(\theta))Y'(\theta),\\
F(\theta)&=A^{-1}Y(\theta+\alpha)A+g(\theta)-Y(\theta),
\end{split}
\end{equation*}
and $H, H'$ being sums of terms at least 2 orders in
$A^{-1}Y(\theta+\alpha)A,g(\theta),-Y(\theta).$
Moreover, the first $``\cdots"$ denotes the sum of terms which are at least 2 orders in $F+H$ but only 1 order in $Y'''.$ The second $``\cdots"$ denotes the sum of terms which are at least 2 orders in $F+H'$  but only 1 order in $Y''.$

We give a estimate about the operator $D\mathcal{F}(Y)^{-1}$.
\begin{proposition}
For the fixed $Y\in\mathcal{B}_{r}(\varepsilon^{1/2}),$ $D\mathcal{F}(Y)$ (defined by \eqref{operatorderivative})
is a linear map from $\mathcal{B}_{r}^{(nre)}$ to $\mathcal{B}_{r}^{(nre)}$ with estimate
\begin{equation}\label{20201006}
\begin{split}
\|D\mathcal{F}(Y)^{-1}\|\leq 2^{-1}\varepsilon^{-1/2}.
\end{split}
\end{equation}
\end{proposition}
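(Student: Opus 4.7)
The plan is to split $D\mathcal{F}(Y)$ into its principal linear part $\mathcal{L}$ and a small perturbation $\mathcal{K}$, invert $\mathcal{L}$ using the small divisor estimate of Lemma~\ref{bettersmalldivisor}, and then obtain $D\mathcal{F}(Y)^{-1}$ by a Neumann series argument. The constant $\varepsilon^{-1/2}=8\gamma^{-1}Q_{n+1}^{\tau^{2}}$ already isolates the target bound, so the arithmetic will work out cleanly provided we can control each piece by the anticipated power of $\varepsilon$.

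\textbf{Step 1 (Principal part).} Define the linear operator
\begin{equation*}
\mathcal{L}(Y')=\mathbb{P}_{nre}\bigl\{A^{-1}Y'(\cdot+\alpha)A-Y'\bigr\},\qquad Y'\in\mathcal{B}_{r}^{(nre)}.
\end{equation*}
Writing $Y'=\{0,\mathcal{T}_{\overline{Q}_{n+1}^{1/2}}v(\theta)\}\in\mathcal{B}_{r}^{(nre)}$ and using $A=\mathrm{diag}\{\me^{-2\pi\mi\rho_{f}},\me^{2\pi\mi\rho_{f}}\}$, one obtains $A^{-1}\{0,v\}A=\{0,\me^{4\pi\mi\rho_{f}}v\}$, so in Fourier coefficients $\mathcal{L}$ is diagonal with eigenvalues $\me^{2\pi\mi(k\alpha+2\rho_{f})}-1$ for $|k|<\overline{Q}_{n+1}^{1/2}$. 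Since $\rho_{f}\in DC_{\alpha}(\gamma,\tau)$ and $\overline{Q}_{n+1}\geq T\geq(4\gamma^{-1})^{2\tau}$, Lemma~\ref{bettersmalldivisor} gives
\begin{equation*}
|\me^{2\pi\mi(k\alpha+2\rho_{f})}-1|\geq\gamma Q_{n+1}^{-\tau^{2}},\qquad |k|\leq\overline{Q}_{n+1}^{1/2},
\end{equation*}
so $\mathcal{L}$ is invertible on $\mathcal{B}_{r}^{(nre)}$ with $\|\mathcal{L}^{-1}\|\leq\gamma^{-1}Q_{n+1}^{\tau^{2}}=\varepsilon^{-1/2}/8$.

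\textbf{Step 2 (Remainder).} Write $D\mathcal{F}(Y)=\mathcal{L}+\mathcal{K}$ where $\mathcal{K}(Y')=\mathbb{P}_{nre}\{O(\|A\|^{2}g)Y'+P[A,Y,Y',g]\}$. Every term appearing in $\mathcal{K}$ is linear in $Y'$ and carries at least one factor of $g$ or of $Y$; inspecting \eqref{operatorderivative}, $O(\|A\|^{2}g)Y'$ is $O(\|g\|_{r}\|Y'\|_{r})$, while each summand of $P[A,Y,Y',g]$ contains a factor of $F+H$ or $F+H'$ which, in view of $\|F\|_{r},\|H\|_{r},\|H'\|_{r}\lesssim\|Y\|_{r}+\|g\|_{r}\leq 2\varepsilon^{1/2}$, contributes at least $O(\varepsilon^{1/2})$. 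Invoking the Banach algebra property (Lemma~\ref{banachalgebra}) term by term, and using that $\mathbb{P}_{nre}$ is a contraction on $\mathcal{B}_{r}$, yields
\begin{equation*}
\|\mathcal{K}\|_{\mathrm{op}(\mathcal{B}_{r}^{(nre)})}\leq C(\|g\|_{r}+\|Y\|_{r})\leq C\varepsilon^{1/2}.
\end{equation*}
Combined with Step~1,
\begin{equation*}
\|\mathcal{L}^{-1}\mathcal{K}\|_{\mathrm{op}(\mathcal{B}_{r}^{(nre)})}\leq\tfrac{1}{8}\varepsilon^{-1/2}\cdot C\varepsilon^{1/2}=\tfrac{C}{8}<\tfrac{1}{2}
\end{equation*}
provided $n_{0}$ (and hence $T$) is chosen large enough that the absolute constant $C$ in Step~2 is absorbed.

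\textbf{Step 3 (Neumann series).} Since $D\mathcal{F}(Y)=\mathcal{L}(\mathrm{Id}+\mathcal{L}^{-1}\mathcal{K})$ with $\|\mathcal{L}^{-1}\mathcal{K}\|<1/2$, the Neumann series converges and produces $\|(\mathrm{Id}+\mathcal{L}^{-1}\mathcal{K})^{-1}\|\leq 2$; hence
\begin{equation*}
\|D\mathcal{F}(Y)^{-1}\|\leq 2\|\mathcal{L}^{-1}\|\leq 2\cdot\tfrac{1}{8}\varepsilon^{-1/2}=\tfrac{1}{4}\varepsilon^{-1/2}\leq\tfrac{1}{2}\varepsilon^{-1/2},
\end{equation*}
which is the desired estimate \eqref{20201006}. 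The one delicate point (and likely the main obstacle) is Step~2: the remainder $P[A,Y,Y',g]$ is a formal series containing nested commutators of $Y$, $Y'$, $g$ and terms of the form $O(A^{-1}Y(\cdot+\alpha)A)$, and one must verify carefully that every contribution, including those coming from the BCH expansions of $\me^{A^{-1}YA}\me^{g}\me^{-Y}$, is at least linear in the small quantities $Y$ or $g$ so that no $O(1)$ piece survives.
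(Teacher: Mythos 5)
Your overall strategy coincides with the paper's: decompose $D\mathcal{F}(Y)$ into a principal linear piece invertible via the small-divisor estimate of Lemma~\ref{bettersmalldivisor}, plus a small perturbation, and bootstrap via a Neumann series. The only cosmetic difference is in the grouping: you take $\mathcal{L}$ to be the $g$- and $Y$-independent part and place $\mathbb{P}_{nre}O(\|A\|^{2}g)Y'$ in the remainder $\mathcal{K}$, whereas the paper keeps that $g$-dependent term inside $D\mathcal{F}(0)$ and treats only $D\mathcal{F}(Y)-D\mathcal{F}(0)$, the piece coming from the $Y$-dependence, as the perturbation. Either grouping is viable; your Step~1 is correct.

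The gap is in the passage from Step~2 to Step~3. You arrive at $\|\mathcal{L}^{-1}\mathcal{K}\|\le C/8$ with $C$ an unspecified absolute constant, and assert that $C/8<1/2$ can be ensured ``provided $n_{0}$ (and hence $T$) is chosen large enough.'' That reasoning cannot work: since $\|\mathcal{L}^{-1}\|\sim\varepsilon^{-1/2}$ while the leading contribution to $\mathcal{K}$ --- the piece of $P[A,Y,Y',g]$ linear in $Y$ --- is genuinely of size $\sim\varepsilon^{1/2}$, the product $C/8$ is scale-invariant and does not shrink as $\varepsilon$, $n_{0}$, or $T$ are adjusted. Enlarging $n_{0}$ kills only the subleading contributions (linear in $g$, or quadratic in $Y$, both $O(\varepsilon)$); it cannot reduce the leading constant. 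To close the Neumann argument you must actually bound that constant by something less than $4$. The paper does exactly this: it establishes $\|D\mathcal{F}(Y)-D\mathcal{F}(0)\|\le 2\varepsilon^{1/2}$ and $\|D\mathcal{F}(0)^{-1}\|\le 4^{-1}\varepsilon^{-1/2}$, so the product is exactly $1/2$ and a factor-$2$ Neumann bound gives $2^{-1}\varepsilon^{-1/2}$. The numerology here is tight rather than order-of-magnitude, so tracking the explicit constant $2$ is the substantive content of the step; it is not something that can be absorbed by taking a threshold large.
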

\begin{proof}
For the fixed $Y\in\mathcal{B}_{r}(\varepsilon^{1/2}),$ obviously, the operator $D\mathcal{F}(Y)$ defined by \eqref{operatorderivative}
is a linear map from $\mathcal{B}_{r}^{(nre)}$ to $\mathcal{B}_{r}^{(nre)}$. In the following we prove the estimate in \eqref{20201006}. To this end, we consider the operator $D\mathcal{F}(0)$ given by
\begin{equation*}
\begin{split}
D\mathcal{F}(0)Y'=A^{-1} Y'(\theta+\alpha)A-Y'+\mathbb{P}_{nre}O(\|A\|^{2}g)Y', Y'\in \mathcal{B}_{r}^{(nre)}.
\end{split}
\end{equation*}
Note the operator $D\mathcal{F}(0)$ is a linear map mapping $\mathcal{B}_{r}^{(nre)}$ to $\mathcal{B}_{r}^{(nre)}$.
Next we give the estimate about $D\mathcal{F}(0)^{-1}.$

Note $\overline{Q}_{n+1}\geq T>(2\gamma^{-1})^{2\tau},n\geq0$ (\eqref{qnestimate}),
then by \eqref{diophantine} in Lemma~\ref{bettersmalldivisor} we get
\begin{equation*}
\begin{split}
\|k\alpha\pm2\rho_{f}\|_{\mathbb{Z}}\geq \gamma Q_{n+1}^{-\tau^{2}}= 8\varepsilon^{\frac{1}{2}}, |k|< \overline{Q}_{n+1}^{\frac{1}{2}}.
\end{split}
\end{equation*}
By the inequality above, one can check, for $Y'\in\mathcal{B}_{r}^{(nre)},$
\begin{equation*}
\begin{split}
A^{-1}&Y'(\cdot+\alpha)A-Y'\in\mathcal{B}_{r}^{(nre)},\\
\|A^{-1}&Y'(\cdot+\alpha)A-Y'\|_{r}
\geq
8\|A\|^{2}\varepsilon^{\frac{1}{2}}\|Y'\|_{r}.
\end{split}
\end{equation*}
Moreover, by Lemma~\ref{banachalgebra} (Banach algebra property) we get $\|O(\|A\|^{2}g)\|_{r}\leq 2\|A\|^{2}\varepsilon.$
Note $(8\varepsilon^{1/2})^{-1} 2\varepsilon<1,$ then
\begin{equation}\label{operatorestimate}
\begin{split}
\|D\mathcal{F}(0)^{-1}\|_{r}\leq2(8\varepsilon^{1/2})^{-1}=4^{-1}\varepsilon^{-1/2}.
\end{split}
\end{equation}

Once we get \eqref{operatorestimate}, we will turn to $\|D\mathcal{F}(Y)^{-1}\|.$
The calculations below also depends on Lemma~\ref{banachalgebra}, we omit the reference about this lemma.

Note $\{D\mathcal{F}(Y)-D\mathcal{F}(0)\}Y'=\mathbb{P}_{nre}\big\{P(A,Y,Y'g)-P(A,0,Y'g)\},$
we get
\begin{equation}\label{operatorestimates}
\begin{split}
\sup_{\|Y\|_{r}\leq\varepsilon^{\frac{1}{2}},\|g\|_{r}\leq\varepsilon
}\|D\mathcal{F}(Y)-D\mathcal{F}(0)\|\leq 2\varepsilon^{\frac{1}{2}}.
\end{split}
\end{equation}
\eqref{operatorestimate} and \eqref{operatorestimates} yield
\begin{equation*}
\begin{split}
\|D\mathcal{F}(0)^{-1}
(D\mathcal{F}(Y)-D\mathcal{F}(0))\|\leq4^{-1}\varepsilon^{-1/2}2\varepsilon^{\frac{1}{2}}
=2^{-1}.
\end{split}
\end{equation*}
Finally, note
\begin{equation*}
\begin{split}
D\mathcal{F}(Y)^{-1}=\{1+D\mathcal{F}(0)^{-1}(D\mathcal{F}(Y)-D\mathcal{F}(0))\}^{-1}D\mathcal{F}(0)^{-1},
\end{split}
\end{equation*}
then by the inequality above we know that $D\mathcal{F}(Y)$ is invertible with
\begin{equation*}
\begin{split}
\|D\mathcal{F}(Y)^{-1}\|\leq 2\|D\mathcal{F}(0)^{-1}\|\leq 2^{-1}\varepsilon^{-1/2}.
\end{split}
\end{equation*}
\end{proof}

Now we turn to functional equation \eqref{constructoperators}. Formally, we get
\begin{equation}\label{20201007}
\begin{split}
Y_{t}=-\int_{0}^{t}D\mathcal{F}(Y_{s})^{-1}\mathcal{F}(Y_{0})ds
=-\int_{0}^{t}D\mathcal{F}(Y_{s})^{-1}\mathbb{P}_{nre} gds, 0\leq t\leq1.
\end{split}
\end{equation}
Moreover, by \eqref{20201006}
\begin{equation*}
\begin{split}
\|Y_{t}\|_{r}\leq\sup_{Y\in \mathcal{B}_{r}(\varepsilon^{1/2}),\|g\|_{r}\leq\varepsilon}\|D\mathcal{F}(Y)^{-1}\|\|g\|_{r}
\leq2^{-1}\varepsilon^{-1/2}\varepsilon<\varepsilon^{1/2},0\leq t\leq1.
\end{split}
\end{equation*}
Therefore, the solution of \eqref{constructoperators} exists in $\mathcal{B}_{r}(\varepsilon^{1/2})$ and is given by \eqref{20201007}.

For $Y_{t},\ 0\leq t\leq1,$ given above,
we know that $\mathcal{F}(Y_{1})=0,$ that is (by \eqref{constructoperator})
\begin{equation*}
\begin{split}
\mathbb{P}_{nre}\ln (\me^{A^{-1} Y_{1}(\theta+\alpha)A}\me^{g}\me^{-Y_{1}})=0,
\end{split}
\end{equation*}
which implies that there exists a matrix $g^{(re)}\in\mathcal{B}_{r}^{(re)}$ such that
\begin{equation*}
\begin{split}
\ln (\me^{A^{-1} Y_{1}(\theta+\alpha)A}\me^{g}\me^{-Y_{1}})=g^{(re)}.
\end{split}
\end{equation*}
That is
\begin{equation*}
\begin{split}
\me^{ Y_{1}(\theta+\alpha)}A\me^{g}\me^{-Y_{1}}=A\me^{g^{(re)}}.
\end{split}
\end{equation*}
By standard calculations we get the estimate $\|g^{(re)}\|\leq2\varepsilon.$
This $Y_{1},$ with the estimate $\|Y_{1}\|_{r}<\varepsilon^{\frac{1}{2}},$ is the one we want.

\section*{Acknowledgments}
J. You and Q. Zhou were supported by
 National Key R\&D Program of China (2020YFA0713300) and Nankai Zhide Foundation.
H. Cheng was supported by NSFC grant (12001294). She would like to  thank Y. Pan for useful discussion. L. Ge  was partially supported by NSF DMS-1901462 and AMS-Simons Travel Grant 2020–2022. J. You was also partially supported by NSFC grant (11871286).
 Q. Zhou was also partially supported by NSFC grant (12071232), The Science Fund for Distinguished Young Scholars of Tianjin (No. 19JCJQJC61300).

\bibliographystyle{alpha}
\bibliography{2021-ultradifferentiable}
\end{document}